\documentclass[a4paper]{amsart}
\tolerance=9999
\usepackage{color,stmaryrd}
\usepackage{hyperref}
\usepackage{amssymb}
\usepackage{amscd}
\usepackage{amsthm}
\usepackage{amsmath}
\usepackage{latexsym}
\usepackage[all]{xy}
\usepackage{wasysym}
\usepackage{mathrsfs}
\usepackage{mathtools}
\usepackage[normalem]{ulem}

\theoremstyle{plain}
\newtheorem{theorem}{Theorem}
\newtheorem{corollary}[theorem]{Corollary}
\newtheorem{lemma}[theorem]{Lemma}
\newtheorem{proposition}[theorem]{Proposition}

\theoremstyle{definition}
\newtheorem{definition}[theorem]{Definition}
\newtheorem{example}[theorem]{Example}

\theoremstyle{remark}

\newtheorem{remark}[theorem]{Remark}

\numberwithin{theorem}{section}

\numberwithin{equation}{theorem}

\makeatletter
\renewcommand{\p@enumii}{}
\makeatother

\newcommand{\Loc}{\mbox{\rm{Loc\,}}}

\newcommand{\Spec}[1]{\mbox{\rm{Spec}}\,#1}

\newcommand{\Add}{\mbox{\rm{Add\,}}}
\newcommand{\Prod}{\mbox{\rm{Prod\,}}}

\newcommand{\Gen}{\mbox{\rm{Gen\,}}}
\newcommand{\gen}{\mbox{\rm{gen\,}}}
\newcommand{\Cogen}{\mbox{\rm{Cogen\,}}}

\newcommand{\End}{\mbox{\rm{End\,}}}

\newcommand{\im}{\mbox{\rm{Im\,}}}
\newcommand{\ProjA}{\ensuremath\mbox{\rm{Proj}-$A$}}

\newcommand{\AInj}{\ensuremath\mbox{$A$-\rm{Inj}}}
\newcommand{\projA}{\ensuremath\mbox{\rm{proj}-$A$}}

\newcommand{\hocolim}{\mbox{\rm{hocolim}}}

\newcommand{\Hom}[3]{\mbox{\rm{Hom}}_{#1}(#2,#3)}
\newcommand{\rhom}[3]{\mbox{\rm{\bf R}Hom}_{#1}(#2,#3)}
\newcommand{\lten}[1]{\otimes^{\bf L}_#1}
\newcommand{\Ext}[4]{\mbox{\rm{Ext}}^{#1}_{#2}(#3,#4)}
\newcommand{\Tor}[4]{\mbox{\rm{Tor}}_{#1}^{#2}(#3,#4)}

\newcommand{\rfmod}[1]{\mbox{\rm{mod}-\!}{#1}}
\newcommand{\rmod}[1]{\mbox{\rm{Mod}-\!}{#1}}
\newcommand{\lfmod}[1]{{#1}\mbox{-\rm{mod}}}
\newcommand{\lmod}[1]{{#1}\mbox{-\rm{Mod}}}

\newcommand{\ModA}{\rmod{A}}
\newcommand{\AMod}{\lmod{A}}
\newcommand{\modA}{\rfmod{A}}
\newcommand{\ModB}{\rmod{B}}

\newcommand{\Modk}{\rmod{k}}

\newcommand{\DModA}{\DD({\rm Mod}\text{-}A)}
\newcommand{\DModB}{\DD({\rm Mod}\text{-}B)}
\newcommand{\DAMod}{\DD(A\text{-}{\rm Mod})}
\newcommand{\DModk}{\DD({\rm Mod}\text{-}k)}

\newcommand{\KModA}{\K({\rm Mod}\text{-}A)}
\newcommand{\KAMod}{\K(A\text{-}{\rm Mod})}
\newcommand{\KModk}{\K({\rm Mod}\text{-}k)}

\newcommand{\Supp}[1]{\mbox{\rm{Supp}}\,#1}
\newcommand{\supp}[1]{\mbox{\rm{supp}}\,#1}
\newcommand{\Ker}[1]{\mbox{\rm{Ker}}\,#1}
\newcommand{\Coker}{\mbox{\rm{Coker}}}
\newcommand{\Cone}{\mbox{\rm{Cone}}}

\newcommand{\p}{\mathbf{p}}
\newcommand{\q}{\mathbf{q}}
\newcommand{\tube}{\mathbf{t}}

\newcommand{\A}{\mathcal{A}}

\newcommand{\Acal}{\ensuremath{\mathcal{A}}}

\newcommand{\Fcal}{\ensuremath{\mathcal{F}}}

\newcommand{\Dcal}{\ensuremath{\mathcal{D}}}
\newcommand{\Scal}{\ensuremath{\mathcal{S}}}
\newcommand{\Ucal}{\ensuremath{\mathcal{U}}}
\newcommand{\Vcal}{\ensuremath{\mathcal{V}}}
\newcommand{\Wcal}{\ensuremath{\mathcal{W}}}
\newcommand{\Tcal}{\ensuremath{\mathcal{T}}}
\newcommand{\Kcal}{\ensuremath{\mathcal{K}}}
\newcommand{\Mcal}{\ensuremath{\mathcal{M}}}
\newcommand{\Lcal}{\ensuremath{\mathcal{L}}}
\newcommand{\Ecal}{\ensuremath{\mathcal{E}}}
\newcommand{\Ccal}{\ensuremath{\mathcal{C}}}

\newcommand{\Bcal}{\ensuremath{\mathcal{B}}}
\newcommand{\Ycal}{\ensuremath{\mathcal{Y}}}
\newcommand{\Xcal}{\ensuremath{\mathcal{X}}}

\newcommand{\Z}{\mathbb{Z}}

\newcommand{\V}{\mathrm{V}}

\newcommand{\DD}{\mathrm{D}}
\newcommand{\LL}{\mathbf{L}}

\newcommand{\K}{\mathrm{K}}

\newcommand{\Aop}{A^{\mathrm{op}}}

\begin{document}

\title{Parametrizing torsion pairs in derived categories}
\author{Lidia Angeleri H\"ugel}
\address[Lidia Angeleri H\"ugel]{Dipartimento di Informatica - Settore di Matematica, Universit\`a degli Studi di Verona, Strada le Grazie 15 - Ca' Vignal, I-37134 Verona, Italy} 
\email{lidia.angeleri@univr.it}
\author{Michal Hrbek}
\address[Michal Hrbek]{Institute of Mathematics of the Czech Academy of Sciences, \v{Z}itn\'{a} 25, 115 67 Prague, Czech Republic}
\email{hrbek@math.cas.cz}
\subjclass[2020]{Primary 18G80, 18E40, 16S85; Secondary 16E60, 16G20, 13C05.}

\thanks{Lidia Angeleri H\"ugel was partially supported by Istituto Nazionale di Alta Matematica INdAM-GNSAGA. Michal Hrbek was supported by the Czech Academy of Sciences Programme for research and mobility support of starting researchers, project MSM100191801.}

\begin{abstract}
We investigate parametrizations of compactly generated t-structures, or more generally, t-structures with a definable coaisle, in the unbounded derived category $\DModA$ of a ring $A$. To this end, we provide a construction of t-structures from chains in the lattice of ring epimorphisms starting in $A$, which is a natural extension of the construction of compactly generated t-structures from  chains of  subsets of the Zariski spectrum known for the commutative noetherian case.  We also provide constructions  of silting and cosilting objects in $\DModA$. This leads us to classification results over  some classes of commutative rings and over finite dimensional hereditary algebras.
\end{abstract}
\maketitle

\section{Introduction}
Since the seminal work of Gabriel, Hopkins, and Neeman, it is well known that over a commutative noetherian ring $A$ many important structures in the category of modules $\ModA$ and its derived category $\DModA$ are controlled by subsets of the Zariski spectrum. For example, the hereditary torsion pairs in $\ModA$ are parametrized by the specialization-closed subsets of $\Spec(A)$, that is, by unions of Zariski-closed subsets. Similarly, it was shown by Alonso, Jerem\'ias and Saor\'in   in \cite{AJSa} that the compactly generated t-structures in  $\DModA$ are parametrized by descending chains of specialization-closed subsets of $\Spec(A)$.

The aim of this paper is to interpret these results from the viewpoint  of silting theory and to establish similar results over further classes of rings, notably rings of weak global dimension at most one.

Silting theory is a useful tool to study decompositions and localizations of categories both at  abelian and triangulated level. Indeed, the silting  objects in the derived category $\DModA$ of a ring $A$ correspond bijectively to certain TTF triples $(\Ucal, \Vcal, \Wcal)$ consisting of a t-structure with an adjacent co-t-structure. Dual results hold for cosilting objects, implying for example that  every  compactly generated TTF triple which is non-degenerate corresponds to a pure-injective cosilting object. There are also abelian versions of these results. 
Indeed, (co)silting modules, which are by definition the zero cohomologies  of  (co)silting complexes of length two, correspond bijectively to certain  torsion pairs in the module category $\ModA$. 

In previous work \cite{AH}, we have already
seen that, over a commutative noetherian ring $A$, (co)silting modules  are in bijection with  hereditary torsion pairs.  The classification result from \cite{AJSa} mentioned above then shows that every
 compactly generated t-structure in $\DModA$
  encodes a sequence of nested cosilting torsion pairs in $\ModA$.
 In Theorem~\ref{commnoeth}, we determine the conditions ensuring that this sequence gives rise to a cosilting complex in $\DModA$, obtaining a   parametrization of the pure-injective cosilting objects over $A$ in terms of  chains of subsets of $\Spec(A)$.
 
 An essential ingredient for this classification is a result from \cite{HN} stating that all pure-injective cosilting objects over  commutative noetherian rings are of cofinite type, i.e.~they correspond to compactly generated TTF triples. In Theorem \ref{hereditaryutc} it turns out that the same holds true over hereditary rings.

 Inspired by these findings, we proceed to investigate possible parametrizations of cosilting objects over further classes of rings. The idea is to replace chains of subsets of the prime spectrum of $A$ by chains of ring epimorphisms starting in $A$.
 
  Ring epimorphisms  with nice homological properties starting in a given ring $A$ form a complete lattice which is known to be related with silting modules. In \cite{AMV2} it is shown that the homological ring epimorphisms starting in a hereditary ring $A$ are in bijection with minimal silting modules, that is, silting modules satisfying a certain minimality condition. 
 Here we discuss the  connection with cosilting modules. Given an arbitrary ring $A$, we provide a general construction of a cosilting $A$-module from a ring epimorphism $\lambda: A\to B$ which satisfies a certain homological condition (Theorem~\ref{copreco-mincosilting}).  As a consequence, we prove that the homological ring epimorphisms  starting in a ring of weak global dimension at most one, or in a commutative noetherian ring,  are in bijection with a class of cosilting modules   which we call minimal (Corollaries~\ref{weak}, and \ref{flatcomm}). If the ring $A$ is hereditary, then minimal silting and cosilting modules  correspond to each other under a silting-cosilting duality (Corollary~\ref{hered}).
 
 We then  turn to  a  chain $\cdots\lambda_n\le\lambda_{n+1}\cdots$  inside the lattice of ring epimorphisms starting in a given ring $A$. If all $\lambda_n$ satisfy our homological condition, we obtain  a chain of  cosilting classes  which gives rise to a t-structure $(\Ucal,\Vcal)$ and a TTF triple $(\Ucal,\Vcal,\Wcal)$ in the derived category of $A$ (Proposition~\ref{construction}). We show that this construction is a natural extension of the construction of compactly generated t-structures from  chains of  subsets of the Zariski spectrum for the commutative noetherian case.  

The coaisle $\Vcal$ obtained from our construction is a definable subcategory of the derived category, that is, it is determined by a set of morphisms between compact objects. Conversely, when the ring $A$ has weak global dimension at most one, every t-structure $(\Ucal,\Vcal)$  with definable coaisle $\Vcal$ encodes a sequence of nested cosilting classes, and we see that it arises  from a chain of ring epimorphisms according to our construction if and only if all cosilting classes involved are minimal (Theorem~\ref{minimalTTF}). 
We also provide a dual construction and 
determine the conditions ensuring that the TTF triples we obtain are induced by a cosilting or a silting object, respectively. Again, if $A$ is hereditary, such silting and cosilting objects  will be related to each other by  a silting-cosilting duality. 

 Finally, we apply our investigations to specific classes of rings. We provide classification results over commutative rings of weak global dimension  at most one and over semihereditary rings (Subsection~\ref{commweak}).  When   $A$ is a  finite dimensional hereditary algebra over a field, we observe that the compact silting complexes correspond to  finite chains of finite dimensional homological ring epimorphisms $0_A\le \lambda_n\le\ldots\le\lambda_m\le {\rm id}_A$ ({Theorem~\ref{fdher}}).
Then we focus on the case  when $A$ is the path algebra of the Kronecker quiver $\xy\xymatrixcolsep{2pc}\xymatrix{ \bullet \ar@<0.5ex>[r]  \ar@<-0.5ex>[r] & \bullet } \endxy$. In Theorem~\ref{kroneckertstr}
 we give a complete classification of all compactly generated t-structures. We show that the chains of homological ring epimorphisms $\cdots\lambda_n\le\lambda_{n+1}\cdots$ with meet $0_A:A\to 0$ and join ${\rm id}_A$ correspond to silting and cosilting objects, and we give a parametrization of  all pure-injective cosilting complexes and their dual silting complexes (Theorems~\ref{kronecosilt} and~\ref{kronesilt}). Similar results are obtained for the ring $\mathbb Z$ and, more generally, for commutative noetherian rings of Krull dimension at most one (Theorem~\ref{T:dimone}, and Examples~\ref{commnoethcoherent} and~\ref{EX:cosiltgroups}).

 The paper is organized as follows. Section 2 contains some preliminaries. In particular, we review the notions of definability and purity in derived categories and  investigate the role of duality in this context. In Section 3 we discuss  compactly generated TTF triples in derived categories. We establish a duality   between compactly generated TTF triples in $\DModA$ and $\DAMod$ which restricts  to a silting-cosilting duality. Then we focus on the special cases when $A$ is  commutative noetherian or  hereditary. Section 4 is devoted to the connection between cosilting modules and ring epimorphisms. In Section 5 we deal with chains of ring epimorphisms and develop our construction of TTF triples. The classification results mentioned above are established in Section 6.

\section{Preliminaries}

\subsection{Notation}
Throughout this paper, let $A$ be a (unital) ring, $\ModA$ the category of right $A$-modules, and $\modA$ its subcategory of finitely presented modules. All subcategories are supposed to be full and strict. We denote by $\ProjA$ and $\projA$ the class of all projective and of all finitely generated projective right $A$-modules, respectively. For any additive subcategory $\Ccal$ of $\ModA$ we let $\K(\Ccal)$ (resp. $\K^b(\Ccal)$) denote the homotopy category of all complexes (resp. bounded complexes) with coordinates in $\Ccal$. Furthermore, we write $\DModA$ for the unbounded derived category of $\ModA$, and $\DD^c(\ModA)=\K^b(\projA)$ for the subcategory of compact objects in $\DModA$. We refer the reader to \cite{Sp} for the definitions and basic facts about K-projective, K-injective, and K-flat resolutions of complexes.

Given a module $M\in\ModA$,  we denote by $\Add{M}$ the class of all modules which are isomorphic to direct summands of direct sums of copies of $M$, and by $\Gen{M}$ the class of all $M$-generated modules, i.~e.~all epimorphic images of modules in $\Add M$. $\Cogen{M}$ and $\Prod M$ are defined dually.
 
 Given  a subcategory $\Ccal$ of $\rmod A$
  and a set of non-negative integers $I$ (which is usually expressed by symbols such as  $\geq n$, $\leq n$, or just $n$, with the obvious associated meaning), 
 we denote by
 $$\Ccal^{\perp_I}=\{X\in\ModA \mid \Ext{i}{R}{\Ccal}{X}=0 \text{ for all } i\in I\}$$
  $${}^{\perp_I}\Ccal=\{X\in\ModA \mid \Ext{i}{R}{X}{\Ccal}=0\text{ for all } i\in I\}.$$
 If $\Ccal$ consists of a single module $M$, we just write $M^{\perp_I}$, 
 ${}^{\perp_I}M$, etc.

We use a similar notation in the derived category $\DModA$.
Given a class of objects $\Xcal$  in $\DModA$ and a set of integers $I$,
 we denote
$${}^{\perp_I}\Xcal:=\{Y\in \DModA \mid \Hom{\DModA}{Y}{X[i]}=0 \text{ for all } X\in\Xcal \text{ and } i\in I\}$$ $${\Xcal}^{\perp_I}:=\{Y\in \DModA \mid \Hom{\DModA}{X}{Y[i]}=0 \text{ for all } X\in\Xcal \text{ and }  i\in I\}.$$

\bigskip

\subsection{Duality}
  We consider the two following dualities on $\DModA$. The first is the functor
$$(-)^* := \rhom{A}{-}{A}: \DModA \rightarrow \DAMod.$$
Recall that $(-)^*$ restricts to a contravariant equivalence between the categories of  finitely generated projective right and left $A$-modules, respectively. By d\'{e}vissage, this further extends to a contravariant equivalence $(-)^*: \DD^c(\ModA) \cong \DD^c(\AMod)$. 

For the second duality we fix a   commutative ring $k$  such that  $A$ is a $k$-algebra, and    an injective cogenerator $W$ in $\rmod \,k$.  For example, one can choose $k=\mathbb Z$ and  $W= \mathbb{Q}/\mathbb{Z}$.
 We denote by   $(-)^+ = \Hom{k}{-}{W}$  the duality functors between $\ModA$ and $\AMod$ and we use the same notation on the derived level: $$(-)^+ := \rhom{k}{-}{W}: \DModA \rightarrow \DAMod.$$
As $W$ is an injective $k$-module, the functor $(-)^+$ is represented by the ordinary Hom-functor $\Hom{k}{-}{W}$. By abusing the notation, we will  use the same notation for the functors defined on the left hand side: 
$$(-)^* := \rhom{\Aop}{-}{A}: \DAMod \rightarrow \DModA.$$
$$(-)^+ := \rhom{k}{-}{W}: \DAMod \rightarrow \DModA.$$

Let $A$ and $B$ be two $k$-algebras. The category of all $A - B$-bimodules is equivalent to the category of all right modules over the ring $B \otimes_k \Aop$. In this way, we define the derived category of all $A - B$-bimodules as {$\DD(A - B) = \DD(\rmod{(B \otimes_k \Aop)})$}. Let $X \in \DModA$, $Y \in \DD(A - B)$, and $Z \in \DModB$. Then we have the adjunction isomorphism in $\rmod{\, k}$:
$$\Hom{\DModB}{X \otimes_A^\LL Y}{Z} \cong \Hom{\DModA}{X}{\rhom{B}{Y}{Z}},$$
as well as its ``enriched'' version in $\DModk$:
\begin{equation}\label{enriched}
\rhom{B}{X \otimes_A^\LL Y}{Z} \cong \rhom{A}{X}{\rhom{B}{Y}{Z}}.
\end{equation}
The following formulas will be useful in the sequel:
\begin{lemma}\label{L:dualityformulas}
	\begin{enumerate}
		\item[(i)] For any $X \in \DModA$ and any $n \in \mathbb{Z}$ we have $H^n(X^+) \cong H^{-n}(X)^+$, and $H^n(X) = 0$ if and only if $H^{-n}(X^+)=0$.
		\item[(ii)] For any $X \in \DModA$ and $Y \in \DAMod$ we have {natural} isomorphisms (in $\DModk$):
			$$\rhom{\Aop}{Y}{X^+} \cong (X \otimes_A^\LL Y)^+ \cong \rhom{A}{X}{Y^+}.$$
		\item[(iii)] For any compact object $S \in \DD^c(\ModA)$ and any complex $X \in \DModA$ we have a {natural} isomorphism
			$$\rhom{A}{S}{X} \cong X \otimes_A^{\LL} S^*.$$
		\item[(iv)] For any compact object $S \in \DD^c(\ModA)$ and any $X \in \DModA$ we have a {natural} isomorphism
			$$\rhom{A}{S}{X}^+ \cong (S \otimes_A^\LL X^+).$$
	\end{enumerate}
\end{lemma}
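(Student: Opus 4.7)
The plan is to verify the four items in order, reducing each to the adjunction isomorphism \eqref{enriched} together with the elementary fact that $(-)^+=\Hom{k}{-}{W}$ is an exact functor (because $W$ is $k$-injective) and so commutes with taking cohomology up to the usual cochain/chain sign flip.

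For part (i), I would represent $X$ by a complex of $A$-modules; then $X^+$ is computed termwise by $\Hom{k}{-}{W}$, since $W$ is injective in $\rmod{\,k}$ and so $(-)^+$ preserves quasi-isomorphisms. Consequently $H^n(X^+)=H^n(\Hom{k}{X^{\bullet}}{W})\cong\Hom{k}{H^{-n}(X)}{W}=H^{-n}(X)^{+}$, and faithfulness of $(-)^+$ (again from $W$ being an injective cogenerator) gives the ``if and only if'' clause.

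For part (ii), both isomorphisms are instances of \eqref{enriched} with $B=k$. For the right-hand one, take $Y\in\DAMod$ viewed as an $A$-$k$-bimodule (with $k$ acting centrally); \eqref{enriched} with $Z=W$ yields
\[
(X\otimes_A^{\LL}Y)^{+}=\rhom{k}{X\otimes_A^{\LL}Y}{W}\cong\rhom{A}{X}{\rhom{k}{Y}{W}}=\rhom{A}{X}{Y^{+}}.
\]
For the left-hand one, reinterpret $X\otimes_A^{\LL}Y$ as $Y\otimes_{\Aop}^{\LL}X$ using the canonical identification of left $A$-modules with right $\Aop$-modules; the enriched adjunction applied over $\Aop$ then gives $(Y\otimes_{\Aop}^{\LL}X)^{+}\cong\rhom{\Aop}{Y}{X^{+}}$.

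For part (iii), I would proceed by d\'evissage. Both $\rhom{A}{-}{X}$ and $(-)\otimes_A^{\LL}X$ precomposed with $(-)^*$ are contravariant triangulated functors from $\DD^c(\ModA)$ to $\DModA$, and for $S=A$ the natural map $X\otimes_A^{\LL}A^{*}\to\rhom{A}{A}{X}$ coming from evaluation is plainly an isomorphism (both sides identify with $X$). Since $\DD^c(\ModA)=\K^b(\projA)$ is generated from $A$ by shifts, cones and direct summands, and both functors commute with these operations, the claim extends to all compact $S$. Part (iv) is then a formal combination: applying $(-)^{+}$ to (iii) yields $\rhom{A}{S}{X}^{+}\cong(X\otimes_A^{\LL}S^{*})^{+}$; part (ii) rewrites the right-hand side as $\rhom{\Aop}{S^{*}}{X^{+}}$; and finally the left $A$-module version of (iii), applied to the compact object $S^{*}\in\DD^c(\AMod)$ with $S^{**}\cong S$, gives $\rhom{\Aop}{S^{*}}{X^{+}}\cong X^{+}\otimes_{\Aop}^{\LL}S\cong S\otimes_A^{\LL}X^{+}$.

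The main obstacle is ensuring that all isomorphisms are genuinely natural in $\DModk$, in particular in part (iii), where one must pick compatible K-projective (or K-flat) resolutions so that the evaluation map $X\otimes_A^{\LL}S^{*}\to\rhom{A}{S}{X}$ is a well-defined morphism of complexes before checking that it is a quasi-isomorphism; the d\'evissage then carries naturality along with it automatically.
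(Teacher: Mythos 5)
Your proposal is correct and follows essentially the same route as the paper: (i) by exactness and faithfulness of $(-)^+$, (ii) by two applications of the enriched adjunction (\ref{enriched}), (iii) by d\'evissage from the finitely generated projective case (where the paper simply cites \cite[Proposition 20.11]{AF}), and (iv) by the identical three-step chain combining (ii) and (iii) with its left-module version. Your closing remark about first constructing the global evaluation map before running the d\'evissage correctly addresses the only point of care in (iii).
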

	\begin{proof} 
		(i) The isomorphism of cohomology modules follows directly from $(-)^+: \ModA \rightarrow \AMod$ being an exact contravariant functor. The second claim follows from $(-)^+$ being a faithful functor, ensured by the assumption that $W$ is a  cogenerator.

		(ii) This follows by applying the enriched derived Hom-$\otimes$ adjunction from (\ref{enriched}) twice - once directly for $Y \in \DD(A-k)$, yielding
	$$(X \otimes_A^\LL Y)^+ \cong \rhom{A}{X}{Y^+},$$
			and once using the left module version for $X \in \DD(k-A)$:
	$$\rhom{\Aop}{Y}{X^+} \cong (X \otimes_A^\LL Y)^+.$$
		
		(iii) Follows from \cite[Proposition 20.11]{AF} and d\' evissage.

		(iv) Using (ii) and (iii) we have
			$\rhom{A}{S}{X}^+ \cong (X \otimes_A^\LL S^*)^+ \cong \rhom{\Aop}{S^*}{X^+} \cong (S \otimes_A^\LL X^+).$ 
	\end{proof}

	\subsection{Definable subcategories}
Next, we turn to a concept introduced in \cite{Kr3}. A subcategory $\Vcal$ of $\DModA$ is said to be \emph{definable} if there is a set $\Phi$ of maps between compact objects of $\DModA$ such that 
$$\Vcal = \{X \in \DModA \mid \Hom{\DModA}{f}{X} \text{ is surjective for each $f \in \Phi$}\}.$$ 
This notion is the derived analogue of the notion of a definable subcategory in $\ModA$. Recall that    any definable subcategory $\Dcal$ of $\ModA$ has  a dual definable subcategory $\Dcal^\vee$ in $\AMod$ which is uniquely determined by the property that a right $A$-module $M$ lies in $\Dcal$ if and only if $M^+$ lies in $\Dcal^\vee$. We are now going to prove an analogous result on derived level.

To this end, we  need some alternative  descriptions of definability.

\begin{lemma}
	Let $\Vcal$ be a subcategory of $\DModA$. The following conditions are equivalent:
	\begin{enumerate}
		\item[(i)] $\Vcal$ is definable;		
		\item[(ii)] there is a set $\Phi$ of morphisms between compact objects of $\DModA$ such that 

$\Vcal = \{X \in \DModA \mid \Hom{\DModA}{f}{X} \text{ is injective for all $f \in \Phi$}\}$;
		\item[(iii)] there is a set $\Phi$ of morphisms between compact objects of $\DModA$ such that 

$\Vcal = \{X \in \DModA \mid \Hom{\DModA}{f}{X} \text{ is zero for all $f \in \Phi$}\}$.
	\end{enumerate}
\end{lemma}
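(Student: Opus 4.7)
The plan is to read all three conditions off the long exact sequence of Hom-groups coming from the triangle completion of a morphism between compact objects, and then to translate between them by replacing each morphism in the defining set by a suitable connecting morphism. The main observation is that if $f\colon S\to T$ is a morphism between compact objects in $\DModA$, and
$$S \xrightarrow{f} T \xrightarrow{h} C(f) \xrightarrow{\partial} S[1]$$
is the distinguished triangle completing $f$, then $C(f)$ is again compact, since the compact objects form a thick subcategory of $\DModA$; in particular the maps $h\colon T\to C(f)$ and $\partial[-1]\colon C(f)[-1]\to S$ are themselves morphisms between compact objects.

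Applying $\Hom{\DModA}{-}{X}$ to the above triangle yields the exact sequence
$$\Hom{\DModA}{C(f)}{X} \xrightarrow{h^*} \Hom{\DModA}{T}{X} \xrightarrow{f^*} \Hom{\DModA}{S}{X} \xrightarrow{\partial[-1]^*} \Hom{\DModA}{C(f)[-1]}{X},$$
from which, by exactness, I would extract the three dictionary entries I need: $f^*$ is surjective if and only if $\partial[-1]^*=0$; $f^*$ is injective if and only if $h^*=0$; and $f^*=0$ if and only if $h^*$ is surjective, equivalently if and only if $\partial[-1]^*$ is injective. Each of these equates one of the three flavours of condition (surjective/injective/zero) on $\Hom{\DModA}{f}{X}$ with a condition of a different flavour on $\Hom{\DModA}{g}{X}$ for some morphism $g$ between compact objects canonically built from $f$.

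With this dictionary in hand, the six implications become immediate. For (i)$\Rightarrow$(iii), given a defining set $\Phi$ of the form in (i), I would take $\Phi' = \{\partial_f[-1] : C(f)[-1]\to S \mid f\in \Phi\}$, a set of morphisms between compact objects, and the first dictionary entry shows that $\Vcal$ is precisely the subcategory defined via vanishing by $\Phi'$. For (iii)$\Rightarrow$(i), I would instead take $\Phi' = \{h_f : T\to C(f) \mid f\in\Phi\}$ and invoke the third dictionary entry. The equivalence (ii)$\Leftrightarrow$(iii) is handled symmetrically: $h_f$ converts injectivity into vanishing, and $\partial_f[-1]$ converts vanishing into injectivity. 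I do not anticipate any substantive obstacle here; the only points to be careful about are that the constructed morphisms genuinely lie between compact objects (which is the thick-subcategory remark above) and that $\Phi'$ is a set, which is automatic since it is in bijection with $\Phi$ after choosing, for each $f$, a fixed cone triangle.
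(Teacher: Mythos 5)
Your proposal is correct and is precisely the "simple argument using the long exact sequence obtained by applying $\Hom{\DModA}{-}{X}$ onto a triangle in $\DD^c(\ModA)$" that the paper invokes (citing \cite[Lemma 3.1]{BH}), with the translation dictionary between surjective/injective/zero conditions worked out explicitly via the cone and connecting morphisms. No gaps; the thickness of $\DD^c(\ModA)$ guarantees the new morphisms stay between compact objects, as you note.
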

\begin{proof}
	The statement follows by a simple argument using the long exact sequence obtained by applying $\Hom{\DModA}{-}{X}$ onto a triangle in $\DD^c(\ModA)$, see also \cite[Lemma 3.1]{BH}.
\end{proof}

Some comments on condition (iii) are in order. First of all, recall that a map $f: X \rightarrow Y$ in an additive category is \emph{zero} if it is the zero element of the abelian group $\Hom{}{X}{Y}$, or equivalently, if it is the unique map between $X$ and $Y$ which factors through the zero object.  The advantage of the description in (iii) is  that the condition on a map being zero is preserved and reflected by the duality functor $(-)^+$, unlike the two other conditions which are dual to each other.
\begin{lemma}\label{L:dualdefinable2}
	 Let $\Phi$ be a set of morphisms between objects in $\DD^c(\ModA)$, and let $\Phi^* = \{f^* \mid f \in \Phi\}$ be the set of their duals in $\DD^c(\AMod)$. Let 
$$\Vcal = \{X \in \DModA \mid \Hom{\DModA}{f}{X} \text{ is zero for all $f \in \Phi$}\}$$ 
and 
$$\Vcal^{\vee} = \{X \in \DAMod \mid \Hom{\DAMod}{f}{X} \text{ is zero for all $f \in \Phi^*$}\}$$ 
be the corresponding definable categories of $\DModA$ and $\DAMod$, respectively. 

Then the following properties hold: 
	\begin{enumerate}
		\item[(i)] for any $X \in \DModA$, we have $X \in \Vcal$ if and only if $X^+ \in \Vcal^\vee$;
		\item[(ii)] for any $Y \in \DAMod$ we have $Y \in \Vcal^\vee$ if and only if $Y^+ \in \Vcal$.
	\end{enumerate}
\end{lemma}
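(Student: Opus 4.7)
The plan is to derive both (i) and (ii) from a single naturality calculation that rewrites the homomorphism group defining $\Vcal^\vee$ as the $k$-dual of the homomorphism group defining $\Vcal$, after which the faithfulness of $(-)^+$ on $\rmod\,k$ converts the ``zero'' condition through the duality.

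First, I would fix a morphism $f\colon S\to T$ in $\DD^c(\ModA)$ and any $X\in\DModA$, and assemble the following natural isomorphism in $\DModk$ using Lemma~\ref{L:dualityformulas}. Parts (iii) and (ii) of that lemma give
\begin{equation*}
\rhom{\Aop}{S^*}{X^+}\;\cong\;(X\otimes_A^{\LL}S^*)^+\;\cong\;\rhom{A}{S}{X}^+,
\end{equation*}
and the analogous chain for $T$. Both chains are natural in the compact variable, so taking $H^0$ and invoking Lemma~\ref{L:dualityformulas}(i) yields a commutative square identifying the map $\Hom{\DAMod}{f^*}{X^+}\colon\Hom{\DAMod}{S^*}{X^+}\to\Hom{\DAMod}{T^*}{X^+}$ with the $k$-dual $\Hom{\DModA}{f}{X}^+$ of the map $\Hom{\DModA}{f}{X}\colon\Hom{\DModA}{T}{X}\to\Hom{\DModA}{S}{X}$ (note that $(-)^+$ is contravariant, which matches the reversal of direction under $(-)^*$).

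Since $W$ is an injective cogenerator of $\rmod\,k$, the functor $(-)^+=\Hom{k}{-}{W}$ is faithful on $\rmod\,k$, so a morphism $g$ in $\rmod\,k$ is zero if and only if $g^+$ is zero. Applying this to $g=\Hom{\DModA}{f}{X}$ and combining with the identification above, we obtain
\begin{equation*}
\Hom{\DModA}{f}{X}=0\quad\Longleftrightarrow\quad\Hom{\DAMod}{f^*}{X^+}=0.
\end{equation*}
Quantifying over $f\in\Phi$ gives $X\in\Vcal$ iff $X^+\in\Vcal^\vee$, which is (i).

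For (ii) I would use that $(-)^*$ is a contravariant equivalence $\DD^c(\ModA)\simeq\DD^c(\AMod)$, verified on $A$ in degree zero and extended by d\'evissage, so that the natural biduality map $f\to f^{**}$ is an isomorphism for every $f\in\Phi$. Hence $\Phi^{**}\cong\Phi$. Repeating the argument of (i) on the left-module side, i.e.\ applied to morphisms in $\DD^c(\AMod)$ and complexes in $\DAMod$, yields for any $g\in\Phi^*$ and any $Y\in\DAMod$ the equivalence $\Hom{\DAMod}{g}{Y}=0\iff\Hom{\DModA}{g^*}{Y^+}=0$. Taking $g=f^*$ and using $f^{**}\cong f$ shows that $Y\in\Vcal^\vee$ iff $Y^+\in\Vcal$. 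The only genuine point of care, and what I expect to be the main bookkeeping obstacle, is checking that the isomorphism assembled from Lemma~\ref{L:dualityformulas}(ii),(iii) is natural in the compact variable so that it really induces an isomorphism of the two connecting maps; this follows because each step in that lemma is natural, but it must be tracked through the shift by $(-)^+$.
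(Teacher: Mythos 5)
Your proposal is correct and follows essentially the same route as the paper's proof: both identify $\Hom{\DAMod}{f^*}{X^+}$ with $\Hom{\DModA}{f}{X}^+$ via the chain of natural isomorphisms from Lemma~\ref{L:dualityformulas}(ii),(iii) applied in degree zero, use that $(-)^+$ reflects zero morphisms because $W$ is a cogenerator, and deduce (ii) from (i) via $\Phi^{**}=\Phi$. Your extra attention to naturality in the compact variable is a sound (if implicit in the paper) bookkeeping point, not a divergence in method.
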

\begin{proof}
	For any $f \in \Phi$ and any $X \in \DModA$, we have
	$$\Hom{\DModA}{f}{X}  \text{ is zero} \Leftrightarrow H^0\,\rhom{A}{f}{X} \text{ is zero} \Leftrightarrow$$ $$\Leftrightarrow H^0 \,\rhom{A}{f}{X}^+ \text{ is zero}.$$ 
	 
	We continue by computing as follows using the natural isomorphisms from Lemma~\ref{L:dualityformulas}:
	$$H^0 \,\rhom{A}{f}{X}^+ \cong H^0(X \otimes_A^\mathbf{L} f^*)^+ \cong$$ $$\cong H^0\, \rhom{\Aop}{f^*}{X^+} \cong \Hom{\DAMod}{f^*}{X^+}.$$
	In conclusion, the morphism $\Hom{\DModA}{f}{X}$ is zero if and only if $\Hom{\DAMod}{f^*}{X^+}$ is zero, which establishes (i).

	(ii) The claim follows by the same argument applied to $\Phi^*$, as $\Phi^{**} = \Phi$.
\end{proof}

	We will say that $\Vcal$ and $\Vcal^\vee$  as above are \emph{dual definable subcategories}.
\begin{remark}\label{R:dualdef} Given a definable subcategory $\Vcal$ of $\DModA$, its dual definable subcategory  is uniquely determined by the rule $\Vcal^\vee = \{X \in \DAMod \mid X^+ \in \Vcal\}$. Also, since $\Phi^{**} = \Phi$, we have that $(\Vcal^\vee)^\vee = \Vcal$, and in particular we have for any $X \in \Vcal$ that $X^{++} \in \Vcal$.
\end{remark}
	For a subcategory $\Ccal$ of $\DModA$ (or $\DAMod$), we set $\Ccal^* = \{X^* \mid X \in \Ccal\}$ and $\Ccal^+ = \{X^+ \mid X \in \Ccal\}$.
\begin{lemma}\label{L:dualdefinable}
	 Let $\Scal$ be a set of compact objects in $\DD^c(\ModA)$. Then $\Vcal = \Scal^{\perp_0}$ and $\Vcal' = (\Scal^*)^{\perp_0}$ are dual definable subcategories. 
\end{lemma}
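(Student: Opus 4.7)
The plan is to apply Lemma~\ref{L:dualdefinable2} by exhibiting both $\Vcal$ and $\Vcal'$ as definable subcategories via the description in condition (iii) of the preceding lemma, with an explicit choice of morphism set coming from identities on objects of $\Scal$ (respectively $\Scal^*$).

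First I would take the set of morphisms $\Phi = \{\mathrm{id}_S : S \to S \mid S \in \Scal\}$, which consists of maps between compact objects of $\DModA$. For any $X \in \DModA$ and any $S \in \Scal$, the induced map $\Hom{\DModA}{\mathrm{id}_S}{X}$ is the identity endomorphism of the abelian group $\Hom{\DModA}{S}{X}$, and hence is the zero element of $\mathrm{End}(\Hom{\DModA}{S}{X})$ if and only if $\Hom{\DModA}{S}{X} = 0$. Consequently,
$$\Vcal = \Scal^{\perp_0} = \{X \in \DModA \mid \Hom{\DModA}{f}{X} \text{ is zero for all } f \in \Phi\},$$
which witnesses that $\Vcal$ is definable in the sense of condition (iii) of the previous lemma.

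Next, since $(-)^* : \DD^c(\ModA) \to \DD^c(\AMod)$ is a (contravariant) functor, the dual of each $\mathrm{id}_S$ is $\mathrm{id}_{S^*}$, so $\Phi^* = \{\mathrm{id}_{S^*} \mid S \in \Scal\}$. Repeating the above argument on the left-hand side, the definable subcategory of $\DAMod$ associated with $\Phi^*$ is exactly $(\Scal^*)^{\perp_0} = \Vcal'$. With both descriptions in place, Lemma~\ref{L:dualdefinable2} immediately delivers that $\Vcal$ and $\Vcal'$ are dual definable subcategories.

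There is no real obstacle here; the only subtle point worth flagging is that condition (iii) of the preceding lemma is precisely the reformulation of definability which is symmetric under the duality $(-)^*$, so that perpendicular categories of the form $\Scal^{\perp_0}$ — although initially presented via vanishing of Hom-groups rather than vanishing of induced maps — slot into the framework of Lemma~\ref{L:dualdefinable2} once one rewrites the Hom-vanishing condition as vanishing of the induced maps on identities.
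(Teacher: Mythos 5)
Your proof is correct and takes essentially the same route as the paper: both realize $\Scal^{\perp_0}$ as the definable subcategory given by the set of identity morphisms $\Phi = \{\mathrm{id}_S \mid S \in \Scal\}$ in the sense of condition (iii), observe $\Phi^* = \{\mathrm{id}_{S^*} \mid S \in \Scal\}$, and then invoke Lemma~\ref{L:dualdefinable2}. Your write-up is just a more explicit version of the paper's one-line argument.
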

\begin{proof}
	This is  a special case of Lemma~\ref{L:dualdefinable2} by noticing that $\Scal^{\perp_0}$ is equal to the definable subcategory  $\{X \in \DModA \mid \Hom{\DModA}{f}{X} \text{ is zero for all } f \in \Phi\}$ corresponding to the set of identity morphisms $\Phi = \{\;{\rm id}_S: S \rightarrow S \mid S \in \Scal\}$.
\end{proof}
\subsection{Purity}
We briefly recall some basic notions from the theory of purity in derived categories, for details and further references we refer the reader e.g.~to \cite{Bel,Kr1,L}. Along the way, we  show that several classical results on the relation of  purity with duality admit a natural generalization  to the derived setting.

A triangle $X \rightarrow Y \rightarrow Z \xrightarrow{h} X[1]$  in $\DModA$ is a \emph{pure triangle} if  it is taken to a short exact sequence  of abelian groups $0 \rightarrow \Hom{\DModA}{S}{X} \rightarrow \Hom{\DModA}{S}{Y} \rightarrow \Hom{\DModA}{S}{Z} \rightarrow 0$ by every  functor $\Hom{\DModA}{S}{-}$ given by a compact object $S \in \DD^c(\ModA)$. This is further equivalent to $h$ being a \emph{phantom} map in $\DModA$, that is, $\Hom{\DModA}{S}{h}$ is a zero map in $\Modk$ for any compact object $S \in \DModA$. 

If $X\to Y\to Z\to X[1]$ is a pure triangle, we say that $X$ is a \emph{pure subobject} and $Z$ is a \emph{pure quotient} of $Y$.  An object $X \in \DModA$ is \emph{pure-injective} if every pure triangle $X\to Y\to Z\to X[1]$ in $\DModA$ is a split triangle, or equivalently, if the functor $\Hom{\DModA}{-}{X}$ takes pure triangles in $\DModA$ to short exact sequences of abelian groups.

The following Lemma shows that the usual characterization of pure-exact sequences in module categories extends to the derived setting.
\begin{lemma}\label{L:puritychar}
	Let $X \rightarrow Y \rightarrow Z \xrightarrow{h} X[1]$ be a triangle in $\DModA$. Then the following conditions are equivalent:
\begin{enumerate}
	\item[(i)] the triangle $X \rightarrow Y \rightarrow Z \xrightarrow{h} X[1]$ is pure in $\DModA$,
	\item[(ii)] the triangle $X \otimes_A^\mathbf{L} C \rightarrow Y \otimes_A^\mathbf{L} C\rightarrow Z \otimes_A^\mathbf{L} C\xrightarrow{h \otimes_A^\mathbf{L} C} X[1] \otimes_A^\mathbf{L} C$ is pure in $\DModk$ for any object $C \in \DAMod$,
	\item[(iii)] the triangle $Z^+ \rightarrow Y^+ \rightarrow X^+ \xrightarrow{h^+[1]} Z^+[1]$ in $\DAMod$ is split.
\end{enumerate}
\end{lemma}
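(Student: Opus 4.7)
The plan is to show that all three conditions are equivalent to the single algebraic statement that the morphism $h^+: X[1]^+ \to Z^+$ vanishes in $\DAMod$. Since a distinguished triangle splits exactly when its connecting morphism is zero, condition (iii) is just a reformulation of $h^+ = 0$.

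For the implications (iii) $\Rightarrow$ (i) and (iii) $\Rightarrow$ (ii), I would apply the duality formulas to $h$ directly. If $h^+ = 0$, then part (iv) of Lemma~\ref{L:dualityformulas} gives the natural isomorphism $\rhom{A}{S}{h}^+ \cong S \otimes_A^{\LL} h^+ = 0$ for every $S \in \DD^c(\ModA)$; the faithfulness of $(-)^+$ on morphisms then yields $\rhom{A}{S}{h} = 0$, and hence $\Hom{\DModA}{S}{h} = H^0\,\rhom{A}{S}{h} = 0$, giving (i). Similarly, part (ii) of the lemma produces $(h \otimes_A^{\LL} C)^+ \cong \rhom{\Aop}{C}{h^+} = 0$ for all $C \in \DAMod$, which forces $h \otimes_A^{\LL} C = 0$; the tensored triangle is then even split in $\DModk$, in particular pure.

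The converse (ii) $\Rightarrow$ (iii) is obtained by running the previous isomorphism backwards. Condition (ii) says each $h \otimes_A^{\LL} C$ is phantom in $\DModk$; by the $k$-linear instance of the equivalence (i) $\Leftrightarrow$ (iii) (which can be handled separately, as $W$ itself is pure-injective in $\DModk$), this gives $(h \otimes_A^{\LL} C)^+ = 0$, and Lemma~\ref{L:dualityformulas}(ii) translates this to $\rhom{\Aop}{C}{h^+} = 0$ for all $C$; specialising $C = A$ yields $h^+ = 0$.

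The principal obstacle is (i) $\Rightarrow$ (iii). Combining parts (ii) and (iii) of Lemma~\ref{L:dualityformulas}, for every compact $T \in \DD^c(\AMod)$ one obtains a natural isomorphism $\Hom{\DAMod}{T}{h^+} \cong \Hom{\DModA}{T^*}{h}^+$, which vanishes by (i) since $T^*$ is compact in $\DD^c(\ModA)$. Thus $h^+$ is itself a phantom morphism in $\DAMod$. To pass from \emph{phantom} to \emph{zero}, I would invoke the classical fact that $Z^+$ is pure-injective in $\DAMod$ (the derived analogue of the module-theoretic statement that character modules are pure-injective): the pure triangle $Z^+[-1] \to B \to X[1]^+ \xrightarrow{h^+} Z^+$ in which $h^+$ plays the role of the connecting morphism then splits because its first term is pure-injective, forcing $h^+ = 0$. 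This appeal to pure-injectivity of character complexes in the derived category is the main technical ingredient and the step most reliant on background results outside the paper's direct calculations.
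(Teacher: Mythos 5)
Your proposal replaces the paper's cycle (i) $\Rightarrow$ (ii) $\Rightarrow$ (iii) $\Rightarrow$ (i) — whose hard step is (i) $\Rightarrow$ (ii), proved by approximating a K-flat resolution via a filtered colimit of perfect complexes (Christensen--Holm) and exchanging $\Hom{\DModk}{P}{-}$ with the colimit via \cite[Proposition~5.4]{SSV} — with direct reductions of each condition to the single criterion $h^+ = 0$. That idea is appealing because it would bypass the only technically heavy implication; the identification of (iii) with $h^+=0$ and your proof of (iii) $\Rightarrow$ (i) via Lemma~\ref{L:dualityformulas}(iv) are fine.

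The remaining implications each have a genuine problem. In (iii) $\Rightarrow$ (ii) you assert that $(h \otimes_A^\LL C)^+ = 0$ forces $h \otimes_A^\LL C = 0$; this is false, since $(-)^+$ is \emph{not} faithful on $\DModk$ — by the $k$-linear instance of this very lemma its kernel on morphisms is the ideal of phantom maps, which is nonzero in general. What you actually get is that the dual of the tensored triangle splits, and then the already-proved (iii) $\Rightarrow$ (i) over $k$ gives pureness, which is all (ii) requires; the conclusion survives, the stated intermediate claim does not. In (ii) $\Rightarrow$ (iii), the natural isomorphism from Lemma~\ref{L:dualityformulas}(ii) is an isomorphism in $\DModk$, so from $\rhom{\Aop}{C}{h^+} = 0$ in $\DModk$ for all $C$ and specialising $C = A$ you only learn that $h^+$ dies after forgetting the $A$-structure, and the forgetful functor $\DAMod\to\DModk$ is not faithful — you cannot conclude $h^+=0$ in $\DAMod$. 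The fix is exactly the paper's move: pass to $H^0$, obtain $\Hom{\DAMod}{C}{h^+}=0$ for every $C\in\DAMod$, and take $C = X^+[1]$ (Yoneda). You also invoke the $k$-linear case of (i) $\Rightarrow$ (iii) as something to be "handled separately because $W$ is pure-injective in $\DModk$", but do not say how, and it is not independently easier: it needs the same cogenerator/Yoneda argument. Finally, in (i) $\Rightarrow$ (iii) your computation showing $h^+$ is phantom is correct, but to pass from phantom to zero you appeal to pure-injectivity of $Z^+$ in $\DAMod$; within the paper this is precisely Corollary~\ref{dualsarepureinjective}, which is proved \emph{from} the present lemma via Lemma~\ref{L:evaluation}, so the argument is circular relative to the paper's development. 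The fact is available from the literature (Krause, Beligiannis), and citing it externally would repair the logic, but then that dependency must be made explicit and cannot be attributed to the paper.
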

\begin{proof}
	$(i) \Rightarrow (ii):$ 
Let first $C$ be a compact object of $\DAMod$. By Lemma~\ref{L:dualityformulas}(iii), the triangle from condition (ii) is isomorphic to the triangle
$$\rhom{A}{C^*}{X} \rightarrow \rhom{A}{C^*}{Y} \rightarrow \rhom{A}{C^*}{Z} \rightarrow \rhom{A}{C^*}{X[1]}.$$
If $P$ is a compact object in $\DModk$, then we have an adjunction $\Hom{\DModk}{P}{\rhom{A}{C^*}{-}} \cong \Hom{\DModA}{P \otimes_k^\mathbf{L} C^*}{-}$, and $P \otimes_k^\mathbf{L} C^*$ is a compact object of $\DModA$. Therefore, the purity of the triangle above in $\DModk$ follows from (i). 
For a general object $C \in \DAMod$ we argue as follows. Let $F$ be a cochain complex quasi-isomorphic to $C$ such that $F$ is K-flat and consists of flat left $A$-modules, such a complex exists by \cite{Sp}. By \cite[Theorem 1.1]{CH}, $F$ can be written as a direct limit $F = \varinjlim_{i \in I} F_i$ of perfect complexes in the category of cochain complexes of left $A$-modules. If $P$ is a compact object in $\DModk$ and $U$ is an object in $\DModA$, we have natural isomorphisms
$$\Hom{\DModk}{P}{U \otimes_A F} \cong \Hom{\DModk}{P}{U \otimes_A \varinjlim_{i \in I}F_i} \cong $$
$$ \cong \Hom{\DModk}{P}{\varinjlim_{i \in I}(U \otimes_A F_i)} \cong \varinjlim_{i \in I}\Hom{\DModk}{P}{U \otimes_A F_i}.$$
For the last isomorphism, consult \cite[Proposition 5.4]{SSV}. Therefore, $\Hom{\DModk}{P}{-}$ sends the triangle from (ii) to a direct limit of exact sequences of abelian groups, and thus to an exact sequence as required.

	$(ii) \Rightarrow (iii):$ To establish the splitting of the triangle in condition (iii), it is enough to show that the map $h^+$ is zero. For any object $C \in \DAMod$, Lemma~\ref{L:dualityformulas}(ii) yields natural equivalences of morphisms $\Hom{\DAMod}{C}{h^+} \cong H^0(h \otimes_A^\mathbf{L} C)^+$ in $\Modk$. Using (ii), the map $h \otimes_A^\mathbf{L} C$ is a phantom map in $\DModk$, and therefore $H^0(h \otimes_A^\mathbf{L} C)$ is a zero map. Consequently, we obtain that the map $\Hom{\DAMod}{C}{h^+}$ is zero for any $C \in \DAMod$. Put differently, $h^+$ is sent to a zero map in the functor category $[\DAMod^{\text{op}},\Modk]$ via the Yoneda embedding $\DAMod \xhookrightarrow{} [\DAMod^{\text{op}},\Modk]$, and therefore $h^+$ must be zero in $\Hom{\DAMod}{X^+[1]}{Z^+}$ as well, as desired.
	
	$(iii) \Rightarrow (i):$ Let $S$ be a compact object in $\DModA$. By Lemma~\ref{L:dualityformulas}(iv), we have a natural equivalence of maps $\Hom{\DModA}{S}{h}^+ \cong H^0(S \otimes_A ^\mathbf{L} h^+)$ in $\Modk$. Since the triangle $Z^+ \rightarrow Y^+ \rightarrow X^+ \rightarrow Z^+[1]$ is split, so is the triangle $S \otimes_A^\mathbf{L} Z^+ \rightarrow S \otimes_A^\mathbf{L}Y^+ \rightarrow S \otimes_A^\mathbf{L}X^+ \xrightarrow{S \otimes_A^\mathbf{L}h^+[1]} S \otimes_A^\mathbf{L}Z^+[1]$, and therefore, using Lemma~\ref{L:dualityformulas}(iv), the map $\Hom{\DModA}{S}{h}^+ \cong H^0(S \otimes_A ^\mathbf{L} h^+)$ is a zero morphism. Consequently by duality, the map $\Hom{\DModA}{S}{h}$ is zero for any compact object $S \in \DModA$. Therefore, $h$ is a phantom map in $\DModA$, showing that the triangle $X \rightarrow Y \rightarrow Z \xrightarrow{h} X[1]$ is pure.
\end{proof}
	 It will be useful in the sequel to note that the usual evaluation map has a derived counterpart enjoying similar properties. Let $X$ be an object of $\DAMod$. Using the Hom-$\otimes$ adjunction twice, we get the following natural isomorphisms:
$$\Hom{\DAMod}{X}{X^{++}} \cong \Hom{\DModk}{X^+ \otimes_A^\LL X}{W} \cong \End_{\DModA}{(X^+)}.$$
We let $\epsilon_X \in \Hom{\DAMod}{X}{X^{++}}$ be the map which corresponds to the identity of the ring $\End_{\DModA}{(X^+)}$ under the isomorphism above and call $\epsilon_X: X \rightarrow X^{++}$ the \emph{evaluation morphism}.

	Now let $P$ be a K-projective quasi-isomorphic replacement of $X$ in $\DAMod$. For any acyclic complex $N$ of right $A$-modules, we have the adjunction isomorphism $\Hom{\KModA}{N}{\Hom{k}{P}{W}} \cong \Hom{\KModk}{N \otimes_A P}{W}$. Since $P$ is also $K$-flat, and $\Hom{k}{-}{W}$ is exact, we infer that $P^+$ is a K-injective complex of right $A$-modules. Then there is a commutative square of natural isomorphisms
$$
\begin{CD}
	\Hom{\DAMod}{X}{X^{++}} @> \cong >> \Hom{\KAMod}{P}{P^{++}} \\
		@V \cong VV @V \cong VV \\
	\End_{\DModA}(X^+) @> \cong >> \End_{\KModA}(P^+) \\
\end{CD}
$$
The evaluation map $\epsilon_X \in \Hom{\DAMod}{X}{X^{++}}$ corresponds to a map $\epsilon_P \in \Hom{\KAMod}{P}{P^{++}}$ which is mapped  to the identity in $\End_{\KModA}(P^+)$ under the vertical arrow. It follows that the homotopy class $\epsilon_P: P \rightarrow P^{++}$ can be represented by the standard evaluation map given by the rule $\epsilon_P^n(x)(f) = f(x)$ for each $x \in P^n$ and $f \in (P^n)^+$ in each coordinate $n \in \Z$.

\begin{lemma}\label{L:evaluation}
The evaluation map $\epsilon_X: X \rightarrow X^{++}$ realizes $X$ as a pure subobject of $X^{++}$ for any $X \in \DModA$.
\end{lemma}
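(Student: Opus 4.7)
The plan is to reduce the claim to a formal triangle identity via Lemma~\ref{L:puritychar}(iii). First, complete $\epsilon_X$ to a distinguished triangle
\[
X \xrightarrow{\epsilon_X} X^{++} \to Z \to X[1]
\]
in $\DModA$. By Lemma~\ref{L:puritychar}(iii), this triangle is pure if and only if the dualized triangle
\[
Z^+ \to (X^{++})^+ \xrightarrow{(\epsilon_X)^+} X^+ \to Z^+[1]
\]
splits in $\DAMod$. A distinguished triangle splits as soon as any of its morphisms admits a one-sided inverse, so it suffices to exhibit a section $s\colon X^+ \to (X^{++})^+$ of $(\epsilon_X)^+$.

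The natural candidate is the evaluation morphism $\epsilon_{X^+}\colon X^+ \to (X^+)^{++} = (X^{++})^+$ for $X^+ \in \DAMod$, constructed by the same procedure described in the excerpt with the roles of left and right modules exchanged. Thus the problem reduces to proving the triangle identity
\[
(\epsilon_X)^+ \circ \epsilon_{X^+} = \mathrm{id}_{X^+}.
\]
Conceptually this is simply the unit-counit identity for the contravariant adjoint pair obtained from Lemma~\ref{L:dualityformulas}(ii) by applying $H^0$, namely $\Hom{\DAMod}{Y}{X^+} \cong \Hom{\DModA}{X}{Y^+}$ naturally in $X \in \DModA$ and $Y \in \DAMod$, for which $\epsilon$ is the unit on both sides.

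To verify the identity explicitly, choose a K-projective resolution $P$ of $X$. As recalled immediately before the statement, $\epsilon_P\colon P \to P^{++}$ is represented in each degree by the classical formula $\epsilon_P^n(x)(f) = f(x)$. A direct chain-level computation then gives, for any $f \in (P^n)^+$ and $x \in P^n$,
\[
\bigl((\epsilon_P)^+ \circ \epsilon_{P^+}\bigr)^n(f)(x) = \epsilon_{P^+}^n(f)\bigl(\epsilon_P^n(x)\bigr) = \epsilon_P^n(x)(f) = f(x),
\]
so the composite equals $\mathrm{id}_{P^+}$ on the nose at the level of complexes. Transporting along the commutative square of natural isomorphisms displayed just before the statement, this identity in $\K(\AMod)$ descends to $(\epsilon_X)^+ \circ \epsilon_{X^+} = \mathrm{id}_{X^+}$ in $\DAMod$, which is what was needed.

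The main obstacle is bookkeeping rather than mathematical content: one has to check that the canonical identifications $(X^{++})^+ \cong (X^+)^{++}$, the naturality of the construction of $\epsilon$, and the passage through K-projective resolutions fit together coherently, so that the entirely formal chain-level calculation legitimately descends to the derived category. Once this is in place, the argument is essentially automatic.
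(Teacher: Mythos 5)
Your proof is correct and follows essentially the same route as the paper: reduce via Lemma~\ref{L:puritychar}(iii) to showing that $\epsilon_X^+$ is a split epimorphism, pass to a K-projective replacement $P$ where $\epsilon_P$ is the chain-level evaluation map, and exhibit $\epsilon_{P^+}$ as the section via the identity $f(x)=f(x)$. The extra framing through the unit--counit identity of the contravariant adjunction is a nice conceptual gloss but does not change the argument.
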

\begin{proof}
	As in the discussion above, we can replace $\epsilon_X$ by a map $\epsilon_P: P \rightarrow P^{++}$, where $P$ is a quasi-isomorphic K-projective replacement of $X$ such that $\epsilon_P$ is the usual evaluation map of cochain complexes. By Lemma~\ref{L:puritychar}, it is enough to check that $\epsilon_P^+: P^{+++} \rightarrow P^+$ is a split epimorphism in $\DAMod$. But it is straightforward to check that the evaluation map $\epsilon_{P^+}: P^+ \rightarrow P^{+++}$ of cochain complexes provides the desired section of $\epsilon_P^+$.
\end{proof}

\begin{corollary}\label{dualsarepureinjective}
Let $C$ be an object in $\DModA$. Then the following conditions are equivalent:
\begin{enumerate}
	\item[(i)] $C$  is pure-injective in $\DModA$,
	\item[(ii)] the evaluation map $\epsilon_C: C \rightarrow C^{++}$ is a split monomorphism,
	\item[(iii)] $C$ is isomorphic to a direct summand of $D^+$ for some $D \in \DAMod$.
\end{enumerate}
\end{corollary}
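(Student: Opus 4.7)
The plan is to prove the cycle (i) $\Rightarrow$ (ii) $\Rightarrow$ (iii) $\Rightarrow$ (i), relying heavily on the duality formulas of Lemma~\ref{L:dualityformulas}, the purity characterization of Lemma~\ref{L:puritychar}, and the fact that $\epsilon_C$ realizes a pure subobject (Lemma~\ref{L:evaluation}).

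For (i) $\Rightarrow$ (ii), I would simply combine Lemma~\ref{L:evaluation} with the definition of pure-injectivity: the lemma realizes $\epsilon_C: C \rightarrow C^{++}$ as a pure monomorphism, fitting into a pure triangle $C \xrightarrow{\epsilon_C} C^{++} \rightarrow Z \rightarrow C[1]$; since $C$ is pure-injective, this triangle splits, which is precisely a splitting of $\epsilon_C$.

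For (ii) $\Rightarrow$ (iii), I would take $D = C^+ \in \DAMod$, so that $D^+ = C^{++}$, and observe that a split monomorphism $\epsilon_C$ exhibits $C$ as a direct summand of $D^+$.

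For (iii) $\Rightarrow$ (i), since pure-injectivity is closed under direct summands, it suffices to show that $D^+$ is pure-injective for every $D \in \DAMod$. Fix a pure triangle $X \to Y \to Z \xrightarrow{h} X[1]$ in $\DModA$; we must verify $\Hom{\DModA}{h}{D^+}=0$. The key calculation uses Lemma~\ref{L:dualityformulas}(ii) to rewrite
$$\rhom{A}{h}{D^+} \cong (h \otimes_A^{\LL} D)^+,$$
and then Lemma~\ref{L:dualityformulas}(i) to identify $H^0$ of the right hand side with $H^0(h \otimes_A^{\LL} D)^+$. By Lemma~\ref{L:puritychar}(ii), the map $h \otimes_A^{\LL} D$ is phantom in $\DModk$, so applying $\Hom{\DModk}{k}{-} \cong H^0(-)$ (since $k$ is compact in $\DModk$) yields $H^0(h \otimes_A^{\LL} D) = 0$. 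Dualizing gives $\Hom{\DModA}{h}{D^+} = H^0(h \otimes_A^{\LL} D)^+ = 0$, which is the required vanishing of the connecting map, so the Hom-sequence is short exact and $D^+$ is pure-injective.

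The main obstacle is in the (iii) $\Rightarrow$ (i) step, where one must carefully chain together the tensor-hom duality of Lemma~\ref{L:dualityformulas}(ii), the cohomology compatibility of (i), and the reformulation of pure triangles in $\DModk$ via compact objects. The other two implications are essentially formal consequences of Lemma~\ref{L:evaluation} and the choice $D=C^+$.
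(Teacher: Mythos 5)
Your proof is correct and follows essentially the same route as the paper: (i)$\Rightarrow$(ii) via Lemma~\ref{L:evaluation}, (ii)$\Rightarrow$(iii) with $D=C^+$, and for (iii)$\Rightarrow$(i) a reduction to $C=D^+$ followed by Lemma~\ref{L:puritychar}(ii) and the duality formulas to transport the purity of $-\otimes_A^{\LL}D$ in $\DModk$ back to the vanishing of $\Hom{\DModA}{h}{D^+}$. The only micro-point is that short-exactness of the Hom-sequence requires the connecting maps to vanish in every degree, not just $H^0$ of $h$ itself; this is immediate by running your argument on the (still pure) shifts of the triangle, whereas the paper sidesteps it by observing that the whole triangle $\rhom{A}{-}{D^+}$ applied to the pure triangle is split, so that all cohomology sequences are (split) exact at once.
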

\begin{proof}
	$(i) \Rightarrow (ii):$ 
	This follows by combining (i) with Lemma~\ref{L:evaluation}.

	$(ii) \Rightarrow (iii):$ 
	Obvious.

	$(iii) \Rightarrow  (i):$ 
	By passing to direct summands, it is sufficient to establish the implication in the case when $C = D^+$. 
	We show that $\Hom{\DModA}{-}{C}$ sends pure triangles in $\DModA$ to short exact sequences in $\Modk$.
	We apply Lemma~\ref{L:puritychar}: if 
\begin{equation}\label{pt} X \rightarrow Y \rightarrow Z \xrightarrow{h} X[1]\end{equation} is  a pure triangle in $\DModA$, then  the triangle $X \otimes_A^\mathbf{L} D \rightarrow Y \otimes_A^\mathbf{L} D\rightarrow Z \otimes_A^\mathbf{L} D\xrightarrow{h \otimes_A^\mathbf{L} D} X[1] \otimes_A^\mathbf{L} D$ is pure in $\DModk$, and the functor  $\Hom{k}{-}{W}$ takes it to a split triangle in $\DModk$. But by adjunction the latter is isomorphic to the  triangle obtained by applying the functor $\rhom{A}{-}{D^+}$ on the triangle (\ref{pt}). Passing to cohomology yields that $0 \rightarrow \Hom{\DModA}{Z}{D^+}\rightarrow \Hom{\DModA}{Y}{D^+}\rightarrow \Hom{\DModA}{X}{D^+} \rightarrow 0$ is exact, establishing (i).
\end{proof}

\subsection{Torsion pairs and TTF triples}\label{tpandttf}
  A pair $(\Ucal,\Vcal)$ of full additive subcategories of $\DModA$ is a \emph{torsion pair} provided that the following conditions hold:
\begin{enumerate}
	\item both $\Ucal$ and $\Vcal$ are closed under direct summands,
	\item $\Hom{\DModA}{\Ucal}{\Vcal} = 0$, and
	\item for any object $X \in \DModA$ there is a triangle
			$$U \rightarrow X \rightarrow V \rightarrow U[1]$$
			in $\DModA$ with $U \in \Ucal$ and $V \in \Vcal$.
\end{enumerate}
Then $\Ucal$ is called the \emph{aisle} and $\Vcal$ the \emph{coaisle} of the torsion pair. 

A torsion pair $(\Ucal,\Vcal)$ is a \emph{t-structure} (resp.~\emph{co-t-structure}) provided that $\Ucal[1] \subseteq \Ucal$ (resp.~$\Ucal[-1] \subseteq \Ucal$). 
When  $(\Ucal,\Vcal)$ is a {t-structure}, the triangle from condition $(3)$ is determined uniquely up to a unique isomorphism, and it is always isomorphic to a triangle of form
	$$\tau_{\Ucal}(X) \rightarrow X \rightarrow \tau_{\Vcal}(X) \rightarrow \tau_{\Ucal}(X)[1],$$
	where $\tau_{\Ucal}$ (resp.~$\tau_{\Vcal}$) is the right (resp.~left) adjoint to the inclusion $\Ucal \subseteq \DModA$ (resp. $\Vcal \subseteq \DModA$). 
	
	\begin{example}\label{tstrexamples} 
		(i) For each $n \in \Z$, consider the  following subcategories of $\DModA$:
			$$\DD^{\leq n} = \{X \in \DModA \mid H^k(X) = 0 \text{ for all } k > n\}\text{, and}$$
			$$\DD^{> n} = \{X \in \DModA \mid H^k(X) = 0 \text{ for all } k \leq n\}.$$
			In the text, we will freely use the alternative symbols $\DD^{<n} = \DD^{\leq n-1}$ and $\DD^{\geq n} = \DD^{>n-1}$. We omit a reference to the ground ring $A$ which should always be clear from the context. It is well-known that the pair $(\DD^{\leq n},\DD^{>n})$ forms a t-structure in $\DModA$. The functors $\tau_{\DD^{\leq n}}$ and $\tau_{\DD^{>n}}$ are represented by the soft truncations $\tau^{\leq n}$ and $\tau^{>n}$ of cochain complexes.
			
			(ii) The following construction goes back to \cite{HRS}. Let $(\Tcal,\Fcal)$ be a torsion pair in $\ModA$, that is, a pair of full subcategories of $\ModA$ such that $\Tcal = {}^{\perp_0} \Fcal$ and $\Fcal = \Tcal^{\perp_0}$. Then there is a t-structure $(\Ucal,\Vcal)$ in $\DModA$, where
			$$\Ucal = \{X \in \DD^{\leq 0} \mid H^0(X) \in \Tcal\}\text{, and}$$
			$$\Vcal = \{X \in \DD^{\geq 0} \mid H^0(X) \in \Fcal\},$$
			called the \emph{Happel-Reiten-Smal\o~} t-structure. This construction yields an injective map from the class of torsion pairs in $\ModA$ to the class of t-structures in $\DModA$.
			\end{example}
	
	A \emph{TTF (torsion-torsion-free) triple} 
	is a triple $(\Ucal,\Vcal,\Wcal)$ formed by two adjacent torsion pairs $(\Ucal,\Vcal)$ and $(\Vcal,\Wcal)$. It is called 
\emph{suspended} (respectively, \emph{cosuspended})  if $\Vcal[1] \subseteq \Vcal$ (respectively, $\Vcal[-1] \subseteq \Vcal$).

   In other words, a suspended TTF triple is a triple $(\Ucal,\Vcal,\Wcal)$ such that $(\Ucal,\Vcal)$ is a co-t-structure, and $(\Vcal,\Wcal)$ is a t-structure, while a cosuspended TTF triple is a triple $(\Ucal,\Vcal,\Wcal)$ such that $(\Ucal,\Vcal)$ is a t-structure, and $(\Vcal,\Wcal)$ is a co-t-structure. 
   
 A    t-structure $(\Ucal,\Vcal)$ is said to be  \emph{stable} if $\Ucal$ and $\Vcal$ are triangulated subcategories of $\DModA$, or equivalently, $\Ucal$ is
a  \emph{localizing} subcategory of $\DModA$, i.e.~a full triangulated subcategory which is closed under  coproducts. If $\Vcal$ is also closed under coproducts, then $\Ucal$ is said to be \emph{smashing}. By \cite[Corollary 2.4]{NS}, every smashing subcategory $\Ucal$ gives rise  to a  TTF triple  $(\Ucal,\Vcal,\Wcal)$ which is \emph{stable}, i.e.~suspended and cosuspended.

 We say  that a torsion pair  $(\Ucal,\Vcal)$ is {\it non-degenerate} 
 if  it satisfies $$\bigcap_{n \in \Z}\Ucal[n] = 0 = \bigcap_{n \in \Z}\Vcal[n].$$ A suspended TTF triple $(\Ucal,\Vcal,\Wcal)$  will be called \emph{non-degenerate} if so is the t-structure $(\Vcal,\Wcal)$, and it will be called \emph{intermediate} 
 if   there are integers $m \leq n$ such that $\DD^{\leq m}\subseteq \Vcal \subseteq \DD^{\leq n}$.  
 A cosuspended TTF triple $(\Ucal,\Vcal,\Wcal)$  will be called \emph{non-degenerate} if so is the t-structure $(\Ucal,\Vcal)$, and it will be called  \emph{cointermediate} if there are integers $m \leq n$ such that $\DD^{\leq m} \subseteq \Ucal \subseteq \DD^{\leq n}$.

Moreover, we say that a torsion pair $(\Ucal,\Vcal)$, or a   TTF triple  $(\Ucal,\Vcal,\Wcal)$, 	is
\begin{itemize}
\item  {\it generated by a set} of objects $\Scal$ of $\DModA$ if $\Vcal = \Scal^{\perp_0}$,
\item  {\it compactly generated} if it is generated by a set of compact objects of $\DModA$,
\item {\it homotopically smashing} if $\Vcal$ is closed under directed homotopy colimits (see \cite{SSV},  \cite[Appendix]{HN}).		
		\end{itemize}
	Note that any set of compact objects $\Scal$ in $\DD^c(\ModA)$ generates a TTF triple $({}^{\perp_0}(\Scal^{\perp_0}),\Scal^{\perp_0},(\Scal^{\perp_0})^{\perp_0})$ by \cite[Theorem 4.3]{AI} and \cite[Theorem 3.11]{PS}.	
Furthermore, it is shown in \cite{AJS} that every set of objects $\Scal$ in $\DModA$ gives rise to a stable t-structure $(\mathsf{Loc}(\Scal), \Scal^{\perp_{\mathbb Z}})$ which is 
generated by the objects of $\Scal$ and all their shifts. Here $\Loc(\Scal)$ denotes the smallest  localizing subcategory of $\DModA$ containing $\Scal$. 

Of course, every compactly generated t-structure has a definable coaisle, and by \cite[Theorem 3.11]{L} every  t-structure with a definable coaisle is homotopically smashing. 

Recall from \cite[Fundamental Correspondence]{Kr3} that any definable subcategory of $\DModA$ is uniquely determined by the (indecomposable) pure-injective objects it contains. In what follows, we show that any t-structure with a definable coaisle is also determined by pure-injectives as a torsion pair. A torsion pair $(\Ucal,\Vcal)$ is said to be
\begin{itemize}
\item \emph{cogenerated} by a subcategory $\Scal$ of $\DModA$ if $\Ucal = {}^{\perp_0}\Scal$.
\end{itemize}
\begin{proposition}\label{P:cogenPI}
	Let $A$ be a ring and $(\Ucal,\Vcal)$ a t-structure in $\DModA$ such that the coaisle $\Vcal$ is definable. Then $(\Ucal,\Vcal)$ is cogenerated by a set of pure-injective objects of $\DModA$.
\end{proposition}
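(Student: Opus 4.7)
The plan is to exhibit a set $\Scal$ of pure-injective objects of $\Vcal$ for which $\Ucal = {}^{\perp_0}\Scal$. For $\Scal$ I would take a representative set of pure-injectives in $\Vcal$ large enough that every $V \in \Vcal$ admits a pure monomorphism into some object of $\Scal$. That $\Vcal$ contains enough pure-injectives is already in hand: for any $V \in \Vcal$ the evaluation map $\epsilon_V \colon V \to V^{++}$ is a pure monomorphism (Lemma~\ref{L:evaluation}), $V^{++}$ is pure-injective (Corollary~\ref{dualsarepureinjective}), and $V^{++} \in \Vcal$ by Remark~\ref{R:dualdef}. The inclusion $\Ucal \subseteq {}^{\perp_0}\Scal$ is automatic from $\Scal \subseteq \Vcal$. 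For the converse, given $X \in {}^{\perp_0}\Scal$, consider the defining triangle $U \to X \to V \to U[1]$ of the t-structure, with $U \in \Ucal$, $V \in \Vcal$. It suffices to show that $f \colon X \to V$ vanishes: the rotated triangle $V[-1] \to U \to X \xrightarrow{0} V$ then splits, exhibiting $X$ as a direct summand of $U$, hence $X \in \Ucal$ by closure under summands.

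To prove $f = 0$, I would iteratively build a tower $V_0 \leftarrow V_1 \leftarrow V_2 \leftarrow \cdots$ in $\Vcal$ starting from $V_0 = V$: at step $n$, choose $P_n \in \Scal$ and a pure monomorphism $V_n \to P_n$, and define $V_{n+1}$ by the triangle $V_{n+1} \to V_n \to P_n \to V_{n+1}[1]$. Three features of the tower are immediate: (i) $V_{n+1} \in \Vcal$, because the cone $V_{n+1}[1]$ is a pure quotient of $P_n$ and hence lies in $\Vcal$ (definability), while $\Vcal[-1] \subseteq \Vcal$; (ii) the transition $V_{n+1} \to V_n$ is a phantom map, being the desuspension of the phantom connecting map of a pure triangle (Lemma~\ref{L:puritychar}); (iii) the map $f$ lifts compatibly to $f_n \colon X \to V_n$ for every $n$, since $\Hom{\DModA}{X}{P_n} = 0$ makes each $\Hom{\DModA}{X}{V_{n+1}} \to \Hom{\DModA}{X}{V_n}$ surjective.

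The crucial step is then that $\holim_n V_n = 0$. For any compact object $S \in \DD^c(\ModA)$, phantomness of $V_{n+1} \to V_n$ forces $\Hom{\DModA}{S}{V_{n+1}} \to \Hom{\DModA}{S}{V_n}$ to be zero, so both $\varprojlim_n \Hom{\DModA}{S}{V_n}$ and $\varprojlim_n^1 \Hom{\DModA}{S}{V_n[-1]}$ vanish; by the Milnor triangle this yields $\Hom{\DModA}{S}{\holim_n V_n} = 0$ for every compact $S$. Since $\DModA$ is compactly generated---already the shifts $A[n]$ compute the cohomology of any object---this forces $\holim_n V_n = 0$. Applying the Milnor triangle to $\Hom{\DModA}{X}{-}$ now gives $\varprojlim_n \Hom{\DModA}{X}{V_n} = 0$, so the compatible system $(f_n)$ must be the zero system, whence $f = f_0 = 0$ as required. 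The main obstacle I anticipate is the set-theoretic step of assembling $\Scal$ as a \emph{set} sufficient to accommodate the pure monomorphisms $V_n \to P_n$ across the whole tower (one needs the collection of pure-injectives in $\Vcal$ to be essentially small, e.g.\ via a Ziegler-spectrum type bound); the rest is a fairly direct interplay between definability, the t-structure property $\Vcal[-1] \subseteq \Vcal$, and compact generation.
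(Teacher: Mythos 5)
Your core argument is correct, and it is a genuinely different route from the paper's. The paper does not argue directly with approximation triangles at all: it splits the t-structure via the stable TTF triple $(\Lcal,\Bcal,\Kcal)$ on $\Bcal=\bigcap_n\Vcal[n]$ (using \cite[Corollary 6.6 and Proposition 6.11]{LV}), obtains a single pure-injective $C$ cogenerating the ``non-degenerate part'', realizes $\Lcal$ as $\Ker(-\otimes_A^{\LL}B)$ for a homological epimorphism of dg algebras, and hence writes $\Ucal={}^{\perp_0}\{B^+[n],\,C[m]\}$ -- an explicit (countable) set whose members are pure-injective by Corollary~\ref{dualsarepureinjective}. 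Your phantom-tower argument avoids all of that machinery: the verifications that $V_{n+1}=Z_n[-1]\in\Vcal$ (pure quotients stay in a definable class, then apply $\Vcal[-1]\subseteq\Vcal$), that the transition maps are phantom, that $f$ lifts along the tower, and that $\holim_nV_n=0$ kills the compatible system via the Milnor sequence, are all sound. What your approach buys is a self-contained, elementary proof of the \emph{qualitative} statement that $\Ucal={}^{\perp_0}(\Vcal\cap\mathrm{PInj})$; what the paper's approach buys is an explicit small cogenerating set together with structural information ($B^+$ and $C$) that is reused elsewhere.

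The one genuine gap is exactly the one you flag: as written you only obtain cogeneration by the \emph{class} of pure-injectives in $\Vcal$, whereas the proposition asserts a \emph{set}, and your towers start at $V=\tau_{\Vcal}(X)$ for $X$ ranging over a proper class, so no cardinality bound on the required $P_n$ is available from the construction itself. Taking $P_n=V_n^{++}$ does not help (still a proper class), and it is not enough to observe that $\Vcal$ is determined by its indecomposable pure-injectives. The cleanest repair within your framework is to take $\Scal=\Vcal\cap\mathrm{Zg}(\DModA)$, which is a set, and to use Krause's Fundamental Correspondence in the form that every object of a definable subcategory is a pure subobject of a product of indecomposable pure-injectives lying in it; then replace each $P_n$ by such a product, which is still pure-injective, still in $\Vcal$ (closure under products), and still satisfies $\Hom{\DModA}{X}{P_n}=\prod\Hom{\DModA}{X}{-}=0$ for $X\in{}^{\perp_0}\Scal$. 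Without importing some such smallness input (or the \cite{LV} results the paper uses), the proof as it stands establishes a weaker statement than the one claimed -- albeit one that would in fact suffice for the paper's application in Theorem~\ref{T:aislecohomology}.
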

\begin{proof}
	Let $\Bcal = \bigcap_{n \in \Z}\Vcal[n]$. By \cite[Corollary 6.6]{LV}, there is a stable TTF triple $(\Lcal,\Bcal,\Kcal)$ in $\DModA$. Furthermore, by \cite[Proposition 6.11]{LV}, there is a pure-injective object $C \in \DModA$ such that the t-structure $(\Ucal',\Vcal')$ in $\DModA$ 	defined by $\Ucal'={}^{\perp_{\leq 0}}C$ (which exists by \cite[Corollary 5.4]{LV}) satisfies $\Vcal'=\Vcal \cap \Kcal$ and  $\Vcal = \Bcal \star \Vcal'$, where $\Bcal \star \Vcal'$ denotes the subcategory consisting of all objects $X \in \DModA$ fitting into a triangle $B \rightarrow X \rightarrow V' \rightarrow B[1]$ with $B \in \Bcal$ and $V' \in \Vcal'$. Then $\Ucal =\Ucal'\cap\Lcal$. By \cite[\S 4, Theorem]{NS} or \cite[Proposition 2.5]{BS}, there is an object $B \in \DD(A - A)$ in the derived category of $A$-$A$-bimodules (in fact, $B$ comes from a suitable homological epimorphism $A \rightarrow B$ of dg algebras) such that $\Lcal = \Ker (- \otimes_A^\LL B)$. Then by adjunction we have $\Lcal = \Ker \rhom{A}{-}{B^+}$. In conclusion, we have $\Ucal = {}^{\perp_{0}}\Scal$, where $\Scal = \{B^+[n], C[m] \mid n \in \mathbb{Z}, m \leq 0\}$. Finally, we know that $C$ is pure-injective, and $B^+$ is a pure-injective object in $\DModA$ by Corollary~\ref{dualsarepureinjective}(iii).
\end{proof}

\subsection{Silting and cosilting TTF triples}
We say that an object $T \in \DModA$ is \emph{silting} if the pair $(T^{\perp_{>0}},T^{\perp_{\leq 0}})$ is a t-structure, which we call the \emph{silting t-structure induced by} $T$. Two silting objects $T, T' \in \DModA$ are \emph{equivalent} if they induce the same t-structure.

In view of the duality results which will be established in Subsection~\ref{dual}, it is convenient to  consider the dual notion of a  cosilting object in the unbounded derived category $\DAMod$ of \emph{left} $A$-modules over a ring $A$.  An object $C \in \DAMod$ is {\it cosilting} if the pair $({}^{\perp_{\leq 0}}C,{}^{\perp_{>0}}C)$ forms a t-structure, which we call the \emph{cosilting t-structure induced by} $C$. Two cosilting objects  are \emph{equivalent} if they induce the same t-structure. 

A silting object  is called a {\it bounded silting complex} if it belongs to $\K^b(\ProjA)$, and a cosilting object is called a {\it bounded cosilting complex} if it belongs to $\K^b(\AInj)$.  

Silting t-structures can be characterized as t-structures fitting into certain TTF triples. 

\begin{theorem}\emph{(\cite[Theorem 4.11]{objects}, \cite[Theorem 4.6]{AMV1})}\label{T:siltingTTF} Let  $(\Vcal,\Wcal)$ be a  t-structure in $\DModA$.

(1)	 $(\Vcal,\Wcal)$ is silting
	if and only if it extends to a TTF triple $(\Ucal,\Vcal,\Wcal)$ which is non-degenerate, suspended, and  generated by a set of objects of $\DModA$.

	(2)   $(\Vcal,\Wcal)$ is induced by a bounded silting complex if and only if it extends to an intermediate suspended TTF triple $(\Ucal,\Vcal,\Wcal)$.
\end{theorem}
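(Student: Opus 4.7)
The plan is to establish both directions of (1) and (2) by exhibiting explicit correspondences between silting objects and TTF triples. For the forward direction of (1), given a silting object $T \in \DModA$ with $\Vcal = T^{\perp_{>0}}$ and $\Wcal = T^{\perp_{\leq 0}}$, I would set $\Ucal := {}^{\perp_0}\Vcal$. The main task is producing, for each $X \in \DModA$, an approximation triangle $U \to X \to V \to U[1]$ with $U \in \Ucal$ and $V \in \Vcal$; this can be built by iteratively attaching coproducts of shifts $T[-i]$ (for $i \geq 0$) to $X$ and passing to the homotopy colimit, in the spirit of a transfinite Bongartz-style construction, where the vanishing of $\Hom{\DModA}{T[-i]}{\Vcal[j]}$ for $i, j \geq 0$ ensures the output lies in $\Ucal$. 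The suspension property is inherited from $\Vcal[1]\subseteq\Vcal$, and generation by a set is witnessed by $\Scal = \{T[-i] \mid i \geq 1\}$, for which $\Scal^{\perp_0} = T^{\perp_{>0}} = \Vcal$. Non-degeneracy reduces to $\bigcap_{n \in \Z}\Vcal[n] = \bigcap_{n \in \Z}\Wcal[n] = T^{\perp_{\Z}} \subseteq \Vcal \cap \Wcal = 0$, using the Hom-vanishing $\Hom{\DModA}{\Vcal}{\Wcal} = 0$ of the t-structure.

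For the backward direction of (1), assume $(\Ucal, \Vcal, \Wcal)$ is a non-degenerate suspended TTF triple with $\Vcal = \Scal^{\perp_0}$ for a set $\Scal$. Following the approach of \cite{objects} and \cite{AMV1}, a silting object $T$ is to be assembled from the generators in $\Scal$ together with the torsion-theoretic data of the TTF triple, by replacing each $S \in \Scal$ with its $\Ucal$-approximation (with respect to the co-t-structure $(\Ucal,\Vcal)$) after a suitable cohomological shift. The key properties to verify are that $T^{\perp_{>0}} = \Vcal$, which follows from the description $\Vcal = \Scal^{\perp_0}$ together with the vanishing of $\Hom{\DModA}{\Ucal}{\Vcal}$, and $T^{\perp_{\leq 0}} = \Wcal$, for which the non-degeneracy hypothesis is essential to ensure no pathological orthogonality. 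That this $T$ is silting then follows from the fact that $(T^{\perp_{>0}}, T^{\perp_{\leq 0}}) = (\Vcal, \Wcal)$ was arranged to hold by construction.

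For part (2), the intermediate condition $\DD^{\leq m} \subseteq \Vcal \subseteq \DD^{\leq n}$ is to be matched with $T$ being representable in $\K^b(\ProjA)$. In the forward direction, a bounded silting complex $T$ concentrated in cohomological degrees $[a,b]$ yields vanishing estimates for $\Hom{\DModA}{T}{X[i]}$ in terms of the cohomology range of $X$, giving the sandwich for suitable $m, n$. In the backward direction, the intermediate condition on the TTF triple makes the approximations appearing in the construction from part (1) stabilize in bounded cohomological degree, so the candidate $T$ can be realized as a complex in $\K^b(\ProjA)$.

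The hardest step is expected to be the backward direction of (1): beyond exhibiting a candidate $T$, one must verify that its positive and non-positive Hom-perpendicular classes recover $(\Vcal, \Wcal)$ on the nose. Non-degeneracy is essential in pinning down the coaisle $\Wcal$ (otherwise $T^{\perp_\Z}$ could be larger than $0$), while the assumption that the TTF triple is generated by a \emph{set} is what keeps the resulting $T$ from degenerating into a proper class. For part (2), the intermediate condition both enables the construction and constrains it so that the resulting $T$ indeed lands in $\K^b(\ProjA)$.
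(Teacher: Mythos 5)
A preliminary remark: the paper does not prove this theorem at all --- it is imported verbatim from \cite[Theorem 4.11]{objects} and \cite[Theorem 4.6]{AMV1} --- so there is no internal proof to compare yours against; I can only judge the proposal on its own terms. Your forward direction of (1) is sound and is the standard argument: $\Scal=\{T[-i]\mid i\ge 1\}$ indeed satisfies $\Scal^{\perp_0}=T^{\perp_{>0}}=\Vcal$, the computation $\bigcap_{n}\Vcal[n]=\bigcap_{n}\Wcal[n]=T^{\perp_{\Z}}\subseteq\Vcal\cap\Wcal=0$ is correct, and the Milnor-colimit/Bongartz construction of the $({}^{\perp_0}\Vcal,\Vcal)$-approximation triangles can be made to work (one must control the $\varprojlim^1$ terms, since $T$ need not be compact, but this is routine given that $\Vcal$ is closed under coproducts and positive shifts).

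The genuine gap is in the backward direction of (1), which you rightly identify as the crux but do not actually prove. Saying that $T^{\perp_{>0}}=\Vcal$ and $T^{\perp_{\le 0}}=\Wcal$ ``was arranged to hold by construction'' is circular: the only inclusion that comes for free is $\Vcal\subseteq T^{\perp_{>0}}$ (valid as soon as $T\in{}^{\perp_{>0}}\Vcal$), whereas the reverse inclusion and, above all, the existence of approximation triangles making $(T^{\perp_{>0}},T^{\perp_{\le 0}})$ a t-structure are exactly the content of the theorem. The known proof (Nicol\'as--Saor\'in--Zvonareva, as packaged in the cited survey) must first replace $\Scal$ by a partial silting set sitting in the co-heart $\Ucal[1]\cap\Vcal$ --- your ``$\Ucal$-approximation after a suitable shift'' is the right first move --- and then use closure of $\Vcal$ under coproducts together with non-degeneracy to show the resulting object detects $\Vcal$ and is a weak generator; none of this verification appears in your sketch. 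A second, smaller gap sits in the backward direction of (2): you propose to run the construction from (1), but that construction takes ``generated by a set'' as an input, and an intermediate suspended TTF triple is not assumed to be generated by a set. You need the result of Marks--Vit\'oria (quoted in the paper immediately after the theorem) that intermediate suspended TTF triples are automatically compactly generated, or else an independent construction of the bounded complex.
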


It is proved in \cite[Theorem 3.6 and Proposition 3.10]{MV} that every intermediate suspended TTF triple in  $\DModA$ is compactly generated, and that every bounded
cosilting complex is pure-injective, which in particular means that the induced t-structure is homotopically smashing. More generally, by \cite[Theorem 4.6]{L}, a t-structure $(\Ucal,\Vcal)$ is induced by a pure-injective cosilting object if and only if it is non-degenerate and homotopically smashing, and in this case the coaisle $\Vcal$ is automatically definable in $\DAMod$. Combining this with a result from \cite{LV} one obtains the following characterization.

 \begin{theorem}\emph{(\cite[Proposition 5.7]{LV},\cite[Theorem 4.6]{L}, \cite[Theorem 6.13]{objects}, \cite[Proposition 3.10 and Theorem 3.13]{MV})}\label{T:cosiltingTTF} Let  $(\Ucal,\Vcal)$ be a  t-structure in $\DAMod$.
	
(1) If the coaisle $\Vcal$ is definable then the t-structure extends to a cosuspended TTF triple $(\Ucal,\Vcal,\Wcal)$.

(2)	 $(\Ucal,\Vcal)$ is induced by a pure-injective cosilting object
	if and only if there is a non-degenerate cosuspended TTF triple $(\Ucal,\Vcal,\Wcal)$ which is homotopically smashing.
		
(3)   $(\Ucal,\Vcal)$ is induced by a bounded cosilting complex
	if and only if there is a cointermediate cosuspended TTF triple $(\Ucal,\Vcal,\Wcal)$. In particular, $(\Ucal,\Vcal)$ is then homotopically smashing. 
	\end{theorem}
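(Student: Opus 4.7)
The statement is a compilation of existing results in the literature, so the plan is to assemble the cited ingredients in a coherent order rather than to reprove them from scratch. The key external inputs are: the characterization of cosilting t-structures as non-degenerate homotopically smashing t-structures in [L, Theorem 4.6]; the automatic definability of coaisles of homotopically smashing t-structures (and, conversely, homotopic smashingness of t-structures with definable coaisle [L, Theorem 3.11]); the production of a right adjoint of the inclusion of a definable coaisle into $\DAMod$ [LV, Proposition 5.7]; and the equivalence between cointermediate cosuspended TTF triples and bounded cosilting complexes [MV, Proposition 3.10 and Theorem 3.13].

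For (1), I would invoke [LV, Proposition 5.7] directly: a definable subcategory of $\DAMod$ is closed under products and directed homotopy colimits, which combined with being the coaisle of a t-structure is exactly the input needed to extract the right adjoint of the inclusion $\Vcal \hookrightarrow \DAMod$. This produces the torsion pair $(\Vcal,\Wcal)$ with $\Wcal = \Vcal^{\perp_0}$, and the triple $(\Ucal,\Vcal,\Wcal)$ is automatically cosuspended because the coaisle of any t-structure satisfies $\Vcal[-1] \subseteq \Vcal$ (orthogonality to $\Ucal[1]\subseteq\Ucal$).

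For (2), the forward direction proceeds as follows: if $(\Ucal,\Vcal)$ is induced by a pure-injective cosilting object $C$, then [L, Theorem 4.6] gives both the non-degeneracy and the definability of $\Vcal$; part (1) then supplies $\Wcal$, and homotopic smashingness of the resulting TTF triple follows from [L, Theorem 3.11] applied to the definable coaisle. Conversely, a non-degenerate homotopically smashing cosuspended TTF triple $(\Ucal,\Vcal,\Wcal)$ in particular gives a non-degenerate homotopically smashing t-structure $(\Ucal,\Vcal)$, and one final appeal to [L, Theorem 4.6] produces the pure-injective cosilting object inducing it. For (3), I would apply [MV, Theorem 3.13] for the forward direction (a bounded cosilting complex generates a cointermediate cosuspended TTF triple) and [MV, Proposition 3.10] for the converse (a cointermediate cosuspended TTF triple is induced by a bounded cosilting complex); the postscript that such a t-structure is then homotopically smashing follows from part (2), since bounded cosilting complexes are pure-injective by [MV, Proposition 3.10].

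The main obstacle, conceptually, is not the assembly itself but keeping track of which condition drives which step: non-degeneracy versus homotopic smashingness versus definability versus boundedness all appear in closely related but distinct equivalences, and the temptation is to conflate them. The cleanest presentation is to treat (1) as a purely structural statement about extracting $\Wcal$ from a definable $\Vcal$, then to use (1) as a black box inside (2) once [L, Theorem 4.6] supplies definability from pure-injectivity, and finally to treat (3) as a refinement of (2) under the extra bounded hypothesis from [MV].
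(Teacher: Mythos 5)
Your assembly is correct and matches the paper's own treatment: the theorem is stated there as a compilation of the cited results, with the surrounding discussion making exactly the points you make (definability of the coaisle from [LV] and [L, Theorem 4.6] for parts (1)–(2), and pure-injectivity of bounded cosilting complexes from [MV] for the homotopically smashing postscript in (3)). No gaps.
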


We now restrict to compactly generated silting and cosilting t-structures, for which we will establish a duality result in Subsection~\ref{dual}.
\begin{definition}\label{finite and cofinite type}
	We say that a silting object $T$ in $\DModA$ is of \emph{finite type} if the induced silting TTF triple is compactly generated. Similarly, we call a cosilting object $C$ in $\DAMod$ of \emph{cofinite type} provided that it induces a compactly generated TTF triple.
\end{definition}
By \cite[Theorem 3.6 and Example 3.12]{MV}, any bounded silting complex is of finite type, but bounded cosilting complexes need not be of cofinite type. However, it is shown in \cite{HCG,HN}  that 
 every  pure-injective cosilting object over a commutative noetherian ring is of cofinite type, and we are going to see in Theorem~\ref{hereditaryutc} that the same holds true over hereditary rings.

 As an immediate consequence of Theorems~\ref{T:siltingTTF} and ~\ref{T:cosiltingTTF}, we obtain the following  characterization of TTF triples induced by (co)silting objects of (co)finite type.
\begin{corollary}\label{L:finitecofinite}
	Let $A$ be a ring. Then:
\begin{enumerate}
	\item[(i)] A compactly generated TTF triple  is silting if and only if it is suspended and non-degenerate.
	\item[(ii)] A compactly generated TTF triple  is cosilting if and only if it is cosuspended and non-degenerate.
\end{enumerate}
\end{corollary}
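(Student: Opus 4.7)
The plan is to derive both parts as immediate consequences of Theorems~\ref{T:siltingTTF} and~\ref{T:cosiltingTTF}, exploiting three simple observations: (a) a TTF triple $(\Ucal,\Vcal,\Wcal)$ is uniquely determined by any one of its two torsion pairs, since $\Ucal={}^{\perp_0}\Vcal$ and $\Wcal=\Vcal^{\perp_0}$; (b) every compactly generated TTF triple is, in particular, generated by a set; and (c) by Lemma~\ref{L:dualdefinable} the coaisle of a compactly generated TTF triple is definable, hence the TTF triple is homotopically smashing by the cited \cite[Theorem 3.11]{L}.

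For part~(i), suppose $(\Ucal,\Vcal,\Wcal)$ is a compactly generated TTF triple. By Definition~\ref{finite and cofinite type}, it is silting precisely when the t-structure $(\Vcal,\Wcal)$ is induced by a silting object. By Theorem~\ref{T:siltingTTF}(1), this is equivalent to $(\Vcal,\Wcal)$ extending to some non-degenerate, suspended TTF triple generated by a set of objects. By observation~(a) such an extension must coincide with our fixed $(\Ucal,\Vcal,\Wcal)$, which by observation~(b) is already generated by a set. Thus, under the standing compact-generation hypothesis, the equivalence collapses to requiring precisely that $(\Ucal,\Vcal,\Wcal)$ be non-degenerate and suspended.

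Part~(ii) is entirely parallel. By Definition~\ref{finite and cofinite type}, a compactly generated TTF triple is cosilting precisely when the t-structure $(\Ucal,\Vcal)$ is induced by a pure-injective cosilting object. By Theorem~\ref{T:cosiltingTTF}(2), this in turn is equivalent to the existence of a non-degenerate, cosuspended, homotopically smashing TTF triple extending $(\Ucal,\Vcal)$. By~(a) the only candidate is $(\Ucal,\Vcal,\Wcal)$ itself, and by~(c) it is automatically homotopically smashing. Hence the requirement reduces to the TTF triple being non-degenerate and cosuspended.

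There is no real mathematical obstacle here: the corollary is a bookkeeping step that distills the ``abstract'' characterizations in Theorems~\ref{T:siltingTTF} and~\ref{T:cosiltingTTF} into the sharper setting of compactly generated TTF triples, using only that compact generation automatically supplies both ``generated by a set'' and the homotopic smashing property via definability of the coaisle. The mildly delicate point worth stating in the proof is the uniqueness of the TTF extension noted in~(a), which ensures that the TTF triple produced by the abstract existence theorems is necessarily the one we started with.
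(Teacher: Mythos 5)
Your proof is correct and matches the paper's intent exactly: the paper states this corollary as an immediate consequence of Theorems~\ref{T:siltingTTF} and~\ref{T:cosiltingTTF} without further argument, and your write-up just supplies the (correct) bookkeeping — uniqueness of the TTF extension of a torsion pair, compact generation implying ``generated by a set'', and definability of the coaisle implying the homotopically smashing condition.
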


\subsection{Silting  and cosilting modules}
We now focus on bounded silting or cosilting complexes of length two. The modules that occur as zero cohomologies of such complexes can be defined as follows. For details we refer to \cite{AMV1,BPop}.

\begin{definition}\label{def p silting}
An $A$-module $T$ is said to be
\begin{itemize}
\item \emph{silting} if it admits  a projective presentation $ P\stackrel{\sigma}{\longrightarrow} Q\to T\to 0$ such that  $\Gen{T}$ coincides with the class 
$$\mathcal{D}_\sigma=\{X\in \ModA \mid \Hom{A}{\sigma}{X}\ \text{is surjective} \};$$
\item \emph{tilting}  if $\Gen{T}=T^{\perp_1}$, or equivalently, $T$ is silting and the map $\sigma$ is injective.
\end{itemize}
 The torsion class $\Gen{T}$ generated by a silting (respectively, tilting) module $T$ is called a \emph{silting} (respectively, \emph{tilting}) \emph{class}. Two silting modules $T$ and $T'$ are said to be \emph{equivalent} if they generate the same silting class, which amounts to having the same additive closure $\Add{T}=\Add{T^\prime}$.
  
 \emph{Cosilting} and \emph{cotilting} modules and classes are defined dually in terms of the classes $\Cogen C$ and $$\Ccal_\omega=\{X\in \ModA \mid \Hom{A}{X}{\omega}\ \text{is surjective} \},$$ where $\omega$  is an injective copresentation of the module $C$.
  Two cosilting modules $C,C'$ are equivalent if they cogenerate the same cosilting class, which amounts to the equality $\Prod{C}=\Prod{C^\prime}$. 
\end{definition} 

 Here is the connection between silting modules, objects, and t-structures:
 if $T$ is a silting module in $\ModA$ with respect to a projective presentation $\sigma$, then $\sigma$ is a silting object (of finite type) in $\DModA$, and the t-structure induced by $\sigma$ is the Happel-Reiten-Smal\o~ t-structure (cf.~Example~\ref{tstrexamples}(ii)) arising from the torsion pair $(\Gen T,T^{\perp_0})$.  
 Similarly, if  $C$ is a cosilting module with respect to an injective copresentation $\omega$, then  $\omega$ is a cosilting object in $\DModA$, and the t-structure induced by $\omega$
arises from the torsion pair $({}^{\perp_0}C, \Cogen C)$.  We say that $C$, or the cosilting  class $\Cogen C$, is {\em of cofinite type} if so is the cosilting object $\omega$.

Silting and cosilting classes are \emph{definable} subcategories of $\ModA$, i.e.~they are closed under direct products,  direct limits, and pure submodules. In fact, the cosilting classes are precisely the definable torsion-free classes, cf.~\cite[Corollary 3.9]{abundance}.

 Given a definable subcategory $\Dcal$ of $\ModA$, we denote by $\Dcal^\vee$ its dual definable subcategory in $\AMod$ determined by the property that a right $A$ module $M$ lies in $\Dcal$ if and only if $M^+$ lies in $\Dcal^\vee$. 

  \begin{proposition}\cite[Proposition 3.5]{AH}\label{dually} 
Let $\sigma$ be a map between projective right $A$-modules.
If $\Dcal_\sigma$ is a silting class in $\ModA$, then $\Dcal_\sigma\,^\vee =\Ccal_{\sigma^+}$ is a cosilting class in $\AMod$.	 
Furthermore, if $T_A$ is a silting module with respect to $\sigma$, then ${}_AT^+$ is a cosilting module  with respect to   $\sigma^+$.
		\end{proposition}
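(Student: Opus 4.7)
The plan is to establish the chain of equalities $\Cogen{T^+} = \Dcal_\sigma^\vee = \Ccal_{\sigma^+}$, which will simultaneously show that $T^+$ is a cosilting left $A$-module with injective copresentation $\sigma^+$ and that $\Ccal_{\sigma^+}$ is a cosilting class in $\AMod$. First, I would observe that $\sigma^+:Q^+\to P^+$ is indeed an injective copresentation of $T^+$: applying the exact functor $(-)^+$ to the presentation $P \xrightarrow{\sigma} Q \to T \to 0$ yields $0 \to T^+ \to Q^+ \xrightarrow{\sigma^+} P^+$, and the modules $P^+, Q^+$ are injective as duals of projectives.

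The equality $\Ccal_{\sigma^+} = \Dcal_\sigma^\vee$ is a direct computation via Hom--tensor adjunction. For any $Y \in \AMod$ there are natural isomorphisms in $\Modk$
$$\Hom{A}{Y}{\sigma^+} \cong (\sigma \otimes_A Y)^+ \cong \Hom{A}{\sigma}{Y^+}.$$
Since $W$ is an injective cogenerator, the functor $(-)^+$ is exact and faithful, so surjectivity of any of the three maps is equivalent to injectivity of $\sigma \otimes_A Y$, and hence equivalent to surjectivity of each of the others. In particular, $\Hom{A}{Y}{\sigma^+}$ is surjective if and only if $Y^+ \in \Dcal_\sigma$, which gives $\Ccal_{\sigma^+} = \{Y \in \AMod : Y^+ \in \Dcal_\sigma\} = \Dcal_\sigma^\vee$.

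The remaining identity $\Cogen{T^+} = \Dcal_\sigma^\vee$ is the substantive part. For the inclusion $\supseteq$, given $Y \in \Dcal_\sigma^\vee$, i.e.\ $Y^+ \in \Gen T$, one picks an epimorphism $T^{(J)} \twoheadrightarrow Y^+$ for some set $J$; dualizing yields a monomorphism $Y^{++} \hookrightarrow (T^+)^J$, and precomposing with the canonical pure embedding $Y \hookrightarrow Y^{++}$ realises $Y$ as a submodule of a power of $T^+$. For the reverse inclusion, closure of $\Dcal_\sigma^\vee$ under submodules and products (valid for any definable torsion-free class in $\AMod$) reduces the problem to the single assertion $T^+ \in \Dcal_\sigma^\vee$, i.e.\ $T^{++} \in \Gen T$.

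This last point is where the main obstacle lies, and it is precisely the place where the definability of the silting class $\Gen T$ is used. The cleanest justification is that any definable subcategory of $\ModA$ is closed under the double-dualization functor $M \mapsto M^{++}$, since $M$ and $M^{++}$ satisfy the same positive-primitive formulas in the language of $A$-modules. Once $T^+ \in \Dcal_\sigma^\vee$ is secured, the whole argument closes and $T^+$ is cosilting with respect to $\sigma^+$, proving both assertions of the proposition.
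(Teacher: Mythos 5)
The paper offers no proof of this statement; it is quoted verbatim from \cite[Proposition 3.5]{AH}, whose argument runs through the identification of cosilting classes with definable torsion-free classes. Your reconstruction is essentially sound and pleasantly self-contained: the adjunction computation giving $\Ccal_{\sigma^+}=\Dcal_\sigma^\vee$ is exactly right, the inclusion $\Dcal_\sigma^\vee\subseteq\Cogen{T^+}$ via dualizing an epimorphism $T^{(J)}\twoheadrightarrow Y^+$ and composing with the pure embedding $Y\hookrightarrow Y^{++}$ is correct, and the fact that definable subcategories are closed under $M\mapsto M^{++}$ is standard (it follows from applying the defining property of the dual definable subcategory twice, using $(\Dcal^\vee)^\vee=\Dcal$; in fact you could shortcut this step entirely, since $T\in\Dcal_\sigma$ already gives $T^+\in\Dcal_\sigma^\vee$ directly from the property ``$M\in\Dcal$ iff $M^+\in\Dcal^\vee$'').

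The one step you should not leave as stated is the closure of $\Dcal_\sigma^\vee$ under arbitrary (not just pure) submodules. You justify it by calling $\Dcal_\sigma^\vee$ a ``definable torsion-free class,'' but whether it is a torsion-free class — i.e.\ closed under submodules — is precisely part of what the proposition asserts, so as written this is circular. The repair is immediate from what you have already proved: since $\Dcal_\sigma^\vee=\Ccal_{\sigma^+}$ and $\sigma^+\colon Q^+\to P^+$ is a map between \emph{injective} left $A$-modules, any $f\colon Y'\to P^+$ from a submodule $Y'\subseteq Y$ with $Y\in\Ccal_{\sigma^+}$ extends to $Y$, factors there through $\sigma^+$, and restricts back, so $\Ccal_{\sigma^+}$ is closed under submodules; closure under products comes from definability of the dual class. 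With that substitution the argument closes, and it is arguably more elementary than the route in \cite{AH}, at the cost of invoking the standard model-theoretic facts about dual definable subcategories rather than the classification of cosilting classes.
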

		\begin{corollary}\cite[Corollary 3.6]{AH}\label{duality} The assignment $\Dcal\mapsto \Dcal^\vee$ defines a bijection between silting classes  in $\ModA$ and cosilting classes of cofinite type in $\lmod{A}$. \end{corollary}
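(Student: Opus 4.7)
The assignment lands in cosilting classes by Proposition~\ref{dually}; the additional task for well-definedness is to show it lands in those of \emph{cofinite type}. For this I would invoke the well-known theorem that every silting module is of finite type, so the silting t-structure induced by $\sigma$ in $\DModA$ is compactly generated by some set $\Scal\subseteq\DD^c(\ModA)$. By Lemma~\ref{L:dualdefinable}, the dual definable subcategory of $\Scal^{\perp_0}\subseteq\DModA$ is exactly $(\Scal^*)^{\perp_0}\subseteq\DAMod$; combined with the cohomology formula of Lemma~\ref{L:dualityformulas}(i) and the description of the Happel--Reiten--Smal{\o} t-structure, this identifies the cosilting TTF triple induced by $\sigma^+$ as compactly generated by the dual compact set $\Scal^*$, so it is of cofinite type.

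Injectivity is immediate from Remark~\ref{R:dualdef}: dual definable subcategories determine each other uniquely, so $\Dcal_1^\vee = \Dcal_2^\vee$ forces $\Dcal_1 = \Dcal_2$. For surjectivity, let $\Ccal$ be a cosilting class of cofinite type in $\AMod$, coming from a cosilting object $\omega$ whose associated TTF triple in $\DAMod$ is compactly generated by some set $\Scal'\subseteq\DD^c(\AMod)$. I would then dualize via the compact equivalence $(-)^*\colon\DD^c(\AMod)\cong\DD^c(\ModA)$, setting $\Scal=(\Scal')^*\subseteq\DD^c(\ModA)$; this generates a compactly generated TTF triple in $\DModA$. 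By Corollary~\ref{L:finitecofinite}, this TTF triple is silting provided it is suspended and non-degenerate, properties which transfer from the cosuspended non-degeneracy on the left-module side through Lemma~\ref{L:dualdefinable} and the grading behaviour of $(-)^*$.

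To conclude that the silting TTF triple in $\DModA$ arises from a silting \emph{module}, i.e.\ that the associated t-structure is the Happel--Reiten--Smal{\o} t-structure of some torsion pair in $\ModA$, the degree bounds witnessing that $\omega$ is a two-term cosilting complex must be shown to propagate through the duality to analogous bounds on the right-module side. This uses Theorems~\ref{T:siltingTTF}(2) and~\ref{T:cosiltingTTF}(3), which characterize bounded silting and cosilting complexes via intermediate and cointermediate TTF triples, together with Lemma~\ref{L:dualityformulas}(i). The main obstacle is exactly this bookkeeping of shifts through the duality, which reverses cohomological grading, and the verification that ``two-term cosilting of cofinite type'' matches ``two-term silting'' once one passes between $\DD^c(\AMod)$ and $\DD^c(\ModA)$. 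Once verified, the silting class $\Dcal$ extracted from the resulting two-term complex automatically satisfies $\Dcal^\vee=\Ccal$ by the uniqueness of dual definable subcategories, completing the bijection.
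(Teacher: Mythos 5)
The paper does not actually prove this corollary — it is quoted verbatim from \cite{AH} — so there is no internal proof to compare against. Your strategy of deriving it from the derived-category duality machinery of Section 3 is legitimate (and non-circular, since Theorem~\ref{P:silttocosilt} and its ingredients do not rely on Corollary~\ref{duality}); it is essentially the two-term shadow of the second part of Theorem~\ref{P:silttocosilt}. Your well-definedness and injectivity arguments are fine.

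The genuine problem is in your surjectivity step, where you assert that non-degeneracy of the suspended TTF triple in $\DModA$ ``transfers from the cosuspended non-degeneracy on the left-module side''. This is exactly the direction that is \emph{not} available: Lemma~\ref{L:nondegengeneral}(i) handles the condition $\bigcap_{n}\Vcal[n]=0$ in both directions, but for the other half of non-degeneracy, part (ii) only gives $\bigcap_{n}\Wcal'[n]=0\Rightarrow\bigcap_{n}\Ucal[n]=0$, i.e.\ from the right-module side to the left-module side; the converse is precisely the ``inadequacy'' the paper flags as the reason Theorem~\ref{P:silttocosilt} yields only an injection in general (it is repaired only for commutative noetherian rings in Lemma~\ref{L:nondegenspecial}, and for hereditary rings via Theorem~\ref{hereditaryutc}). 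The correct route in your situation is to bypass non-degeneracy entirely via boundedness: the injective copresentation $\omega$ of a cosilting module is a bounded (two-term) cosilting complex, so its TTF triple is cointermediate by Theorem~\ref{T:cosiltingTTF}(3); cointermediacy \emph{does} transfer to intermediacy under $\Psi^{-1}$ by Lemma~\ref{L:nondegengeneral}(iii); and an intermediate suspended TTF triple is automatically non-degenerate and induced by a bounded silting complex by Theorem~\ref{T:siltingTTF}(2). Tracking the bounds (the aisle sits between $\DD^{\leq -1}$ and $\DD^{\leq 0}$ on the cosilting side, hence $\DD^{\leq -1}\subseteq\Vcal'\subseteq\DD^{\leq 0}$ on the silting side) shows the resulting silting complex is two-term, i.e.\ a silting module whose class $\Dcal$ satisfies $\Dcal^\vee=\Ccal$. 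This is the ``bookkeeping'' you flag but leave unexecuted; it is routine once you replace the direct non-degeneracy transfer by the (co)intermediacy transfer, which your argument needs anyway.
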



\section{Compactly generated TTF triples}
 In this section, we develop some tools to study (co)silting objects of (co)finite type. First of all, in subsection~\ref{dual} we show that the silting-cosilting duality discussed above on the level of module categories extends to derived categories. In subsection~\ref{commnoether}, we  see that over a commutative noetherian ring this duality yields a bijection between (equivalence classes of) silting objects of finite type and pure-injective cosilting objects.  The classification of compactly generated t-structures from \cite{AJSa} then provides a  parametrization of these classes  by certain chains of specialization-closed subsets of the Zariski spectrum. An essential ingredient for these results is the fact that all pure-injective cosilting  objects over a commutative noetherian ring are of cofinite type, which is proved in \cite{HN}. In subsection~\ref{hereditary} we establish the same result for hereditary rings.
 
 \subsection{Silting-cosilting duality}\label{dual}
Our aim in this subsection is to prove a triangulated version of Corollary~\ref{duality}.
\begin{theorem}\label{T:TTFduality}(cf.~\cite[Theorem 3.11]{PS})
	 There is a 1-1 correspondence
		$$\left \{ \begin{tabular}{ccc} \text{Compactly generated} \\ \text{TTF triples} \\ \text{in $\DModA$} \end{tabular}\right \}  \stackrel{\Psi}{\longleftrightarrow}  \left \{ \begin{tabular}{ccc} \text{Compactly generated} \\ \text{TTF triples} \\ \text{in $\DAMod$} \end{tabular}\right \}.$$
	The correspondence $\Psi$ is given as follows: to the TTF triple in $\DModA$ generated by  a set of compact objects   $\Scal$ in $\DD^c(\ModA)$  we assign the TTF triple in $\DAMod$ generated by the set $\Scal^*$.
\end{theorem}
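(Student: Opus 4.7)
The plan is to reduce the bijection to the duality of definable subcategories already established in Lemma~\ref{L:dualdefinable}, combined with the self-duality $\Scal^{**} \cong \Scal$ coming from the contravariant equivalence $(-)^*\colon \DD^c(\ModA)\cong\DD^c(\AMod)$. The first observation is that any compactly generated TTF triple $(\Ucal,\Vcal,\Wcal)$ is determined by its middle class, since $\Ucal={}^{\perp_0}\Vcal$ and $\Wcal=\Vcal^{\perp_0}$. So verifying that $\Psi$ is a well-defined bijection amounts to checking that the assignment $\Scal^{\perp_0} \mapsto (\Scal^*)^{\perp_0}$ on coaisles is a well-defined bijection between the collections of coaisles of compactly generated TTF triples on the two sides.

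For well-definedness and injectivity, suppose $\Scal_1,\Scal_2\subseteq\DD^c(\ModA)$ satisfy $\Scal_1^{\perp_0}=\Scal_2^{\perp_0}$. By Lemma~\ref{L:dualdefinable}, the pairs $(\Scal_i^{\perp_0},(\Scal_i^*)^{\perp_0})$ are dual definable subcategories for $i=1,2$. By Remark~\ref{R:dualdef}, the dual definable subcategory of a given definable subcategory of $\DModA$ is uniquely determined by the rule $\Vcal^\vee=\{X\in\DAMod\mid X^+\in\Vcal\}$. Hence $\Scal_1^{\perp_0}=\Scal_2^{\perp_0}$ forces $(\Scal_1^*)^{\perp_0}=(\Scal_2^*)^{\perp_0}$, which simultaneously shows that $\Psi$ is well-defined and that distinct compactly generated TTF triples in $\DModA$ have distinct images under $\Psi$.

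For surjectivity, one defines the candidate inverse $\Psi'$ symmetrically: a compactly generated TTF triple in $\DAMod$ generated by $\Tcal\subseteq\DD^c(\AMod)$ is sent to the TTF triple in $\DModA$ generated by $\Tcal^*$. The same argument with the roles of $\DModA$ and $\DAMod$ exchanged shows $\Psi'$ is well-defined. Finally, since $(-)^*$ is a contravariant equivalence $\DD^c(\ModA)\cong\DD^c(\AMod)$, for any $\Scal\subseteq\DD^c(\ModA)$ one has $\Scal^{**}\cong\Scal$, and analogously on the other side; this immediately yields $\Psi'\circ\Psi=\mathrm{id}$ and $\Psi\circ\Psi'=\mathrm{id}$.

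The main substance of the argument is already isolated in Lemma~\ref{L:dualdefinable}, so the only real obstacle would be the subtlety of being sure that the duality $(-)^\vee$ between definable subcategories of $\DModA$ and $\DAMod$ is a genuine bijection — but that is precisely the content of Remark~\ref{R:dualdef}, where the identity $(\Vcal^\vee)^\vee=\Vcal$ is recorded. With that in hand, the proof becomes essentially formal.
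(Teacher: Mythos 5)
Your proposal is correct and follows essentially the same route as the paper: the key point in both is that $Y\in(\Scal^*)^{\perp_0}$ if and only if $Y^+\in\Scal^{\perp_0}$ (Lemma~\ref{L:dualdefinable}), which makes the assignment independent of the chosen generating set of compact objects. The paper's proof only records this well-definedness step and leaves the bijectivity to the evident symmetry via $\Scal^{**}\cong\Scal$, which you spell out explicitly.
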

\begin{proof}
	The only thing we need to prove is that the assignment is well-defined, that is, if $\Scal_0$ and $\Scal_1$ are two subcategories of $\DD^c(\ModA)$ such that $\Scal_0^{\perp_0} = \Scal_1^{\perp_0}$, then also $(\Scal_0^*)^{\perp_0} = (\Scal_1^*)^{\perp_0}$. For any $Y \in \DAMod$ we have by Lemma~\ref{L:dualdefinable} that: 
	$$Y \in (\Scal_0^*)^{\perp_0} \Leftrightarrow Y^+ \in \Scal_0^{\perp_0} =\Scal_1^{\perp_0} \Leftrightarrow Y \in (\Scal_1^*)^{\perp_0},$$
	establishing the claim.
\end{proof}

\begin{lemma}\label{L:nondegengeneral}
	Let  $(\Ucal',\Vcal',\Wcal')$ be a compactly generated TTF triple in $\DModA$ and $(\Ucal,\Vcal,\Wcal)$ a compactly generated TTF triple in $\DAMod$ corresponding to each other via $\Psi$. Then $(\Ucal',\Vcal',\Wcal')$ is suspended if and only if $(\Ucal,\Vcal,\Wcal)$ is cosuspended, and if that is the case, the following holds:
	\begin{enumerate}
		\item[(i)]  $\bigcap_{n \in \Z}\Vcal'[n] = 0$ if and only if $\bigcap_{n \in \Z}\Vcal[n] = 0$.
		\item[(ii)] If $\bigcap_{n \in \Z}\Wcal'[n] = 0$, then $\bigcap_{n \in \Z}\Ucal[n] = 0$.
		\item[(iii)] $(\Ucal',\Vcal',\Wcal')$ is intermediate if and only if  $(\Ucal,\Vcal,\Wcal)$ is cointermediate.
	\end{enumerate}
\end{lemma}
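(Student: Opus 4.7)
The common setup for all four statements is that, by the construction of $\Psi$ in Theorem~\ref{T:TTFduality} and by Lemma~\ref{L:dualdefinable}, the coaisles $\Vcal'\subseteq\DModA$ and $\Vcal\subseteq\DAMod$ form a dual definable pair, so $X\in\Vcal'\Leftrightarrow X^+\in\Vcal$ and $Y\in\Vcal\Leftrightarrow Y^+\in\Vcal'$. Two further ingredients will be used repeatedly: the shift identity $(X[n])^+\cong X^+[-n]$ (from the cohomology formula in Lemma~\ref{L:dualityformulas}(i)) and the observation, immediate from the characterization of definable subcategories via vanishing of morphisms on compacts, that every definable subcategory is closed under pure subobjects. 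Combined with the pure monomorphism $\epsilon_X\colon X\to X^{++}$ from Lemma~\ref{L:evaluation}, this allows us to transfer membership freely between $X$ and $X^{++}$.

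For the equivalence ``suspended $\Leftrightarrow$ cosuspended'', assume $\Vcal'[1]\subseteq\Vcal'$ and fix $Y\in\Vcal$. Then $Y^+\in\Vcal'$, hence $Y^+[1]\in\Vcal'$, hence $Y^{++}[-1]\cong(Y^+[1])^+\in\Vcal$. Since $\epsilon_Y$ is pure, so is $Y[-1]\to Y^{++}[-1]$, and closure of $\Vcal$ under pure subobjects forces $Y[-1]\in\Vcal$; the converse is symmetric. Part~(i) follows by the same mechanism: the shift identity makes $\Vcal'[n]$ and $\Vcal[-n]$ dual definable, so $X\in\bigcap_{n}\Vcal'[n]$ is equivalent to $X^+\in\bigcap_n\Vcal[n]$, and faithfulness of $(-)^+$ (Lemma~\ref{L:dualityformulas}(i)) ensures that one intersection reduces to zero precisely when the other one does.

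For~(iii), I would first observe that the dual-definable assignment $\Vcal\mapsto\Vcal^\vee$ preserves inclusions (immediate from its defining rule) and compute $(\DD^{\leq n})^\vee=\DD^{\geq -n}$ by applying Lemma~\ref{L:dualdefinable} to the compact set $\Scal=\{A[-k]\mid k>n\}$ with dual $\Scal^*=\{{}_{A}A[k]\mid k>n\}$. Dualizing $\DD^{\leq m}\subseteq\Vcal'\subseteq\DD^{\leq n}$ then yields $\DD^{\geq -m}\subseteq\Vcal\subseteq\DD^{\geq -n}$, and applying ${}^{\perp_0}(-)$ converts these coaisle bounds into the aisle bounds $\DD^{\leq -n-1}\subseteq\Ucal\subseteq\DD^{\leq -m-1}$, as required for cointermediate. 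The converse runs in reverse using $(\Vcal^\vee)^\vee=\Vcal$ together with the analogous formula $(\DD^{\geq k})^\vee=\DD^{\leq -k}$.

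The main obstacle is~(ii), because $\Wcal'=(\Vcal')^{\perp_0}$ and $\Ucal={}^{\perp_0}\Vcal$ sit on opposite sides of the dual pair $(\Vcal',\Vcal)$ and are not themselves dual definable in an obvious direct way. The plan is to convert orthogonality through the Hom--tensor duality of Lemma~\ref{L:dualityformulas}(ii): for any $Y\in\bigcap_n\Ucal[n]$, $V'\in\Vcal'$ and $n\in\Z$, that lemma combined with the shift identity yields a natural isomorphism
\[
\Hom{\DModA}{V'[n]}{Y^+}\;\cong\;\Hom{\DAMod}{Y}{(V')^+[-n]}.
\]
Since $(V')^+\in\Vcal$, the right-hand side vanishes by the defining property of $\bigcap_n\Ucal[n]$. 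Unwinding $\Wcal'=(\Vcal')^{\perp_0}$ identifies $\bigcap_n\Wcal'[n]$ with $\{X\mid\Hom{\DModA}{V'[n]}{X}=0\text{ for all }V'\in\Vcal',\,n\in\Z\}$, so $Y^+\in\bigcap_n\Wcal'[n]=0$ and thus $Y=0$ by Lemma~\ref{L:dualityformulas}(i). I expect most of the care to go into the sign and shift bookkeeping when composing these dualities.
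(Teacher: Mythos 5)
Your proof is correct and follows essentially the same route as the paper: all four parts rest on the dual-definable pair $(\Vcal',\Vcal)$, the two-way transfer $X\in\Vcal'\Leftrightarrow X^+\in\Vcal$, and, for (ii), the inclusion $(\Vcal')^+\subseteq\Vcal$ combined with the Hom--tensor duality of Lemma~\ref{L:dualityformulas}(ii), exactly as in the paper. The only (harmless) deviations are cosmetic: for shift-closure you detour through $Y^{++}$ and pure subobjects where the two-way membership transfer applied to $(Y[-1])^+\cong Y^+[1]$ suffices directly, and in (iii) you compute $(\DD^{\leq n})^\vee=\DD^{\geq -n}$ via explicit compact generators where the paper reads it off from the cohomology formula of Lemma~\ref{L:dualityformulas}(i).
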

\begin{proof}
  For any $Y \in \Vcal'$, we have $(Y[1])^+ \cong Y^+[-1]$, and for any $V \in \Vcal$ we have $(V[-1])^+ \cong V^+[1]$. By Lemma~\ref{L:dualdefinable} we infer that $\Vcal$ is closed under $[-1]$ if and only if $\Vcal'$ is closed under $[1]$. 

	$(i)$ It follows from Lemma~\ref{L:dualdefinable} that $X \in \Vcal'[n]$ if and only if $X^+ \in \Vcal[-n]$. Therefore, if we assume $\bigcap_{n \in \Z}\Vcal[n] = 0$, then for any $X \in \bigcap_{n \in \Z}\Vcal'[n]$ we have $X^+ = 0$, and therefore $X = 0$. The other implication is proved in the same way.

	$(ii)$ First note that $\bigcap_{n \in \Z}\Wcal'[n] = \Vcal'^{\perp_\Z}$, and $\bigcap_{n \in \Z}\Ucal[n] = {}^{\perp_\Z}\Vcal$. Suppose that $\Vcal'^{\perp_\Z}=0$, and pick an object $X \in \DAMod$ belonging to ${}^{\perp_\Z}\Vcal$, which means that $\rhom{\Aop}{X}{\Vcal} = 0$. By Lemma~\ref{L:dualdefinable}, we have $\Vcal'^+ \subseteq \Vcal$, and thus $\rhom{\Aop}{X}{\Vcal'^+} = 0$. By Lemma~\ref{L:dualityformulas}, this translates as $\rhom{A}{\Vcal'}{X^+} = 0$. By assumption it follows  $X^+ = 0$ in $\DModA$, and thus $X = 0$ in $\DAMod$, as desired.
		
		$(iii)$ 
	Using Lemma~\ref{L:dualdefinable} and Lemma~\ref{L:dualityformulas}(i) we infer that for all $n \in \mathbb{Z}$, the inclusion
		$\DD{}^{\geq n} \subseteq \Vcal$ implies that $ X^+ \in \Vcal$ for all $X$ in $\DD{}^{\leq -n}$, hence $\DD{}^{\leq -n} \subseteq \Vcal'$. 
	Similarly, if $m\in\mathbb{Z}$ and 
		$\Vcal \subseteq \DD{}^{\geq m}$, then for all $X$ in $\DModA$ the condition $X^+ \in \Vcal$ implies that $X \in \DD{}^{\leq -m}$, hence  $\Vcal' \subseteq \DD{}^{\leq -m}$. 
	The same argument with the r\^ oles of $\Vcal$ and $\Vcal'$ switched concludes the proof.
\end{proof}

We can now prove the desired triangulated version of the silting-cosilting duality from Corollary~\ref{duality}. Note that while we showed in Lemma~\ref{L:nondegengeneral} that the duality restricts perfectly well to (co)intermediate TTF triples, the preservation of the non-degeneracy condition is established only in one direction. This is why  the first map in the following Theorem is only shown to be an injection. However, in \S\ref{commnoether}, we will be able to remove this inadequacy in case $R$ is commutative noetherian.
\begin{theorem}\label{P:silttocosilt}
	The correspondence $\Psi$ induces an injective map
		$$\left \{ \begin{tabular}{ccc} \text{Silting objects of finite type} \\ \text{in $\DModA$, up to equivalence} \end{tabular}\right \} \, \xhookrightarrow{} \, \left \{ \begin{tabular}{ccc} \text{Cosilting objects of cofinite type} \\  \text{ in $\DAMod$, up to equivalence} \end{tabular}\right \}\qquad\qquad\quad$$
		which is given by the assignment $T \mapsto T^+$, and which 
	 restricts to a bijection
		$$\left \{ \begin{tabular}{ccc} \text{Bounded silting complexes} \\ \text{in $\DModA$, up to equivalence} \end{tabular}\right \}  \longleftrightarrow  \left \{ \begin{tabular}{ccc} \text{Bounded cosilting complexes} \\ \text{of cofinite type in $\DAMod$,} \\ \text{ up to equivalence} \end{tabular}\right \}.$$
\end{theorem}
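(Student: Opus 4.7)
The plan is to lift the TTF-level duality $\Psi$ of Theorem~\ref{T:TTFduality} to (co)silting objects using the characterization in Corollary~\ref{L:finitecofinite} together with Lemma~\ref{L:nondegengeneral}. By Corollary~\ref{L:finitecofinite}, silting objects of finite type in $\DModA$ correspond, up to equivalence, to compactly generated suspended non-degenerate TTF triples $(\Ucal',\Vcal',\Wcal')$ in $\DModA$, and analogously cosilting objects of cofinite type in $\DAMod$ correspond to compactly generated cosuspended non-degenerate TTF triples $(\Ucal,\Vcal,\Wcal)$ in $\DAMod$. Lemma~\ref{L:nondegengeneral} translates suspendedness to cosuspendedness bidirectionally and handles non-degeneracy of the coaisles bidirectionally (part (i)), but only one-sidedly on the other side via $\bigcap_n\Wcal'[n]=0 \Rightarrow \bigcap_n\Ucal[n]=0$ (part (ii)); combining these, $\Psi$ restricts to a well-defined injective map on equivalence classes, with surjectivity obstructed precisely by the absent reverse implication in (ii).

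To identify this map concretely as $T \mapsto T^+$, choose a compact generating set $\Scal \subseteq \DD^c(\ModA)$ with $\Vcal' = \Scal^{\perp_0} = T^{\perp_{>0}}$. By Lemma~\ref{L:dualdefinable}, $\Psi$ produces the coaisle $\Vcal = (\Scal^*)^{\perp_0} = \{X \in \DAMod : X^+ \in \Vcal'\}$, and the adjunction isomorphism
$$\Hom{\DModA}{T}{X^+[n]} \;\cong\; \Hom{\DAMod}{X}{T^+[n]}$$
extracted from Lemma~\ref{L:dualityformulas}(ii) yields $\Vcal = {}^{\perp_{>0}} T^+$. Moreover, $T \in \Vcal'$, so Lemma~\ref{L:dualdefinable2}(i) gives $T^+ \in \Vcal$, and $T^+$ is pure-injective by Corollary~\ref{dualsarepureinjective}(iii). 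A short diagram chase along the t-structure triangle $U \to X \to V \to U[1]$, combined with cosuspendedness of $\Vcal$ and the non-degeneracy $\bigcap_n \Vcal[n] = {}^{\perp_\Z} T^+ = 0$, then yields ${}^{\perp_{\leq 0}} T^+ = \Ucal$. Hence $T^+$ is a pure-injective cosilting object inducing precisely the t-structure produced by $\Psi$, so the assignment is $T \mapsto T^+$ up to equivalence.

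For the bounded case, Lemma~\ref{L:nondegengeneral}(iii) supplies the missing bidirectional equivalence intermediate $\Leftrightarrow$ cointermediate under $\Psi$. Combining this with Theorem~\ref{T:siltingTTF}(2) and Theorem~\ref{T:cosiltingTTF}(3), which describe bounded silting complexes (resp.\ bounded cosilting complexes of cofinite type) as exactly those attached to intermediate suspended (resp.\ cointermediate cosuspended) compactly generated TTF triples, the injection restricts to the claimed bijection. The main obstacle is the middle step: while $\Psi$ produces \emph{some} cosilting object abstractly, concretely identifying it with $T^+$ requires a careful interplay between the duality formulas of Lemma~\ref{L:dualityformulas}, the dual-definability framework of Lemma~\ref{L:dualdefinable2}, and the cosilting characterization of Theorem~\ref{T:cosiltingTTF}.
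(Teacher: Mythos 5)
Your proposal is correct and follows essentially the same route as the paper: pass through $\Psi$ on compactly generated TTF triples, use Lemma~\ref{L:nondegengeneral} together with Corollary~\ref{L:finitecofinite} to see the image triple is cosilting (with injectivity but not surjectivity, exactly because of the one-sided implication in Lemma~\ref{L:nondegengeneral}(ii)), identify the cosilting object as $T^+$ via the duality formulas and the cogenerator/non-degeneracy argument on the approximation triangle, and settle the bounded case with Lemma~\ref{L:nondegengeneral}(iii) and Theorems~\ref{T:siltingTTF}(2) and~\ref{T:cosiltingTTF}(3). No gaps worth flagging.
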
			
				
\begin{proof}
	Let $T \in \DModA$ be a silting object of finite type,  let $(\Ucal',\Vcal',\Wcal')$ be the induced compactly generated suspended non-degenerate TTF triple in $\DModA$, and let $(\Ucal,\Vcal,\Wcal)$ be its image under $\Psi$ in $\DAMod$. By Lemma~\ref{L:nondegengeneral}, we see that the compactly generated  TTF triple $(\Ucal,\Vcal,\Wcal)$ is cosuspended and non-degenerate, and thus it is cosilting by Corollary~\ref{L:finitecofinite}. Put $C=T^+$, and let us show that $C$ is a cosilting object inducing $(\Ucal,\Vcal,\Wcal)$.

	Since $T \in \Vcal'$, we have $C \in \Vcal$. For any $X \in \DAMod$, we have by Lemma~\ref{L:dualdefinable} and \ref{L:dualityformulas} that:
$$X \in {}^{\perp_{>0}}C \Leftrightarrow \rhom{\Aop}{X}{T^+} \in \DD^{\leq 0} \Leftrightarrow \rhom{A}{T}{X^+} \in \DD^{\leq 0} \Leftrightarrow$$ $$\Leftrightarrow X^+ \in \Vcal' \Leftrightarrow X \in \Vcal.$$
We showed that ${}^{\perp_{>0}}C = \Vcal$. It remains to check that $\Ucal = {}^{\perp_{\leq 0}} C$. If $X \in \DAMod$ belongs to $ \Ucal$, then $\Hom{\DAMod}{X}{\Vcal}=0$. Since 
$\Vcal$ contains $C$ and all its negative shifts, we infer that
$X$ lies in $^{\perp_{\leq 0}} C$. For the other inclusion, consider the approximation triangle with respect to the t-structure $(\Ucal,\Vcal)$:
	\begin{equation}\label{EE:eeh}\tau_\Ucal(X) \rightarrow X \rightarrow \tau_\Vcal(X) \rightarrow \tau_\Ucal(X)[1].\end{equation}
		Assume $X \in {}^{\perp_{\leq 0}} C$. By the previous consideration, we have $\tau_\Ucal(X) \in {}^{\perp_{\leq 0}} C$ and $\tau_\Vcal(X) \in {}^{\perp_{>0}}C$. By applying $\Hom{\DAMod}{-}{C}$ to (\ref{EE:eeh}), we easily see that $\tau_\Vcal(X) \in  {}^{\perp_{\mathbb{Z}}}C$. But since $T$ is a silting complex, it is a generator in $\DModA$, and it is easy to check that $C=T^+$ is then necessarily a cogenerator in $\DAMod$, implying that $\tau_\Vcal(X) = 0$, and therefore $X \in \Ucal$.		
		
Let us now show  the second statement.
	It is clear that if $T$ is a bounded silting complex in $\DModA$, then $T^+$ belongs to $\K^b(\AInj)$. Since any bounded silting object in $\DModA$ is of finite type,  the assignment $T \mapsto T^+$ 
	thus restricts as stated, and we only have to prove surjectivity. Let  $C$ be a bounded cosilting object of cofinite type in $\DAMod$, let $(\Ucal,\Vcal,\Wcal)$ be the induced cointermediate cosuspended  TTF triple, and $(\Ucal',\Vcal',\Wcal')$ its preimage under $\Psi$. Then $(\Ucal',\Vcal',\Wcal')$ is intermediate by Lemma~\ref{L:nondegengeneral}, and by Theorem~\ref{T:siltingTTF} it is induced by   a bounded silting complex $T \in \DModA$. Then $T^+$ is a bounded cosilting object of cofinite type inducing $(\Ucal,\Vcal,\Wcal)$, and the proof is complete.		
\end{proof}

\subsection{Over commutative  noetherian rings} \label{commnoether}
In this section, we focus on commutative noetherian rings and strengthen the statements of Theorem~\ref{P:silttocosilt}.  Our arguments will rely on some important classification results 
which we review below. Let us first briefly recall some terminology. Given a commutative noetherian ring $A$ and  an element $\p$  in  the prime spectrum  $\Spec(A)$, we denote by $\kappa(\p) = A_{\p}/\p A_{\p}$ the residue field of $A$ at $\p$. The  \emph{support} of a complex  of $A$-modules $X$ is defined as
$\supp X=\{\mathfrak p\in\Spec(A)\,\mid\, X\otimes^\mathbb{L}_Ak(\p)\not=0\},$ and the support $\supp\Xcal$ 
of a subcategory $\Xcal$ of $\DModA$ is the union of the supports of the objects of $\Xcal$.
Notice that for a finitely generated $A$-module $M$ this definition of support agrees with the \emph{classical support} $\Supp\, M=\{\p\in\Spec A\mid M\otimes_A A_\p\not=0\}$.

 By a well-known result due to Neeman and Hopkins,  the assignment of support yields a parametrization of  the localizing subcategories of $\DModA$ by subsets of $\Spec(A)$. Moreover, it was shown by Alonso, Jerem\'ias and Saor\'in that  the  compactly generated t-structures in $\DModA$  are parametrized by certain chains of subsets of  $\Spec(A)$.  

\begin{definition}   A subset $P$ of $\Spec(A)$ is said to be \emph{closed under specialization} if for all primes $\mathfrak{p}\subseteq \mathfrak{q}$, if $\mathfrak{p}$ lies in $P$, then so does $\mathfrak{q}$. 
A \emph{filtration by supports} of $\Spec
A$ is a  map $\Phi :\mathbb Z\longrightarrow\mathcal{P}(\Spec(A))$ such
that each $\Phi (n)$ is a subset of $\Spec(A)$ closed under
specialization and  $\Phi (n)\supseteq\Phi (n+1)$ for all
$n\in\mathbb Z$.
\end{definition}
Every filtration by supports
$\Phi$ gives rise to a  t-structure $(\mathcal{U}_\Phi
,\mathcal{V}_\Phi)$ whose aisle
$$\mathcal{U}_\Phi = \{X\in \DModA\text{: }\Supp\, H^n(X)\subseteq\Phi (n)\text{ for all }n\in\mathbb Z\}$$  coincides with the smallest suspended cocomplete subcategory of $\DModA$ which contains the set $\{A/\p[-n]\mid  n\in\mathbb Z, \, \p \in \Phi(n)\}$. Recall from \S\ref{tpandttf} that a subcategory of $\DModA$ is called localizing if it is a triangulated subcategory closed under coproducts.

\begin{theorem}\label{Neeman} Let $A$ be a commutative noetherian ring. 

(1) \cite[Theorem 2.8]{N} The assignment $\Lcal\mapsto\supp\Lcal$  defines   a 1-1 correspondence 
$$\left \{ \begin{tabular}{ccc} \text{localizing subcategories of $\DModA$}
\end{tabular}\right \}  
\longleftrightarrow  \left \{ \begin{tabular}{ccc} \text{subsets of $\Spec(A)$}
\end{tabular}\right \}.$$
The inverse map assigns to a subset 
 $P$ of $\Spec(A)$  the localizing subcategory $\Lcal_P = \Loc\{ \kappa(\p)\mid\p \in P\}$. 
 
 \smallskip
 
(2) \cite[Theorem 3.11]{AJSa} The assignment $\Phi\mapsto(\mathcal{U}_\Phi
,\mathcal{V}_\Phi)$ defines a 1-1 correspondence 
$$\left \{ \begin{tabular}{ccc} \text{compactly generated} \\ \text{t-structures in $\DModA$}
\end{tabular}\right \}  
\longleftrightarrow  \left \{ \begin{tabular}{ccc} \text{filtrations by supports of $\Spec(A)$}
\end{tabular}\right \}.$$
\end{theorem}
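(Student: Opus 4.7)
Both parts of Theorem~\ref{Neeman} are classical. I would approach them as two instances of the principle that, over a commutative noetherian ring, the localizing (respectively, compactly generated t-structural) data in $\DModA$ is faithfully recorded by the support on $\Spec(A)$.

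For part (1), my plan is to check that the two composites are the identity. The inclusion $P \subseteq \supp \Lcal_P$ is immediate from $\supp \kappa(\p) = \{\p\}$, and $\supp \Lcal_P \subseteq P$ follows because the class of complexes with support inside $P$ is closed under triangles, coproducts, shifts and summands, hence is localizing. The non-trivial equality is $\Lcal = \Lcal_{\supp \Lcal}$, where the inclusion $\Lcal \subseteq \Lcal_{\supp \Lcal}$ reduces, by the smashing localization at each prime, to the local statement that every complex of $A_\p$-modules with support contained in $\{\p A_\p\}$ lies in $\Loc\{\kappa(\p)\}$. I would establish this by a Koszul devissage: express the residue field at $\p A_\p$ as a finite iterated cofibre of Koszul complexes on powers of the maximal ideal, then decompose an arbitrary such complex through its cohomologies, each of which is $\p A_\p$-torsion and hence built from $\kappa(\p)$ by extensions and coproducts. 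This local devissage is the main obstacle.

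For part (2), I would produce explicit maps in both directions. Given a compactly generated t-structure $(\Ucal,\Vcal)$ with generating set $\Scal \subseteq \DD^c(\ModA)$, I define
$$\Phi(n) = \bigcup_{K\in \Scal} \Supp H^n(K).$$
Each $H^n(K)$ is finitely generated because $K$ is perfect, so $\Supp H^n(K)$ is Zariski-closed, making $\Phi(n)$ specialization-closed; the monotonicity $\Phi(n)\supseteq \Phi(n+1)$ comes from $\Ucal[1]\subseteq \Ucal$. Conversely, given a filtration by supports $\Phi$, I attach to each prime $\p$ a Koszul complex $K(\p)$ on a finite generating system of $\p$, which is a compact object of $\DD^c(\ModA)$ with $\Supp H^0(K(\p)) = V(\p)$. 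The set $\{K(\p)[-n] \mid \p \in \Phi(n),\, n\in \Z\}$ then generates a compactly generated t-structure, and the real content is to show that its aisle equals $\Ucal_\Phi$. Checking that the two maps are mutually inverse is the main obstacle; it amounts to comparing, for each specialization-closed $Z\subseteq \Spec(A)$, the aisle $\{X \mid \Supp H^n(X) \subseteq Z \text{ at each level } n\}$ with the one generated by Koszul complexes of primes in $Z$, which is a level-wise compact-generation version of part (1) combined with the Hopkins--Neeman thick subcategory classification in $\DD^c(\ModA)$.
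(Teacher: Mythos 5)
The paper offers no proof of Theorem~\ref{Neeman}: both parts are quoted from Neeman \cite{N} and Alonso--Jerem\'ias--Saor\'in \cite{AJSa}, so there is no internal argument to compare against. Judged on its own, your sketch follows the standard strategy of the classical proofs, but two concrete steps would fail as written.

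In part (2), your assignment $\Phi(n)=\bigcup_{K\in\Scal}\Supp H^n(K)$ depends on the chosen generating set and need not be decreasing: for $\Scal=\{A\}$, which generates the standard aisle $\DD^{\leq 0}$, you get $\Phi(0)=\Spec(A)$ and $\Phi(-1)=\emptyset$, so $\Phi(-1)\supseteq\Phi(0)$ fails. The closure of the aisle under positive shifts is not visible in the generators, so your appeal to $\Ucal[1]\subseteq\Ucal$ does not apply to this $\Phi$. The assignment actually used in \cite{AJSa} is $\Phi(n)=\bigcup_{U\in\Ucal}\Supp H^n(U)$, equivalently $\Phi(n)=\{\p\mid A/\p[-n]\in\Ucal\}$, which is well defined and monotone for the reason you give. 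In part (1), the claim that $\kappa(\p)$ is a finite iterated cofibre of Koszul complexes over $A_\p$ is false unless $A_\p$ is regular: a finite iterated cofibre of perfect complexes is perfect, and $\kappa(\p)$ is perfect over $A_\p$ only in the regular case. You do not need this claim --- for the local statement it suffices that each cohomology of a complex supported at the closed point is $\p A_\p$-torsion, hence a direct limit of finite-length modules finitely filtered by $\kappa(\p)$, and that the complex is rebuilt from its cohomologies by homotopy colimits of truncations. Finally, the reduction of $\Lcal\subseteq\Lcal_{\supp\Lcal}$ ``by the smashing localization at each prime'' hides the real core of Neeman's proof: one needs the local-to-global principle that $X$ lies in the localizing subcategory generated by its pieces supported at single primes (e.g.\ the objects $X\otimes^{\LL}_A\kappa(\p)$ for $\p\in\supp X$), which Neeman obtains from the chromatic tower over the dimension filtration of $\Spec(A)$; this is a theorem in its own right, not a formal consequence of $A\to A_\p$ being smashing.
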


There is a further ingredient we will need. We have seen in Section 2 that every compactly generated t-structure  is homotopically smashing. Over a commutative noetherian ring the converse is also true.

\begin{theorem}\label{HN} \cite{HN} If $A$ is a commutative noetherian ring, every homotopically smashing t-structure in $\DModA$ is compactly generated. In particular, every pure-injective cosilting  object is  of cofinite type.
 \end{theorem}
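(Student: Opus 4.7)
The plan is to manufacture a filtration by supports $\Phi$ from the data of the t-structure $(\Ucal,\Vcal)$ and then invoke Theorem~\ref{Neeman}(2) to identify $(\Ucal,\Vcal)$ with the compactly generated t-structure $(\Ucal_\Phi,\Vcal_\Phi)$. Since $(\Ucal,\Vcal)$ is homotopically smashing, the cited result of Laking \cite[Theorem 3.11]{L} guarantees that $\Vcal$ is a definable subcategory of $\DModA$, which is the crucial input allowing me to shuttle between derived and module level information.

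First, for each $n\in\Z$, I would consider the cohomological ``shadow''
\[
\Fcal_n \;:=\; \{M\in\ModA \mid M[-n]\in\Vcal\}.
\]
Using that $\Vcal$ is closed under extensions, products, and (being definable) under pure subobjects and directed homotopy colimits, one checks routinely that each $\Fcal_n$ is a torsion-free class in $\ModA$ closed under products and directed colimits, hence a \emph{definable} torsion-free class; the containment $\Vcal[-1]\subseteq\Vcal$ further yields $\Fcal_n\subseteq\Fcal_{n+1}$. Over a commutative noetherian ring every definable torsion-free class has the form $\{M \mid \Supp{M}\cap P=\emptyset\}$ for a unique specialization-closed subset $P\subseteq\Spec(A)$ (this may be extracted from the Ziegler-style classification of cosilting classes over commutative noetherian rings). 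Letting $\Phi(n)$ be the specialization-closed subset corresponding to $\Fcal_n$, the nesting of the $\Fcal_n$ translates into $\Phi(n)\supseteq\Phi(n+1)$, so $\Phi$ is a filtration by supports.

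The main step is to prove $\Vcal=\Vcal_\Phi$. The inclusion $\Vcal\subseteq\Vcal_\Phi$ follows by testing objects $X\in\Vcal$ against the compact generators $A/\p[-n]$ of $\Ucal_\Phi$ with $\p\in\Phi(n)$ and translating the resulting vanishing into the cohomological support condition $\Supp{H^n(X)}\cap\Phi(n)=\emptyset$ via the defining property of $\Phi$. For the reverse inclusion $\Vcal_\Phi\subseteq\Vcal$, I would realize $X\in\Vcal_\Phi$ as a directed homotopy colimit of its bounded-above truncations $\tau^{\geq -m}X$, then express each such truncation as an iterated extension of shifted cohomologies $H^n(X)[-n]$, each of which lies in $\Vcal$ by construction of $\Phi$; closure of $\Vcal$ under extensions and directed homotopy colimits would then deliver $X\in\Vcal$.

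This reconstruction is where I expect the principal obstacle, since $\Phi$ encodes only cohomology-level information whereas $\Vcal$ is a condition on entire complexes, and lifting truncation-level membership back up through the homotopy colimit relies essentially on the homotopically smashing hypothesis. The ``in particular'' clause then follows at once: if $C$ is a pure-injective cosilting object in $\DAMod$, Theorem~\ref{T:cosiltingTTF}(2) supplies a non-degenerate homotopically smashing cosuspended TTF triple $(\Ucal,\Vcal,\Wcal)$ inducing $C$; by the main assertion this TTF triple is compactly generated, and Corollary~\ref{L:finitecofinite}(ii) then upgrades it to a cosilting TTF triple of cofinite type.
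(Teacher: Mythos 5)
The paper does not actually prove this statement; it is quoted verbatim from \cite{HN}, so the only fair comparison is with the argument there, which is substantially longer and more delicate than your sketch. Your overall plan (extract an sp-filtration $\Phi$ and identify $(\Ucal,\Vcal)$ with $(\Ucal_\Phi,\Vcal_\Phi)$ via Theorem~\ref{Neeman}(2)) is indeed the right skeleton, but two of your steps rest on claims that are false or unavailable, and the obstacle you yourself flag at the end is precisely where the real proof lives.

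First, your ``crucial input'' is backwards: \cite[Theorem 3.11]{L} says that a \emph{definable} coaisle is homotopically smashing, not the converse; definability of $\Vcal$ is only known for non-degenerate homotopically smashing t-structures (\cite[Theorem 4.6]{L}), and for a general homotopically smashing t-structure it is not available as an input (over commutative noetherian rings it is a \emph{consequence} of the theorem you are trying to prove). Without it, even the assertion that $\Fcal_n=\{M\mid M[-n]\in\Vcal\}$ is a torsion-free class closed under pure submodules is not ``routine'': a coaisle gives you products, extensions, summands and $[-1]$, but no closure under (pure) subobjects. Second, and more seriously, your identification $\Vcal=\Vcal_\Phi$ treats both classes as if they were determined on cohomology. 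Over a general commutative noetherian ring they are not: by \cite{AJSa} one has $\Vcal_\Phi=\{X\mid \mathbf{R}\Gamma_{\Phi(n)}X\in\DD^{>n}\ \forall n\}$, and since $\mathbf{R}\Gamma_V$ has cohomological dimension up to $\Dim{A}$, this condition genuinely mixes cohomological degrees; in particular the stalks $H^n(X)[-n]$ of an object $X\in\Vcal_\Phi$ need not lie in $\Vcal_\Phi$, so your Postnikov reconstruction of $X$ from objects known to be in $\Vcal$ breaks down (determination on cohomology is special to weak global dimension at most one, cf.\ Theorem~\ref{T:aislecohomology}). The proof in \cite{HN} closes exactly this gap by an induction on the dimension of supports using derived torsion and completion functors together with the homotopically smashing hypothesis. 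Two smaller slips: the torsion-free class of the hereditary torsion pair attached to a specialization-closed $P$ is $\{M\mid \Hom{A}{A/\p}{M}=0\ \forall\p\in P\}$, i.e.\ cut out by associated primes, not by $\Supp{M}\cap P=\emptyset$ (the module $A$ has full support yet lies in every such class); and the truncations $\tau^{\geq -m}X$ assemble to $X$ by a homotopy \emph{limit} of a tower, not a directed homotopy colimit. The ``in particular'' clause, by contrast, is handled correctly once the main assertion is granted.
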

 
We now start by proving the missing implication from Lemma~\ref{L:nondegengeneral}, which shows that the map $\Psi$ in Theorem~\ref{T:TTFduality} respects non-degeneracy.
 
 \begin{lemma}\label{L:nondegenspecial}
	Let $A$ be a commutative noetherian ring. Then, in the setting of Lemma~\ref{L:nondegengeneral},  we have 
		$\bigcap_{n \in \Z}\Ucal[n] = 0$ if and only if  $\bigcap_{n \in \Z}\Wcal'[n] = 0$. 
\end{lemma}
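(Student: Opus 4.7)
The plan is to reduce both vanishing conditions to support-theoretic conditions on $\Spec A$ via Neeman's classification of localizing subcategories (Theorem~\ref{Neeman}(1)) and then to bridge them using the duality $(-)^+$. First I observe that $\bigcap_n \Wcal'[n] = \Vcal'^{\perp_\Z}$ and $\bigcap_n \Ucal[n] = {}^{\perp_\Z}\Vcal$ are both localizing subcategories of $\DModA = \DAMod$ (identifying the two sides, $A$ being commutative): both are closed under shifts, extensions and coproducts, since $\Vcal$ and $\Vcal'$ are definable and therefore closed under coproducts. By Theorem~\ref{Neeman}(1), each of these is determined by its support in $\Spec A$ and vanishes iff this support is empty, that is, iff no residue field $\kappa(\p)$ belongs to it.

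Next, using base change along $A \to \kappa(\p)$ and the faithfulness of $\kappa(\p)$-linear duality, I obtain the equivalence $\rhom{A}{V'}{\kappa(\p)} = 0 \iff V' \otimes^{\mathbf L}_A \kappa(\p) = 0$ for every $V' \in \Vcal'$. Hence $\bigcap_n \Wcal'[n] = 0$ iff $\Spec A = \bigcup_{V' \in \Vcal'} \supp V'$. By the same argument $\bigcap_n \Ucal[n] = 0$ iff $\Spec A = \bigcup_{V \in \Vcal} \mathrm{cosupp}\, V$, where cosupport denotes the set of primes $\p$ with $\rhom{A}{\kappa(\p)}{V} \neq 0$. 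The missing implication then reduces to the inclusion $\bigcup_{V \in \Vcal} \mathrm{cosupp}\, V \subseteq \bigcup_{V' \in \Vcal'} \supp V'$. To produce this, given a prime $\p$ and $V \in \Vcal$ with $\rhom{A}{\kappa(\p)}{V} \neq 0$, I would invoke the dual definability $V^+ \in \Vcal'$ from Lemma~\ref{L:dualdefinable} together with the natural isomorphism $\rhom{A}{\kappa(\p)}{V^+} \cong (V \otimes^{\mathbf L}_A \kappa(\p))^+$ of Lemma~\ref{L:dualityformulas}(ii), and the pure embedding $V \hookrightarrow V^{++}$ from Lemma~\ref{L:evaluation}, which splits after tensoring with the field $\kappa(\p)$ by Lemma~\ref{L:puritychar}.

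The main obstacle will be precisely the asymmetry between support and cosupport under the duality $V \leftrightarrow V^+$: while $\mathrm{cosupp}(V^+) = \supp V$ follows directly from Lemma~\ref{L:dualityformulas}(ii), the symmetric identity $\supp V^+ = \mathrm{cosupp}\, V$ fails in general (for example with $A = \mathbb{Z}$, $V = A$ and $V^+ = \mathbb{Q}/\mathbb{Z}$, the generic point of $\Spec \mathbb{Z}$ lies in $\mathrm{cosupp}\, V$ but not in $\supp V^+$). This is where the commutative noetherian hypothesis becomes essential and is invoked through Theorem~\ref{HN}: since the cosilting class $\Vcal$ is then of cofinite type, its pure-injective cogenerator admits a Matlis-type decomposition along the primes of $\Spec A$, and this rigidity is what should ultimately allow the required transfer of the cosupport condition on $\Vcal$ into the support condition on $\Vcal'$, completing the argument.
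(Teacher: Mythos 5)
Your reduction of both non-degeneracy conditions to statements about residue fields is sound in outline and close in spirit to the paper's argument: the equivalence $\rhom{A}{V'}{\kappa(\p)}=0\Leftrightarrow V'\otimes^{\mathbf L}_A\kappa(\p)=0$ is correct, and Theorem~\ref{Neeman}(1) does let you test triviality of ${}^{\perp_\Z}\Vcal$ against the $\kappa(\p)$. (Two small caveats: $\Vcal'^{\perp_\Z}$ is a \emph{right} perpendicular, so its closure under coproducts is not automatic from the definability of $\Vcal'$; the paper avoids this by passing to the localizing subcategory ${}^{\perp_\Z}(\Vcal'^{\perp_\Z})\supseteq\Loc(\Vcal')$ and showing \emph{that} equals $\DModA$. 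Also, only one implication needs proving, since Lemma~\ref{L:nondegengeneral}(ii) already gives the other.)

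The genuine gap is the bridge you leave open at the end: transferring ``$\p$ lies in the cosupport of some $V\in\Vcal$'' into ``$\p$ lies in the support of some $V'\in\Vcal'$''. You correctly diagnose that $\supp V^+=\mathrm{cosupp}\,V$ fails, but your proposed repair via Theorem~\ref{HN} does not work as stated: that theorem is about homotopically smashing t-structures being compactly generated (which is already assumed here), and appealing to ``the pure-injective cosilting cogenerator of $\Vcal$'' is premature, since non-degeneracy --- the very thing being proved --- is a prerequisite for the TTF triple to be cosilting at all. The missing idea is to produce a \emph{shifted residue field inside $\Vcal$ itself}: by \cite[Proposition 2.3]{HCG} the compactly generated coaisle $\Vcal$ is closed under $\rhom{A}{\kappa(\p)}{-}$, so from $\rhom{A}{\kappa(\p)}{V}\neq 0$ one gets a nonzero object of $\Vcal$ which is a complex of $\kappa(\p)$-vector spaces, hence splits into shifted stalks, giving $\kappa(\p)[n]\in\Vcal$ for some $n$. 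Dualizing a stalk of a $\kappa(\p)$-vector space yields again a nonzero $\kappa(\p)$-vector space, so Lemma~\ref{L:dualdefinable} puts $\kappa(\p)[-n]$ into $\Vcal'$; for residue-field stalks support and cosupport coincide, so the asymmetry you worried about disappears. Without this step (or a genuine substitute for it), your argument does not close.
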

\begin{proof}
	We have to prove the only-if-part, or equivalently, we have to show that ${}^{\perp_\Z}\Vcal=0$ implies  $\Vcal'^{\perp_\Z}=0$.	 
Since ${}^{\perp_\Z}\Vcal=0$,
	for any prime ideal $\p \in \Spec{A}$ there is $V \in \Vcal$ such that $\rhom{A}{\kappa(\p)}{V}$ is not a zero object of $\DModA$. By \cite[Proposition 2.3]{HCG}, $\rhom{A}{\kappa(\p)}{V}$ belongs to $\Vcal$. Furthermore, the object $\rhom{A}{\kappa(\p)}{V}$ is quasi-isomorphic to a complex of vector spaces over $\kappa(\p)$ in $\DModA$, and therefore $\rhom{A}{\kappa(\p)}{V} \cong \bigoplus_{n \in \Z} H^n\rhom{A}{\kappa(\p)}{V}[-n]$. Thus
	there is $n \in \Z$ such that $H^n\rhom{A}{\kappa(\p)}{V}$ is a non-zero
	vector space over $\kappa(\p)$. Since $\Vcal$ is closed under direct summands, we conclude that for each $\p \in \Spec{A}$ there is $n \in \Z$ such that $\kappa(\p)[n] \in \Vcal$.

	Let $W$ be an injective cogenerator of $\ModA$. By Lemma~\ref{L:dualdefinable}, $\rhom{A}{\kappa(\p)[n]}{W} \cong \Hom{A}{\kappa(\p)}{W}[-n] \in \Vcal'$. Since the non-zero $A$-module $\Hom{A}{\kappa(\p)}{W}$ is again naturally a vector space over $\kappa(\p)$, we see that $\Vcal'$ contains $\kappa(\p)[-n]$. Let $\Lcal = {}^{\perp_\Z}(\Vcal'^{\perp_\Z})$. Then $\Lcal$ is a localizing subcategory of $\DModA$ and $\Vcal' \subseteq \Lcal$. In particular, $\Lcal$ contains $\kappa(\p)$ for all $\p \in \Spec(A)$, and the subset   of $\Spec(A)$ corresponding to $\Lcal$ under the bijection in Theorem~\ref{Neeman}(1) must be $P = \Spec(A)$. Thus $\Lcal = \DModA$ and $\Vcal'^{\perp_\Z} = 0$, as desired.
\end{proof}
 As a consequence, the injective map from Theorem~\ref{P:silttocosilt} is now bijective, and we   obtain a classification of silting and cosilting objects over commutative noetherian rings which extends \cite[Corollary 3.5]{ASa}.

\begin{theorem}\label{commnoeth}
	Let $A$ be a commutative noetherian ring. There is a bijective correspondence between		
	\begin{enumerate}
	\item[(i)] equivalence classes of silting objects of finite type,
	\item[(ii)] equivalence classes of pure-injective cosilting objects,
	\item[(iii)] 		filtrations by supports $\Phi$ of $\Spec(A)$ such that $$\bigcup_{n\in\mathbb Z} \Phi(n)=\Spec(A)\text{  and  }\bigcap_{n\in\mathbb Z} \Phi(n)=\emptyset,$$		\end{enumerate}
	which restricts to a bijective correspondence between		
	\begin{enumerate}
				\item[(i')] equivalence classes of bounded silting complexes,
			\item[(ii')] equivalence classes of bounded cosilting complexes,
	\item[(iii')] 	filtrations by supports $\Phi$ of $\Spec(A)$ such that	there are   integers $n\leq m$ with $$\Phi(n)=\Spec(A)\text{  and }\Phi(m)=\emptyset.$$ 	
	\end{enumerate}
\end{theorem}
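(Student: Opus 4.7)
The plan is to combine three ingredients: the silting--cosilting duality of Theorem~\ref{P:silttocosilt}, the classification of compactly generated t-structures in Theorem~\ref{Neeman}(2), and Theorem~\ref{HN}, which in the commutative noetherian setting identifies pure-injective cosilting objects with cosilting objects of cofinite type.

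For the bijection (i)$\leftrightarrow$(ii), I note that by Corollary~\ref{L:finitecofinite} the equivalence classes in (i) are parametrized by compactly generated suspended non-degenerate TTF triples in $\DModA$, while cosilting objects of cofinite type in $\DAMod$ are parametrized by compactly generated cosuspended non-degenerate TTF triples. The duality $\Psi$ of Theorem~\ref{T:TTFduality} gives a bijection between compactly generated TTF triples on the two sides, and Lemmas~\ref{L:nondegengeneral} and~\ref{L:nondegenspecial} together show that it restricts to a bijection between those that are suspended non-degenerate and cosuspended non-degenerate, respectively. At the object level this recovers the map $T\mapsto T^+$ of Theorem~\ref{P:silttocosilt}. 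Theorem~\ref{HN} then identifies the equivalence classes in (ii) with those of cosilting objects of cofinite type, closing the loop.

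For (i)$\leftrightarrow$(iii), every compactly generated suspended TTF triple $(\Ucal,\Vcal,\Wcal)$ is determined by the t-structure $(\Vcal,\Wcal)$, which by Theorem~\ref{Neeman}(2) corresponds to a unique filtration by supports $\Phi$ with $\Vcal=\Ucal_\Phi$. The non-degeneracy of the TTF triple translates into conditions on $\Phi$: unravelling the membership condition shows that $\bigcap_{n\in\Z}\Vcal[n]=0$ is equivalent to $\bigcap_{n\in\Z}\Phi(n)=\emptyset$, while $\bigcap_{n\in\Z}\Wcal[n]=\Loc(\Vcal)^{\perp_\Z}$ vanishes precisely when $\Loc(\Vcal)=\DModA$. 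Since the specialization-closedness of each $\Phi(n)$ implies that $\Loc(\Ucal_\Phi)$ has support $\bigcup_{n}\Phi(n)$, Theorem~\ref{Neeman}(1) then translates the second non-degeneracy condition into $\bigcup_{n}\Phi(n)=\Spec(A)$.

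For the restricted bijection, (i')$\leftrightarrow$(ii') follows from Theorem~\ref{P:silttocosilt} since bounded cosilting complexes are pure-injective by Theorem~\ref{T:cosiltingTTF} and hence of cofinite type by Theorem~\ref{HN}. For (i')$\leftrightarrow$(iii'), Theorem~\ref{T:siltingTTF}(2) identifies bounded silting complexes with intermediate suspended TTF triples, i.e.\ those with $\DD^{\leq m}\subseteq\Vcal\subseteq\DD^{\leq n}$ for some $m\leq n$. Testing these inclusions against the stalk complex $A[-m]$ and against $A/\p[-k]$ for $\p\in\Phi(k)$ with $k>n$ respectively shows that they amount to $\Phi(m)=\Spec(A)$ and $\Phi(n+1)=\emptyset$, matching (iii'). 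The main technical step is the identification of $\Loc(\Ucal_\Phi)$ via Theorem~\ref{Neeman}(1); the rest is bookkeeping.
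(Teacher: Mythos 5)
Your reduction of (i) to (ii) via $\Psi$, Corollary~\ref{L:finitecofinite} and Lemmas~\ref{L:nondegengeneral} and~\ref{L:nondegenspecial}, and your translation of the two non-degeneracy conditions into $\bigcap_{n}\Phi(n)=\emptyset$ and $\bigcup_{n}\Phi(n)=\Spec(A)$ via the support of the generated localizing subcategory, are exactly the paper's arguments. The gap is in how you attach the filtration $\Phi$ to the silting side. You apply Theorem~\ref{Neeman}(2) to the t-structure $(\Vcal,\Wcal)$ sitting inside the compactly generated suspended TTF triple $(\Ucal,\Vcal,\Wcal)$. But ``compactly generated'' for this TTF triple means $\Vcal=\Scal^{\perp_0}$ for a set of compact objects $\Scal$, i.e.\ it is the \emph{co-t-structure} $(\Ucal,\Vcal)$ that is compactly generated as a torsion pair. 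For Theorem~\ref{Neeman}(2) to apply to $(\Vcal,\Wcal)$ you would need $\Wcal=\Tcal^{\perp_0}$ for a set of compact objects $\Tcal$; this is neither among your hypotheses nor a formal consequence of $\Vcal=\Scal^{\perp_0}$, and it amounts to the nontrivial statement that a compactly generated co-t-structure over a commutative noetherian ring has a compactly generated adjacent t-structure (essentially the content of the classification in \cite{PS}, which you do not invoke). The same unproved identification $\Vcal=\Ucal_\Phi$ underlies your treatment of (i')$\leftrightarrow$(iii').

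The paper sidesteps this by attaching $\Phi$ to the \emph{cosilting} t-structure: under $\Psi$ the silting TTF triple goes to a cosuspended compactly generated TTF triple whose t-structure part has coaisle $(\Scal^*)^{\perp_0}$, a perp of compacts by construction, so Theorem~\ref{Neeman}(2) genuinely applies there and yields $(\Ucal_\Phi,\Vcal_\Phi)$. Your argument is repaired the same way: compose your (correct) bijection (i)$\leftrightarrow$(ii) with the map sending a pure-injective cosilting object to the filtration of its t-structure, which is compactly generated by Theorem~\ref{HN}, and run your non-degeneracy and support computations --- which are fine as such --- on that t-structure; for (i')$\leftrightarrow$(iii') you then also need Lemma~\ref{L:nondegengeneral}(iii) to pass from intermediate to cointermediate before testing against stalk complexes. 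The filtration obtained this way differs from the one you intended by the passage to the dual, but since the theorem only asserts the existence of bijections, this is harmless.
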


\begin{proof}
The bijection between $(i)$ and $(ii)$ is the first part of Theorem~\ref{P:silttocosilt} in conjunction with Theorem~\ref{HN} and Lemma~\ref{L:nondegenspecial}.

For the bijection with $(iii)$, we have to show that  $(\Ucal_\Phi, \Vcal_\Phi)$ is non-degenerate if and only if  the filtration by supports  $\Phi$ satisfies the stated conditions.
 The  condition on the intersection of the $\Phi(n)$ follows immediately from the fact that $\bigcap_{n\in\mathbb Z}\Ucal_\Phi[n]$ consists of the objects $X\in\DModA$
 whose cohomologies are supported in $\bigcap_{n\in\mathbb Z} \Phi(n)$.
Moreover, if $\Scal$ is a set of compact objects generating $(\Ucal_\Phi, \Vcal_\Phi)$, then the condition
$\bigcap_{n\in\mathbb Z}\Vcal_\Phi[n]=0$ holds if and only if $\Scal^{\perp_{\mathbb Z}}=0$, which  amounts to $\Loc(\Scal)=\DModA$. 

We claim that  the  subset  $P$ of $\Spec(A)$ corresponding to $\Loc(\Scal)$ under Theorem~\ref{Neeman} is precisely $P=\bigcup_{n\in\mathbb Z} \Phi(n)$. Indeed, since $\Scal \subseteq \Ucal_\Phi$, and the support $\supp X$ of   a compact object $X$ is given by the classical supports $\bigcup_{n\in\Z}\Supp H^n(X)$ of its cohomologies, we have $\supp \Scal \subseteq \bigcup_{n\in\mathbb Z} \Phi(n)$. The localizing subcategory  corresponding to $\bigcup_{n\in\mathbb Z} \Phi(n)$ must therefore contain $\Loc(\Scal)$, hence $P \subseteq \bigcup_{n\in\mathbb Z} \Phi(n)$. On the other hand, the fact that $\Loc(\Scal)$ contains $\Ucal_\Phi$
and thus also  the set $\{A/\p[-n]\mid  n\in\mathbb Z,\, \p \in \Phi(n)\}$
 yields the other inclusion.  We conclude using Theorem~\ref{Neeman} that
the condition $\Loc(\Scal)=\DModA$
is equivalent to $\bigcup_{n\in\mathbb Z} \Phi(n)=\Spec(A)$.

For the second part,  we combine 
  Theorem~\ref{T:cosiltingTTF} with Theorem~\ref{HN} to see  that every bounded cosilting complex in $\DModA$ is of cofinite type. The bijection between $(i')$ and $(ii')$ then follows from the second part of Theorem~\ref{P:silttocosilt}. 
 Furthermore, the existence of   integers $n\leq m$ such that $\mathsf D^{\leq n}\subseteq \Ucal_{\Phi}\subseteq \mathsf D^{\leq m}$ means precisely that the  cohomologies of objects in $\Ucal_{\Phi}$ are arbitrary in degrees  $\le n$ and vanish in degrees $>m$. In other words, $\Phi(i)=\Spec A$ for all $i\le n$ and $\Phi(i)=\emptyset$ for all $i> m$. This proves the equivalence of $(ii')$ and $(iii')$.
\end{proof}

\subsection{Over hereditary rings}\label{hereditary} 
We know from Theorem~\ref{HN} that  homotopically smashing t-structures over  commutative noetherian rings are compactly generated. The main result of this section establishes the same result over hereditary rings.

	In order to be consistent with later sections, it is convenient to switch to the unbounded derived category $\DAMod$ of \emph{left} $A$-modules over a ring $A$. We assume that $A$ is a left hereditary ring. Then the structure of the derived category simplifies considerably. Indeed, in this case, for any object $X \in \DAMod$ we have isomorphisms $X \cong \bigoplus_{n \in \Z}H^n(X)[-n] \cong \prod_{n \in \Z}H^n(X)[-n]$ (see e.g.~\cite[\S 1.6]{K2}). As a consequence, for all $X,Y \in \DAMod$ we have
	$$\Hom{\DAMod}{X}{Y} \cong \Hom{\DAMod}{\bigoplus_{n \in \Z}H^n(X)[-n]}{\prod_{n \in \Z}H^n(Y)[-n]} \cong$$ 
	$$\cong \prod_{n \in \Z}(\Hom{\Aop}{H^n(X)}{H^n(Y)} \oplus \Ext{1}{\Aop}{H^n(X)}{H^{n-1}(Y)}).$$
	Let $(\Ucal,\Vcal)$ be a t-structure. We fix the notation $$\Ucal_n = \{H^n(X) \mid X \in \Ucal\}\text{ and }\Vcal_n = \{H^n(X) \mid X \in \Vcal\}.$$ Then the left heredity of $A$ implies that $$\Ucal = \{X \in \DAMod \mid H^n(X) \in \Ucal_n \text{ for all } n \in \Z\} \text{ and}$$ $$\Vcal = \{X \in \DAMod \mid H^n(X) \in \Vcal_n \text{ for all } n \in \Z\},$$ as well as the formulas
	\begin{equation}\label{unvn}\Ucal_n = {}^{\perp_0}\Vcal_n \cap {}^{\perp_1}\Vcal_{n-1} \text{ and } \Vcal_n = \Ucal_n^{\perp_0} \cap \Ucal_{n+1}^{\perp_1}.\end{equation}
	\begin{lemma}\label{approxsummand}
		Let $A$ be a ring, $(\Ucal,\Vcal)$ a t-structure in $\DAMod$, and $X \in \DAMod$. Suppose that $C$ is a direct summand of $\tau_{\Vcal}(X)$ such that $\Hom{\DAMod}{X}{C} = 0$. Then $C = 0$.
	\end{lemma}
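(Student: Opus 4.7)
The plan is to exploit the adjunction property of the truncation functor $\tau_\Vcal$. Recall that $\tau_\Vcal$ is left adjoint to the inclusion $\Vcal \hookrightarrow \DAMod$, with unit $\eta_X \colon X \to \tau_\Vcal(X)$, so for any $V \in \Vcal$ one has a natural isomorphism
\[
\Hom{\DAMod}{\tau_\Vcal(X)}{V} \xrightarrow{\;\cong\;} \Hom{\DAMod}{X}{V}, \qquad g \mapsto g \circ \eta_X.
\]

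First I would argue that the summand $C$ actually lies in $\Vcal$. This is immediate since $\Vcal$ is (by definition of a torsion pair) closed under direct summands, and $\tau_\Vcal(X) \in \Vcal$. Consequently, the displayed adjunction isomorphism applies to $V = C$, and the assumption $\Hom{\DAMod}{X}{C}=0$ forces $\Hom{\DAMod}{\tau_\Vcal(X)}{C}=0$.

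Next, write the summand structure as $\tau_\Vcal(X) \cong C \oplus C'$ with projection $\pi \colon \tau_\Vcal(X) \to C$ and section $\iota \colon C \to \tau_\Vcal(X)$ satisfying $\pi \circ \iota = \id_C$. The projection $\pi$ is an element of $\Hom{\DAMod}{\tau_\Vcal(X)}{C}$, which we just showed is zero, hence $\pi = 0$. But then $\id_C = \pi \circ \iota = 0$, and an object with zero identity is itself the zero object.

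There is no real obstacle here; the only thing to be careful about is the direction of the adjunction (left adjoint to inclusion, which is the correct convention for the coaisle of a t-structure) and the use of the torsion-pair axiom that $\Vcal$ is closed under direct summands. The proof is a two-line application of the universal property, and the $\tau_\Ucal(X) \to X \to \tau_\Vcal(X) \to \tau_\Ucal(X)[1]$ approximation triangle does not even need to be written out explicitly.
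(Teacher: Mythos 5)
Your proof is correct. It takes a slightly different route from the paper's: the paper writes out the approximation triangle $\tau_{\Ucal}(X) \to X \to \tau_{\Vcal}(X) \to \tau_{\Ucal}(X)[1]$ and observes that, since the component of $X \to \tau_{\Vcal}(X)$ landing in $C$ vanishes, the triangle splits off a summand $C[-1] \to 0 \to C \to C$, so that $C[-1]$ is a direct summand of $\tau_{\Ucal}(X)$; then $C[-1] \in \Ucal \cap \Vcal = 0$. You instead package the same information through the reflection property of $\Vcal$: the precomposition map $\Hom{\DAMod}{\tau_{\Vcal}(X)}{C} \to \Hom{\DAMod}{X}{C}$ is an isomorphism for $C \in \Vcal$, so the vanishing of the target kills the projection $\pi$ and hence $\id_C$. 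Note that the injectivity of that precomposition map (which is what you actually use) comes from $\Hom{\DAMod}{\tau_{\Ucal}(X)[1]}{C} = 0$, i.e.\ from $\Ucal[1] \subseteq \Ucal$ --- so the t-structure hypothesis, which you flag, is genuinely needed, and the long exact sequence of the approximation triangle is still doing the work behind the adjunction. What your version buys is brevity and no need to identify a summand of the aisle part; what the paper's version buys is that it makes explicit where the shifted copy of $C$ ends up, which is in the spirit of how the lemma is applied later. Both are complete.
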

	\begin{proof}
		Consider the approximation triangle
		$$\tau_{\Ucal}(X) \rightarrow X \xrightarrow{g} \tau_{\Vcal}(X) \rightarrow \tau_{\Ucal}(X)[1].$$
		Since $\tau_{\Vcal}(X) = C \oplus C'$, and $\Hom{\DAMod}{X}{C} = 0$, a general argument in triangulated categories shows that $C[-1]$ is a direct summand of $\tau_{\Ucal}(X)$. Since $\tau_{\Ucal}(X) \in \Ucal$, and $C[-1] \in \Vcal$, this forces $C = 0$.
	\end{proof}
	\begin{remark}\label{SSVexplanation}
		For the proof of the next theorem, we will need to invoke a deep theorem \cite[Theorem A]{SSV} which allows to lift t-structures in $\DAMod$ to the category $\DD(\AMod^I)$ of coherent diagrams of shape $I$, where $I$ is a small category. We provide a short explanation for the process adjusted for our application here. The reader is referred to \cite{SSV} for the unexplained terminology of coherent diagrams, and to \cite[Example 2.4]{SSV} in particular for the situation of the canonical derivator of the derived category of a Grothendieck category. Let $(\Ucal,\Vcal)$ be a t-structure in $\DAMod$, and let $I$ be a small category. By \cite[Theorem A]{SSV}, there is a t-structure $(\Ucal_I,\Vcal_I)$ in $\DD(\AMod^I)$, where
$$\Ucal_I  = \{\mathscr{U} \in \DD(\AMod^I) \mid \mathscr{U}_i \in \Ucal \text{ for all } i \in I\},$$
$$\Vcal_I  = \{\mathscr{V} \in \DD(\AMod^I) \mid \mathscr{V}_i \in \Vcal \text{ for all } i \in I\},$$
where $\mathscr{X}_i$ is the $i$-th component of the coherent diagram $\mathscr{X} \in \DD(\AMod^I)$. Therefore, for any $\mathscr{X} \in \DD(\AMod^I)$ there is an approximation  triangle
$$\mathscr{U} \rightarrow \mathscr{X} \rightarrow \mathscr{V} \rightarrow \mathscr{U}[1],$$
where $\mathscr{U} \in \Ucal_I$ and $\mathscr{V} \in \Vcal_I$. By \cite[Corollary 4.19]{Groth}, taking the $i$-th coordinate yields a triangle
$$\mathscr{U}_i \rightarrow \mathscr{X}_i \rightarrow \mathscr{V}_i \rightarrow \mathscr{U}_i[1]$$
in $\DAMod$ for any $i \in I$. Since $\mathscr{U}_i \in \Ucal$ and $\mathscr{V}_i \in \Vcal$, this triangle is necessarily isomorphic to the approximation triangle of $\mathscr{X}_i$ with respect to the t-structure $(\Ucal,\Vcal)$ in $\DAMod$. Thus, $\mathscr{U}_i \cong \tau_{\Ucal}(\mathscr{X}_i)$. Let $\alpha$ be an arrow in $I$. Since $\tau_{\Ucal}: \DAMod \rightarrow \Ucal$ is the right adjoint to the inclusion of $\Ucal$, it follows by simple diagram chasing that $\mathscr{U}(\alpha) = \tau_{\Ucal}(\mathscr{X}(\alpha))$. Therefore, the coherent diagram $\mathscr{U}$ is given by applying the functor $\tau_{\Ucal}$ onto the coherent diagram $\mathscr{X}$. The analogous statement for $\mathscr{V}$ follows by a dual argument.
	\end{remark}
	\begin{theorem}\label{hereditaryutc}
		Let $A$ be a left hereditary ring. Then any homotopically smashing t-structure in $\DAMod$ is compactly generated. In particular, every pure-injective cosilting object is of cofinite type.
	\end{theorem}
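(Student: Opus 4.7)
My approach hinges on the hereditary splitting $X \cong \bigoplus_{n \in \Z} H^n(X)[-n]$ recalled just before the theorem, which identifies a t-structure $(\Ucal,\Vcal)$ in $\DAMod$ with its family of cohomology components $(\Ucal_n,\Vcal_n)_{n\in\Z}$ in $\AMod$ via formulas (\ref{unvn}). The first task is to verify that each pair $(\Ucal_n,\Vcal_n)$ forms a torsion pair in $\AMod$. The orthogonality $\Hom{A}{\Ucal_n}{\Vcal_n}=0$ is immediate from $\Hom{\DAMod}{\Ucal}{\Vcal}=0$ applied to the shifted complexes $M[-n]$ and $N[-n]$. For the approximation sequence of an arbitrary $M \in \AMod$, I would take the truncation triangle $\tau_{\Ucal}(M[-n]) \to M[-n] \to \tau_{\Vcal}(M[-n]) \to \tau_{\Ucal}(M[-n])[1]$, split both truncations into their cohomologies using the hereditary splitting, and invoke Lemma~\ref{approxsummand} to discard the superfluous direct summands in degrees other than $n$, thereby extracting a short exact sequence $0 \to V \to M \to U \to 0$ with $V \in \Vcal_n$ and $U \in \Ucal_n$. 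Consequently each $\Vcal_n$ is a torsion-free class, closed under products, subobjects, and extensions.

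The homotopically smashing hypothesis then translates directly into closure of each $\Vcal_n$ under direct limits in $\AMod$: a directed system $(M_i)_{i\in I}$ in $\Vcal_n$ lifts to the system $(M_i[-n])_{i \in I}$ in $\Vcal$, whose directed homotopy colimit in $\DAMod$ is quasi-isomorphic to $(\varinjlim_i M_i)[-n]$ and again lies in $\Vcal$. Combined with the torsion-free closure properties, this identifies each $\Vcal_n$ as a definable torsion-free class of $\AMod$, hence a cosilting class by \cite[Corollary 3.9]{abundance}.

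The core technical step is then to promote each such cosilting class $\Vcal_n$ to one of \emph{cofinite type}. By Corollary~\ref{duality}, this is equivalent to showing that the dual definable subcategory $\Vcal_n^\vee \subseteq \ModA$ is a silting class. Here the hereditary hypothesis becomes essential: the vanishing of $\mathrm{Ext}^2$ allows an injective copresentation of any cosilting $A$-module to be resolved by a two-term complex of injectives, which in turn dualises via the $(-)^+$-duality of Proposition~\ref{dually} into a two-term projective presentation of a silting right $A$-module. This produces, for each $n$, a set $\Scal_n$ of compact objects of $\DD^c(\AMod)$ whose $\perp_0$-orthogonal in $\DAMod$ carves out precisely $\Vcal_n$ in degree $n$. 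Assembling $\Scal = \bigcup_n \Scal_n$ with the appropriate cohomological shifts then yields a set of compact generators realising $\Vcal = \Scal^{\perp_0}$; the diagram-level lifting of Remark~\ref{SSVexplanation} may be needed to organise this construction coherently across all degrees.

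The second assertion follows at once: by \cite[Theorem 4.6]{L}, any pure-injective cosilting object induces a non-degenerate homotopically smashing t-structure, which the first part identifies as compactly generated, so the cosilting object is of cofinite type by Corollary~\ref{L:finitecofinite}(ii). The principal obstacle throughout is the third step: without the hereditary hypothesis, definable torsion-free classes need not be of cofinite type, and it is precisely the $\mathrm{Ext}^2$-vanishing afforded by left heredity that enables the passage from a cosilting copresentation to a silting presentation and thereby to an explicit set of compact generators.
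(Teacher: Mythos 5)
Your overall frame (reduce to the cohomology components of the t-structure and exploit the splitting $X\cong\bigoplus_n H^n(X)[-n]$) matches the paper's starting point, but the argument breaks down at two places, the second of which is fatal. First, $(\Ucal_n,\Vcal_n)$ is not a torsion pair in general: by (\ref{unvn}) one has $\Vcal_n=\Ucal_n^{\perp_0}\cap\Ucal_{n+1}^{\perp_1}$, which is strictly smaller than $\Ucal_n^{\perp_0}$ and need not be closed under submodules (compare Proposition~\ref{construction}, where $\Vcal_n=\Ccal_n\cap\Xcal_{n+1}$ and $\Xcal_{n+1}$ is only closed under kernels). The object that is a definable torsion-free class, hence a cosilting class, is $\Ccal_n=\Cogen(\Vcal_n)=\Ucal_n^{\perp_0}$, as in Theorem~\ref{T:aislecohomology}.

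Second, and more seriously, your ``core technical step'' does not work: you cannot dualise an injective copresentation of a cosilting left module into a projective presentation of a silting right module. Proposition~\ref{dually} runs only in one direction precisely because $(-)^+$ takes finitely generated projectives to injectives but does \emph{not} take injectives to projectives; the $(-)^+$-dual of an injective module is almost never projective, and $\mathrm{Ext}^2$-vanishing does not repair this. Indeed, the existence of cosilting modules \emph{not} of cofinite type over rings of weak global dimension one (Example~\ref{exampleminimal}(5)) shows that no formal duality argument of this kind can succeed; cofinite type is exactly the nontrivial content here. Even granting cofinite type of each $\Ccal_n$, assembling compact generators for $\Vcal$ still requires controlling the $\Ext^1$-conditions linking degrees $n$ and $n+1$, i.e.\ knowing that each aisle component $\Ucal_n$ is a direct limit of finitely presented modules from $\Ucal_n$. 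That is the actual content of the paper's proof: it writes $M\in\Ucal_n$ as $\varinjlim F_i$ with $F_i$ finitely presented in ${}^{\perp_1}\Vcal_{n-1}$ via \cite[Lemma 5.2]{KSt}, shows $H^n(\tau_\Ucal(F_i[-n]))$ is a submodule of $F_i$ and hence (by Cohn's structure theorem for submodules of finitely presented modules over left hereditary rings) a direct sum of finitely presented modules in $\Ucal_n$, and then uses the derivator lifting of the t-structure to coherent diagrams (\cite[Theorem A]{SSV}, Remark~\ref{SSVexplanation}) together with the homotopically smashing hypothesis to pass these approximations through the homotopy colimit. None of these ingredients is replaced by your duality step, so the proposal has a genuine gap.
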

	\begin{proof}
			Let $(\Ucal,\Vcal)$ be a homotopically smashing t-structure in $\DAMod$. We claim that for any $M \in \Ucal_n$, we can write $M = \varinjlim_{i \in I} M_i$ for a directed system $(M_i \mid i \in I)$ consisting of finitely presented modules from $\Ucal_n$. This is enough for the compact generation of $(\Ucal,\Vcal)$ --- indeed, by the left heredity of $A$, any stalk of a finitely presented left $A$-module is a compact object of $\DAMod$, and since aisles are closed under directed homotopy colimits (\cite[Proposition 4.2]{SSV}), we have that $(\Ucal,\Vcal)$ is compactly generated.

		To prove the claim, we first use \cite[Lemma 5.2]{KSt} to write $M = \varinjlim_{i \in I} F_i$, where $F_i$ is a finitely presented module such that $F_i \in {}^{\perp_1}\Vcal_{n-1}$. For each $i \in I$, consider the approximation triangle of the stalk complex $F_i[-n]$ with respect to the t-structure $(\Ucal,\Vcal)$:
			$$\tau_{\Ucal}(F_i[-n]) \rightarrow F_i[-n] \rightarrow \tau_{\Vcal}(F_i[-n]) \rightarrow \tau_{\Ucal}(F_i[-n])[1].$$
		Passing to cohomology, we obtain a long exact sequence of form
		$$\cdots \rightarrow 0 \rightarrow H^{n-1}\tau_{\Vcal}(F_i[-n]) \rightarrow H^n\tau_{\Ucal}(F_i[-n]) \rightarrow $$ $$\rightarrow F_i \rightarrow H^n\tau_{\Vcal}(F_i[-n]) \rightarrow H^{n+1}\tau_{\Ucal}(F_i[-n]) \rightarrow 0 \rightarrow \cdots$$
			Recall that we have $\tau_{\Vcal}(F_i[-n]) \cong \bigoplus_{k \in \Z}H^k(\tau_{\Vcal}(F_i[-n]))[-k]$, and also $\Hom{\DAMod}{F_i[-n]}{H^{n-1}\tau_{\Vcal}(F_i[-n])[-n+1]} \cong \Ext{1}{\Aop}{F_i}{H^{n-1}(\tau_{\Vcal}(F_i[-n]))} = 0$, as $H^{n-1}(\tau_{\Vcal}(F_i[-n])) \in \Vcal_{n-1}$. Then Lemma~\ref{approxsummand} applies and shows that $H^{n-1}(\tau_{\Vcal}(F_i[-n])) = 0$. As a consequence, $H^n(\tau_{\Ucal}(F_i[-n]))$ is isomorphic to a submodule of $F_i$.  Now we use \cite[Theorems 2.1.4 and 5.1.6]{Cohn} to see that every submodule of a finitely presented module over a left hereditary ring has a direct decomposition in finitely presented modules. Hence $H^n(\tau_{\Ucal}(F_i[-n]))$ is isomorphic to a direct sum of finitely presented modules, and these have to belong to $\Ucal_n$.

			Now we consider $\mathscr{F} = (F_i[-n] \mid i \in I)$ as an object in $\DD(\lmod{A}^I)$, the category of all coherent diagrams in $\DAMod$ of shape $I$. Denote by $\Ucal_I$ the subcategory of $\DD(\lmod{A}^I)$ consisting of those coherent diagrams such that all their coordinates belong to $\Ucal$, and define $\Vcal_I$ similarly. By \cite[Theorem A]{SSV}, the pair $(\Ucal_I,\Vcal_I)$ forms a t-structure in the triangulated category $\DD(\lmod{A}^I)$. Consider the approximation triangle of $\mathscr{F}$ with respect to the t-structure $(\Ucal_I,\Vcal_I)$:
		$$\mathscr{U} \rightarrow \mathscr{F} \rightarrow \mathscr{V} \rightarrow \mathscr{U}[1].$$
		By \cite{Groth}, the directed homotopy colimit functor is exact, and thus passing to directed homotopy colimits yields a triangle in $\DAMod$:
			\begin{equation}\label{hocolim}\hocolim_{i \in I}\mathscr{U} \rightarrow M[-n] \rightarrow \hocolim_{i \in I}\mathscr{V} \rightarrow \hocolim_{i \in I}\mathscr{U}[1].\end{equation}
					Since $(\Ucal,\Vcal)$ is homotopically smashing, both $\Ucal$ and $\Vcal$ are closed under directed homotopy colimits. Therefore, $\hocolim_{i \in I}\mathscr{U} \in \Ucal$ and $\hocolim_{i \in I}\mathscr{V} \in \Vcal$. Then (\ref{hocolim}) is an approximation triangle of $M[-n]$ with respect to $(\Ucal,\Vcal)$. But since $M \in \Ucal_n$, and thus $M[-n] \in \Ucal$, we have an isomorphism $M[-n] \cong \hocolim_{i \in I}\mathscr{U}$. Passing to the $n$-th cohomology, we obtain 
		$$M \cong H^n(\hocolim_{i \in I}\mathscr{U}) \cong \varinjlim_{i \in I} H^n(\tau_{\Ucal}(F_i[-n])).$$
		But as we have shown above, for each $i \in I$ the module $H^n(\tau_{\Ucal}(F_i[-n])$ is isomorphic to a direct sum of finitely presented modules, all of which belong to $\Ucal_n$. Therefore, we have a presentation of $M$ as a direct limit of modules from $\Ucal_n \cap \lfmod{A}$, as desired.
	\end{proof}


\section{Cosilting modules and ring epimorphisms} 
 Inspired by the classification results for commutative noetherian rings in Section~\ref{commnoether}, we  proceed to investigate possible parametrizations of  cosilting objects over further classes of rings. Instead of chains of subsets of the prime spectrum, we will use chains of ring epimorphisms.  

In this section, we start by investigating  the case of a single ring epimorphism. After some preliminaries in subsection~\ref{epi}, we discuss a construction of cosilting modules from  ring epimorphisms  in subsection~\ref{minimalcosiltingmod}.  Over rings of weak global dimension at most one, or over commutative noetherian rings,  this leads us to a bijection between homological ring epimorphisms and  certain cosilting modules   (Corollaries~\ref{weak} and~\ref{flatcomm}). Such cosilting modules will be termed ``minimal'', as their construction  is dual to the construction of minimal silting modules over hereditary rings  in \cite{AMV2}. In fact, over a hereditary ring minimal silting and cosilting modules will correspond to each other under  silting-cosilting duality (Corollary~\ref{hered}).
 
\subsection{Reminder on ring epimorphisms}\label{epi}
Let us first  recall some notions and basic results.

\begin{definition}
(1) A {ring homomorphism} $\lambda:A\longrightarrow B$  is a 
{\em ring epimorphism} if it is an epimorphism in the category of rings with unit, or equivalently, if the functor given by restriction of scalars $\lambda_\ast:\ModB\longrightarrow \ModA$ is fully faithful.
Further, $\lambda$ is a 
{\em homological ring epimorphism} if in addition $\Tor{i}{A}{B}{B}=0$ for all $i>0$, or equivalently,  the functor given by restriction of scalars  $\lambda_\ast:\DModB\rightarrow \DModA$ is a full embedding.

(2)
Two ring epimorphisms $\lambda_1:A\longrightarrow B_1$ and  $\lambda_2:A\longrightarrow B_2$ are said to be  {\em equivalent} if there is an isomorphism of rings $\mu: B_1\longrightarrow B_2$ such that $\lambda_2=\mu\circ \lambda_1$. We then say that $\lambda_1$ and $\lambda_2$ lie in the same {\em epiclass} of $A$.

(3)
A full subcategory $\Xcal$ of $\ModA$ is called {\em bireflective} if the inclusion functor $\Xcal\longrightarrow\ModA$ admits both a left and right adjoint or, equivalently, if it is closed under products, coproducts, kernels and cokernels. 
\end{definition}

\begin{theorem}\label{epicl}
 \cite{GdP,GL,BD,Sch} The assignment which takes  a ring epimorphism  $\lambda:A\rightarrow B$ to  the essential image $\Xcal_B$ of $\lambda_\ast$ defines a bijection between
\begin{itemize}
\item epiclasses of ring epimorphisms $A\rightarrow B$,
\item bireflective subcategories of $\ModA$,
\end{itemize}
which restricts to a bijection between  
\begin{itemize}
\item epiclasses of ring epimorphisms $A\rightarrow B$ with $\Tor{1}{A}{B}{B}=0$,
\item bireflective subcategories closed under extensions in $\ModA$.
\end{itemize}
\end{theorem}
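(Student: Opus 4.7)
The proof has two parts: the main bijection, and its refinement to extension-closed subcategories.

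For the main bijection I would construct mutually inverse assignments at the level of equivalence classes. In the forward direction, given a ring epimorphism $\lambda \colon A \to B$, the restriction of scalars $\lambda_\ast \colon \ModB \to \ModA$ is fully faithful by definition of a ring epimorphism and admits the left adjoint $-\otimes_A B$ together with the right adjoint $\Hom{A}{B}{-}$. Its essential image $\Xcal_B$ therefore inherits closure under all products, coproducts, kernels and cokernels formed in $\ModA$, so $\Xcal_B$ is bireflective; equivalent ring epimorphisms clearly produce the same essential image, so the assignment is well-defined on epiclasses.

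For the converse, given a bireflective subcategory $\Xcal$ of $\ModA$, let $\ell \colon \ModA \to \Xcal$ denote the left reflector with unit $\eta$ and put $B := \ell(A)$. The adjunction provides natural isomorphisms $\End_A(\ell A) \cong \Hom{A}{A}{\ell A} = \ell A$, which transport composition of endomorphisms to a ring structure on $B$ for which the unit $\eta_A \colon A \to B$ is a ring homomorphism. This $\eta_A$ is a ring epimorphism because the adjunction bijection $\Hom{A}{B}{M} \cong M$ for $M \in \Xcal$ says precisely that restriction along $\eta_A$ is fully faithful. A direct check identifies $\Xcal$ with $\Xcal_B$, and the two assignments descend to mutually inverse bijections at the level of epiclasses.

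For the refinement, fix a ring epimorphism $\lambda \colon A \to B$ and a short exact sequence $0 \to X \to Y \to Z \to 0$ in $\ModA$ with $X, Z \in \Xcal_B$. Since $B \otimes_A B \cong B$ holds automatically for a ring epimorphism, $B$-module structures give $X \otimes_A B \cong X$ and $Z \otimes_A B \cong Z$, and the Tor long exact sequence collapses to
$$0 \to \Tor{1}{A}{Z}{B} \to X \to Y \otimes_A B \to Z \to 0.$$
A short diagram chase using naturality of the unit $Y \to Y \otimes_A B$ shows that $\Xcal_B$ is closed under extensions if and only if $\Tor{1}{A}{Z}{B} = 0$ for every $Z \in \Xcal_B$. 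Taking $Z = B$ yields the necessity of the homological condition. The main technical obstacle is the converse implication: propagating the vanishing $\Tor{1}{A}{B}{B} = 0$ to $\Tor{1}{A}{Z}{B} = 0$ for arbitrary $Z \in \Xcal_B$. I would handle this by resolving any such $Z$ by free $B$-modules, viewing the resolution in $\ModA$ via $\lambda_\ast$, and dimension-shifting through the Tor long exact sequences, crucially using closure of $\Xcal_B$ under cokernels to keep the syzygies inside $\Xcal_B$. This dévissage, which is really where the ring-epimorphism hypothesis is used essentially rather than cosmetically, is the heart of the Geigle--Lenzing half of the statement.
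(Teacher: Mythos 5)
The paper offers no proof of this statement: it is quoted from Gabriel--de la Pe\~na, Geigle--Lenzing, Bergman--Dicks and Schofield, so your proposal can only be compared with the standard literature arguments, which it follows in outline. The forward direction of the first bijection and the sufficiency half of the refinement are sound: fullness of $\lambda_\ast$ makes $\Xcal_B$ closed under products, coproducts, kernels and cokernels, and once $\Tor{1}{A}{Z}{B}=0$ is known for all $Z\in\Xcal_B$, the five lemma applied to the unit maps $\eta_X,\eta_Y,\eta_Z$ gives extension-closure. However, you misplace the weight of the argument in two ways. The ``propagation'' step you call the heart of the matter is a one-liner: applying $-\otimes_A B$ to a presentation $0\to K\to B^{(I)}\to Z\to 0$ taken inside $\ModB$ and using $B\otimes_A B\cong B$ identifies $\Tor{1}{A}{Z}{B}$ with the kernel of the injection $K\to B^{(I)}$, so it vanishes with no dimension shifting and no appeal to cokernel-closure. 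Conversely, the step you dismiss as ``a direct check'' is the substantive one: to see that a bireflective $\Xcal$ equals $\Xcal_B$ for $B=\End_{A}(\ell A)$ one must show that $\ell A$ is a small projective generator of the abelian category $\Xcal$ (this is where closure under coproducts and cokernels is really used) and invoke a Gabriel--Mitchell type equivalence compatible with the inclusion into $\ModA$.

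There is also a genuine gap in the necessity half of the refinement. Your displayed sequence is not exact at its left end: the long exact sequence begins $\Tor{1}{A}{Y}{B}\to\Tor{1}{A}{Z}{B}\to X\otimes_A B\to\cdots$, and $\Tor{1}{A}{Y}{B}$ cannot be assumed to vanish, since $Y$ is precisely the module whose membership in $\Xcal_B$ is at issue. Consequently your diagram chase only shows that extension-closure forces the \emph{image} of each connecting map $\Tor{1}{A}{Z}{B}\to X\otimes_A B$ to be zero; it does not show $\Tor{1}{A}{Z}{B}=0$, so ``taking $Z=B$'' does not yet yield the necessity of $\Tor{1}{A}{B}{B}=0$. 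To close this you need a test extension on which the connecting map is injective: take a projective presentation $0\to\Omega\to P\to B\to 0$ of $B_A$ and push it out along $\eta_\Omega\colon\Omega\to\Omega\otimes_A B$; both end terms of the resulting extension of $B$ by $\Omega\otimes_A B$ lie in $\Xcal_B$, and its connecting map is the canonical injection $\Tor{1}{A}{B}{B}\hookrightarrow\Omega\otimes_A B$, so its middle term can lie in $\Xcal_B$ only if $\Tor{1}{A}{B}{B}=0$. (Alternatively, use the change-of-rings exact sequence $0\to\Ext{1}{B}{M}{N}\to\Ext{1}{A}{M}{N}\to\Hom{B}{\Tor{1}{A}{M}{B}}{N}\to\cdots$ with $N$ an injective cogenerator of $\ModB$.)
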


The partial order  on bireflective subcategories   given by inclusion corresponds under the bijection in Theorem~\ref{epicl} to a   partial order on the epiclasses of $A$ defined by setting   $$\lambda_1\le  \lambda_2$$  whenever there is a commutative diagram of ring homomorphisms
$$\xymatrix{A\ar[rr]^{\lambda_1}\ar[dr]_{\lambda_2} & & B_1\\ & B_2\ar[ur]_{\mu} & }$$
Since bireflective subcategories are determined by closure properties,
 the poset induced by $\le$ is a complete lattice, and  the ring epimorphisms $A\to B$ with $\Tor{1}{A}{B}{B}=0$ form a sublattice in it by Theorem~\ref{epicl}.
 
Notice that over a  ring of weak global dimension at most one, every ring epimorphism $A\to B$ with $\Tor{1}{A}{B}{B}=0$ is already a homological ring epimorphism and thus has the properties listed in the next theorem. The same holds true when $A$ is commutative noetherian, since $B$ is then a flat $A$-module by \cite[Proposition 4.5]{AMSTV}. For details on recollements, we refer to \cite{NS}.  
 \begin{theorem}\label{recoll} 
 \cite{NS} Let $A$ be a ring and let  $\lambda:A\to B$ be a homological ring epimorphism. The functor $\lambda_*: \DModB\to \DModA$  given by restriction of scalars, together with the adjoint functors  $\lambda^*=-\lten{A} B$ and $\lambda^!={\mathbf R} \Hom{A}{B}{-}$, induces a stable TTF triple in $\DModA$
$$(\mathcal L=\Ker\lambda^\ast,\; \Bcal=\im\lambda_\ast,\; \mathcal K=\Ker\lambda^!)$$ and  a 
recollement
 $$\xymatrix@!=6pc{\DModB
\ar[r]|{\lambda_*=\lambda_!} &\DModA \ar@<+2.5ex>[l]|{\lambda^!}
\ar@<-2.5ex>[l]|{\lambda^*} \ar[r]|{j^!=j^*} & 
{\:\mathcal L}\ar@<+2.5ex>[l]|{j_*} \ar@<-2.5ex>[l]|{j_!} }
$$ with  $j^*=-\lten{A} L[-1]$ where $L$ is  the cone  of $\lambda$. 
In particular, for every  $X$ in $\DModA $ there is an approximation triangle with respect to the stable t-structure $(\mathcal L,\Bcal)$
\[
\xymatrix@R=0.5pc{
j_! j^! (X)=X\lten{A} L[-1]\ar[r]& X\ar[r]& \lambda_\ast \lambda^\ast(X)=X\lten{A} B\ar[r]& j_!j^!(X)[1]}
\]
and an approximation triangle with respect to the stable t-structure $(\Bcal, \mathcal K)$
\[
\xymatrix@C=0.9pc{
\lambda_\ast \lambda^!(X)= {\mathbf R} \Hom{A}{B}{X}\ar[r]& X\ar[r]& j_\ast j^\ast (X)={\mathbf R} \Hom{A}{L[-1]}{X}\ar[r]& \lambda_\ast \lambda^!(X)[1]}
\]
where the maps are given by adjunctions.
\end{theorem}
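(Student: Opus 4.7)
The plan is to deduce everything from the well-known principle that homological ring epimorphisms produce fully faithful derived restriction functors. First I would verify that the assumption $\Tor{i}{A}{B}{B}=0$ for $i>0$ is equivalent to $\lambda_*:\DModB\to\DModA$ being a fully faithful triangulated functor. The key point is that for $Y\in\DModB$, the counit $B\lten{A}\lambda_* Y\to\lambda_* Y$ of the adjunction $(\lambda^*,\lambda_*)$ is an isomorphism precisely when $B\lten{A}B\to B$ is an isomorphism in $\DModB$, which by the convergent spectral sequence computing $\Tor$ is equivalent to the homological hypothesis. The adjunctions $\lambda^*=-\lten{A}B\dashv\lambda_*\dashv\lambda^!=\rhom{A}{B}{-}$ are the standard tensor-hom adjunctions for the $A$-$B$-bimodule $B$ and are already at hand from Subsection~2.2.

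Once $\lambda_*$ is a fully faithful exact functor admitting both adjoints, the formalism of Bousfield (co)localization automatically produces a stable TTF triple: with $\Bcal=\im\lambda_*$, the full embedding from a left/right adjoint gives the Bousfield-colocalizing subcategory $\Lcal=\ker\lambda^*$ and the Bousfield-localizing subcategory $\Kcal=\ker\lambda^!$, yielding the triple $(\Lcal,\Bcal,\Kcal)$ of Theorem~\ref{recoll}. The nontrivial content is then concentrated in the explicit form of the approximation triangles.

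To produce the first approximation triangle, I would tensor the defining triangle $A\xrightarrow{\lambda}B\to L\to A[1]$ of the cone $L$ with an arbitrary $X\in\DModA$. After rotation this gives
\[
X\lten{A}L[-1]\to X\to X\lten{A}B\to (X\lten{A}L[-1])[1].
\]
The middle term is $\lambda_*\lambda^*X\in\Bcal$ by definition, so it suffices to check that $X\lten{A}L[-1]\in\Lcal$. For this, tensor the defining triangle with $B$: the homological condition makes $B\lten{A}\lambda$ into an isomorphism $B\to B\lten{A}B$, so $B\lten{A}L=0$, and consequently $\lambda^*(X\lten{A}L[-1])=X\lten{A}B\lten{A}L[-1]=0$. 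The second approximation triangle is produced symmetrically by applying $\rhom{A}{-}{X}$ to the same defining triangle of $L$; one obtains $\lambda_*\lambda^!X\to X\to\rhom{A}{L[-1]}{X}\to\lambda_*\lambda^!X[1]$, and a dual computation (using $\rhom{A}{L}{B}=0$) places the third term in $\Kcal$.

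Finally, for the recollement presentation itself, the subcategory $\Lcal$ inherits a triangulated structure from $\DModA$, and the functors $j_!,\; j^!=j^*,\; j_*$ arise as the inclusion $j_!:\Lcal\hookrightarrow\DModA$ together with its left adjoint $j^!$ and right adjoint $j_*$; these exist because $\Lcal$ is simultaneously Bousfield-colocalizing and Bousfield-localizing, and the explicit formulas $j_!j^!=-\lten{A}L[-1]$ and $j_*j^*=\rhom{A}{L[-1]}{-}$ are precisely what the two approximation triangles above encode. The main obstacle I anticipate is the careful bookkeeping needed to verify that the various Tor/Ext vanishing statements for $L$ (namely $B\lten{A}L=0=\rhom{A}{L}{B}$) are exactly the right conditions for the formal adjoint nonsense to close up; once this is done, the rest is automatic.
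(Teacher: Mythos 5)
The paper itself gives no proof of this statement: it is quoted verbatim from \cite{NS}. Your reconstruction is essentially the standard argument and is sound --- full faithfulness of $\lambda_*$ from $B\lten{A}B\cong B$, the stable TTF triple from Bousfield (co)localization at the bireflective subcategory $\Bcal=\im\lambda_\ast$, and the two approximation triangles obtained by applying $X\lten{A}-$ resp.\ $\rhom{A}{-}{X}$ to the defining triangle $A\to B\to L\to A[1]$. Two small points of bookkeeping deserve attention. First, sides: to see $X\lten{A}L[-1]\in\Lcal$ one computes $(X\lten{A}L)\lten{A}B\cong X\lten{A}(L\lten{A}B)$, so the relevant vanishing is $L\lten{A}B=0$ rather than $B\lten{A}L=0$; both do hold (tensor the defining triangle of $L$ with $B$ on either side and use that both unit maps $B\to B\lten{A}B$ are isomorphisms), so this is only a matter of keeping the factors in the right order. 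Likewise, the clean way to place $\rhom{A}{L[-1]}{X}$ in $\Kcal$ is the adjunction $\rhom{A}{B}{\rhom{A}{L}{X}}\cong\rhom{A}{B\lten{A}L}{X}$, which again uses $B\lten{A}L=0$; your proposed vanishing $\rhom{A}{L}{B}=0$ is a true fact but is not the one this computation calls for. Second, the adjunctions in the recollement run $j_!\dashv j^!=j^*\dashv j_*$ with $j_!$ the inclusion of $\Lcal$, so $j^!$ is the right (not left) adjoint of $j_!$, and $j_*$ is right adjoint to $j^*$ rather than to $j_!$. Neither issue affects the validity of the approach.
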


For a full subcategory  $\Xcal$ of $\ModA$ we consider the full subcategory of $\DModA$ $$\Dcal_\Xcal(A)=\{X\in\DModA\mid H^n(X)\in\Xcal\text{ for all } n\in\mathbb Z\}.$$ 

\begin{theorem} \label{KS}
Let $A$ be a ring, let $\lambda: A \rightarrow B$ be a homological ring epimorphism and let $\Xcal$ be the corresponding bireflective subcategory of $\ModA$.
\begin{enumerate}
\item[(i)] \cite[Lemma 4.6]{AKL1} The subcategory $\Bcal=\im\lambda_\ast$ equals $\Dcal_\Xcal(A)$.
	\item[(ii)] \cite[Lemma 4.2]{AKL1} If $B_A$ has projective dimension at most one, the subcategory $\Kcal = \Ker \lambda^! = \Ker \rhom{A}{B}{-}$  equals $\Dcal_{\Ccal}(A)$ where $\Ccal = \ModA\,\cap\,\Kcal = B^{\perp_{0,1}}.$
	\item[(iii)] \cite[Theorem 6.1(a)]{BP}\label{T:BP} If ${}_A B$ has weak dimension at most one, the subcategory $\Lcal = \Ker \lambda^\ast = \Ker (- \otimes_A^\LL B)$ equals  $\Dcal_{\Ecal}(A)$ where $\Ecal = \ModA\,\cap\,\Lcal
	= \{M \in \ModA \mid M \otimes_A B = 0 = \Tor{1}{A}{M}{B}\}$.
\item[(iv)] \cite[Propositions 2.6 and 3.1]{KSt}
 If $A$ is  hereditary, the stable TTF triple $(\Lcal,\Bcal,\Kcal)$  is given by $$\Lcal=\Dcal_{^{\perp_{0,1}}\Xcal}(A),\; \Bcal=\Dcal_\Xcal(A),\;\Kcal=\Dcal_{\Xcal^{\perp_{0,1}}}(A).$$ 
\end{enumerate}
\end{theorem}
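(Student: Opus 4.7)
Since Theorem~\ref{KS} compiles four results from the literature, my plan is not to reinvent them but to indicate the uniform strategy one would follow, emphasising that each part reduces to a general principle about how bounded (co)homological dimensions of $B$ as an $A$-module force t-structures on $\DModA$ to be ``degreewise''.

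For part (i), the starting point is that since $\lambda$ is homological, $\lambda_*:\DModB\to\DModA$ is fully faithful, so its essential image $\Bcal$ is a triangulated subcategory of $\DModA$ closed under all coproducts and products. At the module level, $\lambda_*(\ModB)=\Xcal$, so every stalk $M[n]$ with $M\in\Xcal$ sits in $\Bcal$. The inclusion $\Dcal_\Xcal(A)\subseteq\Bcal$ then follows by soft truncation: for $X\in\Dcal_\Xcal(A)$, the triangles $\tau^{\le n}X\to X\to\tau^{>n}X\to$ together with the bounded-below approximation of $X$ as a homotopy colimit of $\tau^{\ge -n}X$ express $X$ as a (co)limit of objects built by extensions from shifted stalks in $\Xcal$. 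For the reverse inclusion, note that cohomology commutes with $\lambda_*$, so any $X\in\Bcal$ has cohomologies in $\Xcal$.

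For parts (ii) and (iii) the heuristic is identical: if $B_A$ has projective dimension $\le 1$, then $\rhom{A}{B}{-}$ has cohomology concentrated in degrees $0$ and $1$, and if ${}_AB$ has weak dimension $\le 1$, then $-\otimes_A^{\LL}B$ has cohomology concentrated in degrees $-1$ and $0$. Explicitly picking a length-one projective (resp.\ flat) resolution of $B$ and applying it termwise to a K-injective (resp.\ K-flat) replacement of $X$ yields a hypercohomology spectral sequence (or just a two-term complex argument) showing that $\rhom{A}{B}{X}=0$ iff each $H^n(X)$ simultaneously satisfies $\Hom{A}{B}{H^n(X)}=0$ and $\Ext{1}{A}{B}{H^n(X)}=0$, i.e.\ lies in $\Ccal=B^{\perp_{0,1}}$; and analogously $X\otimes_A^{\LL}B=0$ iff each $H^n(X)\in\Ecal$. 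So (ii) and (iii) both reduce to a cohomology-by-cohomology analysis, which is why the cited sources phrase them as direct corollaries of a two-term resolution of $B$.

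Part (iv) is then a packaging of (i)--(iii) under the hereditary hypothesis, which automatically supplies the needed dimension bounds on $B$ from either side. The only additional content is the identification $\Ecal={}^{\perp_{0,1}}\Xcal$ and $\Ccal=\Xcal^{\perp_{0,1}}$. For the first identity, bireflectivity of $\Xcal$ implies $\Xcal$ is closed under products and submodules of products, so the classes $^{\perp_{0,1}}\Xcal$ and $\{M\mid M\otimes_A B=0=\Tor{1}{A}{M}{B}\}$ coincide via the fact that $B$ generates $\Xcal$ as a bireflective subcategory (every $X\in\Xcal$ embeds in a product of copies of $B$ with cokernel in $\Xcal$, so the vanishing against $B$ propagates); the dual argument handles $\Ccal$. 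I expect the main obstacle to be precisely this last identification: translating ``kernel of a derived functor restricted to modules'' into ``orthogonal of $\Xcal$'' requires knowing that $\Xcal$ is controlled cohomologically by $B$, which is where the hereditary hypothesis (and the Kraft--\v{S}\v{t}ov\'{i}\v{c}ek framework for bireflective subcategories) does the real work.
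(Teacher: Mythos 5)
The paper offers no proof of this theorem: all four parts are imported verbatim from \cite{AKL1}, \cite{BP} and \cite{KSt}, so the only question is whether your sketches would actually reconstruct those arguments. For (i)--(iii) they essentially would. The dévissage in (i) is standard (one small slip: $X$ is the homotopy \emph{limit} of the truncations $\tau^{\geq -n}X$, not their colimit; the usual route is $X\cong\hocolim_n\tau^{\leq n}X$ plus closure of $\im\lambda_*$ under products to absorb the cohomologically bounded-above pieces), and the two-column spectral sequence coming from a length-one projective (resp.\ flat) resolution of $B$ gives exactly the short exact sequences $0\to\Ext{1}{A}{B}{H^{n-1}(X)}\to H^n\rhom{A}{B}{X}\to\Hom{A}{B}{H^n(X)}\to 0$ and their Tor analogues, which is all that (ii) and (iii) require.

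The genuine gap is in (iv), at exactly the point you flagged as the main obstacle. Your proposed fix does not work: embedding objects of $\Xcal$ into products of copies of $B$ (which is false in general anyway -- not every $B$-module is cogenerated by $B$) could at best relate perpendicularity to $B$ with perpendicularity to $\Xcal$, but the real issue is different in kind. The class $\Ecal$ from (iii) is cut out by \emph{Tor}-vanishing against $B$, whereas ${}^{\perp_{0,1}}\Xcal$ is cut out by \emph{Hom/Ext}-vanishing into $\Xcal$, and no generation/cogeneration argument converts one into the other. The bridge is the derived adjunction $\rhom{A}{M}{\lambda_*Y}\cong\rhom{B}{M\otimes_A^\LL B}{Y}$, valid because $\lambda$ is a homological epimorphism, combined with the splitting $M\otimes_A^\LL B\cong (M\otimes_A B)\oplus\Tor{1}{A}{M}{B}[1]$ available over a hereditary ring: if $M\otimes_A^\LL B=0$ then $\rhom{A}{M}{X}=0$ for every $B$-module $X$, and conversely testing against $Y=M\otimes_A B$ and $Y=\Tor{1}{A}{M}{B}$ in degrees $0$ and $1$ forces both to vanish. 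This is the actual content of \cite[Propositions 2.6 and 3.1]{KSt}. By contrast, the identification $\Ccal=B^{\perp_{0,1}}=\Xcal^{\perp_{0,1}}$ is \emph{not} the formal dual of this: it follows elementarily from a free presentation of $X\in\Xcal$ over $B$, with no hereditary hypothesis needed.
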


\begin{theorem}{\cite[Theorem~4.1]{Sch}}\label{def:universallocalisation}
Let $A$ be a ring and $\Sigma$ be a class of morphisms between
finitely generated projective right $A$-modules. Then
there is a ring $A_\Sigma$ and a ring homomorphism 
$f: A\longrightarrow A_\Sigma$ such that
\begin{enumerate}
\item $f$ is \emph{$\Sigma$-inverting,} i.e.~if
$\sigma$ belongs to  $\Sigma$, then
$\sigma\otimes_A A_\Sigma$ is an isomorphism of right
$A_\Sigma$-modules, and \item $f$ is \emph{universal
$\Sigma$-inverting}, i.e.~for any $\Sigma$-inverting morphism $f': A\longrightarrow B$
there exists a unique ring homomorphism ${g}:
A_\Sigma\longrightarrow B$ such that $g\circ f=f'$.
\end{enumerate}
The homomorphism
$f\colon A\longrightarrow A_\Sigma$ is a ring epimorphism
with  $\Tor{1}{A}{A_\Sigma}{{A_\Sigma}}=0$, called
the \textbf{universal localization} of $A$ at
$\Sigma$.
\end{theorem}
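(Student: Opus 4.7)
The plan is to construct $A_\Sigma$ explicitly by a generators-and-relations procedure—adjoining formal inverses for the matrices representing the maps in $\Sigma$—then verify the universal property directly, deduce the epimorphism condition from uniqueness, and finally attack the statement $\Tor{1}{A}{A_\Sigma}{A_\Sigma}=0$ via the bireflective-subcategory criterion in Theorem~\ref{epicl}.

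First I would reduce to square matrices. Given $\sigma\colon P\to Q$ in $\Sigma$, pick finitely generated projective modules $P',Q'$ with $P\oplus P'\cong A^n$ and $Q\oplus Q'\cong A^m$; then a map $A\to B$ inverts $\sigma$ iff it inverts a suitable square stabilization $\sigma\oplus \mathrm{id}\colon A^N\to A^N$, so after replacing $\Sigma$ by an equivalent family we may assume every $\sigma\in\Sigma$ is represented by a square matrix $M_\sigma\in M_{n_\sigma}(A)$. Set
\[
A_\Sigma \;=\; \bigl(A*_{\mathbb{Z}}\mathbb{Z}\langle y^{(\sigma)}_{ij}\mid \sigma\in\Sigma,\ 1\le i,j\le n_\sigma\rangle\bigr)\big/I,
\]
where $I$ is the two-sided ideal generated by the entries of $M_\sigma Y_\sigma-I_{n_\sigma}$ and $Y_\sigma M_\sigma-I_{n_\sigma}$ for every $\sigma$, with $Y_\sigma=(y^{(\sigma)}_{ij})$, and let $f\colon A\to A_\Sigma$ be the canonical map.

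The verifications proceed in order: (i) $f$ is $\Sigma$-inverting by construction, with $Y_\sigma$ realising $M_\sigma^{-1}$ in $A_\Sigma$; (ii) for any $\Sigma$-inverting $f'\colon A\to B$, the assignment $y^{(\sigma)}_{ij}\mapsto (f'(M_\sigma)^{-1})_{ij}$ extends $f'$ uniquely to a ring homomorphism $g\colon A_\Sigma\to B$ with $g\circ f=f'$, since $A_\Sigma$ is generated as a ring by $f(A)$ and the $y^{(\sigma)}_{ij}$, and the relations are forced; (iii) the uniqueness in (ii) shows that any two ring maps from $A_\Sigma$ that agree on $f(A)$ must coincide, hence $f$ is a ring epimorphism.

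The main obstacle is the final step, $\Tor{1}{A}{A_\Sigma}{A_\Sigma}=0$. Combining (iii) with Theorem~\ref{epicl}, this is equivalent to the essential image $\Xcal$ of the restriction functor $\rmod{A_\Sigma}\to\ModA$ being closed under extensions in $\ModA$. The crucial ingredient is the intrinsic characterization $M\in\Xcal$ iff the map $\sigma\otimes_A M\colon P\otimes_A M\to Q\otimes_A M$ is an isomorphism for every $\sigma\in\Sigma$; granted this, closure under extensions follows easily from the Snake Lemma applied to $P\otimes_A(-)$ and $Q\otimes_A(-)$ and the flatness of $P$ and $Q$. The genuine technical heart lies in the ``if'' direction of the characterization: one must produce an $A_\Sigma$-action on $M$ from the invertibility of all $\sigma\otimes_A M$ and check compatibility with the defining relations of $A_\Sigma$. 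This reduces to a normal-form argument for words in the free product $A*_{\mathbb{Z}}\mathbb{Z}\langle y^{(\sigma)}_{ij}\rangle$ (essentially Cohn's matrix reduction), and I expect this to be the only place in the proof that requires real work—the remaining verifications are formal consequences of the construction and the universal property.
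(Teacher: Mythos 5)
This statement is quoted verbatim from Schofield's book \cite[Theorem~4.1]{Sch}; the paper offers no proof of its own, so your proposal can only be measured against the standard argument in the cited source. Your overall architecture (generators and relations, formal verification of the universal property, epimorphism from uniqueness of factorizations, and $\Tor{1}{A}{A_\Sigma}{A_\Sigma}=0$ via extension-closure of the essential image) is the right one, but there is one genuine error and one misplaced difficulty. The error is the opening reduction: it is \emph{not} true that inverting $\sigma\colon P\to Q$ is equivalent to inverting a square stabilization $\sigma\oplus\mathrm{id}\colon A^N\to A^N$. Adding identities to both sides turns the source into $A^n\oplus Q'$ and the target into $A^m\oplus P'$, which are simultaneously free only when $P$ and $Q$ are stably isomorphic; indeed, inverting $\sigma$ forces $P\otimes_AA_\Sigma\cong Q\otimes_AA_\Sigma$, hence an identification $[P]=[Q]$ in $K_0(A_\Sigma)$, which no localization at square matrices over $A$ imposes. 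The standard repair (and what Schofield actually does) is to present $P=eA^n$ and $Q=fA^m$ by idempotent matrices, write $\sigma$ as a rectangular matrix $M=fMe$, and adjoin a rectangular formal inverse $Y$ subject to $Y=eYf$, $MY=f$ and $YM=e$. With that modification your verifications of (1), (2) and the epimorphism property are correct and purely formal.

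The second issue is that you defer the ``genuine technical heart'' --- the claim that a right $A$-module $M$ lies in the essential image of $\rmod{A_\Sigma}\to\ModA$ as soon as every $\Hom{A}{\sigma}{M}$ is bijective --- to a normal-form argument in the free product. No such argument is needed, and leaving the crux of your $\Tor{}{}{}{}$-vanishing step as an expectation is the real gap in the write-up. The clean route is: the right $A$-module structure on $M$ is a ring homomorphism $\rho\colon A\to\End_{\mathbb Z}(M)^{\mathrm{op}}$, and for $\sigma\in\Sigma$ the map $\sigma\otimes_A\End_{\mathbb Z}(M)^{\mathrm{op}}$ is identified with $\Hom{A}{\sigma}{M}$; hence $\rho$ is $\Sigma$-inverting and factors through $A_\Sigma$ by the universal property you have already established, exhibiting $M$ as (the restriction of) an $A_\Sigma$-module. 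Granting this, extension-closure of the essential image follows from the Five Lemma applied to $\Hom{A}{P}{-}$ and $\Hom{A}{Q}{-}$ (note these are the right functors for right modules; your $P\otimes_A-$ is the left-module version), and Theorem~\ref{epicl} then yields $\Tor{1}{A}{A_\Sigma}{A_\Sigma}=0$. Be aware, though, that this outsources the substance to Theorem~\ref{epicl}, which the paper itself cites without proof; Schofield and Bergman--Dicks instead derive the $\Tor{}{}{}{}$-vanishing directly from the presentation. Where normal forms (Malcolmson's criterion) genuinely are needed is in computing $A_\Sigma$ or deciding when it collapses, neither of which the theorem asserts.
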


Recall that a ring $A$ is said to be \emph{right semihereditary} if all its finitely generated right ideals are projective. It is well known that every finitely generated submodule of a projective right $A$-module is then projective, and therefore
$\rfmod{A}$ consists of modules of projective dimension at most one.

\begin{theorem} \label{univlochom}
Let $A$ be a right semihereditary ring. There is a bijection between
\begin{enumerate}
\item  wide subcategories (i.e.~abelian subcategories closed under extensions)  of $\rfmod{A}$,
\item epiclasses of universal localizations of $A$,
\end{enumerate}
which assigns to a wide subcategory 
$\Mcal\subseteq\rfmod{A}$ 
the epiclass of the universal localization $\lambda_\Mcal$ at the projective resolutions of the modules in $\Mcal$.
If $A$ is right hereditary, then there is also a  bijection with
\begin{enumerate}
\item[(3)] bireflective extension closed subcategories of $\ModA$,
\end{enumerate}
which assigns to a wide subcategory 
$\Mcal\subseteq\rfmod{A}$ the perpendicular category $\Mcal^{\perp_{0,1}}$.
 Conversely, given  a
 bireflective subcategory $\Xcal$, the associated
 wide subcategory is $\Mcal={}^{\perp_{0,1}}\Xcal\cap\rfmod{A}$.\end{theorem}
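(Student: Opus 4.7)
The plan is to construct explicit maps in both directions and verify they are mutually inverse. First, I would verify that the forward map $\Mcal \mapsto \lambda_\Mcal$ is well-defined. Since $A$ is right semihereditary, every $M \in \rfmod{A}$ fits in a short exact sequence
$$0 \to P_1^M \xrightarrow{\sigma_M} P_0^M \to M \to 0$$
with $P_i^M \in \projA$. Choosing one such resolution for each $M$ in a wide subcategory $\Mcal$ and setting $\Sigma_\Mcal = \{\sigma_M \mid M \in \Mcal\}$, Theorem~\ref{def:universallocalisation} produces a ring epimorphism $\lambda_\Mcal: A \to A_{\Sigma_\Mcal}$ whose epiclass is independent of the chosen resolutions, because two resolutions of the same $M$ yield chain-homotopy equivalent maps between projectives and hence mutually invert one another after any base change.

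For the reverse direction, I would associate to a universal localization $\lambda: A \to A_\Sigma$ the subcategory
$$\Mcal_\lambda := \{M \in \rfmod{A} \mid M \otimes_A A_\Sigma = 0 \text{ and } \Tor{1}{A}{M}{A_\Sigma} = 0\},$$
which the long exact Tor sequence shows to be closed under kernels, cokernels, and extensions in $\rfmod{A}$, hence wide. One composition is immediate: for $M \in \Mcal$, tensoring the resolution of $M$ with $B = A_{\Sigma_\Mcal}$ makes $\sigma_M \otimes_A B$ invertible, forcing $M \otimes_A B = 0$ and $\Tor{1}{A}{M}{B} = 0$, so $\Mcal \subseteq \Mcal_{\lambda_\Mcal}$. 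The main obstacle will be the reverse inclusion $\Mcal_{\lambda_\Mcal} \subseteq \Mcal$: this rests on Schofield's theorem that the subcategory of $\rfmod{A}$ annihilated by a universal localization coincides with the smallest wide subcategory containing the cokernels of the inverted maps. Since by construction these cokernels are precisely the modules of $\Mcal$, and $\Mcal$ is already wide, equality follows. The other composition $\lambda \mapsto \Mcal_\lambda \mapsto \lambda_{\Mcal_\lambda}$ then drops out because both universal localizations invert the same maps up to wide generation and therefore satisfy the same universal property.

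For the bijection with (3) in the hereditary case, I would first invoke Theorem~\ref{epicl} to translate bireflective extension-closed subcategories of $\ModA$ into epiclasses of ring epimorphisms $\lambda: A \to B$ with $\Tor{1}{A}{B}{B} = 0$. Over a right hereditary ring, every such epimorphism is known to arise as a universal localization (a theorem of Schofield, refined later in the spirit of \cite{KSt}), yielding the bijection (2)$\leftrightarrow$(3). To make the composed bijection (1)$\leftrightarrow$(3) explicit as $\Mcal \mapsto \Mcal^{\perp_{0,1}}$, I would apply $\Hom{A}{-}{X}$ to the resolution of $M \in \Mcal$: a module $X$ lies in the essential image of the restriction functor along $\lambda_\Mcal$ precisely when $\Hom{A}{\sigma_M}{X}$ is bijective for every $M \in \Mcal$, which — since $\sigma_M$ is injective with cokernel $M$ — is equivalent to $\Hom{A}{M}{X} = 0 = \Ext{1}{A}{M}{X}$. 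The inverse formula $\Xcal \mapsto {}^{\perp_{0,1}}\Xcal \cap \rfmod{A}$ then follows by retracing the bijections and using that a wide subcategory $\Mcal \subseteq \rfmod{A}$ is determined by its double perpendicular in $\ModA$.
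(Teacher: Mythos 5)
Your proposal follows essentially the same route as the paper's proof: both identify the wide subcategory attached to a universal localization $\lambda\colon A\to B$ with the class of finitely presented modules killed by $-\otimes_A B$ and $\Tor{1}{A}{-}{B}$, verify wideness using right coherence and projective dimension at most one (the paper cites Schofield's Lemma 5.3 for exactly this), defer the recovery of $\Mcal$ from $\lambda_{\Mcal}$ to Schofield's arguments, and reduce the hereditary statement (3) to the Schofield/Krause--\v{S}\v{t}ov\'{\i}\v{c}ek identification of extension-closed bireflective subcategories with universal localizations. The only point you elide that the paper makes explicit is the preliminary reduction (Schofield's Lemma 2.1, which the paper notes must be rechecked in the semihereditary setting) allowing one to assume $\Sigma$ consists of \emph{injective} maps between finitely generated projectives --- without it the cokernels of the inverted maps need not lie in $\Mcal_{\lambda}$, and your ``same universal property'' step for the composition $\lambda\mapsto\Mcal_{\lambda}\mapsto\lambda_{\Mcal_{\lambda}}$ would not go through.
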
 
\begin{proof} 
The result goes back to \cite[Theorem 2.3]{Scho2},\cite[Theorem 6.1]{KSt} for the case when $A$ is right hereditary. For the semihereditary case one proceeds similarly. Indeed, first of all, 
one easily checks that the proof in \cite[Lemma 2.1]{Scho2} is still valid. Given a universal localization $\lambda:A\to B$,  we can thus assume that $\lambda$ is the universal localization at a set $\Sigma$ of injective morphisms between
finitely generated projective right $A$-modules. We can then consider the bireflective subcategory $\Xcal$ of $\ModA$ associated to $\lambda$ together with its left perpendicular subcategory   $\Mcal={}^{\perp_{0,1}}\Xcal\cap\rfmod{A}$, which  consists of all finitely presented modules $M$ whose projective resolutions are inverted by $B$, that is, $M\otimes_AB=\Tor{1}{A}{M}{{B}}=0$, cf.~\cite[Theorem 5.2]{Scho1}. Since $A$ is right coherent and all finitely presented right $A$-modules have projective dimension at most one, we infer from~\cite[Lemma 5.3]{Scho1} that $\Mcal$ is a wide subcategory of $\rfmod{A}$. By construction,
$\lambda$ lies in the same epiclass as $\lambda_\Mcal$. This shows that the assignment $\Mcal\mapsto\lambda_\Mcal$ in the statement is surjective. The injectivity is shown as in \cite[Theorem 2.3]{Scho2}.
\end{proof}


\subsection{Minimal cosilting modules}\label{minimalcosiltingmod}
 Ring epimorphisms with nice homological properties can be used to  construct silting modules \cite{AMV2,abundance}. We now investigate the dual construction, developing  work from \cite{B} on the cotilting case. This is going to shed some light on the connection between ring epimorphisms and  cosilting modules. In fact, it will turn out that the class of cosilting modules obtained from the dual construction is in general larger  than the class of silting modules arising from ring epimorphisms, cf.~Examples~\ref{exampleminimal}(4) and (5).
\begin{definition}  Let $M$ be a right $A$-module, and let
$\Ccal$ be a class of left $A$-modules. We say that $M$ is a \emph{$\Ccal$-Mittag-Leffler module} if the canonical map
\[\rho \colon  M\otimes_A \prod _{i\in I}C_i\to \prod _{i\in
I}(M\otimes_AC_i)\] is injective for any family $\{C_i\}_{i\in
I}$ of modules in $\Ccal$.
\end{definition}

We refer the reader to \cite[\S 5, \S 6, \S 13]{GT} for the basics on the notions of cotorsion pairs, (pre)envelopes and (pre)covers, and tilting cotorsion pairs in module categories.

\begin{lemma} \label{ML} Let $T$ be a silting right $A$-module, and let $C=T^+$. Then every module in $\Add T$ is $\Cogen C$-Mittag-Leffler.
\end{lemma}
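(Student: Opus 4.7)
The plan is to combine Proposition~\ref{dually} with a snake-lemma computation. Write $T = \Coker(\sigma\colon P \to Q)$ for a projective presentation witnessing the silting property $\Gen T = \Dcal_\sigma$. By Proposition~\ref{dually}, $C = T^+$ is cosilting with respect to the injective copresentation $\sigma^+$, and hence $\Cogen C = \Ccal_{\sigma^+}$. Via the tensor--hom adjunction $\Hom{A}{N}{X^+} \cong (X \otimes_A N)^+$, this translates into the working description $\Cogen C = \{N \in \AMod \mid \sigma \otimes_A N \text{ is injective}\}$.

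Next, I would reduce to the case $M = T$. The $\Cogen C$-Mittag-Leffler property is stable under direct summands (the canonical map $\rho$ splits additively) and under arbitrary direct sums: for a family $\{M_j\}$, the relevant canonical map factors as $\bigoplus_j \rho_{M_j}$ followed by the always-injective canonical inclusion $\bigoplus_j \prod_i X_{j,i} \hookrightarrow \prod_i \bigoplus_j X_{j,i}$, so injectivity of each $\rho_{M_j}$ forces injectivity of the whole. Since every $M \in \Add T$ is a summand of some $T^{(J)}$, it suffices to prove the claim for $T$ itself.

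Now given a family $(C_i)_{i \in I}$ in $\Cogen C$, the product $\prod_i C_i$ again lies in $\Cogen C$ because cosilting classes are definable and thus closed under products. Consequently $\sigma \otimes_A \prod_i C_i$ and each $\sigma \otimes_A C_i$ are injective, and tensoring the presentation with $\prod_i C_i$ or, alternatively, with each $C_i$ followed by forming the product yields the commutative diagram
\[
\begin{CD}
0 @>>> P \otimes_A \prod_i C_i @>>> Q \otimes_A \prod_i C_i @>>> T \otimes_A \prod_i C_i @>>> 0 \\
@. @VV{\rho_P}V @VV{\rho_Q}V @VV{\rho_T}V @. \\
0 @>>> \prod_i (P \otimes_A C_i) @>>> \prod_i (Q \otimes_A C_i) @>>> \prod_i (T \otimes_A C_i) @>>> 0
\end{CD}
\]
with short exact rows. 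The maps $\rho_P$ and $\rho_Q$ are injective: reducing via projectivity to free modules $A^{(J)}$, the canonical map $(\prod_i C_i)^{(J)} \hookrightarrow \prod_i C_i^{(J)}$ is visibly an injection. The snake lemma therefore embeds $\Ker \rho_T$ into $\Coker \rho_P$ through the connecting homomorphism.

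The hard part will be to show that this connecting map is zero. Given $y \in \Ker \rho_T$, lift it to $\tilde y \in Q \otimes_A \prod_i C_i$, which, as a finite tensor sum, arises from some homomorphism $q \colon A^n \to Q$ applied to a tuple $c = (c_1, \dots, c_n) \in (\prod_i C_i)^n$. The hypothesis $\rho_T(y) = 0$ forces, for each $i$, the element $(q \otimes C_i)(c_{1,i}, \dots, c_{n,i})$ into the image of the injective map $\sigma \otimes C_i$, yielding a unique pointwise preimage $z_i \in P \otimes_A C_i$. What is needed is to assemble these local witnesses into a single global element $\tilde z \in P \otimes_A \prod_i C_i$ with $\rho_P(\tilde z) = (z_i)_i$, which reduces to producing a factorization through $\sigma$ of a suitable map built from $q$ and compatible with $c$. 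This is the main technical obstacle, and is where the silting condition $\Gen T = \Dcal_\sigma$ must be exploited: since $T^{(I)} \in \Dcal_\sigma$ for every set $I$, every map from $P$ into a copower of $T$ factors through $\sigma$, and a careful application of this property should furnish the uniform lift and thereby the vanishing of the boundary map.
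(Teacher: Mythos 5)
Your scaffolding is sound: the identification $\Cogen C=\Ccal_{\sigma^+}=\{N\in\AMod\mid \sigma\otimes_AN \text{ is injective}\}$ via Proposition~\ref{dually} and adjunction, the reduction to $M=T$ using closure of the relative Mittag--Leffler property under direct sums and summands, and the snake-lemma diagram with $\rho_P,\rho_Q$ injective, so that $\Ker{\rho_T}$ embeds into $\Coker\,\rho_P$ via the connecting map (equivalently, $\Ker{\rho_T}\cong\Ker{(\Coker\,\rho_P\to\Coker\,\rho_Q)}$). But the argument stops exactly where the content of the lemma begins: you do not prove that the connecting map vanishes, you only assert that a ``careful application'' of $\Gen T=\Dcal_\sigma$ ``should'' assemble the local witnesses $z_i\in P\otimes_AC_i$ into a single element of $P\otimes_A\prod_iC_i$. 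This is a genuine gap, not a routine verification. The silting condition you propose to exploit concerns morphisms out of $P$ into objects of $\Dcal_\sigma$, whereas the obstruction lives in $\Coker\,\rho_P$: the elements $z_i$ may involve an unbounded collection of elements of $P$ as $i$ ranges over $I$ (only the $q_k$'s in $Q$ are uniformly finite), and there is no evident mechanism turning a factorization property of maps $P\to T^{(I)}$ into the required uniform lift. If this step were formal, every cokernel of a map between projectives would be Mittag--Leffler relative to the corresponding class, and the hypothesis that $T$ is silting would be superfluous.

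For comparison, the paper does not attempt a direct diagram chase at all. It first passes to $\overline A=A/\Ann{T}$, over which $T$ becomes a tilting module, considers the tilting cotorsion pair $(\Acal,\Gen T)$ in $\rfmod{\overline A}$, and invokes \cite[Corollary~9.8]{relml}, which says that every module in $\Acal$ --- in particular every module in $\Add T$ --- is $\Cogen C$-Mittag--Leffler; the property is then transferred from $\overline A$ back to $A$. That cited result is a substantial theorem whose proof relies on tilting classes being of finite type and on the machinery of relative Mittag--Leffler conditions developed in \cite{relml}; your missing step is essentially equivalent to it, so it cannot be dispatched by an elementary manipulation of the presentation $\sigma$. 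To repair your proof you would either have to reproduce that theory or, more economically, follow the paper's reduction to the tilting case and cite the known result.
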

\begin{proof}
We know from \cite{AMV1} that $T$ is a tilting module over $\overline{A}=A/I$ where $I=\{a\in A\mid Ta=0\}$ is the annihilator of $T$ in $A$. We consider the tilting cotorsion pair $(\Acal,\Gen T_{\overline{A}})$ induced by $T$  in $\rmod{\overline{A}}$ and the dual cotilting class $\Cogen _{\overline{A}}C$. By \cite[Corollary 9.8]{relml}, 
every module in $\Acal$ is $\Cogen _{\overline{A}}C$-Mittag-Leffler. Now, any module $M\in\Add T$ is an $\overline{A}$-module contained in $\Acal$. 
Moreover, since $\rmod{\overline{A}}$ is closed under products and submodules, and $C= \Hom{k}{T}{W}$ is an $\overline{A}$-module, every module $X$ in $\Cogen _AC$ is contained in $\Cogen _{\overline{A}}C$ and satisfies  $M\otimes_A X\cong M\otimes_{\overline A} X$.
Hence the Mittag-Leffler property over $\overline{A}$ entails the Mittag-Leffler property over ${A}$.
\end{proof}

\begin{proposition}\label{precosilting}
Let  $T$ be a silting right $A$-module and let 
\begin{equation}\label{approxseq1}
A\stackrel{f}{\longrightarrow} T_0\to T_1\to 0
\end{equation}
	be an exact sequence such that $f$ is a $\Gen{T}$-preenvelope and $T_1$ lies in $\Add T$. 
Set $C=T^+$ and $C_i=(T_i)^+$ for $i=0,1$ and consider the exact sequence
\begin{equation}\label{coapproxseq}
0\to C_1\to C_0\stackrel{f^+}{\longrightarrow}  A^+.
\end{equation}
Then $f^+$ is a $\Cogen C$-precover, $C_1$ lies in $\Prod C$, and the subcategory $\Ycal=\Cogen C\cap{}^{\perp_0}C_1$ is bireflective.  
\end{proposition}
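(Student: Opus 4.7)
The plan is to establish each of the three assertions in turn, exploiting the adjunction isomorphisms of Lemma~\ref{L:dualityformulas} together with a reduction to the factor ring $\bar A = A/\Ann{T}$, over which $T$ becomes tilting and $C = T^+$ becomes cotilting. Every $X \in \Cogen C$ is automatically a left $\bar A$-module, since $\Ann{T}$ annihilates $C$ and hence every product and submodule thereof; moreover, $T_0 \in \Gen T$ (as the target of a $\Gen T$-preenvelope) is an $\bar A$-module, and the preenvelope property of $f$ forces $\Ker f = \Ann{T}$, so the sequence becomes a genuine short exact sequence $0 \to \bar A \to T_0 \to T_1 \to 0$ of right $\bar A$-modules.

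Part (2) is immediate: since $T_1 \in \Add T$, there is a split monomorphism $T_1 \hookrightarrow T^{(J)}$ for some set $J$, and dualising produces a split epimorphism $C^J \cong (T^{(J)})^+ \twoheadrightarrow T_1^+ = C_1$, so $C_1 \in \Prod C$. For part (1), I use the adjunction identity $\Hom{A}{X}{M^+} \cong (M \otimes_A X)^+$ (valid for $X \in \AMod$ and $M \in \ModA$) to identify $\Hom{A}{X}{f^+}$ with $(f \otimes_A X)^+$. Since $(-)^+$ is exact and faithful, its surjectivity is equivalent to the injectivity of $f \otimes_A X : X \to T_0 \otimes_A X$. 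Working over $\bar A$ and using the cotilting-class identity $\Cogen C = \{X \mid \Tor{1}{\bar A}{T}{X} = 0\}$ together with $T_1 \in \Add T$, we have $\Tor{1}{\bar A}{T_1}{X} = 0$, and the Tor long exact sequence of $0 \to \bar A \to T_0 \to T_1 \to 0$ delivers the required injectivity of $X \to T_0 \otimes_{\bar A} X = T_0 \otimes_A X$.

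For part (3), I verify the four closure conditions for bireflectivity of $\Ycal$ in $\AMod$. The adjunction $\Hom{A}{X}{C_1} \cong (T_1 \otimes_A X)^+$ together with the faithfulness of $(-)^+$ yields the reformulation ${}^{\perp_0}C_1 = \{X \in \AMod \mid T_1 \otimes_A X = 0\}$. Closure under \emph{products} combines the product-closure of $\Cogen C$ with Lemma~\ref{ML}: for $\{X_i\}\subseteq\Ycal$, the Mittag-Leffler injection $T_1 \otimes_A \prod_i X_i \hookrightarrow \prod_i(T_1 \otimes_A X_i) = 0$ forces $\prod_i X_i \in {}^{\perp_0}C_1$. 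Closure under \emph{coproducts} is immediate, as $\Cogen C$ is definable and hence closed under direct sums, while $\Hom{A}{-}{C_1}$ sends coproducts to products. For \emph{kernels}, applying the Tor long exact sequence of $0 \to \Ker\alpha \to X \to \im\alpha \to 0$ with $T_1$ and using $\im\alpha \subseteq Y \in \Cogen C$ (so $\Tor{1}{\bar A}{T_1}{\im\alpha} = 0$) together with $T_1 \otimes_A X = 0$ yields $T_1 \otimes_A \Ker\alpha = 0$.

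The main obstacle lies in closure under \emph{cokernels}. Given $\alpha: X \to Y$ in $\Ycal$ with $Q = \Coker\alpha$, one first observes that $\im\alpha \in \Ycal$: as a submodule of $Y \in \Cogen C$ it is cogenerated by $C$, and as a quotient of $X$ it inherits $\Hom{A}{\im\alpha}{C_1} = 0$. This reduces matters to showing that $Y/Y' \in \Ycal$ whenever $Y' \subseteq Y$ both lie in $\Ycal$. The decisive fact is that for every $M \in \Ycal$ the canonical map $\eta_M : M \to T_0 \otimes_{\bar A} M$ induced by $f$ is an \emph{isomorphism}: tensoring $0 \to \bar A \to T_0 \to T_1 \to 0$ with $M$ over $\bar A$ yields the four-term exact sequence $\Tor{1}{\bar A}{T_1}{M} \to M \to T_0 \otimes_{\bar A} M \to T_1 \otimes_{\bar A} M \to 0$, whose outer terms vanish by virtue of $M \in \Cogen C$ and $M \in {}^{\perp_0}C_1$, respectively. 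Naturality of $\eta$ on $0 \to Y' \to Y \to Y/Y' \to 0$, combined with the Tor long exact sequences with $T_0$ and with $T_1$, then forces $\eta_{Y/Y'}$ to be an iso and $\Tor{1}{\bar A}{T_j}{Y/Y'} = 0$ for $j = 0, 1$; together with $T_1 \otimes_{\bar A}(Y/Y') = 0$ (a quotient of $T_1 \otimes_{\bar A} Y = 0$), this shows $Y/Y' \in \Ycal$ and completes the proof of bireflectivity.
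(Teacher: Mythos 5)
Your strategy is sound and genuinely different from the paper's: the paper obtains the precover property from the identity $\Cogen C=(\Gen T)^\vee$ (so that $X\in\Cogen C$ iff $X^+\in\Gen T$, to which the preenvelope property of $f$ applies directly), outsources closure under coproducts, kernels and cokernels to the dual of \cite[Proposition 3.3]{AMV2}, and only proves product closure by hand via Lemma~\ref{ML}. You instead work throughout with the Tor-characterization of the cotilting class over $\overline{A}=A/\Ann{T}$. Your parts (1) and (2), and the product, coproduct and kernel closures in (3), are correct (your product argument coincides with the paper's), and the observation that $\eta_M\colon M\to T_0\otimes_{\overline{A}}M$ is an isomorphism for $M\in\Ycal$ is exactly the right tool for cokernels.

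The gap is in the final inference of the cokernel step. You conclude $Y/Y'\in\Cogen C$ from $\Tor{1}{\overline{A}}{T_j}{Y/Y'}=0$ for $j=0,1$, but your own characterization of $\Cogen C$ is $\Ker\Tor{1}{\overline{A}}{T}{-}$, and the hypotheses only give $T_1\in\Add T$ and $T_0\in\Gen T$: vanishing of $\Tor{1}{\overline{A}}{-}{Y/Y'}$ on $T_0$ and $T_1$ does not formally yield vanishing on $T$ unless one first shows $T\in\Add(T_0\oplus T_1)$. That is true but requires an argument you do not give: from $0\to\overline{A}\to T_0\to T_1\to 0$ one gets $\Ext{1}{\overline{A}}{T_0}{\Gen T}=0$, so $T_0$ lies in the kernel ${}^{\perp_1}(\Gen T)\cap\Gen T=\Add T$ of the tilting cotorsion pair over $\overline{A}$, whence $T_0\oplus T_1$ is a tilting module equivalent to $T$ and $\Cogen C=\Ker\Tor{1}{\overline{A}}{T_1}{-}$. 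Alternatively, and closer to the dual of the AMV2 argument, you can bypass this entirely using the injectivity of $\eta_{Y/Y'}$, which you already have: it is equivalent to the surjectivity of $\Hom{A}{Y/Y'}{f^+}$, i.e.\ every homomorphism $Y/Y'\to A^+$ factors through $f^+\colon C_0\to A^+$; since $A^+$ is an injective cogenerator of $\AMod$, the canonical embedding of $Y/Y'$ into a power of $A^+$ then factors through the corresponding power of $C_0$, giving $Y/Y'\in\Cogen C_0\subseteq\Cogen C$. Either patch closes the argument; as written, the last step is unjustified.
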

\begin{proof}
We know from Proposition~\ref{dually} that $C$ is a cosilting module with cosilting class $\Cogen C=(\Gen T)^\vee$. 
By Hom-$\otimes$-adjunction, for any left $A$-module $X$ there is a commutative diagram linking the maps $\Hom{A}{X}{f^+}$, $(f\otimes_A X)^+$ and $\Hom{A}{f}{X^+}$. Thus $X\in \Cogen C$ if and only if $X^+\in \Gen T$, which in turn means that $\Hom{A}{f}{X^+}$, or equivalently, $\Hom{A}{X}{f^+}$ is surjective. 

Dually to \cite[Proposition 3.3]{AMV2}, we see that $\Ycal$ is closed under coproducts, kernels and cokernels. By Hom-$\otimes$-adjunction, ${}^{\perp_0}C_1$ consists of the left $A$-modules $Y$ for which $T_1\otimes Y=0$. Now assume that $(Y_i)_{i\in I}$ is a family of left $A$-modules  in $\Ycal$ and consider $Y=\prod_{i\in I}Y_i$. Of course $Y$ belongs to $\Cogen C$. Moreover, 
since  $T_1$ is  $\Cogen C$-Mittag-Leffler by Lemma~\ref{ML}, we have an injective map 
$\rho \colon  T_1\otimes_A Y\to \prod _{i\in
I}(T_1\otimes_AY_i)=0$, showing that $Y$ also belongs to ${}^{\perp_0}C_1$, and therefore to  $\Ycal$.
\end{proof}

\begin{example}\label{Kronecker}
Let $A$ be the Kronecker algebra, i.e., the path algebra of the quiver $\xy\xymatrixcolsep{2pc}\xymatrix{ \bullet \ar@<0.5ex>[r]  \ar@<-0.5ex>[r] & \bullet } \endxy$ over a field $k$. 
Every homological ring epimorphism $\lambda: A\to B$ 
induces a silting module of the form $T=B\oplus\Coker\lambda$. The silting modules arising in this way are termed ``minimal'' (cf.~\cite{AMV2} and Definition~\ref{defmin}).
There is just one   
 silting module  over $A$ (up to equivalence) which is not minimal: the Lukas tilting module $L$ whose
tilting class $\Gen L$ is given by the right $A$-modules without  indecomposable preprojective summands.
 
Consider now the cotilting left $A$-module $W=L^+$.  Up to equivalence, $W$ is the direct sum of the generic module $G$ and all Pr\"ufer modules $S_\infty$, where $S$ runs through a set of representatives of the simple regular modules (for more details, we refer to Chapter 6).  If we apply Proposition~\ref{precosilting} on an exact sequence $0\to A\stackrel{f}{\longrightarrow} L_0\to L_1\to 0$  such that $f$ is a $\Gen{L}$-preenvelope and $L_1$ lies in $\Add L$,
 we obtain an exact sequence $0\to C_1\to C_0\stackrel{f^+}{\longrightarrow}  A^+\to 0$ as in (\ref{coapproxseq}) where  $\Ycal=\Cogen W\cap {}^{\perp_0}C_1=0$. Indeed, this follows from 
 $\Gen L\cap L_1^{\perp_0}= 0$, which is shown in \cite[Example 5.10]{AMV2}.
 
On the other hand, $A^+$ also has a $\Cogen W$-cover $g$ with an exact sequence $0\to W_1\to W_0\stackrel{g}{\longrightarrow}  A^+\to 0$, where $W_1\in\Add G$ and $W_0$ is a direct sum of Pr\"ufer modules by \cite[Theorem 7.1]{RR}. 
Here $\Cogen W\cap {}^{\perp_0}W_1={}^{\perp_{0,1}}W_1={}^{\perp_{0,1}}G$ contains all Pr\"ufer modules, and it is not closed under direct products, because for every simple regular module $S$ the countable product $S_\infty\,^{\mathbb N}$ contains a copy of the generic module as a direct summand by \cite[Proposition 3]{R}. 
Notice that $\Hom{A}{W_0}{W_1}=0$, while $C_0$ and $C_1$ must contain a Pr\"ufer module as a common direct summand.
\end{example}


In view of the discussion above, we introduce the following terminology.

\begin{definition}\label{cominimal} A cosilting left $A$-module $C$  is said to be {\it minimal}
if there is an exact sequence \begin{equation}0\to C_1\to C_0\stackrel{g}{\longrightarrow}  A^+\end{equation}
such that  $g$ is a $\Cogen C$-precover, $C_1$ lies in $\Prod C$, and
\begin{enumerate}  \item[(i)] the subcategory $\Cogen C\cap{}^{\perp_0}C_1$ is bireflective, that is, it is closed under direct products, \item[(ii)] $\Hom{A}{C_0}{C_1}=0$. \end{enumerate}\end{definition}

The following conditions are needed to construct minimal cosilting modules from ring epimorphisms.

 \begin{definition} With the notation from Definition~\ref{def p silting}, we say that
\begin{enumerate}
\item	
a projective presentation $ P\stackrel{\sigma}{\longrightarrow} P'\to T\to 0$ of an $A$-module $T$ is  a \emph{presilting presentation} if 
$\Hom{\DModA}{\sigma}{{\sigma}^{(I)}[1]}=0$ for all sets $I$, or equivalently, $\Gen T\subseteq\mathcal{D}_{\sigma}$;
\item
an injective copresentation $0\to C\to  I\stackrel{\omega}{\longrightarrow} I'$ of an $A$-module $C$ is  a \emph{precosilting copresentation} if 
$\Hom{\DModA}{{\omega}^{I}}{\omega[1]}=0$ for all sets $I$, or equivalently, $\Cogen C\subseteq\Ccal_{\omega}$.
\end{enumerate}
\end{definition}

\begin{remark}\label{min}
  A module $C$ has a precosilting copresentation if and only if its minimal injective copresentation is precosilting, which
 amounts to the condition 
$\Cogen C\subset {}^{\perp_1} C$ by \cite[Lemma 3.3]{abundance}. 
\end{remark}

\begin{example}\label{copreco}
(1) Every (co)silting module has a pre(co)silting (co)presentation.

\smallskip

 (2) If $\lambda:A\to B$ is  a ring epimorphism such that  $B$ has a presilting presentation $\sigma$, then  $T=B\oplus \Coker\lambda$ is a silting right $A$-module with silting class
 $\Gen T=\Gen B\subseteq\Dcal_\sigma$ by  \cite[Proposition 1.3]{AMV4}. 
 
Furthermore,   $\Cogen B^+\subseteq\Ccal_{\sigma^+}$.
Indeed, $\Cogen B^+ = \Cogen T^+$ is a cosilting class with dual definable subcategory $\Gen T$  according to  Proposition~\ref{dually}.
 Hence  $X \in \Cogen B^+$ entails  $X^+\in\Dcal_\sigma$, that is,  the morphisms   $\Hom{A}{\sigma}{X^+}$ and $\Hom{\Aop}{X}{\sigma^+}$ are  surjective,  and $X\in\Ccal_{\sigma^+}$.
 
 We conclude that if $\lambda:A\to B$ is a ring epimorphism such that $B$ has a presilting presentation, then $B^+$ has a precosilting copresentation.
 Example~\ref{exampleminimal}(4) below will show that the converse is not true.

\smallskip
 
 (3)  If $\lambda:A\to B$ is a ring epimorphism  such that the left $A$-module $B^+$ has a precosilting copresentation, then $\Tor{1}{A}{B}{B}=0$. Indeed, the assumption entails that $_AB\in\Cogen B^+\subset {}^{\perp_1} B^+$, hence  $\Tor{1}{A}{B}{B}^+\cong\Ext{1}{A}{B}{B^+}=0$.
 
 \smallskip
 
 (4) A ring epimorphism  $\lambda:A\to B$  such that the weak dimension of $B_A$ is at most one is homological if and only if the left $A$-module $B^+$ has a precosilting copresentation. The if-part follows from (3). For a proof of the only-if-part, we  show that $\Cogen B^+\subset {}^{\perp_1} B^+$.  To this end, observe first that $B^+$ is an injective left $B$-module, hence $\Ext{1}{A}{X}{B^+}\cong\Ext{1}{B}{X}{B^+}$ vanishes for all $X\in\Prod B^+$. Now the formula $\Ext{1}{A}{X}{B^+}\cong \Tor{1}{A}{B}{X}^+$   and the assumption on the weak dimension of $B_A$ ensure that  $\Ext{1}{A}{X}{B^+}$ vanishes even for $X\in\Cogen B^+$. 
 
\smallskip

(5) A ring epimorphism   $\lambda:A\to B$  such that the projective dimension of $B_A$ is at most one  is  homological if and only if $B$ has a presilting presentation. Indeed, if $\lambda$ is homological, then  $\Ext{1}{A}{B}{X}\cong\Ext{1}{B}{B}{X}$ vanishes for all $X\in\Add B$, hence also for $X\in\Gen B$. This shows that $\Gen B\subseteq B^{\perp_1}$, hence any projective resolution of $B_A$ is a presilting presentation. The converse implication follows from (2) and (3).
\end{example}

Given a  bireflective subcategory $\Xcal$ of $\ModA$ and an $R$-module $M$, we call the unit morphism $\eta: M \rightarrow M_\Xcal$ with respect to the left adjoint to the inclusion $\Xcal$ into $\ModA$ the $\Xcal$\emph{-reflection} of $M$; the $\Xcal$\emph{-coreflection}  $\epsilon: M^\Xcal \rightarrow M$ is defined dually. Note that if $\Xcal$ corresponds to a ring epimorphism $\lambda: A \rightarrow B$ via Theorem~\ref{epicl}, then the $\Xcal$-reflection of any $R$-module $M$ is equivalent to the natural map $\eta: M \rightarrow B \otimes_A M$. One can now generalize and refine \cite[Theorem 3.3]{B}  as follows.

\begin{proposition} \label{construct} 
Let  $\lambda:A\longrightarrow B$ be a  ring epimorphism such that the left $A$-module
$B^+$ has a precosilting copresentation. Denote by $\Xcal$ the associated bireflective subcategory of $\AMod$ and set $\Ccal=\Cogen \Xcal$. Then

(1)
 $B^+\oplus \Ker\lambda^+$ is a minimal cosilting  left $A$-module with cosilting class $\Ccal$.
 
 (2) The classes $\Xcal$  and $\Ccal$ consist precisely of the left $A$-modules $M$ whose $\Xcal$-reflection 
 $\eta: M \rightarrow B\otimes_A M$ is bijective, respectively  injective.

(3) $(\Ker(B\otimes_A-),\Ccal)$ is a torsion pair.

 (4) For every module $M\in\ModA$ there is an exact sequence $0\to M'\to M\stackrel{\eta}{\rightarrow} B\otimes_AM\to M''\to 0$ where $M'$ and $M''$ belong to $\Ker(B\otimes_A-)$.
 
 (5) If $0\to X\to C\to Z\to 0$ is a short exact sequence with $X\in \Xcal$ and $C\in\Ccal$, then $Z\in\Ccal$.
 
 (6) If the weak dimension of $B_A$ is at most one, then $\Ccal\subseteq\Ker \Tor{1}{A}{B}{-}$.
\end{proposition}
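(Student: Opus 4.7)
The proof rests on two key observations. First, $\Cogen\Xcal=\Cogen B^+$: since $B^+\in\Xcal$ one direction is immediate, and conversely any $X\in\Xcal$ embeds in $X^{++}$ while dualizing a free $B$-presentation of the right $B$-module $X^+$ embeds $X^{++}$ into a product of copies of $B^+$. Second, via the natural isomorphism $\Ext^1_A(-,B^+)\cong\Tor_1^A(B,-)^+$ (from Hom--tensor adjunction and exactness of $(-)^+$), the precosilting hypothesis $\Cogen B^+\subseteq{}^{\perp_1}B^+$ (cf.\ Remark~\ref{min}) is equivalent to
\[
\Tor_1^A(B,N)=0\quad\text{for every }N\in\Ccal,
\]
which I denote $(\star)$. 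My plan is to prove (2) first, then (5), (3), (6), (4), and finally (1).

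Part (2) is classical for bireflective subcategories associated to ring epimorphisms: the characterization of $\Xcal$ is Theorem~\ref{epicl}, and for $\Ccal$, injectivity of $\eta_M$ gives $M\hookrightarrow B\otimes_AM\in\Xcal$ directly, while for any embedding $M\hookrightarrow X\in\Xcal$ the naturality square with the isomorphism $\eta_X$ forces $\eta_M$ injective. For (5), given $0\to X\to C\to Z\to 0$ with $X\in\Xcal$ and $C\in\Ccal$, let $X'\subseteq B\otimes_AC$ be the image of $X$ under $\eta_C\circ\iota_X$ and set $W=(B\otimes_AC)/X'$; the induced map $Z=C/X\to W$ is injective (since $\eta_C\circ\iota_X\colon X\to X'$ is bijective and $\eta_C$ is injective), and $W\in\Xcal$ by closure of $\Xcal$ under cokernels, so $Z\in\Cogen\Xcal=\Ccal$. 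For (3), the equality ${}^{\perp_0}\Ccal=\Ker(B\otimes_A-)$ follows by adjunction in one direction: if $B\otimes_AM=0$, then for $C\hookrightarrow X\in\Xcal$ we have $\Hom_A(M,C)\hookrightarrow\Hom_A(M,X)\cong\Hom_B(B\otimes_AM,X)=0$; conversely, $M\in{}^{\perp_0}\Ccal$ forces $\eta_M=0$ and then $\mathrm{id}_{B\otimes_AM}=0$ via the universal property. Extension closure of $\Ccal$ (hence the torsion pair) follows from $(\star)$ by a standard lifting argument, and (6) is $(\star)$ itself.

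For (4), set $N=\im\eta_M\in\Ccal$ and split the canonical four-term sequence as $0\to M'\to M\to N\to 0$ and $0\to N\to B\otimes_AM\to M''\to 0$. Writing $\eta_M=i\circ\pi$, we have $B\otimes_A\eta_M=(B\otimes_Ai)\circ(B\otimes_A\pi)=\eta_{B\otimes_AM}$, an isomorphism, so $B\otimes_Ai$ is an isomorphism and the Tor long exact sequence for the second sequence forces $B\otimes_AM''=0$. For $M'$, the map $B\otimes_A\iota\colon B\otimes_AM'\to B\otimes_AM$ vanishes (since $b\otimes m'=b\cdot(1\otimes m')=0$ for $m'\in M'$), so the Tor long exact sequence for the first collapses to $\Tor_1^A(B,N)\twoheadrightarrow B\otimes_AM'$, and $(\star)$ applied to $N$ yields $B\otimes_AM'=0$. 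Finally, for (1), the identification $\Cogen\Xcal=\Cogen B^+$ and the embedding $K^+\hookrightarrow B^+$ (from $0\to K^+\to B^+\xrightarrow{\lambda^+}A^+$) give $\Cogen(B^+\oplus K^+)=\Ccal$; an injective copresentation of $B^+\oplus K^+$ is constructed by combining the hypothesized precosilting copresentation of $B^+$ with the sequence above (using injectivity of $A^+$ to extend $B^+\hookrightarrow A^+$ to a copresentation), and $(\star)$ together with Hom--tensor adjunction verifies that the associated class equals $\Ccal$. Minimality in the sense of Definition~\ref{cominimal} is then checked using parts (3)--(5). I expect the most delicate step to be the kernel computation in (4): without $(\star)$ one obtains only that the map $B\otimes_AM'\to B\otimes_AM$ vanishes, not that $B\otimes_AM'$ itself is zero, and this depends essentially on the precosilting hypothesis.
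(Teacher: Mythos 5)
Your treatment of parts (2)--(6) is correct, and in places it takes a genuinely different, more computational route than the paper. The pivot is your observation $(\star)$: via the natural isomorphism $\Ext{1}{A}{N}{B^+}\cong\Tor{1}{A}{B}{N}^+$ and faithfulness of $(-)^+$, the precosilting hypothesis $\Ccal=\Cogen B^+\subseteq{}^{\perp_1}B^+$ is literally equivalent to $\Tor{1}{A}{B}{N}=0$ for all $N\in\Ccal$. The paper never isolates this; it proves (6) only under the extra weak-dimension hypothesis (via $\lambda$ being homological plus left exactness of $\Tor{1}{A}{B}{-}$), so your argument actually yields (6) unconditionally. Your (2) and (3) coincide with the paper's. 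For (5) you replace the paper's Four Lemma on the naturality square by embedding $Z$ into the cokernel $W=(B\otimes_AC)/\eta_C(X)\in\Xcal$, which works. For (4) the paper first produces the torsion/torsion-free sequence from the pair in (3) and splices it with the reflection of the torsion-free part, whereas you compute $\Ker\eta_M$ and $\Coker\eta_M$ directly; your point that $B\otimes_A\iota$ vanishes on $\Ker\eta_M$, so that $(\star)$ applied to $\im\eta_M$ forces $B\otimes_A\Ker\eta_M=0$, is correct and is indeed where the hypothesis enters essentially.

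Part (1), however, has a real gap. Showing $\Cogen(B^+\oplus\Ker\lambda^+)=\Ccal$ is easy, but a cosilting module must satisfy $\Cogen C=\Ccal_\omega$ for some injective copresentation $\omega$, and your sketch (``$(\star)$ together with Hom--tensor adjunction verifies that the associated class equals $\Ccal$'') does not establish this. The paper gets it by first showing, via the adjunction identifying $\lambda_*(\lmod{B})$ with the modules $X$ for which $\Hom{A}{X}{\lambda^+}$ is bijective, that $\lambda^+\colon B^+\to A^+$ is a $\Cogen B^+$-cover, and then invoking \cite[Proposition 3.5]{abundance}. Likewise, the two minimality conditions of Definition~\ref{cominimal} are not consequences of parts (3)--(5) as you claim: condition (i) is the identification of $\Cogen B^+\cap{}^{\perp_0}\Ker\lambda^+$ with the bireflective subcategory $\lambda_*(\lmod{B})$, and condition (ii) is the separate computation $\Hom{A}{B^+}{\Ker\lambda^+}\cong(\Coker\lambda\otimes_AB^+)^+=0$, which uses that $\lambda\otimes_AB^+$ is bijective because $B^+$ carries a left $B$-module structure. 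These steps need to be supplied.
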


\begin{proof}
(1) By Remark~\ref{min} we know that 
$\Cogen B^+\subset {}^{\perp_1} B^+$.
By arguments as in the proof of Proposition~\ref{precosilting}, we see that a left $A$-module $X$  belongs to $\lambda_\ast({\lmod{B}})$ if and only if  $\Hom{A}{X}{\lambda^+}$ is bijective. 
In particular, $\lambda^+:B^+\to A^+$ is a $\Prod B^+$-cover and a $\Cogen  B^+$-cover.
By \cite[Proposition 3.5]{abundance} it   follows that $B^+\oplus \Ker\lambda^+$ is a  cosilting left $A$-module.

Moreover, the category $\Ycal=\Cogen B^+\cap{}^{\perp_0}\Ker\lambda^+=\lambda_\ast({\lmod{B}})$ is bireflective, and  since $B^+$ is a left $B$-module, the map $\lambda\otimes_A B^+$ is bijective, which proves that  $\Coker\lambda\otimes_AB^+=0$ and  $\Hom{A}{B^+}{\Ker\lambda^+}\cong(\Coker\lambda\otimes_AB^+)^+=0$.
Finally, 		since $\Xcal$ is closed under products,
		$\Ccal = \{M \in \AMod \mid \text{$M$ embeds into a module from $\Xcal$}\} = \Cogen(B^+).$ 
		
		(2)		The statement for $\Xcal$ is clear. For the second statement, note that the injectivity of $\eta:M \rightarrow B \otimes_A M$ implies $M$ embeds in the module $B \otimes_A M $ from $ \Xcal$ and therefore lies in $\Ccal$. Conversely, if $M \in \Ccal$, there is a monomorphism $M \rightarrow N$ with $N \in \Xcal$, and the $\Xcal$-reflection $\eta: M \rightarrow B \otimes_A M$ must be injective.
		
	(3)	The cosilting class $\Ccal$ is a torsion-free class. A left $A$-module $M$ belongs to the torsion class ${}^{\perp_0}\Ccal= {}^{\perp_0}\Xcal$ if and only if $\Hom{A}{M}{\lambda_\ast(N)}\cong\Hom{B}{B\otimes_AM}{N}$ vanishes for any $B$-module $N$, which means  that  $B\otimes_AM=0$.

 (4) Consider the canonical exact sequence $0\to M'\to M\to \overline{M}\to 0$ with $M'\in\Ker(B\otimes_A-)$ and $\overline{M}\in\Ccal$ induced by the torsion pair $(\Ker(B\otimes_A-),\Ccal)$. Then $B\otimes_A M\cong B\otimes_A\overline{M}$. Moreover, the $\Xcal$-reflection of the torsion-free module $\overline{M}$ gives rise to a short exact sequence $0\to \overline{M}\to B\otimes_AM\to M''\to 0$ with $B\otimes _A M''= 0$, hence  $M''$ belongs to $\Ker(B\otimes_A-)$. The claim is now obtained by splicing the two short exact sequences.

(5)  Consider the commutative diagram
		$$
		\begin{CD}
				0 @>>> X @>>> C @>>> Z @>>> 0 \\
				& & \eta_X @VVV\eta_C  @VVV\eta_Z  @VVV @| \\
				  & &  B \otimes_A X@>>> B \otimes_A C @>>> B\otimes_A Z @>>> 0
		\end{CD}
		$$
		with exact rows. As $X \in \Xcal$ and $C \in \Ccal$, the  map $\eta_X$ is an isomorphism, and the  map $\eta_C$ is a monomorphism. Then the Four Lemma shows that  $\eta_Z:Z \rightarrow B \otimes_A Z$ is a monomorphism, and thus $Z\in \Ccal$.

(6) By Example~\ref{copreco}(3), the  ring epimorphism $\lambda:A\to B$ is  homological. Hence $\Tor{1}{A}{B}{N}\cong\Tor{1}{B}{B}{N}$ vanishes for all $B$-modules $N$, and since $\Tor{1}{A}{B}{-}$ is left exact, also for all modules in $\Ccal$.
\end{proof}

 \begin{theorem}\label{copreco-mincosilting}
 The map assigning to a ring epimorphism $\lambda:A\to B$  the  class $\Cogen B^+$ yields a bijection between
 \begin{enumerate}
\item[(i)] epiclasses of  ring epimorphisms $\lambda:A\to B$ such that $B^+$ has a precosilting copresentation,
\item[(ii)] equivalence classes of minimal cosilting  modules.
\end{enumerate}

\end{theorem}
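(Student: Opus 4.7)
The proof proceeds in two directions. For the forward map from (i) to (ii), Proposition~\ref{construct}(1) already does the work: for every $\lambda: A \to B$ in class (i), the module $B^+ \oplus \Ker\lambda^+$ is a minimal cosilting $A$-module with cosilting class $\Cogen B^+$, and the assignment is manifestly invariant on equivalence classes. For the inverse direction, starting from a minimal cosilting module $C$ with data $(C_0, C_1, g)$ as in Definition~\ref{cominimal}, I would set $\Xcal := \Cogen C \cap {}^{\perp_0}C_1$. This is bireflective by condition (i) of Definition~\ref{cominimal}, and extension-closed as the intersection of the torsion-free class $\Cogen C$ with the Hom-orthogonal ${}^{\perp_0}C_1$; by Theorem~\ref{epicl} it then corresponds to the epiclass of a ring epimorphism $\lambda_C: A \to B$ with $\Tor{1}{A}{B}{B} = 0$.

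Next I would verify that $\Cogen B^+ = \Cogen C$. Since $B^+$ is canonically a left $B$-module, $B^+ \in \Xcal \subseteq \Cogen C$, which gives $\Cogen B^+ \subseteq \Cogen C$. For the reverse inclusion, note that $C_0 \in \Xcal$: indeed $C_0 \in \Cogen C$ because $g$ is a $\Cogen C$-precover, and $C_0 \in {}^{\perp_0}C_1$ by condition (ii) of Definition~\ref{cominimal}. Given any $M \in \Cogen C$, I would embed $M$ into a product of copies of the injective cogenerator $A^+$ and factor each component $M \to A^+$ through $g$, obtaining an embedding $M \hookrightarrow C_0^J$ with $C_0^J \in \Xcal$; hence $M \in \Cogen \Xcal = \Cogen B^+$.

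The main obstacle is showing that $\lambda_C$ in fact lies in class (i), i.e., that $B^+$ admits a precosilting copresentation. By Remark~\ref{min} this amounts to $\Cogen B^+ \subseteq {}^{\perp_1}B^+$, which via Lemma~\ref{L:dualityformulas} translates to the vanishing $\Tor{1}{A}{B}{M} = 0$ for every $M \in \Cogen C$. My plan is to combine the cosilting condition $\Cogen C \subseteq {}^{\perp_1}C$ with the exact sequence $0 \to C_1 \to C_0 \to A^+$, in which $C_0 \in \Xcal$ is a $B$-module and $C_1 \in \Prod C$, together with a comparison of $g$ to the $\Xcal$-coreflection $\lambda_C^+: B^+ \to A^+$ of $A^+$ (which is a $\Cogen C$-precover via Hom--tensor adjunction), so as to transfer the required Ext-vanishing from $C$ to $B^+$. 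Once this obstacle is cleared, the bijection follows: applying the inverse construction to $C_\lambda = B^+ \oplus \Ker\lambda^+$ equipped with its natural data $(B^+,\Ker\lambda^+,\lambda^+)$ recovers $\Xcal = \lambda_*(\lmod{B})$ and hence $\lambda$ up to equivalence, while passing from a minimal cosilting $C$ to $\lambda_C$ and back yields a module $B^+ \oplus \Ker\lambda_C^+$ with cosilting class $\Cogen B^+ = \Cogen C$, and so equivalent to $C$.
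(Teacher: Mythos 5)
Your overall strategy coincides with the paper's: the forward map is Proposition~\ref{construct}(1), the bireflective subcategory $\Xcal=\Cogen C\cap{}^{\perp_0}C_1$ produces the candidate ring epimorphism via Theorem~\ref{epicl}, and the identification $\Cogen B^+=\Cogen C$ is argued correctly. But there is a genuine gap at exactly the point you flag as ``the main obstacle'': you never actually prove that $B^+$ has a precosilting copresentation, you only announce a plan to ``transfer the required Ext-vanishing from $C$ to $B^+$.'' Without this step the inverse assignment does not even land in class (i), so the bijection is not established. Moreover, reformulating the goal as $\Tor{1}{A}{B}{M}=0$ for all $M\in\Cogen C$ is a detour that is hard to attack directly, since at this stage you know essentially nothing about $B$ as a right $A$-module.

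The missing idea is to prove the isomorphism $B^+\cong C_0$, and this is where condition (ii) of Definition~\ref{cominimal} does its real work (you only use it to place $C_0$ in $\Xcal$). Since $C_0\in\Xcal$ and $g$ is a $\Cogen C$-precover, $g$ is an $\Xcal$-precover of $A^+$; and because any endomorphism $u$ of $C_0$ with $g\circ u=g$ has $u-{\rm id}$ factoring through $\Ker g=C_1$, the hypothesis $\Hom{A}{C_0}{C_1}=0$ forces $u={\rm id}$, so $g$ is in fact the $\Xcal$-coreflection of $A^+$. Comparing with the other $\Xcal$-coreflection $\lambda^+:B^+\to A^+$ yields $B^+\cong C_0$. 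One then still needs to know that $C_0$ itself has a precosilting copresentation: this follows because $g$ is a $\Cogen C$-precover with $\Ker g=C_1\in\Prod C$, whence $C_0\oplus C_1$ is a cosilting module equivalent to $C$ (the paper cites \cite{abundance} for this), so $C_0\in\Prod C$ and $\Cogen C_0\subseteq\Cogen C\subseteq{}^{\perp_1}C\subseteq{}^{\perp_1}C_0$, which is the precosilting condition by Remark~\ref{min}. Finally, note that $\Xcal$ as you define it depends a priori on the chosen data $(C_0,C_1,g)$ (the paper's Example~\ref{Kronecker} and the remark after the theorem make this point), so well-definedness of your ``inverse construction'' is only recovered after the isomorphism $B^+\cong C_0$ identifies it with the $\Cogen C$-cover of $A^+$; the paper sidesteps this by proving injectivity separately via the uniqueness of covers.
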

\begin{proof}
Proposition~\ref{construct}(1) yields a map (i)$\rightarrow$(ii). To prove the injectivity, suppose  $\lambda_i:A\to B_i$, $i=1,2$, are two ring epimorphisms as in (i) which are mapped to the same cosilting class. Then, as seen in the proof of Proposition~\ref{construct}(1), the map  $\lambda_i^+:B_i^+\to A^+$ is a $\Cogen B_i^+$-cover for $i=1,2$. But $\Cogen B_1^+=\Cogen B_2^+$, hence the cover property yields $B_1^+\cong B_2^+$ and $\Ker \lambda_1^+\cong\Ker\lambda_2^+$, and the bireflective subcategories $\lambda_\ast({\lmod{B_i}})= \Cogen B_i^+\cap{}^{\perp_0}\Ker\lambda_i^+$ coincide for $i=1,2$, showing that $\lambda_1$ and $\lambda_2$ are in the same epiclass.

	Now we show the surjectivity. Take a minimal cosilting module $C$ with an exact sequence \begin{equation}0\to C_1\to C_0\stackrel{g}{\longrightarrow}  A^+\end{equation} such that $g$ is a $\Cogen C$-precover, $C_1$ lies in $\Prod C$,  $\Ycal=\Cogen C\cap{}^{\perp_{0}}C_1$ is a bireflective subcategory of $\lmod{A}$, and $\Hom{A}{C_0}{C_1}=0$. Then  there is a  ring epimorphism $\lambda:A\to B$ such that $\Ycal=\lambda_*(\lmod{B})= \{M\in \AMod\mid \lambda\otimes_AM\text{ is bijective }\}= \{M\in \AMod\mid \Hom{A}{M}{\lambda^+} \text{ is bijective }\}$.
Notice that $\lambda^+:B^+\to A^+$ is then a $\Ycal$-coreflection. On the other hand, the condition $\Hom{A}{C_0}{C_1}=0$ implies that $C_0\in\Ycal$, and therefore also $g:C_0\to A^+$ is a $\Ycal$-coreflection. But then 
  $B^+$  is isomorphic to $C_0$, and in particular
  $B^+$ has a precosilting  copresentation, cf.~\cite[page 9]{abundance}. Now it follows from Proposition~\ref{construct}(1) that $B^+\oplus \Ker\lambda^+$  is a cosilting module which is clearly equivalent to $C$. 
So,  the equivalence class of $C$ lies in the image of our assignment.
\end{proof}

\begin{remark} Example~\ref{Kronecker} shows that the  conditions  (i) and (ii)  in the definition of a minimal cosilting module (Definition~\ref{cominimal}) are independent and depend on the choice of the $\Cogen C$-precover. On the other hand, once we know that $C$ is minimal,  we can always guarantee that the $\Cogen C$-cover of $A^+$ (which is precisely  $\lambda^+:B^+\to A^+$ for the associated ring epimorphism $\lambda:A\to B$) satisfies (i) and (ii).  Hence we can rephrase the definition as follows: $C$ is a minimal cosilting module if the $\Cogen C$-cover of $A^+$ satisfies (i) and (ii).\end{remark}

\begin{corollary}\label{weak} Let  $A$ be a ring of weak global dimension at most one. The map assigning to a ring epimorphism $\lambda:A\to B$  the  class $\Cogen B^+$ yields a bijection between \begin{enumerate}
\item[(i)] epiclasses of homological ring epimorphisms,
\item[(ii)] equivalence classes of minimal cosilting modules.
\end{enumerate}
\end{corollary}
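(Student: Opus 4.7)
The plan is to deduce Corollary~\ref{weak} directly from Theorem~\ref{copreco-mincosilting} by identifying its side (i) with the epiclasses of homological ring epimorphisms under the hypothesis on the weak global dimension of $A$.

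First, I would invoke Theorem~\ref{copreco-mincosilting}, which already provides the desired bijection between equivalence classes of minimal cosilting modules and epiclasses of ring epimorphisms $\lambda:A\to B$ such that $B^+$ admits a precosilting copresentation. The map in both statements is the same: $\lambda\mapsto\Cogen B^+$. Hence everything reduces to showing that, under the assumption $\mathrm{w.gl.dim}\,A\le 1$, the condition ``$B^+$ has a precosilting copresentation'' is equivalent to ``$\lambda$ is a homological ring epimorphism''.

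For this equivalence I would appeal to Example~\ref{copreco}(4), which states precisely this equivalence provided the weak dimension of $B_A$ is at most one. Since $B_A$ is a right $A$-module and $\mathrm{w.gl.dim}\,A\le 1$, the weak dimension of $B_A$ is automatically at most one, so the hypothesis of Example~\ref{copreco}(4) is satisfied for every ring epimorphism $\lambda:A\to B$. Thus the two classes of ring epimorphisms coincide, and the bijection of Theorem~\ref{copreco-mincosilting} restricts to the claimed one.

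There is no real obstacle here: the work has been done in Theorem~\ref{copreco-mincosilting} and in Example~\ref{copreco}(4), and the hypothesis on the weak global dimension is exactly what is needed to remove the auxiliary assumption on the weak dimension of $B_A$ appearing in Example~\ref{copreco}(4). The only thing worth double-checking is that every ring epimorphism $\lambda:A\to B$ produces a right $A$-module $B_A$ to which the weak dimension bound applies, which is immediate.
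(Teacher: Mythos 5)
Your proposal is correct and matches the paper's intended argument: the corollary is stated without proof precisely because it is the restriction of Theorem~\ref{copreco-mincosilting} obtained by observing, via Example~\ref{copreco}(4), that when $\mathrm{w.gl.dim}\,A\le 1$ (so that $B_A$ automatically has weak dimension at most one) the condition ``$B^+$ has a precosilting copresentation'' is equivalent to $\lambda$ being homological. Nothing is missing.
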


\begin{corollary}\label{flatcomm} Let $A$ be a commutative noetherian ring. The map assigning to a ring epimorphism $\lambda:A\to B$  the  class $\Cogen B^+$ yields a bijection between \begin{enumerate}
\item[(i)] epiclasses of homological (that is, flat) ring epimorphisms,
\item[(ii)] equivalence classes of minimal cosilting modules.
\end{enumerate}
\end{corollary}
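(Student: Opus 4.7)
The plan is to derive this as a direct specialisation of Theorem~\ref{copreco-mincosilting}, in close parallel with the proof sketch for Corollary~\ref{weak}. Theorem~\ref{copreco-mincosilting} already furnishes a bijection between epiclasses of ring epimorphisms $\lambda:A\to B$ for which $B^+$ admits a precosilting copresentation and equivalence classes of minimal cosilting modules; so the entire task reduces to showing that, over a commutative noetherian ring $A$, the ring epimorphisms $\lambda:A\to B$ with the property that $B^+$ has a precosilting copresentation coincide with the homological (equivalently, flat) ring epimorphisms.

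The key external ingredient is the fact, recalled in the text just before Theorem~\ref{recoll}, that over a commutative noetherian ring every ring epimorphism $\lambda:A\to B$ satisfying $\Tor{1}{A}{B}{B}=0$ is automatically flat (this is \cite[Proposition 4.5]{AMSTV}). In particular, for such $\lambda$ the conditions \emph{homological}, \emph{flat}, and \emph{Tor-vanishing in degree one} are mutually equivalent. This also justifies the parenthetical remark ``homological (that is, flat)'' in the statement.

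With this in hand, both required implications are immediate. If $B^+$ admits a precosilting copresentation, then Example~\ref{copreco}(3) yields $\Tor{1}{A}{B}{B}=0$, so by the noetherian fact above $\lambda$ is flat, hence homological. Conversely, if $\lambda$ is homological, then again by the noetherian fact $B$ is flat over $A$, so the weak dimension of $B_A$ is zero and in particular at most one; Example~\ref{copreco}(4) then gives that $B^+$ has a precosilting copresentation. Combining these equivalences with Theorem~\ref{copreco-mincosilting} delivers the bijection, with the assignment $\lambda\mapsto\Cogen B^+$ inherited directly from the proof of Theorem~\ref{copreco-mincosilting}.

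There is essentially no serious obstacle: all the substantive work is absorbed into Theorem~\ref{copreco-mincosilting} and the Examples in~\ref{copreco}, and the only additional input needed to pass from ``precosilting copresentation on $B^+$'' to ``$\lambda$ is flat'' is the cited result of \cite{AMSTV}. The only point to double-check is that, in the if-direction, flatness really does place us in the hypothesis of Example~\ref{copreco}(4), but this is clear since flatness means weak dimension zero.
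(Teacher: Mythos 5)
Your proposal is correct and follows essentially the same route as the paper: the paper's proof likewise invokes \cite[Proposition 4.5]{AMSTV} together with Example~\ref{copreco}(3) and (4) to show that, over a commutative noetherian ring, a ring epimorphism is homological if and only if it is flat if and only if $B^+$ has a precosilting copresentation, and then specializes Theorem~\ref{copreco-mincosilting}. Your extra care about verifying the hypothesis of Example~\ref{copreco}(4) is sound but, as you note, immediate from flatness.
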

\begin{proof}
Recall from  \cite[Proposition 4.5]{AMSTV} that every  ring epimorphism $A\to B$  starting in a commutative noetherian ring $A$ and satisfying $\Tor{1}{A}{B}{B}=0$ is flat, i.e.~$B$ is a flat $A$-module. Combining this with  Example~\ref{copreco} (3) and (4) we infer  that a ring epimorphism $A\to B$ is homological if and only if it is flat, if and only if $B^+$  has a precosilting copresentation. 
\end{proof}
\begin{definition}\label{defmin}\cite{AMV2} A silting module $T$ over a hereditary ring is said to be {\it minimal}
if there is an exact sequence \begin{equation}\label{approxseq3}
A\stackrel{f}{\longrightarrow} T_0\to T_1\to 0
\end{equation}
 such that $f$ is an $\Gen{T}$-envelope and $T_1$ lies in $\Add T$. \end{definition}

 \begin{corollary}\label{hered} Let $A$ be a hereditary ring. Then there are bijections between
 \begin{enumerate}
\item[(i)] epiclasses of homological ring epimorphisms;
\item[(ii)] equivalence classes of minimal silting modules;
\item[(iii)] equivalence classes of minimal cosilting modules.
\end{enumerate}
\end{corollary}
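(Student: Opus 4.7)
The plan is to obtain the three bijections by transitivity through (i), leveraging two results that are already at our disposal. The statement is essentially a formal consequence of results proved earlier in the paper together with a theorem from \cite{AMV2}, and the silting-cosilting duality will explain the correspondence between (ii) and (iii) conceptually.

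First, since every hereditary ring has weak global dimension at most one, Corollary~\ref{weak} immediately provides the bijection between (i) and (iii). Explicitly, the construction from Proposition~\ref{construct}(1) assigns to a homological ring epimorphism $\lambda\colon A\to B$ the minimal cosilting module $B^+\oplus \Ker\lambda^+$, whose cosilting class equals $\Cogen B^+$.

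Next, the bijection between (i) and (ii) is the content of the main theorem of \cite{AMV2} (recalled in the introduction of this paper), which states that over a hereditary ring the homological ring epimorphisms $\lambda\colon A\to B$ are in bijection with the equivalence classes of minimal silting modules, via $\lambda\mapsto B\oplus \Coker\lambda$; the associated silting class is $\Gen B$. Composing these two bijections produces the bijection between (ii) and (iii).

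Finally, to make the correspondence (ii)$\leftrightarrow$(iii) conceptually transparent, I would observe that it coincides with the silting-cosilting duality of Corollary~\ref{duality}. Indeed, if $T\sim B\oplus\Coker\lambda$ is a minimal silting module with $\Gen T=\Gen B$, then by Proposition~\ref{dually} the dual cosilting class is $(\Gen T)^\vee=\Cogen T^+$, while Proposition~\ref{construct}(1) identifies the cosilting class attached to $\lambda$ with $\Cogen B^+$; since $\Coker\lambda\in\Gen B$ on the silting side and $\Ker\lambda^+\in\Cogen B^+$ on the cosilting side, both classes agree. Hence $T\mapsto T^+$ sends equivalence classes of minimal silting modules bijectively onto equivalence classes of minimal cosilting modules, consistent with Corollary~\ref{duality}. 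There is no serious obstacle: the work has already been done in establishing Corollary~\ref{weak}, Proposition~\ref{construct}, Proposition~\ref{dually} and in the cited result from \cite{AMV2}; the only mild check is the compatibility just described, which follows directly from the definitions and the containment $\Coker\lambda\in\Gen B$.
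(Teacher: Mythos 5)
Your proof is correct and takes essentially the same route as the paper: the bijection (i)$\leftrightarrow$(iii) is Corollary~\ref{weak} (applicable since hereditary rings have weak global dimension at most one), the bijection (i)$\leftrightarrow$(ii) is the cited result of \cite{AMV2}, and the composite (ii)$\leftrightarrow$(iii) is identified with the duality $T\mapsto T^+$ of Corollary~\ref{duality}. The paper's proof is just a terser version of yours; your explicit verification that $\Cogen T^+=\Cogen B^+$ via $\Gen T=\Gen B$ is a correct spelling-out of the compatibility the paper leaves implicit.
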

\begin{proof}
It is shown in \cite{AMV2} that the  minimal silting (right) modules over a hereditary ring $A$ correspond bijectively to the epiclasses of homological ring epimorphisms $\lambda:A\to B$. The  bijection associates to $\lambda$  the silting module $B\oplus \Coker\lambda$.
The bijection (i)$\leftrightarrow$(iii) is Corollary~\ref{weak}. Notice that this correspondence is the composition of the bijection (i)$\to$(ii)  with the  map from Corollary~\ref{duality} which associates the equivalence class of a silting (right) module with the equivalence class of its dual cosilting module. 
\end{proof}

\begin{example}\label{exampleminimal} (1)  Every finite dimensional (co)silting module over a finite dimensional hereditary algebra $A$ is minimal. Indeed, $A$ has a $\Gen T$-envelope for every finite dimensional silting module $T$, and every finite dimensional cosilting module is of cofinite type (e.g.~by \cite[Corollary 3.8]{AH} or by Theorem~\ref{hereditaryutc}), hence equivalent to $T^+$ for a finite dimensional silting module $T$.


\smallskip

(2)  Let $A$ be a commutative noetherian ring. It is shown in \cite{AH} that the cosilting classes in $\AMod$ are precisely the torsion-free classes in hereditary torsion pairs, and they are therefore parametrized by subsets  $V\subset \Spec A$ closed under specialization. The subset $V$ is computed as $V=\Supp\Tcal$ where $\Tcal={}^{\perp_0}\Ccal$ is the torsion class associated to the cosilting class $\Ccal$. 

Recall further from Corollary~\ref{flatcomm} that every minimal cosilting class corresponds to  
a flat ring epimorphism $A\to B$, thus the hereditary torsion pair is of the form $(\Tcal=\Ker(B\otimes_A-), \Ccal=\Cogen B^+)$. The subsets $V$ corresponding to flat ring epimorphisms are determined in \cite[Theorem 4.9]{AMSTV}; they are characterized by the property that their complement is coherent in the sense of \cite{K}. We conclude that the minimal cosilting classes in $\AMod$ are parametrized by the specialization-closed subsets of $\Spec A$ having a coherent complement.

\smallskip

(3) If $A$ is a commutative noetherian ring of Krull dimension at most one, the map assigning to a ring epimorphism $\lambda:A\to B$  the  class $\Cogen B^+$ yields a bijection between 
\begin{enumerate}
\item[(i)] epiclasses of homological ring epimorphisms,
\item[(ii)] equivalence classes of  cosilting modules.
\end{enumerate}
The claim is a special case of Corollary~\ref{flatcomm}. It follows from (2) by noting that every subset of $\Spec A$ is coherent when $A$ has Krull dimension at most one, see \cite[Corollary 4.3]{K}.

\smallskip

(4) Let $A$ be a commutative noetherian local domain.  Then  the embedding $\lambda:A\hookrightarrow Q$  into  the quotient field $Q$ is a flat ring epimorphism which corresponds to  the   subset $V=\Spec A\setminus \{0\}$ and to the cotilting torsion pair $(\Tcal=\Ker( Q\otimes_A-), \Ccal=\Cogen Q^+)$ given by the torsion and torsionfree $A$-modules, respectively.  The dual tilting class $\Dcal$ consists of all divisible modules. 
Assume that $Q$ is not countably generated over $A$. 
 Combining a result of Kaplansky~\cite{Ka} with \cite[Theorem 1.1]{AHT1},  we infer that the $A$-module $Q$ has projective dimension at least two, the tilting module generating $\Dcal$ has not the shape $ Q\oplus \Coker\lambda$, and $A$ does not admit a $\Dcal$-envelope, in contrast with the bijection (i)$\leftrightarrow$(ii) in Corollary~\ref{hered} for the hereditary case. 
 Moreover, it follows from Example~\ref{copreco}(2)
 that $Q$ cannot have a presilting presentation.
 
\smallskip

(5) The bijection (i)$\leftrightarrow$(ii) in Corollary~\ref{hered} cannot be extended to rings of weak global dimension at most one. For example, if $A$ is a valuation domain (that is, a commutative local ring of weak global dimension at most one), then the silting modules up to equivalence correspond to flat ring epimorphisms, which coincide with the classical localizations of $A$, this is a consequence of \cite[Theorem 4.7]{AH}. On the other hand, if $A$ is not strongly discrete, meaning that $A$ admits a non-trivial idempotent ideal, then there are homological ring epimorphisms over $A$ which are not flat. They  correspond to minimal cosilting modules which are not of cofinite type, see subsection~\ref{commweak} for a more general statement. For a simple example of a valuation domain which is not strongly discrete, see e.g. \cite[Example 5.24]{BS}.  

\end{example}

\section{TTF triples and ring epimorphisms}

We now want to exploit the construction of cosilting modules arising from ring epimorphisms studied in the previous section. We turn to chains of ring epimorphisms and  use them to construct TTF triples and cosilting objects in the derived category. 

In subsection~\ref{theconstruction}, we provide a construction of a t-structure with definable coaisle from an increasing chain $\ldots\, \lambda_n \leq \lambda_{n+1}\,  \ldots$ of ring epimorphisms $\lambda_n:A\to B_n$ such that all left $A$-modules $B_n^+$ have a precosilting copresentation, or in other words, from a chain of nested cosilting classes $\ldots\, \Cogen B_n^+\subseteq\Cogen B_{n+1}^+\,\ldots$ We show that our construction is a natural extension of the construction of compactly generated t-structures from filtrations by supports discussed in  subsection~\ref{commnoether} for   the commutative noetherian case. Then we determine the conditions ensuring that our t-structure will be induced by a cosilting object. In subsection~\ref{overhered} we specialize to rings of weak global dimension at most one. Here, 
as in the commutative noetherian case,  every t-structure with a definable coaisle encodes a sequence of nested cosilting classes. We can then characterize the t-structures obtained from ring epimorphisms by our construction as those where all  cosilting classes involved are minimal. Recall further that over a hereditary ring our construction yields  compactly generated, cosuspended  TTF triples. In subsection~\ref{SS:chains},   we turn to the dual construction  and determine the  corresponding suspended TTF triples  associated under the bijection $\Psi$ from Theorem~\ref{T:TTFduality}. 

\subsection{Constructing t-structures from chains of epimorphisms}\label{theconstruction}
First of all, we show how to construct a t-structure with definable coaisle.
\begin{proposition}\label{hs}
Let $A$ be a ring, and let 
$$\cdots \subseteq \Vcal_{n-1} \subseteq \Vcal_n \subseteq \Vcal_{n+1} \subseteq \cdots$$
be a (not necessarily strictly) increasing $\Z$-indexed sequence of definable classes closed under extensions in $\AMod$, satisfying the following condition: Whenever $f: V_n \rightarrow V_{n+1}$ is a map with $V_n \in \Vcal_n$ and $V_{n+1} \in \Vcal_{n+1}$ for some $n \in \mathbb{Z}$, then $\Ker(f) \in \Vcal_n$ and $\Coker(f) \in \Vcal_{n+1}$. Then there is a t-structure $(\Ucal,\Vcal)$ in $\DAMod$ with definable coaisle
$$\Vcal = \{X \in \DAMod \mid H^n(X) \in \Vcal_n\text{ for all } n \in \mathbb{Z}\}.$$
\end{proposition}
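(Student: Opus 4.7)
The plan is to show that $\Vcal$ is a definable subcategory of $\DAMod$ closed under $[-1]$ and extensions, and then invoke the machinery of t-structures with definable coaisles from \cite{L, LV} to conclude that $\Vcal$ is a coaisle.

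First, I would check the elementary closure properties. Closure under $[-1]$ is immediate from the chain being increasing: if $X \in \Vcal$, then $H^n(X[-1]) = H^{n-1}(X) \in \Vcal_{n-1} \subseteq \Vcal_n$. Closure under products follows from the compatibility of cohomology with products in $\DAMod$ together with the fact that each $\Vcal_n$ is closed under products (being definable). The main check is closure under extensions: for a triangle $X \to Y \to Z \to X[1]$ with $X, Z \in \Vcal$, the long exact sequence in cohomology yields, for each $n \in \Z$, a short exact sequence
\[
0 \to \Coker(\alpha_{n-1}) \to H^n(Y) \to \Ker{\alpha_n} \to 0,
\]
where $\alpha_n \colon H^n(Z) \to H^{n+1}(X)$ is the connecting morphism. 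Since $H^n(Z) \in \Vcal_n$ and $H^{n+1}(X) \in \Vcal_{n+1}$, the kernel-cokernel hypothesis gives $\Ker{\alpha_n} \in \Vcal_n$, and similarly $\Coker(\alpha_{n-1}) \in \Vcal_n$. Since $\Vcal_n$ is closed under extensions, $H^n(Y) \in \Vcal_n$, so $Y \in \Vcal$.

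Next, I would establish the definability of $\Vcal$ in $\DAMod$. Each $\Vcal_n$ is definable in $\AMod$, hence cut out by a set $\Phi_n$ of morphisms between finitely presented left $A$-modules via surjectivity of the induced $\Hom{A}{\varphi}{-}$. For each $\varphi \in \Phi_n$, one lifts $\varphi$ to a morphism $\tilde{\varphi}$ between compact objects of $\DAMod$ (by replacing source and target with short projective resolutions and shifting by $n$) in such a way that $\Hom{\DAMod}{\tilde{\varphi}}{X}$ is surjective precisely when $\Hom{A}{\varphi}{H^n(X)}$ is surjective. The set $\bigcup_{n \in \Z}\{\tilde{\varphi} \mid \varphi \in \Phi_n\}$ then witnesses the definability of $\Vcal$.

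Finally, definability together with the closure properties established above forces $(\Ucal, \Vcal)$ with $\Ucal = {}^{\perp_0}\Vcal$ to be a t-structure. A definable subcategory of $\DAMod$ is automatically closed under directed homotopy colimits and pure subobjects; combined with closure under products, extensions, and $[-1]$, this yields, via the machinery of \cite{L, LV}, the existence of a left adjoint to the inclusion $\Vcal \hookrightarrow \DAMod$, which is precisely the approximation property defining a t-structure. The hardest step is the definability lifting in the third paragraph: for a general ring $A$ that is not left coherent, finitely presented modules need not be compact in $\DAMod$, so some care is required to confirm that the lifted morphisms $\tilde{\varphi}$ between compact complexes detect the cohomological condition $H^n(X) \in \Vcal_n$ exactly, rather than defining a strictly larger subcategory.
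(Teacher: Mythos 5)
Your overall strategy matches the paper's: verify that $\Vcal$ is closed under cosuspension, products and extensions, establish that $\Vcal$ is definable, and then invoke \cite{LV} (together with \cite{Nee}) to obtain the left adjoint to the inclusion $\Vcal\hookrightarrow\DAMod$. In particular, your closure-under-extensions argument via the long exact cohomology sequence and the kernel--cokernel hypothesis is exactly the one used in the paper.

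The genuine gap is in your third paragraph, the definability of $\Vcal$. You propose to lift each $\varphi\in\Phi_n$ to a morphism $\tilde\varphi$ between compact objects of $\DAMod$ so that $\Hom{\DAMod}{\tilde\varphi}{X}$ is surjective precisely when $\Hom{A}{\varphi}{H^n(X)}$ is, but this is never carried out, and it is precisely the delicate point: a ``short projective resolution'' of a finitely presented module $M$ is not a compact replacement of $M$ in $\DAMod$ unless $\pd{M}\le 1$, and for a two-term truncation $P_1\to P_0$ the group $\Hom{\DAMod}{(P_1\to P_0)}{X[n]}$ mixes $\Hom{A}{M}{H^n(X)}$ with contributions from $H^{n-1}(X)$, so surjectivity of the induced map on these groups does not translate directly into surjectivity of $\Hom{A}{\varphi}{H^n(X)}$. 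You acknowledge the difficulty but do not resolve it. The paper sidesteps the lifting entirely: by \cite[Proposition 3.11]{Kr1}, a subcategory of $\DAMod$ closed under products, pure subobjects and pure quotients is definable; since $H^n$ preserves products and sends pure triangles to pure-exact sequences (\cite[Lemma 2.4]{GP}), and each $\Vcal_n$, being definable in $\AMod$, is closed under products, pure submodules and pure epimorphic images, the class $\Vcal$ inherits these three closure properties and is therefore definable. Replacing your lifting argument with this purity argument closes the gap; the rest of your proof then goes through as written.
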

\begin{proof}
	Invoking \cite[Proposition 3.11]{Kr1}, \cite[Proposition 1.4]{Nee} and \cite[Theorem 4.7]{LV}, it is enough to show that $\Vcal$  is closed under extensions, cosuspensions, products, pure subobjects, and pure quotients. Closure under cosuspensions is clear because $\Vcal_n \subseteq \Vcal_{n+1}$ for all $n \in \Z$. Since the cohomology functor $H^n: \DAMod \rightarrow \AMod$ sends products to products, and 
	pure-exact triangles to pure-exact sequences by \cite[Lemma 2.4]{GP}, the last three closure properties follow from the definability of $\Vcal_n$. 

	Finally, we need to show that $\Vcal$ is closed under extensions. Let $X \rightarrow Y \rightarrow Z \rightarrow X[1]$
be a triangle in $\DAMod$ with $X,Z \in \Vcal$ and consider the induced exact sequence on cohomology
$$H^{n-1}(Z) \xrightarrow{f} H^n(X) \rightarrow H^n(Y) \rightarrow H^n(Z) \xrightarrow{g} H^{n+1}(X).$$
By the hypothesis, the cokernel of the map $f$ belongs to $\Vcal_n$, and the kernel of $g$ belongs to $\Vcal_n$. As $H^n(Y)$ is an extension of these two, and $\Vcal_n$ is closed under extensions, we conclude that $H^n(Y) \in \Vcal_n$ for all $n \in \Z$. Therefore, $Y \in \Vcal$, as desired.
\end{proof}
\begin{remark}\label{BH}
    \begin{itemize}
        \item In \cite{SVR}, coaisles of t-structures in the case of hereditary categories were studied using a notion of \emph{reflective co-narrow sequences} of subcategories. These sequences satisfy essentially the same closure condition to the one considered in Proposition~\ref{hs}. In our situation, the reflectivity of the subcategories in the sequence follows automatically from the assumption of definab.
        \item It is proved in \cite[Proposition 3.7]{BH} that if $A$ is a ring of weak global dimension at most one, then every definable coaisle in $\DAMod$ arises as in Proposition~\ref{hs}.
    \end{itemize}
\end{remark}

Recall from subsection~\ref{epi}  that epimorphisms starting in a ring $A$ form a lattice, where the partial order is induced by inclusion of the corresponding bireflective subcategories. In this lattice,  we now fix  a (not necessarily strictly) increasing chain 
		\begin{equation}\label{chain}\cdots \leq \lambda_{n-1} \leq \lambda_n \leq \lambda_{n+1} \leq \cdots\end{equation}
		of ring epimorphisms $\lambda_n:A\to B_n$ and we assume that the left $A$-modules $B_n^+$ have a precosilting copresentation. Then $\Tor{1}{A}{B_n}{B_n} = 0$, cf.~Example~\ref{copreco}(3). Therefore, the bireflective subcategories  corresponding to the $\lambda_n$ are all extension closed by Theorem~\ref{epicl}.

For every $n\in\mathbb Z$ we also fix the induced ring epimorphism   $\mu_n: B_{n+1} \rightarrow B_n$  given by the diagram
	\begin{equation}\label{mu}\xymatrix{A\ar[rr]^{\lambda_n}\ar[dr]_{\lambda_{n+1}} & & B_n\\ & B_{n+1}\ar[ur]_{\mu_n} & }.\end{equation}

The following observation will be needed later.
\begin{lemma}\label{cones} 
If all $\lambda_n$ in the chain (\ref{chain}) are  homological ring epimorphisms, then also all $\mu_n$ are homological, and in $\DModA$ we have $$\Cone(\mu_n)\cong \Cone(\lambda_n)\otimes_A^\LL  B_{n+1}.$$ Moreover, there is a triangle
	$$\Cone(\lambda_{n+1})\to \Cone(\lambda_n)\to \Cone(\mu_n)\to \Cone(\lambda_{n+1})[1].$$\end{lemma}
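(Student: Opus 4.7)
The plan is to exploit the characterisation of a homological ring epimorphism $\mu\colon R\to S$ as one for which restriction of scalars on unbounded derived categories is fully faithful.

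First, I will verify that $\mu_n$ is itself a homological ring epimorphism. That $\mu_n$ is an epimorphism of rings is immediate from the factorisation $\lambda_n=\mu_n\circ\lambda_{n+1}$: if $f,g\colon B_n\to R$ satisfy $f\mu_n=g\mu_n$, then $f\lambda_n=g\lambda_n$, and $f=g$ follows since $\lambda_n$ is epi. For the homological property, observe the equality of derived restriction functors $\lambda_{n,*}=\lambda_{n+1,*}\circ\mu_{n,*}$. Since the outer two functors are fully faithful by the homological hypothesis, a $2$-out-of-$3$ argument forces $\mu_{n,*}$ to be fully faithful as well, which is equivalent to $\mu_n$ being homological.

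Next, for the cone isomorphism, I will apply $-\otimes_A^\LL B_{n+1}$ to the defining triangle
$$A\xrightarrow{\lambda_n} B_n\to \Cone(\lambda_n)\to A[1].$$
The crucial observation is that $B_n$ lies in the essential image of $\lambda_{n+1,*}$ via the (homological) map $\mu_n$. By Theorem~\ref{recoll} applied to $\lambda_{n+1}$, this essential image coincides with the class of $X\in\DModA$ for which the unit $X\to X\otimes_A^\LL B_{n+1}$ is an isomorphism. Hence $B_n\otimes_A^\LL B_{n+1}\cong B_n$ canonically, and under this identification the morphism $\lambda_n\otimes_A^\LL B_{n+1}\colon B_{n+1}\to B_n$ becomes precisely $\mu_n$, thanks to the factorisation $\lambda_n=\mu_n\circ\lambda_{n+1}$. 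The tensored triangle then reads
$$B_{n+1}\xrightarrow{\mu_n} B_n\to \Cone(\lambda_n)\otimes_A^\LL B_{n+1}\to B_{n+1}[1],$$
yielding $\Cone(\mu_n)\cong\Cone(\lambda_n)\otimes_A^\LL B_{n+1}$ in $\DModA$.

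Finally, the triangle relating the three cones will follow from the octahedral axiom applied to the composition $\lambda_n=\mu_n\circ\lambda_{n+1}$, which directly produces a distinguished triangle of the form
$$\Cone(\lambda_{n+1})\to\Cone(\lambda_n)\to\Cone(\mu_n)\to\Cone(\lambda_{n+1})[1].$$
The only delicate point, I expect, will be step two: one must check carefully that the tensored morphism $\lambda_n\otimes_A^\LL B_{n+1}$ really does correspond to $\mu_n$ under the canonical identification $B_n\otimes_A^\LL B_{n+1}\cong B_n$ coming from the recollement. Everything else is routine bookkeeping in the triangulated setting.
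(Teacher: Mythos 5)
Your proof is correct and follows essentially the same route as the paper: identify $B_n\otimes_A^\LL B_{n+1}\cong B_n$ (and $\lambda_n\otimes_A^\LL B_{n+1}$ with $\mu_n$) using that $B_n$ lies in the essential image of $\lambda_{n+1,*}$, then invoke the octahedral axiom for the composition $\lambda_n=\mu_n\circ\lambda_{n+1}$. The only (harmless) variation is in showing $\mu_n$ is homological: you use the two-out-of-three property of fully faithful functors applied to $\lambda_{n,*}=\lambda_{n+1,*}\circ\mu_{n,*}$, whereas the paper deduces $B_n\otimes^\LL_{B_{n+1}}B_n\cong B_n\otimes^\LL_A B_n\cong B_n$ from a result of Geigle--Lenzing; both arguments are valid.
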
	
	\begin{proof} That $\mu_n$ is homological follows easily form the fact that $\lambda_{n+1}$ being homological yields a natural isomorphism $B_n \otimes_A^\LL B_n \cong B_n \otimes_{B_{n+1}}^\LL B_n$, see \cite[Theorem 4.4]{GL}. For the second statement, consider the diagram obtained by applying the functor $-\otimes_A^\LL B_{n+1}$ on  (\ref{mu}) and use the natural isomorphisms $B_{n+1} \otimes_A^\LL  B_{n+1}\cong B_{n+1}$ and $B_{n} \otimes_A^\LL  B_{n+1}\cong B_{n}$. The third statement follows from the octahedral axiom. \end{proof}

\begin{proposition}\label{construction} {\rm\bf (The construction)}
		Denote by $\Xcal_n$
		the  extension-closed bireflective subcategories of $\AMod$ corresponding to the chain (\ref{chain}), and set  $$\Ccal_n = \Cogen(\Xcal_n) \text{ and } \Vcal_n = \Ccal_n \cap \Xcal_{n+1}$$ for all $n \in \Z$. Then there is a t-structure $(\Ucal,\Vcal)$ in $\DAMod$ with definable coaisle
		$$\Vcal = \{X \in \DAMod \mid H^n(X) \in \Vcal_n  \text{ for all } n \in \Z\}.$$ 		
\end{proposition}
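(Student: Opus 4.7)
The strategy is to invoke Proposition~\ref{hs} applied to the sequence $\{\Vcal_n\}_{n\in\Z}$ with $\Vcal_n=\Ccal_n\cap\Xcal_{n+1}$. We must check that each $\Vcal_n$ is a definable, extension-closed subcategory of $\AMod$, that $\Vcal_n\subseteq\Vcal_{n+1}$, and that kernels and cokernels of maps $f\colon V_n\to V_{n+1}$ with $V_n\in\Vcal_n$ and $V_{n+1}\in\Vcal_{n+1}$ stay in $\Vcal_n$ and $\Vcal_{n+1}$ respectively. As background, each $\Xcal_n$ is bireflective and extension-closed by Theorem~\ref{epicl} together with Example~\ref{copreco}(3), and each $\Ccal_n=\Cogen B_n^+$ is a cosilting (and in particular definable, torsion-free) class by Proposition~\ref{construct}(1). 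The chain inclusion $\Vcal_n\subseteq\Vcal_{n+1}$ follows from $\Ccal_n\subseteq\Ccal_{n+1}$ (since $\Xcal_n\subseteq\Xcal_{n+1}$ implies $\Cogen\Xcal_n\subseteq\Cogen\Xcal_{n+1}$) and the given $\Xcal_{n+1}\subseteq\Xcal_{n+2}$, while extension-closedness of $\Vcal_n$ is automatic.

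The first nontrivial step is the definability of $\Vcal_n$. Closure under products and direct limits is immediate, since $\Ccal_n$ is definable and $\Xcal_{n+1}$, being bireflective, is closed under all limits and colimits in $\AMod$. The delicate point is closure under pure submodules. Given a pure short exact sequence $0\to N\to M\to L\to 0$ in $\AMod$ with $M\in\Vcal_n$, definability of $\Ccal_n$ yields $N,L\in\Ccal_n\subseteq\Ccal_{n+1}$, so by Proposition~\ref{construct}(2) the reflection maps $\eta_N$ and $\eta_L$ into $B_{n+1}\otimes_A N$ and $B_{n+1}\otimes_A L$ are injective. Applying the functor $B_{n+1}\otimes_A-$ to the pure sequence preserves exactness, and the snake lemma applied to the naturality square of $\eta$, using that $\eta_M$ is an isomorphism (since $M\in\Xcal_{n+1}$), forces $\Coker\eta_N\cong\Ker\eta_L=0$. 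Hence $\eta_N$ is an isomorphism, so $N\in\Xcal_{n+1}$ and $N\in\Vcal_n$.

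For the kernel-cokernel condition, fix $f\colon V_n\to V_{n+1}$ with $V_n\in\Vcal_n$ and $V_{n+1}\in\Vcal_{n+1}$. Since $V_n\in\Xcal_{n+1}\subseteq\Xcal_{n+2}$ and $V_{n+1}\in\Xcal_{n+2}$, both are $B_{n+2}$-modules, so $f$ is automatically $B_{n+2}$-linear by full faithfulness of restriction of scalars along $\lambda_{n+2}$, whence $\Coker f\in\Xcal_{n+2}$. To place $\Ker f$ in $\Xcal_{n+1}$, use $V_{n+1}\in\Ccal_{n+1}=\Cogen\Xcal_{n+1}$ to choose an embedding $\iota\colon V_{n+1}\hookrightarrow X$ with $X\in\Xcal_{n+1}$; then $\iota\circ f$ is a $B_{n+1}$-linear map between $B_{n+1}$-modules, and $\Ker(\iota\circ f)=\Ker f$ is a $B_{n+1}$-submodule of $V_n$, so $\Ker f\in\Xcal_{n+1}$. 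Combined with the submodule closure of the torsion-free class $\Ccal_n$, this gives $\Ker f\in\Vcal_n$. Finally, $\im f=V_n/\Ker f$ is a cokernel of $B_{n+1}$-modules in $\Xcal_{n+1}$; applying Proposition~\ref{construct}(5) to the exact sequence $0\to\im f\to V_{n+1}\to\Coker f\to 0$ then yields $\Coker f\in\Ccal_{n+1}$, so $\Coker f\in\Vcal_{n+1}$.

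The principal obstacle is the pure-submodule closure of $\Vcal_n$: the bireflective subcategory $\Xcal_{n+1}$ alone need not be closed under pure submodules in $\AMod$, but intersecting with the cosilting class $\Ccal_n$ restores this closure via the snake-lemma argument above. Once all four hypotheses of Proposition~\ref{hs} are verified, the proposition yields the t-structure $(\Ucal,\Vcal)$ with the stated definable coaisle.
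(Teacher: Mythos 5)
Your proof is correct and follows essentially the same route as the paper: both reduce the statement to verifying the hypotheses of Proposition~\ref{hs} for the sequence $(\Vcal_n)$, and your kernel/cokernel step (placing $\Ker{f}$ and $\im f$ in $\Xcal_{n+1}$ via full faithfulness of restriction of scalars and closure of the bireflective subcategory under kernels and cokernels, then invoking Proposition~\ref{construct}(5) to get $\Coker f\in\Ccal_{n+1}$) is the paper's argument. The only divergences are minor: the paper obtains definability of $\Vcal_n$ by citing that $\Xcal_{n+1}$ and $\Ccal_n$ are themselves definable, where you instead prove pure-submodule closure by hand with the snake lemma applied to the reflection maps, and the paper places $\Coker f$ in $\Xcal_{n+2}$ by checking that $\eta_{\Coker f}$ is both injective and surjective rather than by your (equally valid, slightly more direct) appeal to closure of $\Xcal_{n+2}$ under cokernels.
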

\begin{proof}
		We  check the conditions of Proposition~\ref{hs} for the sequence $(\Vcal_n \mid n \in \Z)$. Clearly $\Ccal_n \subseteq \Ccal_{n+1}$, and therefore we have $\Vcal_n \subseteq \Vcal_{n+1}$ for all $n \in \Z$. The classes $\Ccal_n$ are minimal cosilting classes by Theorem~\ref{copreco-mincosilting}. In particular, $\Xcal_n$ and $\Ccal_n$ are definable and extension closed, and so is $\Vcal_n$ for any $n \in \Z$. Therefore, we are left with showing that $\Ker(f) \in \Vcal_n$ and $\Coker(f) \in \Vcal_{n+1}$ for any homomorphism $f: V_n \rightarrow V_{n+1}$ with $V_i \in \Vcal_i$ for $i=n,n+1$. Denote $K = \Ker(f)$, $C = \Coker(f)$, $I=\im{f}$, and consider the exact sequences
		$$0 \rightarrow K \rightarrow V_n \rightarrow I \rightarrow 0,$$
		and
		$$0 \rightarrow I \rightarrow V_{n+1} \rightarrow C \rightarrow 0.$$
		As $\Ccal_n$ is closed under submodules, $K \in \Ccal_n$. Moreover, since $V_n$ lies in $\Xcal_{n+1}$ and $V_{n+1}$ embeds in a module from $\Xcal_{n+1}$, the module $I$ is the image of a map in $\Xcal_{n+1}$ and therefore lies  in $\Xcal_{n+1}$. Applying Proposition~\ref{construct}(5) to the second exact sequence, we infer	
		 that $C \in \Ccal_{n+1}$. Then also $C \in \Ccal_{n+2}$, and therefore the natural map $\eta_C:C \rightarrow B_{n+2} \otimes_A C$ is a monomorphism. As $C$ is an epimorphic image of $V_{n+1} \in \Xcal_{n+2}$, the  map $\eta_C$ is also an epimorphism, and thus finally $C \in \Xcal_{n+2}$, establishing $C \in \Vcal_{n+1}$.
\end{proof}
 Now we restrict to the special case of homological ring epimorphisms $\lambda: A \rightarrow B_n$ such that the right $A$-modules $B_n$ have weak dimension at most one.  

\begin{proposition}\label{dimone-coaisle}
	Assume that  all $\lambda_n$ in the chain (\ref{chain}) are homological ring epimorphisms such that the right $A$-modules $B_n$ have weak dimension at most one. Then the definable coaisle $\Vcal$ of Proposition~\ref{construction} can be expressed as follows:
$$\Vcal=\{X \in \DAMod \mid \Cone(\lambda_n) \otimes_A^\LL X \in \DD^{\geq n} \text{ for all } n \in \Z\}.$$
\end{proposition}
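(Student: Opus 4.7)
The plan is to establish the equality by a direct cohomological computation. Set $L_n = \Cone(\lambda_n)$ and $\Vcal' = \{X \in \DAMod \mid L_n \otimes_A^{\LL} X \in \DD^{\geq n} \text{ for all } n \in \Z\}$; the goal is $\Vcal = \Vcal'$. Tensoring the defining triangle of $L_n$ on the right with $X$ produces
$$X \longrightarrow B_n \otimes_A^{\LL} X \longrightarrow L_n \otimes_A^{\LL} X \longrightarrow X[1],$$
so statements about the cohomology of $L_n \otimes_A^{\LL} X$ will be extracted from the long exact sequence via the canonical maps $f_k \colon H^k(X) \to H^k(B_n \otimes_A^{\LL} X)$. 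Since ${}_A B_n$ has weak dimension at most one, the hyper-$\Tor$ spectral sequence collapses to natural short exact sequences
$$0 \longrightarrow B_n \otimes_A H^k(X) \longrightarrow H^k(B_n \otimes_A^{\LL} X) \longrightarrow \Tor{1}{A}{B_n}{H^{k+1}(X)} \longrightarrow 0,$$
under which $f_k$ factors as the unit $\eta \colon H^k(X) \to B_n \otimes_A H^k(X)$ followed by the edge monomorphism. I will also use that $M \in \Xcal_n$ forces $\eta$ to be an isomorphism and $\Tor{i}{A}{B_n}{M} = 0$ for all $i \geq 1$ (as $M$ restricts from a $B_n$-module along the homological epimorphism $\lambda_n$), while $M \in \Ccal_n$ has $\eta$ injective and $\Tor{1}{A}{B_n}{M} = 0$ by Proposition~\ref{construct}(2) and~(6).

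For the inclusion $\Vcal \subseteq \Vcal'$, take $X \in \Vcal$ and fix $n$ together with $k < n$; by the long exact sequence, the vanishing $H^k(L_n \otimes_A^{\LL} X) = 0$ reduces to $f_k$ being an epimorphism and $f_{k+1}$ being a monomorphism. Since the chains $\Xcal_i$, $\Ccal_i$ and hence $\Vcal_i$ are all increasing in $i$ (see the proof of Proposition~\ref{construction}) and $k + 1 \leq n$, one has $H^k(X) \in \Vcal_k \subseteq \Xcal_n$ and $H^{k+1}(X) \in \Vcal_{k+1} \subseteq \Ccal_n$. The $\Tor_1$ terms in the short exact sequence then vanish, so $H^k(B_n \otimes_A^{\LL} X) \cong B_n \otimes_A H^k(X)$ and the factorization of $f_k$ through $\eta$ exhibits it as an isomorphism; by the same logic, injectivity of $\eta$ on $H^{k+1}(X)$ forces $f_{k+1}$ to be a monomorphism.

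For the reverse inclusion $\Vcal' \subseteq \Vcal$, take $X \in \Vcal'$ and $i \in \Z$; I must recover $H^i(X) \in \Ccal_i \cap \Xcal_{i+1}$. Instantiating the hypothesis at $n = i$ gives $H^{i-1}(L_i \otimes_A^{\LL} X) = 0$, whence the long exact sequence forces $f_i \colon H^i(X) \to H^i(B_i \otimes_A^{\LL} X)$ to be a monomorphism, and its factorization through $\eta \colon H^i(X) \to B_i \otimes_A H^i(X)$ yields $H^i(X) \in \Ccal_i$. Instantiating at $n = i+1$ yields both $H^{i-1}(L_{i+1} \otimes_A^{\LL} X) = 0$ and $H^i(L_{i+1} \otimes_A^{\LL} X) = 0$, so $f_i \colon H^i(X) \to H^i(B_{i+1} \otimes_A^{\LL} X)$ is an isomorphism; the factorization then compels $\eta \colon H^i(X) \to B_{i+1} \otimes_A H^i(X)$ to be an isomorphism as well, i.e.~$H^i(X) \in \Xcal_{i+1}$. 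The computation is essentially mechanical; the main care lies in the bookkeeping — choosing which index $n$ of the chain extracts which of the two containments — and in the naturality step identifying $f_k$ with the claimed factorization through $\eta$, a step where the homological hypothesis on $\lambda_n$ precisely kills the higher $\Tor$ contributions for modules coming from $\Xcal_n$.
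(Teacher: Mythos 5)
Your proof is correct. The inclusion $\{X \mid \Cone(\lambda_n)\otimes_A^\LL X \in \DD^{\geq n}\ \forall n\} \subseteq \Vcal$ is argued exactly as in the paper: both extract from the long exact sequence that the condition $\Cone(\lambda_n)\otimes_A^\LL X \in \DD^{\geq n}$ amounts to $f_l\colon H^l(X)\to H^l(B_n\otimes_A^\LL X)$ being an isomorphism for $l<n$ and a monomorphism for $l=n$, and then read off $H^i(X)\in\Ccal_i\cap\Xcal_{i+1}$ by instantiating at $n=i$ and $n=i+1$. Where you diverge is the other inclusion. The paper splits $X$ by soft truncations: $\tau^{<n}X$ lies in $\Dcal_{\Xcal_n}(A)=\im(\lambda_n)_*$ and is killed by $\Cone(\lambda_n)\otimes_A^\LL-$ via the recollement of Theorem~\ref{recoll}, $\tau^{>n}X$ is handled by the degree bound coming from a two-term flat resolution of $\Cone(\lambda_n)$, and the stalk $H^n(X)[-n]$ by $\Tor{1}{A}{B_n}{H^n(X)}=0$. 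You instead stay entirely in cohomology, using the degenerate hyper-Tor sequence $0\to B_n\otimes_A H^k(X)\to H^k(B_n\otimes_A^\LL X)\to \Tor{1}{A}{B_n}{H^{k+1}(X)}\to 0$ together with the naturality identifying $f_k$ with the reflection $\eta$ followed by the edge monomorphism; the memberships $H^k(X)\in\Xcal_n$ and $H^{k+1}(X)\in\Ccal_n$ (for $k<n$), combined with Proposition~\ref{construct}(2),(6), then give $f_k$ iso and $f_{k+1}$ mono directly. Your route is more uniform and symmetric between the two inclusions (the same Künneth bookkeeping serves both), at the cost of having to justify the naturality of the edge factorization, which you correctly flag; the paper's truncation argument avoids that but invokes the recollement machinery. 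Both rest on the same module-theoretic inputs, and your argument is complete.
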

\begin{proof} 
	Recall from Example~\ref{copreco}(4) that  the left $A$-module $B_n^+$ has a precosilting copresentation for each $n \in \Z$, and therefore Proposition~\ref{construction} applies. So, setting $\Vcal_n = \Ccal_n \cap \Xcal_{n+1}$, we obtain a definable coaisle $\Vcal = \{X \in \DAMod \mid H^n(X) \in \Vcal_n \text{ for all } n \in \Z\}$  in $\DAMod$. We need to prove that $\Vcal$ equals
	 $\tilde{\Vcal} = \{X \in \DAMod \mid \Cone(\lambda_n) \otimes_A^\LL X \in \DD^{\geq n} \text{ for all } n \in \Z\}$.
	From the long exact sequence of cohomology induced by the natural map $X \rightarrow B_n \otimes_A^\LL X$ we infer that
	$$\Cone(\lambda_n) \otimes_A^\LL X \in \DD^{\geq n} \Leftrightarrow 
	 \text{the map } H^l(X) \rightarrow H^l(B_n \otimes_A^\LL X) $$ $$ \text{ is an isomorphism for all $l<n$ and a monomorphism for $l=n$}.$$
	Since $H^l(B_n \otimes_A^\LL X) \in \Xcal_n$, by Theorem~\ref{KS}(i) we conclude that if $X \in \tilde{\Vcal}$, then $H^n(X) \in \Ccal_n \cap \Xcal_{n+1}$ for each $n\in\mathbb Z$, and thus $X \in \Vcal$. Conversely, if $X \in \Vcal$, consider the soft truncation triangle
	$$\tau^{<n}X \rightarrow X \rightarrow \tau^{\geq n}X \xrightarrow{+}.$$
	Since $X \in \Vcal$, the truncation $\tau^{<n}X $ lies in $\DD_{\Xcal_n}(A)=\DD(B_n\text{-}{\rm Mod})$, and thus $\Cone(\lambda_n) \otimes_A^\LL \tau^{<n}X = 0$, see Theorem~\ref{KS}. Therefore, to show that $X \in \tilde{\Vcal}$, it is enough to show that $\Cone(\lambda_n) \otimes_A^\LL \tau^{\geq n}X \in \DD^{\geq n}$. We truncate further to obtain a triangle
	$$H^n(X)[-n] \rightarrow \tau^{\geq n}X \rightarrow \tau^{>n}X \xrightarrow{+}.$$
	Since the right $A$-module $B_n$ has weak dimension at most one, $\Cone(\lambda_n)$ can be replaced by a complex of right flat $A$-modules concentrated in degrees -1 and 0, and therefore $\Cone(\lambda_n) \otimes_A^\LL \tau^{>n}X \in \DD^{\geq n}$. Also, $\Cone(\lambda_n) \otimes_A^\LL H^n(X)[-n] \cong \Cone(H^n(X) \rightarrow B_n \otimes_A^\LL H^n(X))[-n] \in \DD^{\geq n}$, because $H^n(X) \in \Ccal_n$, and thus $\Tor{A}{1}{B_n}{H^n(X)}=0$ by Proposition~\ref{construct}(6).
\end{proof}

\begin{example}\label{commnoethcoherent}
Let $A$ be a commutative noetherian ring, and let
		$\cdots \leq \lambda_{n-1} \leq \lambda_n \leq \lambda_{n+1} \leq \cdots$
		be an increasing chain of homological ring epimorphisms $\lambda_n:A\to B_n$. Recall from Example~\ref{exampleminimal}(2) that all $B_n$ are flat $A$-modules, and that every $\lambda_n$ corresponds to a hereditary torsion pair $(\Tcal_n=\Ker(B_n\otimes_A-), \Ccal_n=\Cogen B_n^+)$, hence to a minimal cosilting class $\Ccal_n$, and to a   specialization-closed subset $V_n\subset\Spec A$ which has a coherent complement. We obtain a filtration by supports $\Phi:~\mathbb Z\longrightarrow\mathcal{P}(\Spec(A)),\, n\mapsto V_n$ which gives rise to a t-structure $(\Ucal_\Phi,\Vcal_\Phi)$. By \cite[Theorem 3.11]{AJS} $$\Vcal_\Phi=\{X \in \DAMod \mid \mathbf{R}\Gamma_{V_n} X \in \DD^{> n} \text{ for all } n \in \Z\}$$
where $\mathbf{R}\Gamma_{V_n}$ is the right derived functor of the torsion radical $\Gamma_{V_n}$ of the torsion class $\Tcal_n$. In the notation of Theorem~\ref{recoll}, we have  that $\mathbf{R}\Gamma_{V_n}=j_!j^*=j_!(-\otimes_A^\LL\Cone(\lambda_n)[-1]$, see \cite[Remark 3.3]{AMSTV}. Hence we deduce that
	$$\Vcal_\Phi=	\{X \in \DAMod \mid \Cone(\lambda_n) \otimes_A^\LL X \in \DD^{\geq n} \text{ for all } n \in \Z\},$$ that is, the t-structure associated to $\Phi$ coincides with the t-structure constructed in Proposition~\ref{construction}.
	
	In particular, it follows from Example~\ref{exampleminimal}(3) that all compactly generated t-structures over a commutative ring of Krull dimension at most one arise in this way. 
\end{example}

Next, we look for conditions ensuring that the t-structure in Proposition~\ref{construction} is non-degenerate and is thus induced by a cosilting object (cf. \cite[Theorem 4.6]{L}).
	
\begin{proposition}\label{constructioncomplex}
Assume that  all $\lambda_n$ in the chain (\ref{chain}) are homological ring epimorphisms 
	such that the right $A$-modules $B_n$ have weak dimension at most one. 	Denote by $\Xcal_n$  the extension closed bireflective subcategories of $\AMod$, and by
	$\Lcal_n = \Ker(B_n \otimes_A^\LL -)$  the smashing subcategories of $\DAMod$ associated with $\lambda_n$ via Theorems~\ref{epicl} and~\ref{recoll}. 
	Then the t-structure $(\Ucal,\Vcal)$ constructed in Proposition~\ref{construction} is induced by a  cosilting object in $\DAMod$ if and only if the following conditions  hold true.
	\begin{equation}\label{zeros}\bigcap_{n \in \Z}\Xcal_n = 0\text{ and }\bigcap_{n \in \Z}\Lcal_n = 0.\end{equation} In this case, the t-structure $(\Ucal,\Vcal)$ is induced by the (pure-injective) cosilting object
		$$C = \prod_{n \in \Z}\rhom{\Aop}{B_{n+1}}{\Cone(\lambda_n)^+}[-n]\cong\prod_{n \in \Z}\Cone(\mu_n)^+[-n].$$
\end{proposition}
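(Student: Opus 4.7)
The plan is to use the general theory of cosilting t-structures with definable coaisles and then exhibit the cosilting object explicitly. Since $\Vcal$ is definable by Proposition~\ref{construction}, the result \cite[Theorem~3.11]{L} shows that $(\Ucal,\Vcal)$ is homotopically smashing, so Theorem~\ref{T:cosiltingTTF}(2) reduces the statement to two tasks: (a) translating non-degeneracy of $(\Ucal,\Vcal)$ into the two vanishing conditions (\ref{zeros}), and (b) verifying that the stated formula indeed yields the inducing pure-injective cosilting object.

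For (a), I first compute $\bigcap_{k \in \Z}\Vcal[k]$ from the cohomological description in Proposition~\ref{construction}: it consists of complexes whose cohomologies lie in $\bigcap_j\Vcal_j = \bigcap_j(\Ccal_j \cap \Xcal_{j+1})$. Since $\Xcal_j \subseteq \Ccal_j$ and the intersection is invariant under the reindexing $j\mapsto j+1$, this collapses to $\bigcap_j \Xcal_j$, yielding the first equivalence. For the aisle $\bigcap_{k \in \Z}\Ucal[k]={}^{\perp_\Z}\Vcal$, I observe that $B_n^+[-n] \in \Vcal$ for every $n$, since $B_n^+\in \Xcal_n\subseteq \Ccal_n$ and $B_n^+\in \Xcal_{n+1}$ via $\mu_n$. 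By Lemma~\ref{L:dualityformulas}(ii), any $X \in {}^{\perp_\Z}\Vcal$ then satisfies $B_n \otimes_A^\LL X = 0$ for all $n$, giving $\bigcap_k\Ucal[k] \subseteq \bigcap_n\Lcal_n$; in particular the hypothesis $\bigcap_n\Lcal_n=0$ forces $\bigcap_k\Ucal[k]=0$. The reverse inclusion, required to deduce $\bigcap_n\Lcal_n=0$ from non-degeneracy, is the main technical obstacle. Given $X \in \bigcap_n\Lcal_n$ and $V\in\Vcal$, the recollement triangle $\Cone(\lambda_n)[-1] \otimes_A^\LL V \to V \to B_n \otimes_A^\LL V$ of Theorem~\ref{recoll}, combined with the vanishing of $\Hom{\DAMod}{X}{-}$ on any $B_n$-module complex and Proposition~\ref{dimone-coaisle}, yields
\[\Hom{\DAMod}{X}{V[k]} \cong \Hom{\DAMod}{X}{\Cone(\lambda_n) \otimes_A^\LL V[k-1]}\]
for each $n$, with the right-hand side increasingly positively shifted. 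Letting $n \to \infty$ and using $\bigcap_n\Lcal_n=0$ to control the inverse system of approximations $\Cone(\lambda_n) \otimes_A^\LL V[-1] \in \Lcal_n$ via a Milnor-type argument will force this common value to vanish.

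For (b), I combine Lemma~\ref{cones}, which gives $\Cone(\lambda_n) \otimes_A^\LL B_{n+1} \cong \Cone(\mu_n)$, with Lemma~\ref{L:dualityformulas}(ii) to obtain the natural isomorphism
\[\rhom{\Aop}{B_{n+1}}{\Cone(\lambda_n)^+} \cong (\Cone(\lambda_n) \otimes_A^\LL B_{n+1})^+ \cong \Cone(\mu_n)^+,\]
identifying the two forms of $C$. Pure-injectivity of $C$ is then immediate from Corollary~\ref{dualsarepureinjective}(iii). To show that $C$ induces $(\Ucal,\Vcal)$, I compute, again via Lemma~\ref{L:dualityformulas},
\[\Hom{\DAMod}{Y}{\Cone(\mu_n)^+[i-n]} \cong H^{n-i}(\Cone(\mu_n) \otimes_A^\LL Y)^+,\]
so that $Y \in {}^{\perp_{>0}}C$ is equivalent to $\Cone(\mu_n)\otimes_A^\LL Y \in \DD^{\geq n}$ for all $n$. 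Using the triangle $\Cone(\lambda_{n+1}) \to \Cone(\lambda_n) \to \Cone(\mu_n)$ of Lemma~\ref{cones} together with the description of $\Vcal$ from Proposition~\ref{dimone-coaisle}, the inclusion $\Vcal \subseteq {}^{\perp_{>0}}C$ follows formally from the cohomology long exact sequence, while the reverse inclusion once more rests on the downward-induction argument sketched above, exploiting $\bigcap_n \Lcal_n = 0$.

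The hard part will be exactly this ``downward induction from infinity'' that arises in both (a) and (b): converting vanishing conditions for $\Cone(\mu_n) \otimes_A^\LL -$ into those for $\Cone(\lambda_n) \otimes_A^\LL -$ requires knowing that the tower of approximations becomes trivial in the limit, which is precisely what the hypothesis $\bigcap_n \Lcal_n = 0$ furnishes.
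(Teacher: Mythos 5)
Your overall architecture matches the paper's: the same collapse of $\bigcap_j\Vcal_j$ to $\bigcap_j\Xcal_j$, the same identification $\rhom{\Aop}{B_{n+1}}{\Cone(\lambda_n)^+}\cong\Cone(\mu_n)^+$, and the same translation of $Y\in{}^{\perp_{>0}}C$ into $\Cone(\mu_n)\otimes_A^\LL Y\in\DD^{\geq n}$ for all $n$. But there are two genuine gaps. First, your argument that non-degeneracy forces $\bigcap_n\Lcal_n=0$ is circular as written: you take $X\in\bigcap_n\Lcal_n$ and then propose to ``use $\bigcap_n\Lcal_n=0$'' in a Milnor-type limit argument, i.e.\ you assume the conclusion you are trying to reach. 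The isomorphism $\Hom{\DAMod}{X}{V[k]}\cong\Hom{\DAMod}{X}{\Cone(\lambda_n)\otimes_A^\LL V[k-1]}$ is correct, and since $\Cone(\lambda_n)\otimes_A^\LL V\in\DD^{\geq n}$ the right-hand side does vanish for $n$ large --- but only when $X$ is cohomologically bounded above. The non-circular way to close this is to reduce to stalk complexes, using that $\Lcal_n=\Dcal_{\Ecal_n}(A)$ is determined on cohomology (Theorem~\ref{T:BP}); the paper instead shows directly that $\Ecal_n\subseteq\Ucal[n-1]$, by tensoring the approximation triangle of $M[-n]$ with $\Cone(\lambda_n)[-1]$ and observing that the map $M[-n]\rightarrow\Cone(\lambda_n)[-1]\otimes_A^\LL V$ must vanish for degree reasons, so that $M[-n]$ is a direct summand of $\Cone(\lambda_n)[-1]\otimes_A^\LL U\in\Ucal[-1]$. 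Either repair works; the one you sketched does not.

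Second, in part (b) the equality ${}^{\perp_{>0}}C=\Vcal$ alone does not prove that $C$ is a cosilting object inducing $(\Ucal,\Vcal)$: one must also check that $C\in\Vcal$ and that ${}^{\perp_{\leq 0}}C=\Ucal$, and the latter amounts to showing that $C$ is a cogenerator of $\DAMod$ (the paper reduces to exactly these three conditions via \cite[Proposition 4.13]{PV}). The cogenerator step is where both hypotheses of (\ref{zeros}) are actually used: for $X\in{}^{\perp_\Z}C$ every $\mu_n\otimes_A^\LL X$ is an isomorphism, so the third terms of the recollement triangles all coincide and land in $\bigcap_n\Lcal_n=0$, whence $X\cong B_n\otimes_A^\LL X$ for every $n$ and the cohomologies of $X$ lie in $\bigcap_n\Xcal_n=0$. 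Your proposal never mentions this verification, so as it stands it does not establish the displayed formula for the inducing cosilting object; the membership $C\in\Vcal$ is easy from the triangles $B_{n+1}^+[-n-1]\rightarrow\Cone(\mu_n)^+[-n]\rightarrow B_n^+[-n]$, but it too must be said.
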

\begin{proof} 
 (1) We first prove that the conditions (\ref{zeros})  are necessary.
		If our t-structure $(\Ucal,\Vcal)$ is  induced by a  cosilting object, then it must be non-degenerate.
 Recall that  $\Xcal_n \subseteq \Ccal_n \cap \Xcal_{n+1} = \Vcal_n$ for all $n \in \Z$. Then we have a chain of inclusions
		$$\cdots \subseteq \Vcal_{n-1} \subseteq \Xcal_n \subseteq \Vcal_n \subseteq \Xcal_{n+1} \subseteq \cdots$$
		It follows that $\bigcap_{n \in \Z}\Vcal[n] = 0$ if and only if $\bigcap_{n \in \Z}\Vcal_n = 0$ if and only if $\bigcap_{n \in \Z}\Xcal_n = 0$.
		
		For the rest, it is enough to show that $\bigcap_{n \in \Z}\Ucal[n] = 0$ implies that $\bigcap_{n \in \Z}\Lcal_n = 0$. Recall again from Theorem~\ref{T:BP} that $\Lcal_n = \Dcal_{\mathcal{E}_n}(A)$, where $\mathcal{E}_n = \Ker(B_n \otimes_A^\LL -) {\,\cap\,} \AMod$. Therefore it is enough to show that $\bigcap_{n \in \Z} \mathcal{E}_n = 0$. We proceed by proving that $\mathcal{E}_n \subseteq \Ucal[n-1]$ for all $n \in \Z$. Let $M \in \mathcal{E}_n$ and consider the approximation triangle with respect to $(\Ucal,\Vcal)$:
$$U \rightarrow M[-n] \rightarrow V \rightarrow U[1].$$
Denote $L_n = \Cone(\lambda_n)$. Since $M[-n] \in \Lcal_n$, we know from Theorem~\ref{recoll} that $M[-n] \cong L_n[-1] \otimes_A^\LL M[-n]$. Applying $L_n[-1] \otimes_A^\LL -$ we thus obtain a triangle
$$L_n[-1] \otimes_A^\LL U \rightarrow M[-n] \rightarrow L_n[-1] \otimes_A^\LL V \rightarrow L_n[-1] \otimes_A^\LL U[1].$$
By Proposition~\ref{dimone-coaisle}
we get that $L_n[-1] \otimes_A^\LL V \in \DD^{>n}$. As $M[-n] \in \DD^{\leq n}$, the map $M[-n] \rightarrow L_n[-1] \otimes_A^\LL V$ from the latter triangle is zero, and thus $M[-n]$ is a direct summand of $L_n[-1] \otimes_A^\LL U$. Since $L_n[-1] \in \DD^{\leq 1}$, $L_n[-1] \otimes_A^\LL U$ belongs to $\Ucal[-1]$ (cf.~\cite[Proposition 2.3]{HCG}). In conclusion, $M$ is a direct summand of $L_n[n-1] \otimes_A^\LL U \in \Ucal[n-1]$.

(2) We now assume that the conditions (\ref{zeros}) hold true, and prove that $C$ is a cosilting object inducing our t-structure.
	It is enough to show that $C \in \Vcal$, that $C$ is a cogenerator of $\DAMod$, and that ${}^{\perp_{>0}}C = \Vcal$ (cf.~\cite[Proposition 4.13]{PV}).

	Set $C_n = \rhom{\Aop}{B_{n+1}}{\Cone(\lambda_n)^+}$ so that $C = \prod_{n \in \Z}C_n[-n]$. By Lemma~\ref{L:dualityformulas} we can rewrite $C_n$ as	
	$$C_n \cong (\Cone(\lambda_n) \otimes_A^\LL B_{n+1})^+ \cong \Cone(\mu_n)^+,$$
	where the second isomorphism follows from Lemma~\ref{cones}. Consider the triangle
	$$B_{n+1}^+[-n-1] \rightarrow C_n[-n] \rightarrow  B_n^+[-n] \rightarrow B_{n+1}^+[-n].$$
	Since $B_n^+ \in \Xcal_n \subseteq \Vcal_n$ and $B_{n+1}^+ \in \Xcal_{n+1} \subseteq \Vcal_{n+1}$, we see that $C_n[-n] \in \Vcal$. Therefore, $C \in \Vcal$.

	Notice that for any object $X \in \DAMod$, we have the following equivalence:
	$$X \in {}^{\perp_\Z}C_n \Leftrightarrow \mu_n \otimes_A^\LL X \text{ is an isomorphism in $\DAMod$}.$$

	For each $n \in \Z$, consider the morphism of triangles
	\begin{equation}\label{E:mortriangle}
	\begin{CD}
	X @>>> B_{n+1} \otimes_A^\LL X @>>> Y_{n+1} @>>> X[1] \\
	@| @VV \mu_n \otimes_A^\LL X V @VV \alpha_n V @| \\
	X @>>> B_n \otimes_A^\LL X @>>> Y_{n} @>>> X[1]
	\end{CD}
	\end{equation}
	induced as in Theorem~\ref{recoll} by the homological ring epimorphisms $\lambda_n$, where $Y_n\in\Lcal_n$ for all $n \in \Z$.
	Assume that $X \in {}^{\perp_\Z}\prod_{n \geq l}C_n$ for some $l \in \Z$. Then for each $n \geq l$ the vertical maps of (\ref{E:mortriangle}) are isomorphisms. As a consequence, $Y_l \cong Y_n \in \Lcal_n$ for each $n \geq l$, and therefore $Y_l \in \bigcap_{n \geq l}\Lcal_n$.  But $\bigcap_{n \geq l}\Lcal_n = 0$ using that $\Lcal_n \supseteq \Lcal_{n+1}$ for all $n \in \Z$. Thus we conclude that $X \cong B_l \otimes_A^\LL X \in \DD(B_l\text{-}{\rm Mod})= \Dcal_{\Xcal_l}(A)$.

	Now assume that $X \in {}^{\perp_\Z}C$. Then, by the previous computation, the cohomologies of $X$ belong to $\bigcap_{l \in \Z}\Xcal_l = 0$, and we conclude that $X = 0$. Therefore, $C$ is a cogenerator in $\DAMod$.

	Finally, let us prove that ${}^{\perp_{>0}}C = \Vcal$. Using Lemma~\ref{L:dualityformulas}, we compute:
	$$X \in {}^{\perp_{>0}}C \Leftrightarrow \rhom{\Aop}{X}{C} \in \DD^{\leq 0} \Leftrightarrow 
	\Cone(\mu_n)\otimes_A^\LL X\in \DD^{\geq n} \text{ for all } n\in\mathbb Z$$
	$$\Leftrightarrow \text{ for all } n \in \Z: H^l(\mu_n \otimes_A^\LL X) \text{ is an isomorphism for all $l<n$}$$ $$\text{and $H^n(\mu_n \otimes_A^\LL X)$ is a monomorphism}.$$
	
	Given $X \in {}^{\perp_{>0}}C$,  consider again the morphism of triangles (\ref{E:mortriangle}) together with the induced map on long exact sequences of cohomology. For any $l \in \Z$, using the Four Lemma twice, we see that  $H^l(\alpha_n)$ is an isomorphism for all $l<n$ and a monomorphism for $l = n$. In particular, it follows that $H^k(Y_l)\cong H^k(Y_n)$ when $k<l$. By Theorem~\ref{T:BP}, the smashing subcategory $\Lcal_n$ is determined on cohomology, and thus the soft truncation $\tau^{<l}(Y_l) \cong \tau^{<l}(Y_n)$ belongs to $\Lcal_n$ for all $l \leq n$. By our hypothesis, this implies $\tau^{<l}(Y_l) = 0$, and so $H^k(X) \rightarrow H^k(B_l \otimes_A^\LL X) \in \Xcal_l$ is an isomorphism for all $k<l$. Furthermore, the map $H^k(\mu_k \otimes_A^\LL X): H^k(X) \cong H^k(B_{k+1} \otimes_A^\LL X) \rightarrow H^k(B_k \otimes_A^\LL X) \in \Xcal_k$ is a monomorphism. Together, this proves that $H^k(X) \in \Ccal_k \cap \Xcal_{k+1}$  for all $k \in \Z$, and therefore $X\in \Vcal$. 

	Conversely, let $X \in \Vcal=\{X \in \DAMod \mid  \Cone(\lambda_n) \otimes_A^\LL X \in \DD^{\geq n} \text{ for all } n \in \Z\}$. Then the triangle from Lemma~\ref{cones}
	shows that $\Cone(\mu_n)$ is an extension of $\Cone(\lambda_n)$ and $\Cone(\lambda_{n+1})[1]$, and therefore $\Cone(\mu_n) \otimes_A^\LL X$ belongs to $\DD^{\geq n}$, showing that $X \in {}^{\perp_{>0}}C$.
\end{proof}

\subsection{Minimal cosuspended TTF triples} \label{overhered} The goal of this subsection is to determine the t-structures in $\DAMod$ that arise from chains of homological ring epimorphisms in the case when the weak global dimension of the ring $A$ is at most one.  

We say that a subcategory $\Ccal$ of $\DAMod$ is \emph{determined on cohomology} if the following equivalence holds for any object $X \in \DAMod$:
	$$X \in \Ccal \Leftrightarrow H^n(X)[-n] \in \Ccal \text{ for all } n \in \Z.$$
	
For example, the definable coaisle obtained in Proposition~\ref{construction} is determined on cohomology, by construction. Moreover, all aisles and coaisles of  t-structures over hereditary rings are determined on cohomology, cf.~subsection~\ref{hereditary}. 
For rings of weak global dimension  at most one, it was proved in
 \cite[Theorem 3.4]{BH} that all definable coaisles are determined on cohomology. 
  We are now going to establish the same result for the corresponding aisles. We will also see that in such case the  t-structure determines a sequence of module-theoretic cosilting classes. Before that, we need the following simple but very useful Lemma.
\begin{lemma}\label{L:kunneth}
	Let $A$ be a $k$-algebra of weak global dimension at most one.
	For any complex of right $A$-modules $X$, any complex of left modules $Y$, and any $n \in \Z$ there is a short exact sequence
	$$0 \rightarrow \bigoplus_{p + q = n}H^p(X) \otimes_A H^q(Y) \rightarrow H^n(X \otimes_A^\LL Y) \rightarrow \bigoplus_{p + q = n+1} \Tor{1}{A}{H^p(X)}{H^q(Y)} \rightarrow 0$$
in $\rmod{\, k}$.
\end{lemma}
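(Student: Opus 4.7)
The plan is to carry out the classical Künneth argument for a tensor product of chain complexes, exploiting that $\mathrm{wgldim}(A)\le 1$ allows me to replace $X$ by a resolution whose cycle and boundary subcomplexes are themselves flat, and that it forces all higher $\Tor$ terms to vanish so that the potential higher contributions collapse to a two-term short exact sequence.

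First, I would take K-flat resolutions $P\to X$ and $Q\to Y$ with flat terms, which exist by \cite{Sp}. Using $\mathrm{wgldim}(A)\le 1$, any submodule of a flat $A$-module is again flat: applying the long $\Tor$-sequence to $0\to N\to F\to F/N\to 0$ gives $\Tor{1}{A}{N}{-}=0$ from the vanishing of $\Tor{2}{A}{F/N}{-}$. Hence the cycle modules $Z^n=Z^n(P)$ and boundary modules $B^n=B^n(P)$ are all flat, and each short exact sequence
\[
0\to B^n\to Z^n\to H^n(X)\to 0
\]
is a length-one flat resolution of the right $A$-module $H^n(X)$.

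Next, I would consider the tautological short exact sequence of complexes of flat right $A$-modules
\[
0\to Z\to P\to B[1]\to 0,
\]
where $Z$ and $B$ denote the graded modules $(Z^n)$ and $(B^n)$ equipped with the zero differential and the central map is the differential of $P$. Tensoring over $A$ with $Q$ preserves exactness term-by-term (all terms involved are flat) and yields a short exact sequence of complexes of $k$-modules. Since $Z$ and $B$ have zero differential and flat components, a direct computation gives
\[
H^n(Z\otimes_A Q)=\bigoplus_{p+q=n} Z^p\otimes_A H^q(Y),\qquad H^n(B[1]\otimes_A Q)=\bigoplus_{p+q=n+1} B^p\otimes_A H^q(Y).
\]
The associated long exact cohomology sequence then breaks into the required short exact sequence in each degree once the connecting morphism is identified as the direct sum of the maps $B^p\otimes_A H^q(Y)\to Z^p\otimes_A H^q(Y)$ induced by the inclusions $B^p\hookrightarrow Z^p$: its cokernel and kernel are read off from the flat resolutions $0\to B^p\to Z^p\to H^p(X)\to 0$ as $\bigoplus_{p+q=n} H^p(X)\otimes_A H^q(Y)$ and $\bigoplus_{p+q=n+1}\Tor{1}{A}{H^p(X)}{H^q(Y)}$, respectively.

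The main obstacle is identifying the connecting morphism as the one coming from the inclusions $B^p\hookrightarrow Z^p$. This amounts to a naturality check in the zig-zag lemma applied to the short exact sequence above, which is straightforward but requires keeping track of signs and of the shift on $B[1]$ once an explicit K-flat resolution has been fixed. Everything else is a routine consequence of flatness and of the vanishing of $\Tor{i}{A}{-}{-}$ for $i\ge 2$.
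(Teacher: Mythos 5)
Your argument is correct and is essentially the same as the paper's: the paper simply invokes the K\"unneth formula (citing the proof of Proposition 3.6 in the Bazzoni--\v{S}\v{t}ov\'{\i}\v{c}ek reference), whereas you unpack that citation into the standard K\"unneth argument, using that $\mathrm{wgldim}(A)\le 1$ makes cycles and boundaries of a flat resolution flat so that the long exact sequence collapses to the stated two-term sequence. All the steps check out, including the existence of a K-flat resolution with flat terms and the exactness of the termwise sequences after tensoring (the cokernels $B^{n+1}$ are flat), so the only remaining item is the routine identification of the connecting map, which you correctly flag.
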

\begin{proof}
	This follows directly from an application of the K\"{u}nneth formula, see the proof of \cite[Proposition 3.6]{BS}.
\end{proof}
Given a t-structure  $(\Ucal,\Vcal)$, we denote again $\Ucal_n = \{H^n(X) \mid X \in \Ucal\}$ and $\Vcal_n = \{H^n(X) \mid X \in \Vcal\}.$
\begin{theorem}\label{T:aislecohomology}
Let $A$ be a ring of weak global dimension at most one, and let $(\Ucal,\Vcal)$ be a t-structure in $\DAMod$ such that $\Vcal$ is definable. Then both the aisle $\Ucal$ and the coaisle $\Vcal$ are determined on cohomology. Furthermore, the class $\Ccal_n = \Cogen(\Vcal_n)$ is equal to $\Ucal_n^{\perp_0}$ and it is a cosilting class in $\AMod$ for all $n \in \mathbb{Z}$.
\end{theorem}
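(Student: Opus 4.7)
The plan is to combine the structural description of definable coaisles over rings of weak global dimension at most one from \cite{BH} with two direct computations: an adjunction identity comparing derived and underived Hom, and a careful analysis of the approximation triangles.

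First, for the coaisle being determined on cohomology, I invoke \cite[Proposition 3.7]{BH} from Remark~\ref{BH}: $\Vcal$ arises via Proposition~\ref{hs} from a sequence of definable extension-closed subcategories $(\mathcal{D}_n)_{n\in\Z}$ of $\AMod$, so that $\Vcal=\{X\in\DAMod\mid H^n(X)\in\mathcal{D}_n \text{ for all } n\in\Z\}$. A stalk-complex argument identifies $\mathcal{D}_n$ with $\Vcal_n$: the inclusion $\Vcal_n\subseteq\mathcal{D}_n$ is tautological, and conversely if $M\in\mathcal{D}_n$ then the stalk $M[-n]$ belongs to $\Vcal$ (since $0\in\mathcal{D}_k$ for all $k$), so that $M=H^n(M[-n])\in\Vcal_n$.

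Next, for the identity $\Ccal_n:=\Cogen(\Vcal_n)=\Ucal_n^{\perp_0}$, the key tool is the natural isomorphism
$$\Hom{\DAMod}{Y}{V[-n]}\cong \Hom{A}{H^n(Y)}{V}$$
valid for any $Y\in\DAMod$ and $V\in\AMod$, obtained by iterating the adjunctions of the standard t-structures $(\DD^{\leq m},\DD^{\geq m+1})$ to reduce $\Hom$ to $\DD^{\leq n}\cap\DD^{\geq n}$. Taking $Y\in\Ucal$ and $V\in\Vcal_n$ (so that $V[-n]\in\Vcal$ by the previous paragraph) gives $\Hom{A}{H^n(Y)}{V}=0$, hence $\Vcal_n\subseteq\Ucal_n^{\perp_0}$ and thus $\Cogen(\Vcal_n)\subseteq\Ucal_n^{\perp_0}$, since the latter is a torsion-free class. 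Conversely, for $M\in\Ucal_n^{\perp_0}$, the approximation triangle of the stalk $M[-n]$ produces a piece $H^n(U)\to M\to H^n(V)\to H^{n+1}(U)\to 0$ of the cohomology long exact sequence in which the first map vanishes by the Ext-zero hypothesis, embedding $M$ into $H^n(V)\in\Vcal_n$. Since $\Ucal_n^{\perp_0}$ is automatically closed under submodules, products and extensions, the cosilting-class assertion reduces to checking definability of $\Ccal_n$; closure under pure submodules is immediate from closure under submodules, and closure under direct limits I would derive by appealing to \cite[Theorem 3.4]{BH}, which for rings of weak global dimension at most one characterises definable coaisles precisely in terms of chains of cosilting classes in $\AMod$, implying that the $\Ccal_n$ are themselves cosilting.

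Finally, for the aisle being determined on cohomology, the inclusion $\Ucal\subseteq\{X\mid H^n(X)\in\Ucal_n\}$ is definitional. For the converse, given $X$ with $H^n(X)\in\Ucal_n$ for all $n$, form the approximation triangle $U\to X\to V\to U[1]$ and show $V=0$. By the previous step the maps $H^n(X)\to H^n(V)$ in cohomology vanish, producing short exact sequences
$$0\to H^n(V)\to H^{n+1}(U)\to H^{n+1}(X)\to 0$$
with $H^n(V)\in\Vcal_n$ and $H^{n+1}(U),H^{n+1}(X)\in\Ucal_{n+1}$. The main obstacle, which I expect to be the hardest step, is ruling out $H^n(V)\neq 0$; the anticipated route is via the vanishing $\Ext{1}{A}{\Ucal_{n+1}}{\Vcal_n}=0$, coming from the cosilting structure of $\Ccal_{n+1}$ combined with the weak global dimension hypothesis on $A$. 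This splits the sequence above and realises $H^n(V)$ as a direct summand of $H^{n+1}(U)\in\Ucal_{n+1}$; since $H^n(V)$ also lies in $\Vcal_n\subseteq\Ccal_{n+1}=\Ucal_{n+1}^{\perp_0}$, it must be zero.
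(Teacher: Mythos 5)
Your proposal has a genuine gap at its foundation. The ``key tool'' isomorphism $\Hom{\DAMod}{Y}{V[-n]}\cong \Hom{A}{H^n(Y)}{V}$ is false: taking $Y=M[-n-1]$ a stalk in degree $n+1$ gives $\Hom{\DAMod}{Y}{V[-n]}=\Ext{1}{A}{M}{V}$ while $H^n(Y)=0$. What the truncation argument actually produces is an exact sequence $\Hom{\DAMod}{\tau^{\geq n}Y}{V[-n]}\to \Hom{A}{H^n(Y)}{V}\to \Hom{\DAMod}{\tau^{>n}Y}{V[-n+1]}$, and the last group is a map from $\DD^{\geq n+1}$ to $\DD^{\leq n-1}$, i.e.\ of ``$\Ext^{\geq 2}$'' type. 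Weak global dimension at most one controls $\Tor$, not $\Ext$ (think of a non-noetherian valuation domain), so this obstruction does not vanish and you cannot conclude $\Hom{A}{H^n(Y)}{V}=0$ from $\Hom{\DAMod}{Y}{V[-n]}=0$. Consequently the inclusion $\Vcal_n\subseteq\Ucal_n^{\perp_0}$ is not established. In the paper this inclusion is immediate, but only \emph{after} one knows $\Ucal_n[-n]\subseteq\Ucal$, i.e.\ after the aisle has been shown to be determined on cohomology --- which is precisely the hard part of the theorem, and which your ordering postpones to the end.

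The same circularity infects your final step: the splitting you need rests on $\Ext{1}{A}{\Ucal_{n+1}}{\Vcal_n}=0$, but this is exactly the statement $\Hom{\DAMod}{\Ucal_{n+1}[-n-1]}{\Vcal_n[-n]}=0$, which would follow from $\Ucal_{n+1}[-n-1]\subseteq\Ucal$ --- again the claim under proof. I see no non-circular source for it; cosilting classes give $\Ext^1$-vanishing \emph{into} the cosilting module, not from the torsion class into the torsion-free class. The paper's route to the aisle statement is entirely different and is the idea your proposal is missing: by Proposition~\ref{P:cogenPI} the t-structure is cogenerated by pure-injectives, hence (using Corollary~\ref{dualsarepureinjective} and definability of $\Vcal$) by $\Vcal^{++}$; the derived Hom-$\otimes$ adjunction then converts $Y\in\Ucal$ into the condition $\Vcal^+\otimes_A^{\LL}Y\in\DD^{<0}$, and the K\"unneth sequence of Lemma~\ref{L:kunneth} --- this is where weak global dimension at most one actually enters --- shows that condition depends only on the cohomology modules of $Y$. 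A smaller point: for the cosilting-class assertion, closure under direct limits does not come for free from \cite{BH}; the paper gets definability of $\Ccal_n=\Cogen(\Vcal_n)$ from \cite[Proposition 3.4.15]{P} and extension-closure from the identity $\Ccal_n=\Ucal_n^{\perp_0}$.
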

\begin{proof}
	The coaisle $\Vcal$ is determined on cohomology by \cite[Theorem 3.4]{BH}.

	By {Proposition~\ref{P:cogenPI}}, the t-structure $(\Ucal,\Vcal)$ is cogenerated by the pure-injective objects of $\Vcal$. Let $X \in \Vcal$ be pure-injective. Since $\Vcal$ is definable, we have $X^{++} \in \Vcal$ 
	(see Remark~\ref{R:dualdef}).  
	Furthermore, $X$ is a direct summand of $X^{++}$ by Corollary~\ref{dualsarepureinjective}.
	In conclusion, the t-structure $(\Ucal,\Vcal)$ is cogenerated by $\Vcal^{++}$. 

	Using the derived Hom-$\otimes$ adjunction, we have for any $Y \in \DAMod$:
$$Y \in \Ucal \Leftrightarrow \rhom{\Aop}{Y}{\Vcal^{++}} \in \DD^{>0} \Leftrightarrow (\Vcal^+ \otimes_A^\LL Y) \in \DD^{<0}.$$
Now it follows from Lemma~\ref{L:kunneth} that the condition $(\Vcal^+ \otimes_A^\LL Y) \in \DD^{<0}$ depends only on the cohomology of $Y$. More precisely, for any $X \in \Vcal^+$ and $n \geq 0$ we have that 
$$H^n(X \otimes_A^\LL Y) = 0 \Leftrightarrow$$
$$  \bigoplus_{p + q = n}H^p(X) \otimes_A H^q(Y) = 0 \text{ and } \bigoplus_{p + q = n+1} \Tor{1}{A}{H^p(X)}{H^q(Y)}=0 \Leftrightarrow$$
$$ H^n(X \otimes_A^\LL (\bigoplus_{k \in \Z}H^k(Y)[-k])) = 0,$$
where both equivalences follow from Lemma~\ref{L:kunneth}, using that the objects $Y$ and $\bigoplus_{k \in \Z}H^k(Y)[-k]$ have indistinguishable cohomology. We conclude that $Y \in \Ucal$ if and only if $\bigoplus_{k \in \Z}H^k(Y)[-k] \in \Ucal$. Since $\Ucal$ is closed under direct summands and coproducts, $\Ucal$ is determined on cohomology. In other words, $\Ucal = \{Y \in \DAMod \mid H^n(Y) \in \Ucal_n \text{ for all } n \in \Z\}$.

We claim that $\Ccal_n = \Ucal_n^{\perp_0}$. First, since $\Ucal_n[-n] \subseteq \Ucal$ and $\Vcal_n[-n] \subseteq \Vcal$, we have that $\Vcal_n \subseteq \Ucal_n^{\perp_0}$, and therefore $\Ccal_n = \Cogen(\Vcal_n) \subseteq \Ucal_n^{\perp_0}$. For the converse implication, let $M \in \Ucal_n^{\perp_0}$ and consider the following approximation triangle with respect to the t-structure $(\Ucal,\Vcal)$:
$$U \rightarrow M[-n] \rightarrow V \rightarrow U[1].$$
Passing to cohomology, we obtain an exact sequence 
$$H^n(U) \rightarrow M \rightarrow H^n(V).$$
Since $H^n(U) \in \Ucal_n$, the map $H^n(U) \rightarrow M$ above is zero, and therefore $M$ embeds into $H^n(V)$. Since $H^n(V) \in \Vcal_n$, we conclude that $M \in \Ccal_n$.

In particular, we proved that the subcategory $\Ccal_n = \Ucal_n^{\perp_0}$ is closed under extensions in $\AMod$. Since $\Ccal_n = \Cogen(\Vcal_n)$ is also a definable subcategory of $\AMod$ by \cite[Proposition 3.4.15]{P}, it is a definable torsion-free class, and thus a cosilting class in $\AMod$.
\end{proof}

Recall from Theorem~\ref{T:cosiltingTTF}(1) that  a t-structure $(\Ucal,\Vcal)$ as in Theorem~\ref{T:aislecohomology} gives rise to a cosuspended TTF triple $(\Ucal,\Vcal,\Wcal)$. Moreover, both the aisle $\Ucal$ and the coaisle $\Vcal$ are determined by the cohomological projections $\Ucal_n, \Vcal_n \subseteq \AMod$, and  the classes $\Vcal_n$ form a chain 
$$\cdots \subseteq \Vcal_{n-1} \subseteq \Vcal_n \subseteq \Vcal_{n+1} \subseteq \cdots$$ 
satisfying the conditions of Proposition~\ref{hs}. Fix the notation $\Ccal_n = \Cogen(\Vcal_n)$ for all $n \in \Z$. 
\begin{lemma}\label{minimalsequence} In the situation of  Theorem~\ref{T:aislecohomology},
		assume that there is $l \in \Z$ such that the cosilting class $\Ccal_n$ is minimal for all $n > l$. For $n > l$ denote by $\lambda_n:A \rightarrow B_n$ the homological ring epimorphism  and by $\Xcal_n$  the extension closed bireflective subcategory of $\AMod$ associated with $\Ccal_n$ via Corollary~\ref{weak} and Theorem~\ref{epicl}.
		Then:
		\begin{enumerate}
				\item[(i)] $\Vcal_n = \Ccal_n \cap \Xcal_{n+1}$ for any $n \geq l$.
				\item[(ii)] $\Xcal_n \subseteq \Xcal_{n+1}$ for any $n > l$.
		\end{enumerate}
\end{lemma}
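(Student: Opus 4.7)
The plan proceeds in three stages: first, establish the auxiliary inclusion $\Xcal_m \subseteq \Vcal_m$ for every $m > l$; then derive the inclusion $\Vcal_n \subseteq \Ccal_n \cap \Xcal_{n+1}$ of (i) together with (ii); finally, the reverse inclusion in (i) by an iterative construction. For the key claim I would first observe that $\Vcal_m$ is itself a definable subcategory of $\AMod$, being closed under products (since products commute with cohomology), under pure submodules (since $\Vcal$ is closed under pure subobjects in $\DAMod$ and is determined on cohomology by Theorem~\ref{T:aislecohomology}), and under directed colimits (since the definable coaisle $\Vcal$ is closed under directed homotopy colimits, which on stalks reduce to ordinary direct limits). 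Every definable subcategory of $\AMod$ admits a pure-injective cogenerator in itself; calling this $C^\star \in \Vcal_m$, the computation $\Cogen C^\star = \Cogen \Vcal_m = \Ccal_m$ shows that $C^\star$ is a cosilting module cogenerating $\Ccal_m$. Uniqueness of cosilting modules up to equivalence combined with the minimality of $\Ccal_m$ yields $\Prod C^\star = \Prod(B_m^+ \oplus \Ker\lambda_m^+)$ (cf.~Theorem~\ref{copreco-mincosilting}), so $B_m^+ \in \Vcal_m$. For a general $X \in \Xcal_m$, the evaluation map gives a pure embedding $X \hookrightarrow X^{++}$ in $\AMod$ with pure-injective target in $\Xcal_m$; dualising a free presentation $B_m^{(I)} \twoheadrightarrow X^+$ in $\rmod{B_m}$ by $(-)^+$ produces a pure-exact sequence $0 \to X^{++} \to (B_m^+)^I \to K^+ \to 0$, which splits by pure-injectivity and places $X^{++}$ as a direct summand of $(B_m^+)^I \in \Vcal_m$, so $X \in \Vcal_m$.

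With the key claim available, the inclusion $\Vcal_n \subseteq \Ccal_n \cap \Xcal_{n+1}$ of (i) is easy: given $V \in \Vcal_n$, membership $V \in \Ccal_n$ is automatic, and $V \in \Vcal_{n+1} \subseteq \Ccal_{n+1}$ makes the unit $\eta : V \to B_{n+1} \otimes_A V$ injective by Proposition~\ref{construct}(2). The key claim at level $n+1$ gives $B_{n+1} \otimes_A V \in \Xcal_{n+1} \subseteq \Vcal_{n+1}$, so the cross-level closure condition in the proof of Proposition~\ref{hs} applied to $\eta$ yields $\Coker \eta \in \Vcal_{n+1}$. By Proposition~\ref{construct}(4) we also have $\Coker \eta \in \Ker(B_{n+1} \otimes_A -)$, and the torsion pair of Proposition~\ref{construct}(3) forces $\Coker \eta \in \Ccal_{n+1} \cap \Ker(B_{n+1} \otimes_A -) = 0$, hence $V \in \Xcal_{n+1}$. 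Statement (ii) then follows at once: for $n > l$, the key claim gives $\Xcal_n \subseteq \Vcal_n$, and combining with the inclusion just proved yields $\Xcal_n \subseteq \Vcal_n \subseteq \Xcal_{n+1}$.

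For the reverse inclusion $\Ccal_n \cap \Xcal_{n+1} \subseteq \Vcal_n$, I would proceed by an iterative resolution. Setting $Y_0 := Y$, embed $Y_0 \hookrightarrow V_0 \in \Vcal_n$, which is possible since $Y_0 \in \Cogen \Vcal_n$. Both $V_0$ (by the proved direction of (i)) and $Y_0$ (by hypothesis) lie in $\Xcal_{n+1}$, so the cokernel $Y_1 = V_0/Y_0$ lies in $\Xcal_{n+1}$ as bireflective subcategories are closed under cokernels of maps in them (Theorem~\ref{epicl}); by (ii) we upgrade to $Y_1 \in \Xcal_{n+2}$, and since $\Xcal_{n+1} \subseteq \Ccal_{n+1}$ we obtain $Y_1 \in \Ccal_{n+1} \cap \Xcal_{n+2}$. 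Iterating yields data $(V_k \in \Vcal_{n+k},\, Y_k \in \Ccal_{n+k} \cap \Xcal_{n+k+1})_{k \geq 0}$ fitting into short exact sequences $0 \to Y_k \to V_k \to Y_{k+1} \to 0$. Assemble the cochain complex $V_\bullet$ with $V_k$ in cohomological degree $n+k$ and differential the composition $V_k \twoheadrightarrow Y_{k+1} \hookrightarrow V_{k+1}$; a direct calculation gives $H^n(V_\bullet) \cong Y$ and $H^k(V_\bullet) = 0$ for $k \neq n$, so $V_\bullet$ is quasi-isomorphic to $Y[-n]$. Each brutal truncation $V_\bullet^{(p)}$ is a finite iterated extension of the stalks $V_k[-n-k] \in \Vcal$ and hence lies in $\Vcal$, and $V_\bullet = \hocolim V_\bullet^{(p)}$; since $\Vcal$ is closed under directed homotopy colimits by \cite[Theorem~3.11]{L}, we conclude $Y[-n] \cong V_\bullet \in \Vcal$, i.e.~$Y \in \Vcal_n$. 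The most delicate step is the key claim in the first paragraph, which rests on the definability of $\Vcal_m$ in $\AMod$ and the uniqueness of cosilting modules used to place $B_m^+$ inside $\Vcal_m$.
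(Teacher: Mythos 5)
Your argument hinges entirely on the ``key claim'' $\Xcal_m \subseteq \Vcal_m$, and the proof you give for it has a genuine gap. From $\Cogen C^\star = \Ccal_m$ you infer that the pure-injective cogenerator $C^\star$ of the definable subcategory $\Vcal_m$ is a cosilting module, hence equivalent to $B_m^+\oplus\Ker{\lambda_m^+}$. Neither inference is valid: a pure-injective module whose $\Cogen$-closure is a cosilting class need not be cosilting, and a definable subcategory $\Dcal$ with $\Cogen\Dcal=\Ccal_m$ need not contain $\Prod(B_m^+\oplus\Ker{\lambda_m^+})$. A concrete instance of the Lemma refutes the deduction: let $A=\Z$ and let $(\Ucal,\Vcal)$ be the stable t-structure generated by $\{\Z/p[n]\mid p,n\}$, whose coaisle is definable. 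Then every $\Vcal_n$ is the class $\Add\QQ$ of $\QQ$-vector spaces, $\Ccal_n$ is the class of torsion-free groups (the minimal cosilting class of $\lambda:\Z\to\QQ$), $C^\star$ is equivalent to $\QQ$ --- which is not cosilting, since $\QQ$ is injective and its minimal injective copresentation yields $\Ccal_\omega=\lmod{\Z}\neq\Cogen\QQ$ --- and $\Prod C^\star$ does not contain $\Ker{\lambda^+}\cong\prod_p\mathbb{J}_p$. (The desired conclusion $B_m^+\in\Vcal_m$ happens to hold here, but only as a consequence of the Lemma, not of your argument.) A secondary problem in the same paragraph: the $(-)^+$-dual of the presentation $B_m^{(I)}\twoheadrightarrow X^+$ is exact but not pure-exact in general (duals of \emph{pure}-exact sequences split; duals of arbitrary exact sequences need not even be pure --- purity here would require $\Tor{1}{}{X^+}{-}$ to vanish on finitely presented modules), so the splitting of $0\to X^{++}\to(B_m^+)^I\to K^+\to 0$ is unjustified.

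The gap cannot simply be excised, because your (ii) is derived as $\Xcal_n\subseteq\Vcal_n\subseteq\Xcal_{n+1}$ and your reverse inclusion in (i) uses (ii) to place $Y_{k+1}$ in $\Xcal_{n+k+2}$; removing the key claim makes the architecture circular ((ii) would then require the full statement (i), which requires (ii)). The paper breaks this circle differently: for the forward inclusion it only embeds $B_{n+1}\otimes_A M$ into \emph{some} $N\in\Vcal_{n+1}$, using $\Xcal_{n+1}\subseteq\Ccal_{n+1}=\Cogen\Vcal_{n+1}$ and closure of $\Vcal_{n+1}$ under products --- no key claim needed; the reverse inclusion is obtained from the approximation triangle of $M[-n]$ together with $\Ccal_{n+1}=\Ucal_{n+1}^{\perp_0}$ (Theorem~\ref{T:aislecohomology}) and Proposition~\ref{construct}(5), which force $H^{n+1}(\tau_{\Ucal}(M[-n]))=0$ without any appeal to (ii); and (ii) is then deduced last, from the full statement (i), by a Snake Lemma argument. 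Your forward inclusion is easily repaired along these lines, and your resolution-plus-homotopy-colimit argument for the reverse inclusion is a genuinely different and, as far as I can see, correct alternative to the paper's --- but only once (ii) has been secured by other means.
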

\begin{proof} 
Fix $n\ge l$, and recall from Proposition~\ref{construct}(2),(3) that $\Ccal_{n+1}=\Cogen \Xcal_{n+1}$ is a torsion-free class 
  consisting of the left $A$-modules $M$ whose $\Xcal_{n+1}$-reflection 
 $M \rightarrow B_{n+1}\otimes_A M$ is  injective.

		$(i)$ Pick $M \in \Vcal_n$. Obviously, $M$ lies in $\Ccal_n = \Cogen(\Vcal_n)\subseteq \Ccal_{n+1}$,
		hence the map $M \rightarrow B_{n+1} \otimes_A M$ is injective.
		  We claim that it is even bijective, which will yield $M \in \Xcal_{n+1}$. 
		  Notice that $B_{n+1} \otimes_A M$ lies in $\Xcal_{n+1}\subseteq \Ccal_{n+1} =
		  \Cogen(\Vcal_{n+1})$. Since 
		   $\Vcal_{n+1}$ is closed under products, there is a monomorphism $\iota: B_{n+1} \otimes_A M \rightarrow N$ for some $N \in \Vcal_{n+1}$, and we have a commutative diagram with exact rows
		$$
			\begin{CD}
					0 @>>> M @>>> B_{n+1} \otimes_A M @>>> M'' @>>> 0 \\
					& & @| @VV\iota V @VVV \\
					0 @>>> M @>>> N @>>> Y @>>> 0
			\end{CD}
		$$
		Because $M \in \Vcal_n$ and $N \in \Vcal_{n+1}$,  the cokernel $Y$ belongs to $\Vcal_{n+1} \subseteq \Ccal_{n+1}$. On the other hand, we know from Proposition~\ref{construct}(4) that  $M''$ belongs to the torsion  class
$\Ker(B_{n+1}\otimes_A -)={}^{\perp_0}\Ccal_{n+1}$.
		Since the rightmost vertical map $M'' \rightarrow Y$ is necessarily injective, the only possibility is that $M'' = 0$.
		We showed that $\Vcal_n \subseteq \Ccal_n \cap \Xcal_{n+1}$.

		For the converse inclusion,  assume $M \in \Ccal_n \cap \Xcal_{n+1}$ and let us show that $M \in \Vcal_n$. Consider the approximation triangle of $M[-n]$ with respect to $(\Ucal,\Vcal)$,
		$$U \rightarrow M[-n] \rightarrow V \rightarrow U[1],$$
		yielding an exact sequence on cohomology:
		$$H^n(U) \rightarrow M \rightarrow H^n(V) \rightarrow H^{n+1}(U) \rightarrow 0.$$
		Since $M \in \Ccal_n$, and $\Ccal_n = \Ucal_n^{\perp_0}$ by Theorem~\ref{T:aislecohomology}, the leftmost map $H^n(U) \rightarrow M$ is zero. Therefore, we actually have a short exact sequence of form
		$$0 \rightarrow M \rightarrow H^n(V) \rightarrow H^{n+1}(U) \rightarrow 0.$$
		We know that $H^n(V) \in \Vcal_n \subseteq \Ccal_n \subseteq \Ccal_{n+1}$, and that $M \in \Xcal_{n+1}$. It follows from  Proposition~\ref{construct}(5)
		that $H^{n+1}(U) \in \Ccal_{n+1}$. But $H^{n+1}(U) \in \Ucal_{n+1}$, and $\Ccal_{n+1} = \Ucal_{n+1}^{\perp_0}$ by Theorem~\ref{T:aislecohomology} again, resulting in $H^{n+1}(U) = 0$. Therefore, $M \cong H^n(V) \in \Vcal_n$, as desired.

		$(ii)$  Assume now $n>l$ and pick a module $M \in \Xcal_n$. From part $(i)$ we know that $\Vcal_n = \Ccal_n \cap \Xcal_{n+1}$, and therefore $\Ccal_n = \Cogen(\Vcal_n) = \Cogen(\Ccal_n \cap \Xcal_{n+1})$. As $M \in \Xcal_n \subseteq \Ccal_n$, there is a monomorphism $\iota: M \rightarrow Y$ to a module $Y \in \Ccal_n \cap \Xcal_{n+1}$. Denote $X = \Coker(\iota)$ and consider the following commutative diagram, where the vertical maps are the natural morphisms:
		$$
		\minCDarrowwidth20pt\begin{CD}
			0 @>>> M @>>> Y @>>> X @>>> 0 \\
			@VVV @VVV @VV\cong V @VVV \\
			\Tor{1}{\A}{B_{n+1}}{X} @>>> B_{n+1} \otimes_A M @>>> B_{n+1} \otimes_A Y @>>> B_{n+1} \otimes_A X @>>> 0 
		\end{CD}
		$$
	Since $M \in \Xcal_n$ and $Y \in \Ccal_n$, we infer from Proposition~\ref{construct}(5),(6) 	 that $X\in\Ccal_n\subseteq\Ccal_{n+1}\subseteq\Ker \Tor{1}{A}{B_{n+1}}{-}$, so  $\Tor{1}{A}{B_{n+1}}{X} = 0$. The fact that $X \in \Ccal_{n+1}$ also implies that the rightmost vertical map of the commutative diagram is injective. Then the Snake Lemma shows that the $\Xcal_{n+1}$-reflection $M \rightarrow B_{n+1} \otimes_A M$ is surjective. Since it is clearly also injective, we can finally conclude that $M \in \Xcal_{n+1}$.
\end{proof}
\begin{definition}\label{defminTTF}
Let $A$ be a ring of weak global dimension at most one.
A cosuspended TTF triple $(\Ucal,\Vcal, \Wcal)$ such that $\Vcal$ is a definable subcategory of $\DAMod$  is \emph{minimal} if $\Ccal_n=\Cogen\Vcal_n$ is a minimal cosilting class for all $n\in\mathbb Z$.
A pure-injective cosilting object $C$ is  \emph{minimal} if so is the corresponding TTF triple $(\Ucal,\Vcal, \Wcal)$ with $\Vcal={}^{\perp_{>0}}C$.

Furthermore, we say that two chains of homological epimorphisms   over a ring $A$ 
 are \emph{equivalent} if they give rise to the same chain of bireflective subcategories in $\AMod$.
\end{definition}
\begin{theorem}\label{minimalTTF} 
 If $A$ is a ring of weak global dimension at most one, there is a bijection between 
\begin{enumerate}
\item[(i)] equivalence classes of chains $\cdots\lambda_n\le \lambda_{n+1}\cdots$ of homological ring epimorphisms;
\item[(ii)] minimal cosuspended TTF triples in $\DAMod$
\end{enumerate}
which restricts to a bijection between 
\begin{enumerate}
	\item[(i')] equivalence classes of chains $\cdots\lambda_n\le \lambda_{n+1}\cdots$ of homological ring epimorphisms satisfying condition (\ref{zeros});
\item[(ii')] equivalence classes of minimal cosilting objects in $\DAMod$.
\end{enumerate}
\end{theorem}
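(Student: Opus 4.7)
The plan is to show that the constructions of Proposition~\ref{construction} and Theorem~\ref{T:aislecohomology} are mutually inverse. First, given a chain $\cdots\le\lambda_n\le\lambda_{n+1}\le\cdots$ of homological ring epimorphisms, Example~\ref{copreco}(4) ensures that each $B_n^+$ has a precosilting copresentation, so Proposition~\ref{construction} produces a t-structure $(\Ucal,\Vcal)$ with definable coaisle. By Theorem~\ref{T:cosiltingTTF}(1) this t-structure extends uniquely to a cosuspended TTF triple $(\Ucal,\Vcal,\Wcal)$. Since $\Xcal_n\subseteq\Vcal_n=\Ccal_n\cap\Xcal_{n+1}\subseteq\Ccal_n=\Cogen\Xcal_n$, we see $\Cogen\Vcal_n=\Ccal_n$, which is the minimal cosilting class corresponding to $\lambda_n$ via Corollary~\ref{weak}; hence the TTF triple is minimal. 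Equivalent chains yield the same $\Xcal_n$ and therefore the same output.

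Conversely, given a minimal cosuspended TTF triple $(\Ucal,\Vcal,\Wcal)$ with $\Vcal$ definable, Theorem~\ref{T:aislecohomology} identifies $\Ccal_n=\Cogen\Vcal_n=\Ucal_n^{\perp_0}$ as cosilting classes in $\AMod$, and by minimality they correspond under Corollary~\ref{weak} to homological ring epimorphisms $\lambda_n\colon A\to B_n$ with associated bireflective subcategories $\Xcal_n$. Lemma~\ref{minimalsequence}(ii) (valid here for every $n$ since all $\Ccal_n$ are minimal) yields $\Xcal_n\subseteq\Xcal_{n+1}$, so $\{\lambda_n\}$ is indeed an increasing chain. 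The crux of showing that the two assignments are mutually inverse is Lemma~\ref{minimalsequence}(i), which supplies the equality $\Vcal_n=\Ccal_n\cap\Xcal_{n+1}$, i.e.\ precisely the formula used to build the coaisle in Proposition~\ref{construction}. Combined with the fact that both coaisles in question are determined on cohomology (by Theorem~\ref{T:aislecohomology} on one side, by construction on the other), the two t-structures coincide, and the third class of a cosuspended TTF triple is forced by $\Wcal=\Vcal^{\perp_0}$. For the other composition, the uniqueness in Theorem~\ref{copreco-mincosilting} of the epiclass of a ring epimorphism realizing a given minimal cosilting class guarantees that the chain recovered from the constructed TTF triple agrees, up to equivalence in the sense of Definition~\ref{defminTTF}, with the original one.

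For the restricted bijection (i')$\leftrightarrow$(ii'), Theorem~\ref{T:cosiltingTTF}(2) together with \cite[Theorem 4.6]{L} matches equivalence classes of pure-injective cosilting objects in $\DAMod$ with non-degenerate cosuspended TTF triples having definable coaisle, and under the main bijection above, minimality on both sides corresponds. Proposition~\ref{constructioncomplex} translates non-degeneracy of the t-structure produced from a chain into exactly condition~(\ref{zeros}), delivering the restricted correspondence. The hard part throughout is the verification that the two constructions are mutually inverse, which hinges on Lemma~\ref{minimalsequence} to reconcile the axiomatic cohomological description of the coaisle with the concrete intersection formula $\Vcal_n=\Ccal_n\cap\Xcal_{n+1}$ coming from the underlying ring epimorphisms.
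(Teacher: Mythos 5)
Your proposal is correct and follows essentially the same route as the paper's proof: Proposition~\ref{construction} together with Corollary~\ref{weak} gives the well-defined injective map (i)$\to$(ii), Theorem~\ref{T:aislecohomology} and both parts of Lemma~\ref{minimalsequence} give surjectivity by recovering the chain and the formula $\Vcal_n=\Ccal_n\cap\Xcal_{n+1}$, and Proposition~\ref{constructioncomplex} handles the restriction to (i')$\leftrightarrow$(ii').
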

\begin{proof}
	Let us start by showing that the construction of Proposition~\ref{construction} induces a well-defined map (i)$\to$(ii). First, by Theorem~\ref{T:cosiltingTTF}(1) the constructed t-structure $(\Ucal,\Vcal)$ extends to a cosuspended TTF triple. Let $\Xcal_n$ be the bireflective subcategory corresponding to $\lambda_n$ via Theorem~\ref{epicl}. Recall that 
$ \Vcal_n = \Cogen(\Xcal_n) \cap \Xcal_{n+1}$. Since 	
	$(i)$ ensures $\Xcal_n \subseteq \Xcal_{n+1}$ for each $n \in \Z$,  we obtain that $\Cogen (\Vcal_n)  = \Cogen(\Xcal_n)$ is the minimal cosilting class corresponding to $\lambda_n$ via Corollary~\ref{weak}, see also Proposition~\ref{construct}. Therefore, the constructed TTF triple is minimal. Note that the correspondence of Corollary~\ref{weak} also ensures that the assignment (i)$\to$(ii) is injective.
	
	By Theorem~\ref{T:aislecohomology} and Definition~\ref{defminTTF}, any minimal cosuspended TTF triple $(\Ucal,\Vcal,\Wcal)$ determines a sequence of minimal cosilting classes $(\Ccal_n=\Cogen\Vcal_n\mid n\in\mathbb Z)$ and thus by  Corollary~\ref{weak}, also a collection of homological ring epimorphisms $(\lambda_n\mid n\in\mathbb Z)$. By Lemma~\ref{minimalsequence}(ii), the induced bireflective subcategories $(\Xcal_n \mid n \in \Z)$ form an increasing chain, and therefore $\lambda_n$ can be chosen to form a chain as in $(i)$. Finally, Lemma~\ref{minimalsequence}(i) yields $\Vcal_n = \Cogen(\Xcal_n) \cap \Xcal_{n+1}$, showing that $(\Ucal,\Vcal)$ coincides with the t-structure constructed from the chain via Proposition~\ref{construction}. This  proves the surjectivity of the assignment (i)$\to$(ii).

	The second statement follows from Proposition~\ref{constructioncomplex}.
\end{proof}
\subsection{Constructing suspended TTF triples}\label{SS:chains}
We have seen that every chain of homological epimorphisms starting in a  hereditary ring $A$ gives rise to a homotopically smashing, and therefore compactly generated cosuspended TTF triple in $\DAMod$. We now want to determine the  suspended TTF triple in $\DModA$ which is associated under the bijection $\Psi$ from Theorem~\ref{T:TTFduality}.

To this end, we now switch to right modules. Let $A$ be an arbitrary ring. This time we  fix  a (not necessarily strictly) increasing chain 
		\begin{equation}\label{chain2}\cdots \leq \lambda_{n-1} \leq \lambda_n \leq \lambda_{n+1} \leq \cdots\end{equation}
		of ring epimorphisms $\lambda_n:A\to B_n$ such that the right $A$-modules $B_n$ have a presilting presentation. Then the left $A$-module $B_n^+$ has a precosilting copresentation, and $\Tor{1}{A}{B_n}{B_n} = 0$   by Example~\ref{copreco}(2) and (3), so the corresponding bireflective subcategories $\Xcal'_n$ of $\ModA$ are all extension closed.				 
For every $n\in\mathbb Z$ we also fix the induced ring epimorphism $\mu_n$ given by the diagram (\ref{mu}), and the subcategory $\Dcal_n = \Gen(\Xcal_n')$ of $\ModA$, which is the silting class induced by the silting right $A$-module $B_n \oplus\Coker(\lambda_n)$ from Example~\ref{copreco}(2). In particular, $\Dcal_n$ is a definable subcategory of $\ModA$. We start with a dual version of Proposition~\ref{construction}.

\begin{proposition}\label{constructionaisle}{\rm\bf (The dual construction)}
	In the  situation above, denote by $\Xcal'_n$
		the  extension closed bireflective subcategories of $\ModA$ corresponding to the chain (\ref{chain2}), and set  
	 $$\Dcal_n = \Gen(\Xcal'_n),\: \Vcal'_n = \Dcal_n \cap \Xcal'_{n+1}$$ for all $n \in \Z$. Then there is a t-structure $(\Vcal',\Wcal')$ in $\DModA$ with definable aisle
		$$\Vcal' = \{X \in \DModA \mid H^{-n}(X) \in \Vcal_n'  \text{ for all } n \in \Z\}.$$ 
\end{proposition}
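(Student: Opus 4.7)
The proof is formally dual to that of Proposition~\ref{construction}. The plan is to establish and apply an aisle-version of Proposition~\ref{hs}: given an increasing $\Z$-indexed chain of definable extension-closed subcategories $\Vcal'_n$ of $\ModA$ satisfying that for every map $f\colon V \to V'$ with $V \in \Vcal'_{n+1}$ and $V' \in \Vcal'_n$ one has $\Ker(f) \in \Vcal'_{n+1}$ and $\Coker(f) \in \Vcal'_n$, the class $\{X \in \DModA \mid H^{-n}(X) \in \Vcal'_n\text{ for all }n\}$ is the aisle of a t-structure with definable coaisle. This dual statement is obtained by running the argument of Proposition~\ref{hs} in reverse: a definable subcategory is automatically closed under coproducts (being closed under products and pure subobjects), closure under $[1]$ comes from the chain condition $\Vcal'_{n-1}\subseteq \Vcal'_n$, and closure under extensions follows from the long exact sequence of cohomology together with the kernel--cokernel hypothesis.

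Next I verify the hypotheses for $\Vcal'_n = \Dcal_n \cap \Xcal'_{n+1}$. By Example~\ref{copreco}(2)--(3), each $\Xcal'_n$ is an extension-closed bireflective subcategory of $\ModA$ and $B_n \oplus \Coker(\lambda_n)$ is a silting right $A$-module with silting class $\Dcal_n$, which is therefore a definable torsion class. The order $\lambda_n \leq \lambda_{n+1}$ yields $\Xcal'_n \subseteq \Xcal'_{n+1}$, so $\Dcal_n \subseteq \Dcal_{n+1}$ and $\Vcal'_n \subseteq \Vcal'_{n+1}$. For the definability of $\Xcal'_n$, I use that $\Xcal'_n$ is the dual definable subcategory, via $(-)^+$, of the left-module bireflective subcategory $\Xcal_n\subseteq \AMod$; the latter is definable as a consequence of Theorem~\ref{copreco-mincosilting} (this is the observation used implicitly in the proof of Proposition~\ref{construction}), and by Lemma~\ref{L:dualdefinable2} the definability transfers to $\Xcal'_n$. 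Consequently each $\Vcal'_n$ is definable and extension-closed in $\ModA$.

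The main technical step is verifying the kernel--cokernel condition, which I would handle by transferring to the cosilting side through $(-)^+$. Given $f\colon V \to V'$ with $V \in \Vcal'_{n+1}$ and $V' \in \Vcal'_n$, the dual map $f^+\colon V'^+ \to V^+$ satisfies $V'^+ \in \Vcal_n$ and $V^+ \in \Vcal_{n+1}$, where $\Vcal_n = \Cogen(\Xcal_n) \cap \Xcal_{n+1}$ is the left-module class from Proposition~\ref{construction}. Since $\Ker(f^+) = \Coker(f)^+$ and $\Coker(f^+) = \Ker(f)^+$, the kernel--cokernel condition already established for $\Vcal_n$ in Proposition~\ref{construction} yields $\Coker(f)^+ \in \Vcal_n$ and $\Ker(f)^+ \in \Vcal_{n+1}$. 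By Lemma~\ref{L:dualdefinable2} this translates into $\Coker(f)^{++} \in \Vcal'_n$ and $\Ker(f)^{++} \in \Vcal'_{n+1}$; since the evaluation morphism realises every module as a pure submodule of its double dual (the module-theoretic counterpart of Lemma~\ref{L:evaluation}), the definability of $\Vcal'_n$ and $\Vcal'_{n+1}$ then produces $\Coker(f)\in \Vcal'_n$ and $\Ker(f)\in \Vcal'_{n+1}$ as required. The main obstacle is pinning down the definability of $\Xcal'_n$ in this generality, since bireflective subcategories need not be definable; fortunately the minimal-cosilting structure on the dual side supplies what is needed, making the passage through $(-)^+$ the technical heart of the argument.
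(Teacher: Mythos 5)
Your proposal is correct and follows the same basic strategy as the paper: everything is transferred from the left-hand side (Proposition~\ref{construction}) via the duality $(-)^+$. The organization differs slightly. You re-verify the module-level kernel--cokernel condition for the classes $\Vcal'_n$ by dualizing maps one at a time and then run a dual version of Proposition~\ref{hs}; the paper instead observes directly that $\Vcal$ and $\Vcal'$ are dual definable subcategories of $\DAMod$ and $\DModA$ (because $\Xcal_n,\Xcal'_n$ and $\Ccal_n,\Dcal_n$ are dual definable at the module level), and then transfers the closure of $\Vcal$ under cosuspensions and extensions wholesale through the contravariant exact functor $(-)^+$, which is a bit slicker and avoids the detour through double duals and purity in your third paragraph (note that once $\Vcal'_n$ and $\Vcal_n$ are known to be dual definable, $\Coker(f)^+\in\Vcal_n$ already gives $\Coker(f)\in\Vcal'_n$ directly). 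The one place where you are too glib is the final step: the claim that a definable, suspended, extension-closed subcategory of $\DModA$ is actually an \emph{aisle} is not obtained by ``running the argument of Proposition~\ref{hs} in reverse'' --- the results invoked there (\cite[Proposition 3.11]{Kr1}, \cite[Proposition 1.4]{Nee}) are specific to coaisles and are not self-dual. The existence of the right adjoint to the inclusion of $\Vcal'$ requires the aisle-specific results \cite[Theorem 4.7 and Proposition 5.10]{LV}, which is exactly what the paper cites at this point; with that citation supplied, your argument is complete.
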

\begin{proof}
	One can prove that $\Vcal'$ is a definable subcategory of $\DModA$ closed under suspension and extension by dualizing the proofs of Propositions~\ref{hs} and \ref{construction}. In fact, the definability of $\Vcal'$ follows by  precisely the same argument as in the proof of Proposition~\ref{hs}. Since $\Xcal_n$ and $\Xcal'_n$, as well as $\Ccal_n$ and $\Dcal_n$, are dual definable subcategories of $\AMod$ and $\ModA$, respectively, it follows that $\Vcal$ and $\Vcal'$ are dual definable subcategories of $\DAMod$ and $\DModA$, respectively. The closure of $\Vcal'$ under suspensions and extensions then follows immediately by applying the duality functor.
	Now \cite[Theorem 4.7 and Proposition 5.10]{LV} show that $\Vcal'$ is indeed an aisle of a t-structure.
\end{proof}

In analogy to our approach on the cosilting side, we now restrict  to the case of homological ring epimorphisms $\lambda_n: A \rightarrow B_n$   such that the right $A$-modules $B_n$ have projective dimension at most one. 

\begin{proposition}\label{dimone-aisle}
Assume that  all $\lambda_n$ in the chain (\ref{chain2}) are homological ring epimorphisms 
such that the right $A$-modules $B_n$ have projective dimension at most one. Then the definable aisle $\Vcal'$ of Proposition~\ref{constructionaisle} can be expressed as follows:
$$\Vcal'=\{X \in \DModA \mid \rhom{A}{\Cone(\lambda_n)}{X} \in \DD^{\leq -n} \text{ for all } n \in \Z\}.$$
\end{proposition}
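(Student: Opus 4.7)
\emph{Approach.} The plan is to mirror the proof of Proposition~\ref{dimone-coaisle}, replacing the tensor product $-\otimes_A^{\LL}B_n$ by the Hom functor $\rhom{A}{B_n}{-}$ throughout. Write $L_n=\Cone(\lambda_n)$, and let $\tilde{\Vcal}'$ denote the class on the right-hand side of the claim. Applying the contravariant functor $\rhom{A}{-}{X}$ to the defining triangle $A\xrightarrow{\lambda_n}B_n\to L_n\to A[1]$ yields the triangle
$$\rhom{A}{L_n}{X}\to\rhom{A}{B_n}{X}\to X\to\rhom{A}{L_n}{X}[1],$$
and its cohomology long exact sequence translates the condition $\rhom{A}{L_n}{X}\in\DD^{\leq -n}$ into the statement that the natural map $H^l(\rhom{A}{B_n}{X})\to H^l(X)$ is an isomorphism for each $l>-n$ and a surjection at $l=-n$.

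\emph{Forward inclusion $\tilde{\Vcal}'\subseteq \Vcal'$.} For $X\in\tilde{\Vcal}'$, I invoke Theorem~\ref{KS}(i) (applicable since each $\lambda_n$ is homological by Example~\ref{copreco}(5)) to see that $\rhom{A}{B_n}{X}\in\Bcal_n=\Dcal_{\Xcal'_n}(A)$, so all its cohomologies lie in $\Xcal'_n$. The surjectivity criterion at $l=-n$ for $\lambda_n$ gives $H^{-n}(X)\in\Gen\Xcal'_n=\Dcal_n$, and the isomorphism criterion at $l=-n$ for $\lambda_{n+1}$ (noting $-n>-(n+1)$) places $H^{-n}(X)\cong H^{-n}(\rhom{A}{B_{n+1}}{X})\in\Xcal'_{n+1}$. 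Thus $H^{-n}(X)\in\Dcal_n\cap\Xcal'_{n+1}=\Vcal'_n$ for every $n$, so $X\in\Vcal'$.

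\emph{Reverse inclusion $\Vcal'\subseteq\tilde{\Vcal}'$.} For $X\in\Vcal'$ I split via the soft truncation $\tau^{\leq -n}X\to X\to\tau^{>-n}X$. The ascending chain property yields $\Xcal'_{-l+1}\subseteq\Xcal'_n$ for $l>-n$, which combined with $H^l(X)\in\Xcal'_{-l+1}$ places $\tau^{>-n}X$ in $\Bcal_n$; by Theorem~\ref{recoll} the counit $\rhom{A}{B_n}{\tau^{>-n}X}\to\tau^{>-n}X$ is an isomorphism, forcing $\rhom{A}{L_n}{\tau^{>-n}X}=0$. It therefore suffices to bound $\rhom{A}{L_n}{\tau^{\leq -n}X}$. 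Since $B_n$ has projective dimension at most one, $L_n$ is quasi-isomorphic to a complex of projective right $A$-modules concentrated in degrees $-1,0$, so $\rhom{A}{L_n}{-}$ sends $\DD^{\leq m}$ into $\DD^{\leq m+1}$. Applied to the further truncation $\tau^{<-n}X\to\tau^{\leq -n}X\to H^{-n}(X)[n]$, the $\tau^{<-n}X$-piece already lands in $\DD^{\leq -n}$, and the problem reduces to $\rhom{A}{L_n}{H^{-n}(X)}\in\DD^{\leq 0}$, equivalently $\Ext^1_A(L_n,H^{-n}(X))=0$.

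\emph{Main obstacle.} This $\Ext^1$-vanishing is the core of the proof and plays the role dual to the $\Tor_1(B_n,-)$-vanishing used in Proposition~\ref{dimone-coaisle}. Setting $M=H^{-n}(X)\in\Dcal_n=\Gen(B_n)$, the long exact Ext-sequence from $A\to B_n\to L_n\to A[1]$ exhibits $\Ext^1_A(L_n,M)$ as an extension of $\Ext^1_A(B_n,M)$ by $\Coker(\Hom_A(B_n,M)\to M)$. The first term vanishes by Example~\ref{copreco}(5), which gives $\Dcal_n\subseteq B_n^{\perp_1}$. For the second, I observe that if $\phi\in\Hom_A(B_n,M)$ and $b\in B_n$, then $\psi\colon c\mapsto\phi(bc)$ is again in $\Hom_A(B_n,M)$ with $\psi(1)=\phi(b)$; hence the image of $\Hom_A(B_n,M)\to M$ contains the trace $\sum_{\phi}\im(\phi)$, which coincides with $M$ whenever $M\in\Gen(B_n)$. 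This will close the argument.
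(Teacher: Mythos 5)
Your proposal is correct and follows essentially the same route as the paper's proof: the same translation of $\rhom{A}{\Cone(\lambda_n)}{X}\in\DD^{\leq -n}$ into iso/surjectivity of $H^l(\rhom{A}{B_n}{X})\to H^l(X)$, the same use of Theorem~\ref{KS}/Theorem~\ref{recoll} to kill $\rhom{A}{L_n}{\tau^{>-n}X}$, and the same reduction via the two-term projective replacement of $L_n$ to showing $\rhom{A}{L_n}{M}\in\DD^{\leq 0}$ for $M\in\Dcal_n$. The only divergence is in that last step, where the paper uses an epimorphism onto $M$ from a module in $\Xcal'_n$ on which $\rhom{A}{L_n}{-}$ vanishes, while you verify the two obstruction terms $\Ext^1_A(B_n,M)$ and $\Coker(\Hom_A(B_n,M)\to M)$ directly; both verifications are valid.
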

\begin{proof}
 Recall from Example~\ref{copreco}(5) that the right $A$-module $B_n$  has a presilting presentation, therefore Proposition~\ref{constructionaisle} applies. We adopt the same notation and set $\Vcal'_n = \Dcal_n \cap \Xcal'_{n+1}$ for each $n \in \Z$. Let us start with an object $X \in \DModA$ such that $H^{-n}(X) \in \Vcal'_n$ for all $n \in \Z$. We have to show that $\rhom{A}{\Cone(\lambda_n)}{X} \in \DD^{\leq -n} \text{ for all } n \in \Z$.

	By Theorems~\ref{recoll} and~\ref{KS}, there is a sequence of stable TTF triples $$(\Ker(- \otimes_A^\LL B_n),\Bcal_n = \im(\lambda_n)_*, \mathcal{K}_n)$$ in $\DModA$, where $\Bcal_n = \Dcal_{\Xcal_n'}(A)$ and  $L_n = \Cone(\lambda_n)\in \Ker(- \otimes_A^\LL B_n)$ for all $n \in \Z$.
	Since all cohomologies of $(\tau^{>-n}(X))$ lie in  $\Xcal'_n$, we have $\tau^{>-n}(X) \in \Bcal_n$, and therefore $\rhom{A}{L_n}{\tau^{>-n}(X)} = 0$. On the other hand, since $B_n$ is a right $A$-module of projective dimension at most one, $L_n$ can be replaced by a complex of projective $A$-modules concentrated in degrees $-1$ and $0$. Therefore, $\rhom{A}{L_n}{\tau^{<-n}(X)} \in \DD^{\leq -n}$. 
	
	It remains to show that $\rhom{A}{L_n}{H^{-n}(X)[n]} \in \DD^{\leq -n}$. 
We show more: $\rhom{A}{L_n}{M} \in \DD^{\leq 0}$ for any $M \in \Dcal_n$. In fact, since  $\Xcal_n'\subseteq\Bcal_n$, we have  $\rhom{A}{L_n}{X} =0$ for all $X\in\Xcal_n'$, and this implies the statement, because  $M\in \Gen\Xcal_n'$ and $L_n$ can be replaced by a complex of projectives concentrated in degrees -1 and 0. 

 For the converse inclusion, let $X \in \DModA$ be such that $\rhom{A}{L_n}{X} \in \DD^{\leq -n}$ for all $n \in \Z$. By Theorem~\ref{recoll}, there is for each $m \in \Z$ a triangle of form
$$\rhom{A}{B_m}{X} \rightarrow X \rightarrow \rhom{A}{L_m[-1]}{X} \rightarrow \rhom{A}{B_m}{X}[1],$$
inducing for any $n \in \Z$ an exact sequence
$$ H^{-n} \rhom{A}{L_m}{X} \rightarrow H^{-n} \rhom{A}{B_m}{X} \rightarrow$$ $$\rightarrow  H^{-n}(X) \rightarrow H^{-n+1} \rhom{A}{L_m}{X}.$$
Using the assumption on $X$, we see from the last exact sequence that the map $H^{-n} \rhom{A}{B_m}{X} \rightarrow H^{-n}(X)$ is surjective if $m = n$, and it is an isomorphism if $m > n$. Since $H^{-n} \rhom{A}{B_m}{X} \in \Xcal'_m$ by Theorem~\ref{recoll}, we conclude that $H^{-n}(X) \in \Dcal_n \cap \Xcal'_{n+1}$, showing that $X \in \Vcal'$.
\end{proof}

\begin{proposition}\label{P:siltconstr}
	Assume that  all $\lambda_n$ in the chain (\ref{chain2}) are homological ring epimorphisms such that the right $A$-modules $B_n$ have projective dimension at most one.
Denote by $\Xcal_n'$  the extension closed bireflective subcategories of $\ModA$ and by $\Kcal_n=\Ker {\mathbf R} \Hom{A}{B_n}{-}$  the triangulated subcategories of $\DModA$ associated with $\lambda_n$ via Theorems~\ref{epicl} and~\ref{KS}. Then the t-structure $(\Vcal',\Wcal')$ is induced by a silting 
	object if and only if the conditions 
	\begin{equation}\label{dualzeros}\bigcap_{n \in \Z}\Xcal_n' = 0\text{ and }\bigcap_{n \in \Z} \Kcal_n = 0\end{equation} hold true.
	In this case, the t-structure $(\Vcal',\Wcal')$ is induced by the silting 
	object $$T = \bigoplus_{n \in \Z} \Cone(\mu_n)[n].$$
\end{proposition}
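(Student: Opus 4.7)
The plan is to mirror the proof of Proposition~\ref{constructioncomplex} on the silting side, interchanging tensor with Hom and swapping aisles for coaisles throughout.

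For the necessity of the conditions (\ref{dualzeros}), I invoke Theorem~\ref{T:siltingTTF} to obtain non-degeneracy $\bigcap_n \Vcal'[n] = 0 = \bigcap_n \Wcal'[n]$. Using that $\Vcal'$ is determined cohomologically via $\Vcal'_n = \Dcal_n \cap \Xcal'_{n+1}$, together with the inclusions $\Xcal'_n \subseteq \Vcal'_n \subseteq \Dcal_n$ and $\Xcal'_n \subseteq \Xcal'_{n+1}$, a short computation gives $\bigcap_m \Vcal'_m = \bigcap_m \Xcal'_m$, so the first part of non-degeneracy is equivalent to $\bigcap_n \Xcal'_n = 0$. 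For the second part, I establish the analogue of the inclusion $\mathcal{E}_n \subseteq \Ucal[n-1]$ from Proposition~\ref{constructioncomplex}, namely $\Ccal_n \subseteq \Wcal'[1-n]$ where $\Ccal_n = B_n^{\perp_{0,1}} = \Kcal_n \cap \ModA$. For $M \in \Ccal_n$ the stalk $M[n]$ lies in $\Kcal_n$ by Theorem~\ref{KS}(ii); applying $\rhom{A}{L_n[-1]}{-}$ to the approximation triangle $V \to M[n] \to W \to V[1]$ and using Proposition~\ref{dimone-aisle} (which forces $\rhom{A}{L_n[-1]}{V} \in \DD^{<-n}$) exhibits $M[n]$ as a direct summand of $\rhom{A}{L_n[-1]}{W}$. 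The latter lies in $\Wcal'[1]$ by representing $L_n[-1]$ as a two-term complex $[P_0 \to P_1]$ of projectives in cohomological degrees $0,1$, applying $\rhom{A}{-}{W}$ to the brutal truncation triangle $P_0 \to L_n[-1] \to P_1[-1] \to P_0[1]$, and invoking closure of $\Wcal'$ under products, summands, the shift $[-1]$, and extensions. Hence $\bigcap_n \Ccal_n \subseteq \bigcap_n \Wcal'[1-n] = 0$, and Theorem~\ref{KS}(ii) yields $\bigcap_n \Kcal_n = \Dcal_{\bigcap_n \Ccal_n}(A) = 0$.

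For the sufficiency, assuming (\ref{dualzeros}), I verify that $T = \bigoplus_n \Cone(\mu_n)[n]$ satisfies the silting analogue of \cite[Proposition 4.13]{PV}: (a) $T \in \Vcal'$; (b) $T$ is a generator, i.e.\ $T^{\perp_{\mathbb Z}} = 0$; and (c) $T^{\perp_{>0}} = \Vcal'$. Item (a) is verified by computing the cohomologies of $\Cone(\mu_n)[n]$, namely $\Coker(\mu_n)$ in degree $-n$ and $\Ker(\mu_n)$ in degree $-n-1$, both of which inherit a $B_{n+1}$-module structure and hence lie in $\Xcal'_{n+1}$, landing in $\Vcal'_n$, $\Vcal'_{n+1}$ respectively. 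The inclusion $\Vcal' \subseteq T^{\perp_{>0}}$ in (c) follows immediately from Lemma~\ref{cones} together with Proposition~\ref{dimone-aisle}.

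The hard part, and principal obstacle, is the remaining part of (c) together with (b), and both are handled by a common scheme that requires both assumptions of (\ref{dualzeros}) simultaneously. Given $X \in T^{\perp_{>0}}$, the approximation triangle $V \to X \to W \to V[1]$ reduces the task to showing $W = 0$ for any $W \in T^{\perp_{>0}} \cap \Wcal'$. Setting $Y_n = \rhom{A}{B_n}{W} \in \Bcal_n = \Dcal_{\Xcal'_n}(A)$, the hypotheses on $W$ translate, via the triangle $\rhom{A}{\Cone(\mu_n)}{W} \to Y_n \xrightarrow{\mu_n^*} Y_{n+1}$, into the statement that $H^k(\mu_n^*)$ is an isomorphism for $k \geq -n$. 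Fixing $k$ and passing to large $n$, the stable value of $H^k(Y_n)$ lies in $\bigcap_m \Xcal'_m = 0$ by the first assumption, so $Y_n \in \DD^{<-n}$ for every $n$. The recollement triangle $Y_n \to W \to \rhom{A}{L_n[-1]}{W} \to Y_n[1]$ of Theorem~\ref{recoll} then places $H^k(W)$ in $\Ccal_n$ for $k \geq -n$; taking $n \to \infty$ with $k$ fixed and using that the chain $\Ccal_n$ is decreasing together with $\bigcap_n \Kcal_n = 0$ gives $H^k(W) = 0$ for every $k$. The same reasoning applied to $X \in T^{\perp_{\mathbb Z}}$ forces $Y_n = 0$ for all $n$ and hence $X \in \bigcap_n \Kcal_n = 0$, establishing the generator condition (b).
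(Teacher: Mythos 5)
Most of your argument tracks the paper's proof, which is itself the formal dual of Proposition~\ref{constructioncomplex}: the necessity of (\ref{dualzeros}), the membership $T\in\Vcal'$, the inclusion $\Vcal'\subseteq T^{\perp_{>0}}$, and the generator condition are all essentially correct. (Two small points: for $\Coker(\mu_n)\in\Vcal'_n=\Dcal_n\cap\Xcal'_{n+1}$ membership in $\Xcal'_{n+1}$ alone is not enough; you also need $\Coker(\mu_n)\in\Dcal_n$, which holds because it is a quotient of $B_n$. And the isomorphism of $H^{-n}(\mu_n^*)$ at $k=-n$, rather than mere surjectivity, uses $\Hom{\DModA}{T}{W}=0$, i.e.\ $W\in T^{\perp_{\geq 0}}$; you do have this since $T\in\Vcal'$ and $W\in\Wcal'$.)

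The genuine gap is the step ``fixing $k$ and passing to large $n$, the stable value of $H^k(Y_n)$ lies in $\bigcap_m\Xcal'_m=0$.'' The chain $(\Xcal'_m)$ is \emph{increasing} in $m$, and your isomorphisms $H^k(Y_n)\cong H^k(Y_{n+1})$ are only available for $n\geq -k$; they therefore give $H^k(Y_n)\in\bigcap_{m\geq -k}\Xcal'_m=\Xcal'_{-k}$ and nothing more. The hypothesis $\bigcap_m\Xcal'_m=0$ only has content as $m\to-\infty$, exactly where your isomorphisms fail, so $Y_n\in\DD^{<-n}$ does not follow as claimed. The paper circumvents this by using the Four Lemma to transfer the isomorphisms to the maps $\zeta_n\colon Z_n\to Z_{n+1}$ between the objects $Z_n=\rhom{A}{\Cone(\lambda_n)}{X}$, whose truncations $\tau^{>-l}(Z_l)\cong\tau^{>-l}(Z_n)$ then land in the \emph{decreasing} chain $(\Kcal_n)$, where membership for all large $n$ does force membership in $\bigcap_n\Kcal_n=0$. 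Your intermediate claim $Y_n\in\DD^{<-n}$ is in fact true for $W\in T^{\perp_{>0}}\cap\Wcal'$, but for a different reason: since $B_m[m+j]\in\Vcal'$ for all $j\geq 0$ and $W\in\Wcal'$, one gets $H^k(Y_m)=0$ for all $k\leq -m$, and combining this anchor at $n=-k$ with your isomorphisms for $n\geq -k$ yields the required vanishing. If you repair the step this way, the rest of your scheme goes through; the generator part is unaffected, because there the isomorphisms hold for \emph{all} $n$, so the common value of $H^k(Y_n)$ really does lie in $\bigcap_m\Xcal'_m$.
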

\begin{proof} 	 
(1)	We start by proving that the conditions (\ref{dualzeros}) hold whenever the t-structure $(\Vcal',\Wcal')$ is non-degenerate. Recall that $\Xcal_n'\subseteq \Dcal_n \cap \Xcal'_{n+1}=\Vcal'_n$. Since there is clearly a chain
$$\cdots \subseteq \Xcal_n' \subseteq \Vcal'_n \subseteq \Xcal_{n+1}' \subseteq \cdots,$$
we have that $\bigcap_{n \in \Z}\Xcal_n' = 0$ if and only if $\bigcap_{n \in \Z} \Vcal'_n = 0$ if and only if $\bigcap_{n \in \Z}\Vcal'[n] = 0$. It remains to check that $\bigcap_{n \in \Z} \Kcal_n = 0$ provided that $\bigcap_{n \in \Z}\Wcal'[n] = 0$. Since $\Kcal_n$ is determined on cohomology by Theorem~\ref{T:BP}(ii), it is enough to show that for any stalk complex $M$ such that $M \in \Kcal_n$ we have $M \in \Wcal'[-n+1]$. Consider the truncation triangle of $M[n]$ with respect to the t-structure $(\Vcal',\Wcal')$:
$$V' \rightarrow M[n] \rightarrow W' \rightarrow V'[1].$$
Denote $L_n = \Cone(\lambda_n)$. Since $M[n] \in \Kcal_n$, Theorem~\ref{recoll} yields $M[n] \cong \rhom{A}{L_n[-1]}{M[n]}$, and we have a triangle
$$\rhom{A}{L_n[-1]}{V'} \xrightarrow{g} M[n] \rightarrow \rhom{A}{L_n[-1]}{W'} \rightarrow \rhom{A}{L_n[-1]}{V'}[1].$$
By Proposition~\ref{dimone-aisle}, $\rhom{A}{L_n[-1]}{V'} \in \DD^{<-n}$, and therefore the map $g$ in the latter triangle is zero. It follows that $M[n]$ is a direct summand in $\rhom{A}{L_n[-1]}{W'}$. Because $L_n[-1] \in \DD^{\leq 1}$, the complex $\rhom{A}{L_n[-1]}{W'}$ belongs to $\Wcal'[1]$ (cf.~\cite[Proposition 2.3]{HCG}), and so $M \in \Wcal'[-n+1]$.

(2)	Now we assume the conditions (\ref{dualzeros}) and show that $T = \bigoplus_{n \in \Z} \Cone(\mu_n)[n]$ is a silting object inducing the t-structure $(\Vcal',\Wcal')$. This is done similarly as in Proposition~\ref{constructioncomplex}. Indeed, it is enough to show that $T \in \Vcal'$, that $T$ is a generator in $\DModA$, and that $T^{\perp_{>0}} = \Vcal'$. Since $\Cone(\mu_n)$ is an extension of $B_n$ and $B_{n+1}[1]$ for each $n \in \Z$, we infer that $T \in \Vcal'$. Recall from Theorem~\ref{recoll} that for every $X\in\DModA$ there is a triangle
$$Z_n \rightarrow \rhom{A}{B_n}{X} \rightarrow X \rightarrow Z_n[1],$$
where $Z_n = \rhom{A}{L_n}{X} \in \Kcal_n$, and $\rhom{A}{B_n}{X} \in \Bcal_n= \Dcal_{\Xcal'_n}(A)$. For all $n \in \Z$, the map $\rhom{A}{\mu_n}{X}$ induces a morphism of triangles:
\begin{equation}\label{rhomsquare}
\begin{CD}
Z_n @>>> \rhom{A}{B_n}{X} @>>> X @>>> Z_n[1] \\
@V \zeta_n VV @V \rhom{A}{\mu_n}{X} VV @| @V \zeta_n[1] VV \\
Z_{n+1} @>>> \rhom{A}{B_{n+1}}{X} @>>> X @>>> Z_{n+1} \\
\end{CD}
\end{equation}
Assume first that $X \in T^{\perp_\Z}$. Then $\rhom{A}{\Cone(\mu_n)}{X} = 0$, implying that $\rhom{A}{\mu_n}{X}$ is a quasi-isomorphism for all $n \in \Z$, and thus so is $\zeta_n$ for all $n \in \Z$. It follows that $Z_n \cong Z_{n+1}$ in $\DModA$ for all $n \in \Z$, and therefore $Z_n \in \bigcap_{n \in \Z}\Kcal_n = 0$. Then  $X \cong \rhom{A}{B_n}{X} \in \Bcal_n$ for all $n \in \Z$, and thus $X = 0$. We proved that $T$ is a generator of $\DModA$.

It remains to show that $T^{\perp_{>0}} = \Vcal'$. Since $\Vcal' = \{X \in \DModA \mid \rhom{A}{L_n}{X} \in \DD^{\leq -n} \text{ for all } n \in \Z\}$ by Proposition~\ref{dimone-aisle}, the inclusion $\Vcal' \subseteq T^{\perp_{>0}}$ follows by observing that $\Cone(\mu_n)$ is an extension of $L_n$ by $L_{n+1}[1]$, cf.~Lemma~\ref{cones}. To prove the converse, pick $X \in T^{\perp_{>0}}$. From the long exact sequence of cohomologies we infer that $H^{-l}(\rhom{A}{\mu_n}{X})$ is an isomorphism whenever $l<n$, and it is an epimorphism for $l=n$. By the Four Lemma, the map $H^{-l}(\zeta_n)$ is an isomorphism whenever $l<n$, and it is an epimorphism for $l=n$. Then $\tau^{>-l}(Z_l) \cong \tau^{>-l}(Z_n)$ for all $l<n$. By Theorem~\ref{T:BP}, the subcategory $\Kcal_n$ is determined on cohomology for any $n \in \Z$. Hence $\tau^{>-l}(Z_l) \in \bigcap_{n \in \Z}\Kcal_n = 0$   for each $l\in\Z$. This establishes the isomorphisms $H^{-n}(\rhom{A}{B_{n+1}}{X}) \cong H^{-n}(X)$, showing that $H^{-n}(X) \in \Xcal'_{n+1}$. Furthermore, the map $H^{-n}(\rhom{A}{\mu_{n}}{X}): H^{-n}(\rhom{A}{B_{n}}{X}) \rightarrow H^{-n}(\rhom{A}{B_{n+1}}{X}) \cong H^{-n}(X)$ is an epimorphism, and therefore $H^{-n}(X) \in \Gen(\Xcal_n')  = \Dcal_n$. We proved that $X \in \Vcal'$.
\end{proof}
\begin{proposition}\label{L:YcgTTF} 
	Let $A$ be a right hereditary ring, and let
		$\cdots \leq \lambda_{n-1} \leq \lambda_n \leq \lambda_{n+1} \leq \cdots$
		be an increasing chain of homological ring epimorphisms $\lambda_n:A\to B_n$.  Then the  t-structure $(\Vcal',\Wcal')$  with definable aisle $\Vcal'$ constructed in Proposition~\ref{constructionaisle} can be completed to a compactly generated suspended TTF triple $(\Ucal',\Vcal',\Wcal')$ in $\DModA$, which corresponds  under the map $\Psi$ of Theorem~\ref{T:TTFduality} to the compactly generated TTF triple $(\Ucal,\Vcal,\Wcal)$  in $\DAMod$ given by  Proposition~\ref{construction}.
\end{proposition}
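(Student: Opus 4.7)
The plan is to construct explicitly a compact generating set $\Scal_0\subseteq \DD^c(\ModA)$ for the t-structure $(\Vcal',\Wcal')$ and then transport this via the duality $\Psi$ of Theorem~\ref{T:TTFduality}. Since $A$ is right hereditary, each $B_n$ has projective dimension at most one, $A$ is right coherent, every finitely presented right $A$-module is compact in $\DModA$, and one has the hereditary decomposition
\begin{equation*}
\Hom{\DModA}{M[n]}{X}\cong \Hom{A}{M}{H^{-n}(X)}\oplus \Ext{1}{A}{M}{H^{-n-1}(X)}
\end{equation*}
for every $M\in\modA$ and $X\in\DModA$. By Theorem~\ref{univlochom}, each $\lambda_n$ corresponds to a wide subcategory $\Mcal_n'={}^{\perp_{0,1}}\Xcal_n'\cap \modA$, and the hypothesis $\lambda_n\leq \lambda_{n+1}$ translates into the reverse containment $\Mcal_n'\supseteq \Mcal_{n+1}'$, so that the set
\begin{equation*}
\Scal_0 := \{\,N[n] : N\in \Mcal_n',\ n\in\Z\,\}\subseteq \DD^c(\ModA)
\end{equation*}
is closed under the shift $[-1]$.

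I would next verify that $\Scal_0^{\perp_0}=\Vcal'$. Plugging into the hereditary decomposition, $X$ lies in $\Scal_0^{\perp_0}$ iff $H^{-n}(X)\in (\Mcal_n')^{\perp_0}$ and $H^{-n-1}(X)\in (\Mcal_n')^{\perp_1}$ for all $n$, i.e.\ iff each $H^{-n}(X)$ lies in $(\Mcal_n')^{\perp_0}\cap (\Mcal_{n+1}')^{\perp_1}$. On the other hand, Proposition~\ref{constructionaisle} reads $X\in \Vcal'$ iff $H^{-n}(X)\in \Dcal_n\cap \Xcal_{n+1}'$ for all $n$. The two descriptions coincide once one uses the standard identifications $\Dcal_n=(\Mcal_n')^{\perp_0}$ (the minimal silting class attached to $\Mcal_n'$, in the spirit of \cite{AMV2}) and $\Xcal_{n+1}'=(\Mcal_{n+1}')^{\perp_{0,1}}$, together with the inclusion $(\Mcal_n')^{\perp_0}\subseteq (\Mcal_{n+1}')^{\perp_0}$ coming from $\Mcal_n'\supseteq \Mcal_{n+1}'$, which makes the $\perp_0$ condition at level $n+1$ redundant. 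This shows that $(\Vcal',\Wcal')$ is compactly generated, and the resulting TTF triple $(\Ucal',\Vcal',\Wcal')$ is suspended because $\Scal_0$ is closed under $[-1]$.

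Finally, Theorem~\ref{T:TTFduality} maps this TTF triple to the TTF triple in $\DAMod$ generated by $\Scal_0^*$, and Lemma~\ref{L:dualdefinable} identifies $(\Scal_0^*)^{\perp_0}$ with the dual definable subcategory of $\Vcal'=\Scal_0^{\perp_0}$. It therefore suffices to check that the coaisle $\Vcal$ from Proposition~\ref{construction} is this dual definable subcategory, i.e.\ that $Y\in\Vcal$ iff $Y^+\in\Vcal'$ for every $Y\in\DAMod$. This is a direct cohomology-by-cohomology computation: using the natural isomorphism $H^n(Y^+)\cong H^{-n}(Y)^+$ of Lemma~\ref{L:dualityformulas}(i), the silting–cosilting duality $\Dcal_n^{\vee}=\Ccal_n$ from Proposition~\ref{dually}, and the analogous duality between the right and left bireflective subcategories $\Xcal_{n+1}'$ and $\Xcal_{n+1}$ attached to the same ring epimorphism $\lambda_{n+1}$, the condition $Y^+\in\Vcal'$ translates level-wise to $H^n(Y)\in \Ccal_n\cap \Xcal_{n+1}=\Vcal_n$ for all $n$, which is precisely $Y\in\Vcal$. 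The main obstacle in this plan is the identification $\Dcal_n=(\Mcal_n')^{\perp_0}$ in the right-hereditary setting, which requires invoking the theory of minimal silting modules over hereditary rings; once this is in place, the remaining bookkeeping with indices and dualities is purely formal.
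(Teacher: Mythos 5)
Your overall strategy (exhibit a compact generating set built from the wide subcategories $\Mcal_n'$ attached to the $\lambda_n$ via Theorem~\ref{univlochom}, compute the perpendicular class cohomology-by-cohomology using the hereditary decomposition, then transport via $\Psi$ and the dual definable subcategories) is exactly the paper's strategy, and your treatment of the duality step at the end is fine. However, the step you yourself flag as ``the main obstacle'' is not merely delicate --- the identification $\Dcal_n=(\Mcal_n')^{\perp_0}$ is false in general. What \cite{AMV2} actually gives is $\Dcal_n=\Ker\Ext{1}{A}{\Mcal_n'}{-}\cap\rfmod{A/I_n}=(\Mcal_n')^{\perp_1}\cap I_n^{\perp_0}$, where $I_n=\Ker(\lambda_n)$ is an idempotent two-sided ideal. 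The class $\Dcal_n=\Gen(\Xcal_n')$ is a torsion class, whereas $(\Mcal_n')^{\perp_0}$ is closed under submodules; already over the Kronecker algebra, for $\lambda$ the universal localization at a quasi-simple $S$, every indecomposable preprojective $P$ satisfies $\Hom{A}{S}{P}=0$ but $\Ext{1}{A}{S}{P}\neq 0$, so $P\in\Mcal^{\perp_0}\setminus\Dcal$. Consequently your set $\Scal_0$ computes the wrong perpendicular: the conditions it imposes on $H^{-n}(X)$ involve $\perp_0$ against $\Mcal_n'$ and $\perp_1$ against $\Mcal_{n-1}'$ (note also an index slip: your decomposition puts the $\Ext{1}$-condition on $H^{-n-1}(X)$, so after reindexing it is $\Mcal_{n-1}'$, not $\Mcal_{n+1}'$, that appears in the $\perp_1$-slot), whereas the target $\Dcal_n\cap\Xcal_{n+1}'$ requires $\perp_1$ against $\Mcal_n'$, $\perp_{0}$ against $\Mcal_{n+1}'$, and additionally $\perp_0$ against $I_n$.

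The fix, which is what the paper does, is to shift the wide subcategories by one degree (place $\Mcal_{n+1}'$ in degree $-n$, so that the $\Hom$-orthogonality lands on $\Mcal_{n+1}'$ and the $\Ext{1}$-orthogonality on $\Mcal_n'$) and, crucially, to add further compact generators accounting for the condition $H^{-n}(X)\in\rfmod{A/I_n}$: since $A$ is right hereditary, $I_n$ is projective and by Kaplansky's theorem decomposes as a direct sum of finitely generated projectives $I_n^\alpha$, and one includes all $I_n^\alpha[n]$ in the generating set. Without these extra generators your argument only works when every $\lambda_n$ is injective, which fails for homological ring epimorphisms with nonzero idempotent kernel (e.g.\ killing a vertex of a quiver).
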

\begin{proof}  
	To prove the existence of a compactly generated suspended TTF triple of the stated shape, it is enough to exhibit  a set $\Scal$ of compact objects of $\DModA$ such that $\Vcal' = \Scal^{\perp_0}$. Let $(\Mcal_n\mid n \in\mathbb Z)$ be the sequence of wide subcategories of $\modA$ corresponding to the homological epimorphisms $\lambda_n: A \rightarrow B_n$ via Theorem~\ref{univlochom}. It is proved in \cite[Corollary 5.15 and Proposition 5.2]{AMV2} that $\Dcal_n = \Ker \Ext{1}{A}{\Mcal_n}{-} \cap \rmod{A/I_n}$ where $I_n = \Ker(\lambda_n)$ is an idempotent two-sided ideal of $A$. Since $A$ is right hereditary, $I_n$ is a projective right $A$-module and thus, due to a theorem of Kaplansky \cite[(2.24)]{Lam}, $I_n$ admits a direct sum decomposition $I_n = \bigoplus_{\alpha \in \varkappa_n} I_n^\alpha$, where $\varkappa_n$ is a cardinal and $I_n^\alpha$ is a finitely generated projective right $A$-module for each $\alpha \in \varkappa_n$, $n \in \Z$. We consider the subset of $\DD^c(\ModA)$
$$\Scal = \bigcup_{n \in \Z} (\Mcal_{n+1} \cup \{I_{n}^\alpha \mid \alpha \in \varkappa_{n}\})[n],$$
and claim that an object $X \in \DModA$ satisfies $\Hom{\DModA}{\Scal}{X} = 0$ if and only if $X \in \Vcal'$. Since $A$ is right hereditary, and both $\Vcal'$ and $\Ker \Hom{\DModA}{\Scal}{-}$ are closed under direct sums and summands, it is enough to check this when $X = M[n]$ is a stalk complex given by some $M \in \ModA$. Since $I_n$ is an idempotent ideal, clearly $\rmod{A/I_n} = I_n^{\perp_0} \subseteq \ModA$. Then we compute, using again the right heredity of $A$, and the fact that $I_n^\alpha$ is projective for each $\alpha \in \varkappa_n$:
$$M[n] \in \Scal^{\perp_0} \Leftrightarrow M \in \Mcal_n^{\perp_1} \cap \Mcal_{n+1}^{\perp_0} \cap I_n^{\perp_0}.$$
Since $\Mcal_{n+1} \subseteq \Mcal_n$, this is further equivalent to $M$ satisfying $M \in \Mcal_{n+1}^{\perp_{0,1}}$ and $M \in \Mcal_n^{\perp_1} \cap I_n^{\perp_0}$. But $\Mcal_{n+1}^{\perp_{0,1}} = \Xcal_{n+1}'$ and $\Mcal_n^{\perp_1} \cap I_n^{\perp_0} = \Dcal_n$. Therefore, 
$$M[n] \in \Scal^{\perp_0} \Leftrightarrow M \in \Dcal_n \cap \Xcal_{n+1}' \Leftrightarrow M[n] \in \Vcal',$$
as desired.

The map $\Psi$ from Theorem~\ref{T:TTFduality} now provides  a compactly generated TTF triple $(\Ucal,\Vcal,\Wcal)$  in $\DAMod$, where $\Vcal'$ and $\Vcal$ are dual definable subcategories. In other words, $\Vcal$ is uniquely determined by the property that a complex $X \in \DAMod$ lies in $\Vcal$ if and only if $X^+$ lies in $\Vcal'$, cf.~Lemma~\ref{L:dualdefinable2} and Remark~\ref{R:dualdef}. But then, by construction, $\Vcal$ has to be the definable coaisle obtained   from the chain  		$\cdots \leq \lambda_n \leq \lambda_{n+1} \leq \cdots$ as in Proposition~\ref{dimone-coaisle}.
	\end{proof}

\section{Classification results}\label{S:examples}
We finish this note by discussing our results  for specific classes of rings, elaborating on the interplay between TTF triples, (co)silting objects, and ring epimorphisms in these special cases.

\subsection{Commutative noetherian rings of Krull dimension at most one}\label{dimone} Let $A$ be a commutative noetherian ring. Recall from Corollary~\ref{flatcomm} that the flatness of a ring epimorphism is determined by the closure properties of the corresponding bireflective category, and consequently the collection of epiclasses of flat ring epimorphisms forms a complete lattice as in the discussion after Theorem~\ref{epicl}. 
\begin{theorem}\label{T:dimone}
	Let $A$ be a commutative noetherian ring of Krull dimension at most one. Then there are bijections between the following collections:
\begin{enumerate}
	\item[(i)] equivalence classes of chains $\cdots \leq \lambda_{n-1} \leq \lambda_n \leq \lambda_{n+1} \leq \cdots$ in the lattice of flat ring epimorphisms over $A$ such that the meet $\bigwedge_{n \in \Z} \lambda_n$ and the join $\bigvee_{n \in \Z}\lambda_n$ equal the trivial homomorphism $\;{\rm 0}_A: A \rightarrow 0$ and the identity $\;{\rm id}_A: A \rightarrow A$, respectively;
	\item[(ii)] equivalence classes of silting objects $T$ of finite type in $\DModA$;	\item[(iii)] equivalence classes of pure-injective cosilting objects $C$ in $\DModA$.\end{enumerate}

	The representatives of the equivalence classes of objects in (ii) and (iii) are obtained from (i) by the constructions of Proposition~\ref{P:siltconstr} and Proposition~\ref{constructioncomplex}, respectively.
\end{theorem}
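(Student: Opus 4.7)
The plan is to deduce the theorem from Theorem~\ref{commnoeth} by interpreting the lattice of flat ring epimorphisms over $A$ in terms of specialization-closed subsets of $\Spec A$. Recall that Theorem~\ref{commnoeth} already supplies a bijection between (ii) and (iii), both parametrized by filtrations by supports $\Phi: \mathbb{Z} \to \mathcal{P}(\Spec A)$ satisfying $\bigcup_n \Phi(n)=\Spec A$ and $\bigcap_n \Phi(n)=\emptyset$. It therefore suffices to match (i) with the collection of such filtrations and then identify the representatives produced by the two construction propositions.

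First I would set up an order-reversing bijection between epiclasses of flat ring epimorphisms $\lambda: A \to B$ and specialization-closed subsets $V \subseteq \Spec A$, via $\lambda \mapsto V(\lambda) = \Supp\, {}^{\perp_0}\Cogen(B^+)$. This bijection is precisely Example~\ref{exampleminimal}(3) combined with the Gabriel-type parametrization of hereditary torsion pairs recalled in Example~\ref{exampleminimal}(2), using the Krull dimension hypothesis to guarantee that every specialization-closed subset has coherent complement. Unwinding the definitions, $\lambda_1 \leq \lambda_2$ iff the associated bireflective subcategories satisfy $\Xcal_1 \subseteq \Xcal_2$, which gives $\Cogen(\Xcal_1) \subseteq \Cogen(\Xcal_2)$, hence $V_1 \supseteq V_2$; the bottom epimorphism $0_A$ corresponds to $\Spec A$ and $\mathrm{id}_A$ to $\emptyset$. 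Consequently an increasing chain $(\lambda_n)_n$ in (i) translates to a decreasing chain $(V_n)_n$ of specialization-closed subsets, i.e.~to a filtration by supports $\Phi(n)=V_n$. The meet $\bigwedge_n \lambda_n = 0_A$ translates to $\bigcup_n \Phi(n) = \Spec A$, while the join $\bigvee_n \lambda_n = \mathrm{id}_A$ translates to $\bigcap_n \Phi(n) = \emptyset$, yielding the desired bijection (i) $\leftrightarrow$ filtrations.

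It remains to identify the representatives. For the cosilting side, the key observation is Example~\ref{commnoethcoherent}: the definable coaisle $\Vcal$ constructed from the chain via Proposition~\ref{construction} coincides with the coaisle $\Vcal_\Phi$ of the Alonso--Jerem\'ias--Saor\'in t-structure attached to $\Phi$. Since each $B_n$ is $A$-flat (weak dimension zero) and conditions (\ref{zeros}) are precisely the meet/join assumptions (because $\bigcap_n \Xcal_n=0$ amounts to $\bigcup_n V_n=\Spec A$ and $\bigcap_n \Lcal_n=0$ to $\bigcap_n V_n=\emptyset$), Proposition~\ref{constructioncomplex} applies and furnishes the pure-injective cosilting object $C = \prod_n \Cone(\mu_n)^+[-n]$ realizing the class in (iii).

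For the silting side, I would invoke the Raynaud--Gruson theorem (or Jensen's theorem): over a commutative noetherian ring of Krull dimension $d$, every flat module has projective dimension at most $d$. In particular each $B_n$ has projective dimension at most one, so Proposition~\ref{P:siltconstr} applies under the same meet/join conditions and produces the silting object $T = \bigoplus_n \Cone(\mu_n)[n]$. A direct duality computation gives $T^+ \cong \prod_n \Cone(\mu_n)^+[-n] = C$; hence $T$ and the silting object of finite type associated to $\Phi$ by Theorem~\ref{commnoeth} have the same $(-)^+$, and must therefore induce the same t-structure $(\Vcal',\Wcal')$, i.e.~be equivalent as silting objects. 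The principal obstacle I anticipate is tracking carefully the direction of the order-reversing bijection between the lattice of flat ring epimorphisms and the lattice of specialization-closed subsets, and confirming that the meet in the former corresponds precisely to the union of supports in the latter; once that accounting is correct, the remaining verifications are routine applications of the results already collected in the paper.
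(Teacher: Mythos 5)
Your proposal is correct and follows essentially the same route as the paper's proof: both translate chains of flat ring epimorphisms into filtrations by supports via the anti-isomorphism of lattices coming from the Krull dimension hypothesis (Examples~\ref{exampleminimal}(2)--(3) and~\ref{commnoethcoherent}), match the meet/join conditions with the conditions of Theorem~\ref{commnoeth}(iii), and then apply Propositions~\ref{constructioncomplex} and~\ref{P:siltconstr}, the latter after invoking Raynaud--Gruson to bound the projective dimension of the flat modules $B_n$ by one. Your additional check that $T^+ \cong C$ is a harmless (and slightly more explicit) way of confirming that the two constructed objects correspond under the silting--cosilting duality.
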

\begin{proof}
	By Example~\ref{commnoethcoherent}, the Krull dimension of $A$ being at most one implies that the assignment
 $$(\lambda: A \rightarrow B) \mapsto V = \{\p \in \Spec(A) \mid A/\p \otimes_A B = 0\}$$
yields a bijection between the epiclasses of flat ring epimorphisms over $A$ and the specialization closed subsets of $\Spec(A)$. Recall that under this identification, $V = \Supp \Tcal$ where $\Tcal = \Ker(B \otimes_A -)$ is the hereditary torsion class induced by the flat ring epimorphism $\lambda$. It follows that there is an anti-isomorphism of complete lattices between the lattice of epiclasses of flat ring epimorphisms and the set-theoretic lattice of specialization closed subsets of $\Spec(A)$. Therefore, we naturally obtain a bijection between the equivalence classes of chains of flat ring epimorphisms satisfying the conditions of (i), and the filtrations by supports on $\Spec(A)$ satisfying the conditions in Theorem~\ref{commnoeth}(iii). In this way, we have established the bijections (i) $\leftrightarrow (ii) \leftrightarrow (iii)$.
	
	Since the ring epimorphism $\lambda_n: A \rightarrow B_n$ is flat for all $n \in \Z$, Proposition~\ref{constructioncomplex} applies and yields the cosilting object $C$. Finally, since the Krull dimension of $A$ is at most one, for any $n \in \Z$ the projective dimension of the flat module $B_n$ over $A$ is at most one by \cite[Corollaire 3.2.7]{RG}, and therefore the assumptions of Proposition~\ref{P:siltconstr} are satisfied as well, yielding the silting object $T$.
\end{proof}

\begin{example}\label{EX:cosiltgroups} 
	We compute explicitly the silting and cosilting objects of Theorem~\ref{T:dimone} in the case of the ring of integers  $\Z$. Let $\cdots \leq \lambda_{n-1} \leq \lambda_n \leq \lambda_{n+1} \leq \cdots$ be a chain of flat ring epimorphisms satisfying the conditions (i) of Theorem~\ref{T:dimone}. Inspecting the shape of the lattice, this amounts to a choice of an integer $l \in \Z$ which is the smallest with respect to property $B_l \neq 0$, and then a choice of a decreasing sequence
	$$\mathbb{P} \supseteq P_0 \supseteq P_1 \supseteq P_2 \supseteq \cdots$$
	of subsets of the set $\mathbb{P}$ of prime numbers such that $\bigcap_{n \geq 0} P_n = \emptyset$, determined by the property that $B_{l+n}$ is isomorphic to $\Z[P_n^{-1}]$, the localization of $\Z$ at all the prime numbers from $P_n$. For each $n \geq l$, the connecting ring epimorphism $\mu_n: B_{n+1} \rightarrow B_n$ is injective, and
$$\Cone(\mu_n) = \Coker(\mu_n) \cong \bigoplus_{p \in P_{n} \setminus P_{n+1}} \Z_p^{\infty},$$
where $\Z_p^{\infty}$ is the Pr\"{u}fer $p$-group. The remaining non-trivial morphism $\mu_{l-1}$ is of form $\mu_{l-1}: B_{l} \rightarrow 0$, and thus
$$\Cone(\mu_{l-1}) = B_{l}[1] = \Z[P_l^{-1}][1].$$
	Applying the constructions of Proposition~\ref{P:siltconstr} and Proposition~\ref{constructioncomplex}, we obtain that the desired silting object is the split complex
$$T = (\bigoplus_{n \geq 0} \bigoplus_{p \in P_{n} \setminus P_{n+1}} \Z_p^{\infty}[l+n])  \oplus \Z[P_l^{-1}][l]$$
and the cosilting object is the split complex
$$C = (\prod_{n \geq 0} \prod_{p \in P_{n} \setminus P_{n+1}} \mathbb{J}_p[-l-n])  \oplus \Z[P_l^{-1}]^+[-l],$$
where $\mathbb{J}_p = \Hom{\Z}{\Z_p^{\infty}}{\mathbb{Q}/\Z}$ is the group of $p$-adic integers. Finally, note that $T$ is a bounded silting complex if and only if $C$ is a bounded cosilting complex if and only if there is $n \geq 0$ such that $P_n = \emptyset$.
\end{example}

In subsection~\ref{Krone}, we will show that a construction of silting and cosilting objects similar to Example~\ref{EX:cosiltgroups} is available also for the Kronecker algebra over a field. 

\subsection{Commutative rings of weak global dimension at most one}\label{commweak}  In the commutative case, we can refine Theorem~\ref{minimalTTF} and determine which TTF triples  are compactly generated. An essential ingredient is the classification of definable coaisles over valuation domains provided in \cite{BH}. Recall that a valuation domain is an integral domain such that its ideals are totally ordered by inclusion. Also, a commutative ring $A$ is of weak global dimension at most one if and only if its localization $A_{\p}$ at each prime $\p \in \Spec(A)$ is a valuation domain (\cite[Corollary 4.2.6]{Gl}).

\begin{theorem}\label{wgdcomm}
	Let $A$ be a commutative ring of weak global dimension at most one. Then the bijection of Theorem~\ref{minimalTTF} restricts to a bijection between
\begin{enumerate}
\item[(i)] equivalence classes of chains $\cdots\lambda_n\le \lambda_{n+1}\cdots$ of flat ring epimorphisms;
\item[(ii)] minimal cosuspended TTF triples in $\DAMod$ which are compactly generated.
\end{enumerate}
\end{theorem}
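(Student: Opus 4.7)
The plan is to work inside the bijection of Theorem~\ref{minimalTTF}, which matches chains $\cdots\le\lambda_n\le\lambda_{n+1}\le\cdots$ of homological ring epimorphisms over $A$ with minimal cosuspended TTF triples $(\Ucal,\Vcal,\Wcal)$ in $\DAMod$. The task reduces to characterizing which such chains give rise to a compactly generated TTF triple. The underlying principle, already visible in Example~\ref{exampleminimal}(5), is that over a commutative ring of weak global dimension at most one, flatness of a homological ring epimorphism $\lambda_n\colon A\to B_n$ is equivalent to the minimal cosilting class $\Ccal_n = \Cogen(B_n^+)$ being of cofinite type; equivalently, to the Gabriel filter of the hereditary torsion pair $(\Ker(B_n\otimes_A -),\Ccal_n)$ admitting a basis of finitely generated ideals.

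Assuming first that each $\lambda_n$ is flat, I would produce compact generators of $(\Ucal,\Vcal,\Wcal)$ as follows. A set of compact cogenerators of $\Ccal_n$ inside $\DAMod$ is supplied by the Koszul complexes of a family of finitely generated ideals generating the Gabriel filter of $\lambda_n$. Using the description of the coaisle given in Proposition~\ref{dimone-coaisle} via the cones $\Cone(\lambda_n)$, one then assembles these compact objects, together with suitable cohomological shifts, into a single set of compact generators of the TTF triple.

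Conversely, suppose $(\Ucal,\Vcal,\Wcal)$ is compactly generated. By Theorem~\ref{T:aislecohomology}, each $\Ccal_n = \Ucal_n^{\perp_0}$ is a minimal cosilting class in $\AMod$ associated via Corollary~\ref{weak} to a homological $\lambda_n$, so it suffices to show that each $\Ccal_n$ is of cofinite type. I would proceed by a local-global reduction: each localization $A_\p$ at a prime $\p\in\Spec A$ is a valuation domain, and the classification of definable coaisles over valuation domains provided by \cite{BH} can be used to show that compact generation of $\Vcal$ propagates to the cohomological slice $\Vcal_n$ and hence to $\Ccal_n$. Once cofinite type is secured, Example~\ref{exampleminimal}(5) together with Corollary~\ref{weak} forces each $\lambda_n$ to be flat.

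The main obstacle lies in this converse direction. Compact objects in $\DAMod$ are not in general concentrated in a single cohomological degree, so one cannot simply read off cogenerators of $\Ccal_n$ by taking cohomologies of given compact generators of the TTF triple. Reducing to the valuation domain case via localization, where the structure of definable coaisles is explicitly classified in \cite{BH}, appears to be the cleanest way to circumvent this complication.
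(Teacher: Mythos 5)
Your converse direction is essentially the paper's argument: localize at each prime $\p$, observe that $A_\p$ is a valuation domain, and invoke the classification of definable coaisles from \cite{BH}. The paper, however, uses this localization argument for \emph{both} directions at once: by (the proof of) \cite[Theorem 8.7]{BH} compact generation of $(\Ucal,\Vcal)$ is equivalent to compact generation of all the localized t-structures $(\Ucal_\p,\Vcal_\p)$, and by \cite[Theorem 9.4]{BH} the latter holds if and only if each localized chain $\cdots(\lambda_n\otimes_A A_\p)\le(\lambda_{n+1}\otimes_A A_\p)\cdots$ consists of flat epimorphisms, which is equivalent to each $\lambda_n$ being flat over $A$. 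The one step you gloss over in the converse, and which the paper does verify, is that the localized t-structure $(\Ucal_\p,\Vcal_\p)$ is again the t-structure arising via Proposition~\ref{construction} from the localized chain; without this identification you cannot apply the valuation-domain classification to it.

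Two genuine gaps remain. First, your ``underlying principle'' that for a commutative ring of weak global dimension at most one flatness of $\lambda_n$ is equivalent to $\Ccal_n=\Cogen(B_n^+)$ being of cofinite type is not available at the stated generality: the paper proves the relevant equivalence only for (semi)hereditary, i.e.\ coherent, rings (Proposition~\ref{weaksemiher}), and Example~\ref{exampleminimal}(5) concerns valuation domains, which are themselves semihereditary. A general commutative ring of weak global dimension at most one need not be coherent, so this equivalence would itself require a localization argument rather than a citation. Second, the forward direction via Koszul complexes is underdeveloped precisely where coherence fails: for a finitely generated ideal $I$ in the Gabriel filter, the module $A/I$ need not be finitely presented of finite projective dimension, hence need not be compact in $\DAMod$, and one must argue that the Koszul complexes on chosen generators of the $I$'s, suitably shifted, cut out exactly the coaisle $\Vcal$ of Proposition~\ref{dimone-coaisle} and not merely each cohomological slice $\Vcal_n$. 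None of this is impossible (it is close in spirit to the Thomason-set machinery of \cite{HCG}), but as written it is a sketch with the hard points omitted, whereas the paper's uniform local-global argument avoids constructing compact generators altogether.
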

\begin{proof}
	Let $(\Ucal,\Vcal,\Wcal)$ be a minimal cosuspended TTF triple in $\DModA$ corresponding to a chain $\cdots\lambda_n\le \lambda_{n+1}\cdots$ via Theorem~\ref{minimalTTF}. For any prime ideal $\p \in \Spec(A)$ and any subcategory $\Ccal$ of $\DModA$ denote $\Ccal_{\p} = \{X \otimes_A A_{\p} \mid X \in \Ccal\}$, viewed as a subcategory of $\DD(\rmod{A_{\p}})$. By \cite[Lemma 8.6]{BH}, the pair $(\Ucal_{\p},\Vcal_{\p})$ forms a t-structure in $\DD(\rmod{A_{\p}})$ and $\Vcal_{\p} \subseteq \Vcal$ for all $\p \in \Spec(A)$. Using the latter inclusion, it follows easily that $\Vcal_{\p} = \Vcal \cap \DD(\rmod{A_{\p}})$, and also that the subcategory $\Vcal_{\p}$ consists precisely of those complexes $X$ such that $H^n(X)$ belongs to $\Vcal_n \cap \rmod{A_{\p}}$ for any $n \in \Z$, where $\Vcal_n$ is the subcategory of $\ModA$ defined in Proposition~\ref{construction}. Then it is straightforward to check that the t-structure $(\Ucal_{\p},\Vcal_{\p})$ is obtained from the chain $\cdots(\lambda_n \otimes_A A_{\p})\le (\lambda_{n+1} \otimes_A A_{\p})\cdots$ of homological epimorphisms over the ring $A_{\p}$ via Proposition~\ref{construction}. Therefore, there is a minimal cosuspended TTF triple $(\Ucal_{\p},\Vcal_{\p},\Wcal_{\p})$ in $\DD(\rmod{A_{\p}})$ again by Theorem~\ref{minimalTTF}.

	It follows from the proof of \cite[Theorem 8.7]{BH} that the t-structure $(\Ucal,\Vcal)$ is compactly generated in $\DModA$ if and only if $(\Ucal_{\p},\Vcal_{\p})$ is compactly generated in $\DD(\rmod{A_{\p}})$ for all $\p \in \Spec(R)$. By \cite[Theorem 9.4]{BH}, the t-structure $(\Ucal_{\p},\Vcal_{\p})$ is compactly generated in $\DD(\rmod{A_{\p}})$ if and only if the chain $\cdots(\lambda_n \otimes_A A_{\p})\le (\lambda_{n+1} \otimes_A A_{\p})\cdots$ consists of flat epimorphisms over the valuation domain $A_{\p}$. In conclusion, the t-structure $(\Ucal,\Vcal)$ is compactly generated if and only if $\lambda_n \otimes_A A_{\p}$ is a flat ring epimorphism for each $n \in \Z$ and $\p \in \Spec(A)$, which in turn is equivalent to $\lambda_n$ being a flat ring epimorphism over $A$ for each $n \in \Z$. This establishes the bijection.
\end{proof}
The situation of Theorem~\ref{wgdcomm} becomes even nicer if the ring is in addition coherent. Coherent rings of weak global dimension at most one are precisely the semihereditary rings, that is, rings such that any finitely generated ideal is projective. Semihereditary commutative rings include valuation domains, and more generally, Pr\"{u}fer domains. 

Recall that a hereditary torsion pair $(\Tcal,\Ccal)$ in $\AMod$ is \emph{of finite type} if $\Ccal$ is closed under direct limits. 

\begin{proposition}\label{weaksemiher} 
If $A$ is a left semihereditary ring, the bijection in Corollary~\ref{weak} assigning to a ring epimorphism $\lambda:A\to B$  the  minimal cosilting class $\Cogen B^+$ 
takes universal localizations of $A$ to minimal cosilting classes of cofinite type, and it 
induces  a bijection between 
\begin{enumerate}
 \item[(i)] epiclasses of flat ring epimorphisms,
\item[(ii)] hereditary torsion pairs of finite type.
\end{enumerate} 
In particular, every hereditary torsion pair of finite type is induced by a minimal cosilting module of cofinite type.
\end{proposition}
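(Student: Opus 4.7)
The plan is to reduce all three assertions to the bijection of Corollary~\ref{weak} (applicable since left semihereditary $A$ has weak global dimension at most one), together with Proposition~\ref{construct} and silting-cosilting duality, then identify the image of universal localizations and flat ring epimorphisms on the cosilting side.

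First, I would handle the assertion on universal localizations. For $\lambda:A\to A_\Sigma$ a universal localization, Theorem~\ref{def:universallocalisation} provides $\Tor{1}{A}{A_\Sigma}{A_\Sigma}=0$, and $A_\Sigma$ has projective dimension at most one as a right $A$-module (it is built from a set of maps between finitely generated projectives over a semihereditary ring). By Example~\ref{copreco}(5), $\lambda$ is homological and $A_\Sigma$ admits a presilting presentation. Example~\ref{copreco}(2) then yields that $T = A_\Sigma \oplus \Coker\lambda$ is a silting right $A$-module, and being generated by a presilting presentation of $A_\Sigma$ it is manifestly of finite type. Silting-cosilting duality (Corollary~\ref{duality}) makes $T^+ \cong A_\Sigma^+ \oplus \Ker\lambda^+$ a cosilting module of cofinite type, which by Corollary~\ref{weak} and Proposition~\ref{construct}(1) coincides with the minimal cosilting module attached to $\lambda$.

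Next, for the bijection (i)$\leftrightarrow$(ii): in the forward direction, a flat ring epimorphism $\lambda:A\to B$ is homological, and Proposition~\ref{construct}(3) produces the torsion pair $(\Ker(B\otimes_A-),\Cogen B^+)$ in $\AMod$. Flatness of $B$ makes the torsion class closed under submodules (so the pair is hereditary), while the cosilting class $\Cogen B^+$ is automatically definable, hence closed under direct limits. Injectivity follows from Corollary~\ref{weak}. For the backward direction, given a hereditary torsion pair $(\Tcal,\Fcal)$ of finite type, $\Fcal$ is closed under products, submodules, extensions, and direct limits, making it a definable torsion-free class and therefore a cosilting class by \cite[Corollary 3.9]{abundance}. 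To realize $\Fcal$ from a flat ring epimorphism, the cleanest route is to invoke the classical theorem of Stenstr\"{o}m asserting that hereditary torsion pairs of finite type over left semihereditary rings are perfect, i.e., induced by flat ring epimorphisms; alternatively, one constructs $B$ directly as the universal localization $A_\Sigma$ at the set $\Sigma=\{f_M^*\}$, where $P_1\xrightarrow{f_M}P_0$ runs through the finitely generated projective resolutions of the finitely presented modules $M\in\Tcal\cap\lfmod A$ (this is a Serre subcategory of $\lfmod A$ by coherence of $A$, and generates $\Tcal$ by the finite type hypothesis). The ``in particular'' clause then follows by combining this with the first step: once the flat ring epi is identified as a universal localization, its minimal cosilting module is of cofinite type.

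The main obstacle is the backward direction of the bijection, specifically producing a flat ring epimorphism from a hereditary torsion pair of finite type and verifying that it indeed realizes the given torsion pair. The classical perfect-localization theorem of Stenstr\"{o}m gives the most elegant short-cut, essentially using the full strength of the semihereditary hypothesis. A direct construction via universal localizations at dualized projective resolutions requires careful bookkeeping of left/right conventions and a separate verification that the resulting $A_\Sigma$ is flat as a right $A$-module; equivalently, that hereditarity of $\Ker(A_\Sigma \otimes_A -)$ forces flatness of $A_\Sigma$, which is where the structure of finitely generated ideals over the semihereditary ring enters.
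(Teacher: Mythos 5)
There is a genuine gap in your first step, the one handling universal localizations. You claim that $A_\Sigma$ has projective dimension at most one as a right $A$-module ``because it is built from a set of maps between finitely generated projectives over a semihereditary ring'', and you then feed this into Example~\ref{copreco}(5) to get a presilting presentation of $A_\Sigma$ and a silting right module $A_\Sigma\oplus\Coker\lambda$ of finite type. Universal localization gives no control on projective dimension: over a left semihereditary ring only the \emph{weak} dimension of modules is bounded by one, and e.g.\ over a valuation domain $A$ with uncountably generated quotient field $Q$ the flat epimorphism $A\to Q$ is a universal localization with $\pd Q_A\geq 2$ (Kaplansky, as in Example~\ref{exampleminimal}(4)). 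So Example~\ref{copreco}(5) does not apply, and indeed Example~\ref{exampleminimal}(5) warns that the silting-module route ($B\oplus\Coker\lambda$ silting, then dualize) genuinely fails beyond hereditary rings. The paper circumvents this entirely by staying on the left side: via (the left version of) Theorem~\ref{univlochom}, $\lambda$ is the universal localization at a wide subcategory $\Mcal$ of \emph{finitely presented left} modules, which do have projective dimension at most one by left semihereditarity, and Schofield's results identify $\Ker(B\otimes_A-)={}^{\perp_0}(\Mcal^{\perp_0})=\Gen\Mcal$, so that $\Cogen B^+=\Mcal^{\perp_0}$ is cut out by finitely presented data of projective dimension one and is therefore of cofinite type. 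You need some replacement of this form; duality with a silting right module will not deliver it.

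The remainder of your argument is essentially the paper's. The forward direction of (i)$\leftrightarrow$(ii) is as you say, and for the backward direction the paper does exactly what you call the ``clean route'': the Gabriel topology of a finite-type hereditary torsion pair has a basis of finitely generated, hence projective, ideals, and Stenstr\"{o}m's perfect localization theorem produces a flat ring epimorphism; one then checks via Proposition~\ref{construct}(2) that its torsion-free class is the given one. Your alternative direct construction of $A_\Sigma$ is not needed. Note finally that the ``in particular'' clause relies on flat ring epimorphisms over semihereditary rings being universal localizations (\cite[Proposition 5.3]{angarc}) and then on the first step --- so it inherits the gap above until that step is repaired.
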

\begin{proof}
Let $\lambda:A\to B$ be a universal localization. By Theorem~\ref{univlochom}, the  finitely presented left $A$-modules whose projective resolutions are inverted by $B$ form  a wide subcategory $\Mcal$ of $\lfmod{A}$ such that $\lambda$ is equivalent to  the universal localization $\lambda_\Mcal$ at the projective resolutions of the modules in $\Mcal$.  Consider the torsion pair $(\Tcal_B=\Ker(B\otimes_A -), \,\Ccal_B= \Cogen B^+)$ in $\AMod$. The torsion
class $\Tcal_B$ obviously contains $\mathcal M$ and thus also ${}^{\perp_0}(\Mcal^{\perp_0})$,
which in turn contains $\Gen{ \mathcal{\Mcal}}$, and
 from \cite[Lemma 5.1 and Theorem 5.5]{Scho1} it even follows that 
 $\Tcal_B={}^{\perp_0}(\Mcal^{\perp_0})=\Gen{ \mathcal{\Mcal}}$. We conclude that 
 $(\Tcal_B,\Ccal_B)$ is the torsion pair generated by $\Mcal$, which shows that $\Ccal_B$ is a minimal cosilting class of cofinite type.

Assume now that $\lambda:A\to B$ is a flat ring epimorphism. Then   $(\Tcal_B, \Ccal_B)$ is obviously a hereditary torsion pair of finite type. Conversely,  every hereditary torsion pair of finite type $(\Tcal, \Ccal)$ is associated to a Gabriel topology with a basis of finitely generated ideals (cf.~\cite[Chapter VI, Theorem 5.1, and Chapter XIII, Proposition 1.2]{Ste}), which are also projective by assumption on  $A$. It then follows from \cite[Chapter XI, Propositions 3.3 and 3.4]{Ste} that there is a flat ring epimorphism $\lambda:A\to B$ such that  $\Ccal$ consists of the left $A$-modules $M$ whose reflection 
 $\eta: M \rightarrow B\otimes_A M$ is  injective. By Proposition~\ref{construct}(2), this means that $\Ccal=\Ccal_B$. Finally, $\lambda$ is equivalent to a universal localization  by \cite[Proposition 5.3]{angarc}, hence $\Ccal$ is a minimal cosilting class of cofinite type. 
\end{proof}
\begin{proposition}\label{semihereditary}
If $A$ is a commutative semihereditary ring then any compactly generated cosuspended TTF triple in $\DModA$ is minimal.
\end{proposition}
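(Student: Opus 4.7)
The plan is to combine the localisation technique underpinning the proof of Theorem~\ref{wgdcomm} with the semihereditary dictionary of Proposition~\ref{weaksemiher}. Let $(\Ucal,\Vcal,\Wcal)$ be a compactly generated cosuspended TTF triple in $\DModA$. Since $A$ has weak global dimension at most one and $\Vcal$ is definable, Theorem~\ref{T:aislecohomology} already supplies cosilting classes $\Ccal_n=\Ucal_n^{\perp_0}=\Cogen\Vcal_n$ in $\ModA$ for every $n\in\mathbb Z$, and by Definition~\ref{defminTTF} it will suffice to show that each $\Ccal_n$ is a minimal cosilting class.

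First I would localise at each prime. For every $\p\in\Spec(A)$ the ring $A_\p$ is a valuation domain, and by \cite[Theorem~8.7]{BH}, as invoked in the proof of Theorem~\ref{wgdcomm}, the localised triple $(\Ucal_\p,\Vcal_\p,\Wcal_\p)$ is again a compactly generated cosuspended TTF triple in $\DD(\rmod{A_\p})$. The classification of \cite[Theorem~9.4]{BH} then forces any such triple over a valuation domain to arise from a chain of flat ring epimorphisms of $A_\p$; in particular it is already minimal, and its cohomological cosilting classes $(\Ccal_n)_\p$ come from flat ring epimorphisms of $A_\p$, hence are hereditary torsion-free classes of finite type by Proposition~\ref{weaksemiher}.

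Next I would globalise, by verifying that the torsion pair $(\Tcal_n,\Ccal_n)$ with $\Tcal_n={}^{\perp_0}\Ccal_n$ is hereditary and of finite type in $\ModA$. Closure of $\Ccal_n$ under direct limits is automatic because every cosilting class is definable, so the finite-type condition comes for free. For heredity I would argue stalkwise: monomorphisms of $A$-modules are detected at each prime, and, using that $\Vcal$ is determined on cohomology (Theorem~\ref{T:aislecohomology}) and that localisation is exact, the class $(\Ccal_n)_\p$ coincides with the $n$-th cohomological cosilting class of $(\Ucal_\p,\Vcal_\p)$. The preceding paragraph then gives heredity of $(\Tcal_n)_\p$ over each $A_\p$, and consequently heredity of $\Tcal_n$ over $A$. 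Proposition~\ref{weaksemiher} then identifies $(\Tcal_n,\Ccal_n)$ with the hereditary torsion pair of finite type attached to a flat ring epimorphism $\lambda_n\colon A\to B_n$, so $\Ccal_n=\Cogen B_n^+$ is minimal.

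The hard part will be the bookkeeping in this globalisation step, namely the careful identification of $(\Ccal_n)_\p$ with the cohomological cosilting class of $(\Ucal_\p,\Vcal_\p)$ and the resulting descent of heredity from stalks to $A$. Both ingredients should follow from the exactness of localisation combined with the ``determined on cohomology'' property for $\Vcal$, but they require a careful compatibility check between the cohomological projection, localisation at primes, and the orthogonal-class construction defining $\Ccal_n$. Once this is settled, the argument collapses to the valuation-domain case handled in \cite{BH} together with Proposition~\ref{weaksemiher}.
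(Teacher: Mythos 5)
Your overall frame is right: reduce to showing that each cohomological class $\Ccal_n=\Cogen(\Vcal_n)$ is a minimal cosilting class, and get there via Proposition~\ref{weaksemiher} by exhibiting $(\Tcal_n,\Ccal_n)$ as a hereditary torsion pair of finite type. But your route to heredity is a long detour from the paper's argument and, as written, has a genuine gap. The paper does not localise at primes at all in this proof: it invokes \cite[Proposition 3.10 and its proof]{BH} directly, which says that for a compactly generated coaisle the classes $\Vcal_n$ are closed under injective envelopes in $\ModA$ and the pairs $(\Tcal_n,\Ccal_n)$ are hereditary torsion pairs of finite type; Proposition~\ref{weaksemiher} then finishes the argument in one line. (Your remark that the finite type condition is automatic from definability of cosilting classes is correct.)

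The gap is precisely the step you yourself flag as ``the hard part'': the descent of heredity from the stalks $A_{\p}$ to $A$. Observing that monomorphisms are detected at primes does not do it. To make your argument run you would need (a) a local-global principle for $\Tcal_n={}^{\perp_0}\Ccal_n$, namely that $M\in\Tcal_n$ if and only if $M_{\p}$ lies in the torsion class of the localised pair for every $\p$, together with an identification of that localised pair, and (b) heredity of each localised pair. Neither is formal: $\Hom{A}{-}{-}$ into an arbitrary module of $\Ccal_n$ does not localise, and the equivalent reformulation of heredity via closure of $\Ccal_n$ under injective envelopes is of no help stalkwise, since injective envelopes do not commute with localisation over non-noetherian rings. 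In addition, your local input --- that \cite[Theorem 9.4]{BH} forces any compactly generated cosuspended TTF triple over a valuation domain to arise from a chain of flat epimorphisms, hence to be minimal --- is used in the paper only for triples already known to be minimal (in the proof of Theorem~\ref{wgdcomm}), so even the local step needs more justification than the citation you give. The clean fix is to drop the localisation detour entirely and cite \cite[Proposition 3.10]{BH} for heredity and finite type of $(\Tcal_n,\Ccal_n)$.
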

\begin{proof}
	Let $(\Ucal,\Vcal,\Wcal)$ be a compactly generated cosuspended TTF triple in $\DModA$. Recall from Theorem~\ref{T:aislecohomology} that $\Vcal=\{X \in \DModA \mid H^n(X) \in \Vcal_n \text{ for all } n \in \Z\}$ where $\Vcal_n = \{H^n(X) \mid X \in \Vcal\}$.  Setting again $\Ccal_n = \Cogen(\Vcal_n)$, we obtain an ascending sequence of cosilting classes $\ldots \Ccal_n\subseteq \Ccal_{n+1}\ldots$. By \cite[Proposition 3.10 and its proof]{BH}, for all $n\in\Z$, the subcategory $\Vcal_n$ is closed under taking injective envelopes in $\ModA$, and there is a hereditary torsion pair $(\Tcal_n,\Ccal_n)$ of finite type. 
	By Proposition~\ref{weaksemiher}, this means that all $\Ccal_n$ are minimal cosilting classes. Hence the TTF triple is minimal according to Definition~\ref{defminTTF}.
	\end{proof}
If we confine Proposition~\ref{semihereditary} to the module-theoretic case, we see that over a commutative semihereditary ring the correspondence of Corollary~\ref{weak}  restricts to a bijection between  (equivalence classes of)  flat ring epimorphisms and cosilting modules of cofinite type. Notice moreover that the flat ring epimorphisms coincide with the universal localisations in this case (\cite[Proposition 5.3]{angarc} and  \cite[Theorem 7.8]{BS}).

\begin{remark}
	Let us give further comments on the minimality condition in the local case, that is, over a valuation domain $A$. In this situation, whether a cosuspended TTF triple with a definable middle term is minimal depends on certain invariants of a topological nature. In what follows, we adhere closely to \cite{BH}. For any valuation domain $A$, \cite[Theorem 8.3]{BH} establishes a bijection between the definable coaisles in $\DAMod$ (and thus, also between the cosuspended TTF triples with a definable middle term) and certain sequences of sets of intervals in $\Spec(A)$ called the \emph{admissible sequences}. Furthermore, \cite[Theorem 9.4]{BH} shows that, under this bijection, the minimal cosuspended TTF triples correspond to admissible sequences which are \emph{non-dense}, an additional topological condition. For example, if $\Spec(A)$ is countable then all admissible sequences are non-dense, \cite[Remark 9.5]{BH}, and therefore all cosuspended TTF triples in $\DAMod$ with a definable middle term are minimal. On the other hand, there are valuation domains $A$ with a rich supply of admissible sequences with density, see \cite[Example 5.1]{B} or \cite[Example 7.6]{BH} for examples of cosilting modules and TTF triples which are not minimal.

Finally, let us sketch how the topological information given by an admissible sequence fits together with the chain of ring epimorphisms under the minimality condition. Let $\cdots\lambda_n\le \lambda_{n+1}\cdots$ be a chain of homological ring epimorphisms $\lambda_n: A \rightarrow B_n$ over a valuation domain $A$ corresponding to a minimal cosuspended TTF triple in $\DAMod$. Then the $n$-th term of the corresponding non-dense admissible sequence is a collection of \emph{admissible intervals} in the terminology of \cite{BS}, such that it recovers the Zariski spectrum $\Spec(B_n)$ by taking the disjoint union of the intervals it contains via \cite[Lemma 5.8]{BS} or under the correspondence \cite[Theorem 5.23]{BS}, see \cite[\S 9]{BH} for details.
\end{remark}

 \subsection{Finite dimensional hereditary algebras} Our next result provides a classification of the compact silting objects over a finite dimensional hereditary algebra $A$, establishing  a  bijection with finite chains of finite dimensional homological ring epimorphisms. In particular, when $A$ has finite representation type, this yields a classification of all bounded silting complexes.
 
\begin{theorem}\label{fdher} Let  $A$ be a finite dimensional hereditary algebra over a field $k$. Every compact silting complex $T$ over $A$ arises as in Proposition~\ref{P:siltconstr} from a finite chain of finite dimensional homological ring epimorphisms $0_A\le \lambda_n\le\ldots\le\lambda_m\le {\rm id}_A$. 
\end{theorem}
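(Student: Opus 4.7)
Given a compact silting complex $T\in\DModA$, I would first invoke Theorem~\ref{T:siltingTTF}(2) to obtain the associated intermediate, suspended, compactly generated TTF triple $(\Ucal',\Vcal',\Wcal')$ in $\DModA$. Applying the duality $\Psi$ of Theorem~\ref{T:TTFduality} together with Lemma~\ref{L:nondegengeneral}(iii), I would pass to a cointermediate, cosuspended, compactly generated TTF triple $(\Ucal,\Vcal,\Wcal)$ in $\DAMod$. Since $A$ is hereditary, Theorem~\ref{T:aislecohomology} shows that $\Vcal$ is determined by its cohomological projections $\Vcal_l=\{H^l(X)\mid X\in\Vcal\}$, and the classes $\Ccal_l=\Cogen(\Vcal_l)$ are cosilting in $\AMod$; cointermediacy produces integers $m\le n$ with $\Ccal_l=0$ for $l\le m$ and $\Ccal_l=\AMod$ for $l>n$.

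\textbf{Minimality.} The decisive step is to prove that, for $m<l\le n$, each $\Ccal_l$ is a minimal cosilting class which corresponds to a finite dimensional homological ring epimorphism $\lambda_l:A\to B_l$. Using compactness, one may choose a set of generators for the TTF triple inside $\K^b(A\text{-}\mathrm{proj})$; since $A$ is finite dimensional over $k$, the cohomology modules of these generators are finite dimensional. Exploiting the splitting $T\cong\bigoplus_i H^i(T)[-i]$ available because $A$ is hereditary, one verifies that the cohomological projections $\Vcal'_j$ of the coaisle on the right are cut out by perpendicularity conditions against the finite dimensional modules $H^i(T)$; dually, each cosilting class $\Ccal_l$ comes from a finite dimensional silting module in $\ModA$. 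By Example~\ref{exampleminimal}(1), this silting module is minimal, whence by Corollary~\ref{hered} so is $\Ccal_l$, and the corresponding homological ring epimorphism $\lambda_l:A\to B_l$ is finite dimensional since $B_l$ is a summand of a finite dimensional silting module.

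\textbf{Assembly and conclusion.} Lemma~\ref{minimalsequence}(ii) then arranges the $\lambda_l$'s into an increasing chain, finite by cointermediacy, of the stated form $0_A\le\lambda_n\le\cdots\le\lambda_m\le\mathrm{id}_A$. Theorem~\ref{minimalTTF} identifies $(\Ucal,\Vcal,\Wcal)$ with the TTF triple assigned to this chain via Proposition~\ref{construction}. Finally, Proposition~\ref{L:YcgTTF}, valid because $A$ is hereditary, guarantees that the preimage of this TTF triple under $\Psi$ is precisely the suspended TTF triple attached to the same chain by Proposition~\ref{constructionaisle}; hence $(\Ucal',\Vcal',\Wcal')$ coincides with it, and Proposition~\ref{P:siltconstr} realises $T$, up to equivalence, as $\bigoplus_l\Cone(\mu_l)[l]$.

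\textbf{Main obstacle.} The hard point is the minimality step. Over a hereditary ring, not every cofinite-type cosilting class is minimal, as the Lukas-type discussion in Example~\ref{Kronecker} concretely shows; in particular, there exist compactly generated cosuspended TTF triples in $\DAMod$ to which Theorem~\ref{minimalTTF} does not apply. Compactness of $T$ must therefore be leveraged carefully to force each cohomological cosilting class $\Ccal_l$ to be governed by finite dimensional silting data, which is precisely the regime where Example~\ref{exampleminimal}(1) applies and yields minimality.
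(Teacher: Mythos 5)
Your overall architecture is the same as the paper's: pass to the compactly generated suspended TTF triple induced by $T$, dualize via $\Psi$, show that the resulting cosuspended TTF triple in $\DAMod$ is \emph{minimal}, and then conclude with Theorem~\ref{minimalTTF}, Proposition~\ref{L:YcgTTF} and Proposition~\ref{P:siltconstr}. You also correctly identify the crux: minimality of each cohomological cosilting class $\Ccal_l$ cannot be had for free, since over a hereditary ring there are cofinite-type cosilting classes that are not minimal.

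The problem is that your treatment of this crux is an assertion, not a proof. From the splitting $T\cong\bigoplus_i H^i(T)[-i]$ one does get that each $\Vcal'_j$ is an intersection of classes of the form $H^i(T)^{\perp_0}$ and $H^i(T)^{\perp_1}$, but the step ``hence each $\Dcal_j=\Gen\Vcal'_j$ is generated by a finite dimensional silting module, and dually $\Ccal_l$ is the dual of a finite dimensional silting class'' is exactly the statement that needs proving; describing $\Vcal'_j$ by perpendicularity against finite dimensional modules does not by itself produce a finite dimensional generator of the torsion class $\Gen\Vcal'_j$, nor does it identify $\Ccal_l$ with the dual definable class of $\Dcal_l$. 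The paper closes this gap by a concrete mechanism you do not supply: since $A$ is finite dimensional, the co-t-structure $(\Ucal',\Vcal')$ restricts to a \emph{bounded} co-t-structure on $\K^b(\projA)$, so the approximation triangle $U\to A[n]\to V\to U[1]$ can be taken with $U,V$ compact; taking cohomology yields a $\Vcal'_n$-envelope $f_n\colon A\to V_n$ with $V_n\in\modA$, whence $\Dcal_n\cap\modA=\gen V_n$ is a \emph{functorially finite} torsion class in $\modA$, generated by a support $\tau$-tilting module $T_n$ by Adachi--Iyama--Reiten; then $\Dcal_n=\Gen T_n$ is a minimal silting class, $T_n$ can be realized as $B_n\oplus\Coker\lambda_n$ for a finite dimensional homological ring epimorphism $\lambda_n$, and dualizing the envelope $f_n$ (as in Proposition~\ref{precosilting}) identifies $\Ccal_n=\Cogen T_n^+$, which is minimal by Corollary~\ref{hered}. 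Without the bounded co-t-structure/envelope argument (or a substitute for it), your minimality step does not go through, so the proof is incomplete at precisely the point you flag as the main obstacle.
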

\begin{proof}
	To prove this,  let $\Vcal'=T^{\perp_{>0}}$ and  $(\Ucal',\Vcal',\Wcal')$ be the compactly generated suspended TTF triple in $\DModA$ induced by $T$. We fix  an integer $n$ and denote $$\Vcal'_n=\{H^{-n}(X)\mid X\in\Vcal'\}, \: \Wcal'_n=\{H^{-n}(X)\mid X\in\Wcal'\},  \text{ and } \Dcal_n=\Gen\Vcal'_n.$$

Since $A$ is a finite dimensional algebra, the co-t-structure $(\Ucal',\Vcal')$ restricts to a bounded co-t-structure in $\K^b(\projA)$, see \cite[Remark 4.7(2)]{AMV1}. So, there is a triangle $$U\to A[n]\stackrel{f}{\to} V\to U[1]$$
	where $U$ and $V$ are compact objects in $\Ucal'$ and $\Vcal'$, respectively. Taking cohomology, we obtain a morphism $$f_n:A\to V_n$$ in $\modA$ which is clearly a $\Vcal'_n$-preenvelope of $A$. By choosing a left minimal version of $f_n$ (see \cite[Theorem 2.4]{ARS}) we can even assume w.l.o.g.~that $f_n$ is a  $\Vcal'_n$-envelope. Moreover, it is easy to see that $f_n$ is also a $\Dcal_n$-envelope of $A$, and that this implies $\Dcal_n=\Gen V_n$.

Dualizing the arguments in the proof of Theorem~\ref{T:aislecohomology}, we see that $\Dcal_n={}^{\perp_0}\Wcal'_n$ is a torsion class. Since $V_n\in\modA$, it follows that $\Dcal_n\cap\modA=\gen\V_n$ is a functorially finite torsion class in $\modA$, which is therefore generated by a    finite dimensional silting (that is, support $\tau$-tilting)  module $T_n$, see \cite[Proposition 1.1 and Theorem 2.7]{AIR}. 
		In particular, it follows from \cite[Lemma 4.6]{AMV4} that $\Dcal_n=\Gen T_n$ is a minimal silting class. 
		
		By \cite[Corollary 5.12]{AMV2} we can choose $T_n$ of the form $T_n=B_n\oplus\Coker\lambda_n$ where  $\lambda_n:A\to B_n$ is a homological ring epimorphism to a finite dimensional algebra $B_n$. Observe that $\Gen B_n=\Gen T_n=\Dcal_n$, and $\lambda_n$ is also a $\Dcal_n$-envelope of $A$. But then, since envelopes are unique up to isomorphism, we can also set $T_n=V_n\oplus\Coker f_n$.
	
	Now we apply the map $\Psi$ of Theorem~\ref{T:TTFduality}. The TTF triple $(\Ucal',\Vcal',\Wcal')$
	 corresponds   
 to a compactly generated TTF triple 
  $(\Ucal,\Vcal,\Wcal)$  in $\DAMod$ which is induced by  the dual cosilting object $C=T^+$.    Denote again $$\Vcal_n=\{H^{n}(X)\mid X\in\Vcal\} \text{ and } \Ccal_n=\Cogen \Vcal_n.$$
  We know from Lemma~\ref{L:dualityformulas}(i) that $\Vcal_n$ and $\Vcal'_n$ are dual definable subcategories.
As in Proposition~\ref{precosilting} we see that $f_n^+: V_n^+\to A^+$ is a $\Vcal_n$-precover of $A^+$, hence also a $\Cogen \Vcal_n$-precover, and we deduce that $\Ccal_n=\Cogen V_n^+=\Cogen T_n^+$.  Using Corollary~\ref{hered}, we conclude that $\Ccal_n$ is a minimal cosilting class for all $n\in\mathbb Z$. Hence  $(\Ucal,\Vcal,\Wcal)$ is a minimal cosuspended TTF triple, and it follows from Theorem~\ref{minimalTTF}  that the  minimal cosilting object $C$ arises from a chain of homological ring epimorphisms with (\ref{zeros}) by the construction in Proposition~\ref{constructioncomplex}. Since $C$ is a compact complex, this chain has to be finite. Moreover, the conditions (\ref{zeros}) imply that the first term has to be the trivial epimorphism $A\to 0$, and  the last term has to be ${\rm id}_A$.
Finally, the silting complex constructed as in Proposition~\ref{P:siltconstr}
from the same chain must be equivalent to $T$ by  Proposition~\ref{L:YcgTTF}. \end{proof}

\subsection{The Kronecker algebra}\label{Krone}
Throughout this subsection, unless stated otherwise, $A$ denotes the Kronecker algebra, i.e., the path algebra of the quiver $\xy\xymatrixcolsep{2pc}\xymatrix{ \bullet \ar@<0.5ex>[r]  \ar@<-0.5ex>[r] & \bullet } \endxy$ over a field $k$. This algebra has infinite representation type, but we can still classify the silting objects of finite type and the pure-injective cosilting objects, as we are going to see below.

 We adopt terminology and notation from \cite{RR}. In particular,
we denote by $\p,\tube,\q$ the classes of  indecomposable preprojective, regular, and preinjective  $A$-modules (right or left, depending on the context). We fix  a complete
irredundant set of quasi-simple (i.e.~simple regular) modules ${\mathbb U}$, and for
each  $S\in\mathbb U$, we denote by $S[m]$  the
module of regular length $m$ on the ray $$S=S[1]\subset
S[2]\subset \dotsb\subset S[m]\subset S[m+1]\subset\dotsb$$ and let $S_\infty=\varinjlim S[m]$ be the corresponding   {\em Pr\"ufer
module}. The   {\em adic module}   $S_{-\infty}$ corresponding to $S\in\mathbb U$ is defined dually as the inverse limit along the coray ending at $S$.
If the field $k$ is algebraically closed,
the elements of ${\mathbb U}$ can be
 identified with points in the projective line $\mathbb{P}^1(k)$.
 
 Observe that the dual of a Pr\"ufer right $A$-module is the corresponding adic left $A$-module. Moreover, viewed in $\DModA$, the Pr\"ufer modules occur as cones of homological ring epimorphisms. Indeed, the universal localization of $A$ at (the projective presentations of the quasi-simple modules in)  a subset $\Ucal$ of ${\mathbb U}$ gives rise to a short exact sequence $0\to A\stackrel{\lambda}{\to} A_\Ucal\to  \bigoplus_{S \in \mathcal{U}}S_{\infty} \to 0$ in $\ModA$. In particular, when $\Ucal=\mathbb U$, we obtain  a short exact sequence $0\to A\stackrel{\lambda}{\to} A_{\mathbb U}\to  \bigoplus_{S \in \mathbb{U}}S_{\infty} \to 0$ where the right $A$-module $A_{\mathbb U}\cong G^2$ is isomorphic to a direct sum of two copies of the generic module $G$, 
and
$T=G \oplus \bigoplus_{S \in \mathbb{U}}S_{\infty}$ is (equivalent to) the tilting module arising from $\lambda$ as in Example~\ref{copreco}(2).

Let us  review the classification of cosilting classes in $\AMod$. 
We already know from  \cite[Corollary 3.8]{AH} that every
cosilting class is of cofinite type, and therefore any cosilting left $A$-module is equivalent to the dual of a right silting $A$-module. Then the dual version of \cite[Example 5.18]{AMV2} gives a complete classification of cosilting classes in $\AMod$, cf.~also Example~\ref{Kronecker}. In particular, the only cosilting class that is not minimal is $\Cogen(W)$, where $W = L^+$ is the dual of the Lukas tilting module in $\ModA$. This cosilting class induces the torsion pair $(\Add(\mathbf{q}),\Cogen(W))$ in $\AMod$. 

\begin{lemma}\label{kronecker}
	Let  $(\Ucal,\Vcal)$ be a compactly generated t-structure in $\DAMod$ and let $(\Ccal_n \mid n \in \Z)$ be the increasing chain of cosilting classes   obtained by setting $\Ccal_n = \Cogen(\Vcal_n)$.
Suppose that $\Ccal_l = \Cogen(W)$ for some $l \in \Z$. Then 		 $\DD^{>l} \subseteq\Vcal \subseteq \DD^{\geq l}$.\end{lemma}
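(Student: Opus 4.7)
The plan is to leverage Theorem~\ref{T:aislecohomology} together with the recalled classification of cosilting classes over the Kronecker algebra. Since $A$ is hereditary, Theorem~\ref{T:aislecohomology} yields $\Vcal = \{X \in \DAMod : H^n(X) \in \Vcal_n \text{ for all } n\in\Z\}$, with $\Ccal_n = \Cogen \Vcal_n$ a cosilting class. Because every object of $\DAMod$ splits as $X \cong \bigoplus_n H^n(X)[-n]$ over the hereditary $A$, the two target inclusions $\DD^{>l} \subseteq \Vcal \subseteq \DD^{\geq l}$ are equivalent to the module-level statements $\Vcal_n = 0$ for $n < l$ and $\Vcal_n = \AMod$ for $n > l$. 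By the monotonicity of the chain, it suffices to verify these at $n = l-1$ and $n = l+1$, and as $\Vcal_{l-1} = 0 \Leftrightarrow \Cogen(\Vcal_{l-1}) = \Ccal_{l-1} = 0$, the first reduces to $\Ccal_{l-1} = 0$.

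To prove $\Ccal_{l-1} = 0$ and $\Ccal_{l+1} = \AMod$, note that by the chain condition $\Ccal_{l-1} \subseteq \Cogen W \subseteq \Ccal_{l+1}$. According to the classification recalled before the lemma, each $\Ccal_n$ in $\AMod$ is either $0$, $\AMod$, the non-minimal class $\Cogen W$, or a minimal cosilting class of the form $\Cogen B^+$ for some homological ring epimorphism $\lambda: A \to B$. The main task is thus to rule out, via compact generation, the intermediate possibilities: that $\Ccal_{l-1}$ equals $\Cogen W$ or is a nonzero minimal class properly contained in $\Cogen W$, and symmetrically that $\Ccal_{l+1}$ equals $\Cogen W$ or is a minimal class strictly between $\Cogen W$ and $\AMod$.

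I expect to translate to the silting side via the duality $\Psi$ of Theorem~\ref{T:TTFduality} and Proposition~\ref{L:YcgTTF}: the compactly generated TTF triple $(\Ucal,\Vcal,\Wcal)$ corresponds to a compactly generated suspended TTF triple $(\Ucal',\Vcal',\Wcal')$ in $\DModA$, which induces a chain of silting classes of finite type $(\Dcal_n)$ in $\ModA$ with $\Dcal_l = \Gen L$ by the duality of Corollary~\ref{duality}. The required collapse then becomes an assertion about the isolation of the Lukas tilting class $\Gen L$ within the poset of silting classes of finite type: namely, there is no silting class of finite type strictly between $0$ and $\Gen L$ (respectively between $\Gen L$ and $\ModA$) that can appear immediately adjacent to $\Gen L$ in a compactly generated chain. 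This is because $\Gen L$ is characterised by its torsion-free class $\Add(\mathbf{p})$ consisting of the full preprojective component of the Auslander-Reiten quiver, and the homological ring epimorphisms giving the other (minimal) silting classes over $A$ admit an explicit description via universal localisation and the generic module, from which compatibility with $\Gen L$ in a compactly generated chain forces triviality.

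The hard part is the combinatorial argument that realises this isolation; once it is in hand, the conclusion $\Vcal_{l-1} = 0$ is immediate from $\Ccal_{l-1} = 0$, and $\Vcal_{l+1} = \AMod$ follows from $\Ccal_{l+1} = \AMod$ by noting that $\Vcal_{l+1}$ is a definable subcategory of $\AMod$ closed under extensions (an observation that follows from the definability and extension-closure of $\Vcal$ together with the cohomological decomposition over the hereditary ring): the cogeneration condition $\Cogen \Vcal_{l+1} = \AMod$ embeds $A$ into a product of modules in $\Vcal_{l+1}$, and closure under pure submodules, products, and extensions then forces $A \in \Vcal_{l+1}$, whence $\Vcal_{l+1} = \AMod$.
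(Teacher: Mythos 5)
There is a genuine gap: the essential content of the lemma is the part you explicitly defer. After reducing to showing $\Ccal_{l-1}=0$ and $\Ccal_{l+1}=\AMod$, you write that you ``expect to translate to the silting side'' and that ``the hard part is the combinatorial argument that realises this isolation; once it is in hand, the conclusion is immediate.'' But that combinatorial argument is precisely what needs to be proved, and your sketch only restates the desired conclusion in new language (``no silting class of finite type strictly between $0$ and $\Gen L$ can appear immediately adjacent to $\Gen L$ in a compactly generated chain'') without any mechanism for establishing it. Moreover, the detour through $\Psi$ and $\Gen L$ adds a layer that doesn't buy you anything: the paper's proof stays on the cosilting side and argues directly with preinjectives. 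Concretely, it uses $\Ccal_l = \Ucal_l^{\perp_0} = \Cogen(W)$ to conclude $\Ucal_{l+1}\subseteq\Ucal_l\subseteq\Add(\mathbf q)$, then exploits that indecomposable preinjectives have local endomorphism rings (so $\Ucal_l, \Ucal_{l+1}$ are determined by which $Q_k$ they contain) and computes the perpendicular classes $Q_k^{\perp_{0,1}}=\Add(Q_{k+1})$ and $Q_j^{\perp_1}={}^{\perp_0}Q_{j+2}$ via the Auslander--Reiten formula, to force $\Ucal_{l+1}=0$ and $\Vcal_k=0$ for $k<l$. Throughout, the formula $\Vcal_n=\Ucal_n^{\perp_0}\cap\Ucal_{n+1}^{\perp_1}$ from (\ref{unvn}) does the structural work; your proposal never invokes it.

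A secondary problem is your deduction $\Ccal_{l+1}=\AMod \Rightarrow \Vcal_{l+1}=\AMod$. You argue that since $A$ embeds in a product of modules from $\Vcal_{l+1}$, closure under pure submodules, products and extensions forces $A\in\Vcal_{l+1}$ --- but an embedding into a product is not a pure embedding, and $\Vcal_{l+1}=\Ucal_{l+1}^{\perp_0}\cap\Ucal_{l+2}^{\perp_1}$ is not in general closed under arbitrary submodules (the $\Ext^1$-orthogonal piece is closed under quotients and extensions over a hereditary ring, not submodules). The conclusion is nonetheless true, but the correct route is: $\Ccal_{l+1}=\Ucal_{l+1}^{\perp_0}=\AMod$ forces $\Ucal_{l+1}=0$ (take $M=U$), hence also $\Ucal_{l+2}=0$, and then $\Vcal_{l+1}=\Ucal_{l+1}^{\perp_0}\cap\Ucal_{l+2}^{\perp_1}=\AMod$ by (\ref{unvn}). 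So even the ``immediate'' part of your proposal needs repair, and the central argument is missing outright.
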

\begin{proof}
We fix	a set of representatives $\mathbf{q} = \{Q_0,Q_1,Q_2,\ldots\}$ of all indecomposable preinjective left $A$-modules. 
	Since $\Cogen(W) = \Ucal_l^{\perp_0}$ by Theorem~\ref{T:aislecohomology}, it follows that $\Ucal_{l+1} \subseteq \Ucal_l \subseteq \Add(\mathbf{q})$. As every module from $\mathbf{q}$ has a local endomorphism ring, $\Add(\mathbf{q})$ consists (up to isomorphism) of direct sums of copies of objects of $\mathbf{q}$. Since both $\Ucal_l$ and $\Ucal_{l+1}$ are closed under direct summands and direct sums, these two subcategories are determined by the objects from the set $\mathbf{q}$ they contain. Recall that $\Vcal_l = \Ucal_l^{\perp_0} \cap \Ucal_{l+1}^{\perp_1} \subseteq \Ucal_{l+1}^{\perp_{0,1}}$. If $\Ucal_{l+1} \neq 0$, then it contains at least one object from $\mathbf{q}$, say $Q_k$. Therefore, $\Vcal_l \subseteq Q_k^{\perp_{0,1}} = \Add(Q_{k+1})$ (see \cite[Example 5.18]{AMV2}). But $\Vcal_l \subseteq \Ccal_l = \Cogen(W)$ contains no non-zero preinjective, which forces $\Vcal_l = 0$, a contradiction with $\Cogen(\Vcal_l) = \Cogen(W) \neq 0$. Therefore, necessarily $\Ucal_{l+1} = 0$. Since $\Vcal_n = \Ucal_n^{\perp_0} \cap \Ucal_{n+1}^{\perp_1} = \AMod$ for any $n > l$, we proved that 		 $\DD^{>l} \subseteq\Vcal$.

		To prove that $\Vcal \subseteq \DD^{\geq l}$, notice that $0\not=\Ucal_l\subset \Add\mathbf q$ must contain  $Q_j$ for some  $j \geq 0$. Then for all $k<l$ we have that $\Vcal_k = \Ucal_k^{\perp_0} \cap \Ucal_{k+1}^{\perp_1} \subseteq  \Ucal_{l}^{\perp_1} \subseteq Q_j^{\perp_1}$. By the Auslander-Reiten formula  $Q_j^{\perp_1}={}^{\perp_0}Q_{j+2}$, since we have an  almost split sequence 
		$$0 \rightarrow Q_{j+2} \rightarrow Q_{j+1}^{2} \rightarrow Q_{j} \rightarrow 0.$$
 On the other hand, the modules in $\Vcal_k \subseteq \Ccal_l=\Cogen(W)$ can't have summands isomorphic to one of $Q_0,Q_1,\ldots,Q_{j+1}$ and are therefore cogenerated by $Q_{j+2}$. This shows that $\Vcal_k=0$ for all $k<l$.
\end{proof}
Recall from  Theorem~\ref{hereditaryutc} that homotopically smashing t-structures in $\DAMod$ are precisely the compactly generated ones, and pure-injective cosilting objects are precisely the ones of cofinite type. We are now ready for the first classification result.
\begin{theorem}\label{kroneckertstr}
	Let  $A$ be the Kronecker algebra over a field $k$. The following is a complete list of homotopically smashing  t-structures in $\DAMod$:
		\begin{enumerate}
			\item[(i)] t-structures obtained from an increasing chain of homological epimorphisms via Proposition~\ref{construction},
			\item[(ii)] shifts of the Happel-Reiten-Smal\o~ t-structure induced by the torsion pair $(\Add(\mathbf{q}),\Cogen(W))$.
		\end{enumerate}
\end{theorem}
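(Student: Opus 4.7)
The plan is to classify homotopically smashing t-structures $(\Ucal,\Vcal)$ in $\DAMod$ by reducing them to data from their cohomological slices. Since $A$ is hereditary, Theorem~\ref{hereditaryutc} reduces homotopical smashness to compact generation, so it suffices to treat compactly generated t-structures. Theorem~\ref{T:aislecohomology} will then replace $(\Ucal,\Vcal)$ by the ascending chain of cosilting classes $\Ccal_n = \Cogen(\Vcal_n) = \Ucal_n^{\perp_0}$ in $\AMod$, and the hereditary formula $\Vcal_n = \Ucal_n^{\perp_0} \cap \Ucal_{n+1}^{\perp_1}$ recovers $\Vcal$ completely. To proceed I will invoke the classification from Example~\ref{Kronecker} together with Corollary~\ref{hered}, which transferred to the left-module side says that every cosilting class in $\AMod$ is minimal, with the single exception of $\Cogen(W)$.

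I will then split into two cases. In Case~1, if every $\Ccal_n$ is minimal, Definition~\ref{defminTTF} applies and Theorem~\ref{minimalTTF} exhibits $(\Ucal,\Vcal)$ as the t-structure built from an increasing chain of homological ring epimorphisms via Proposition~\ref{construction}, placing it in family~(i). In Case~2, if $\Ccal_l = \Cogen(W)$ for some $l \in \Z$, I will invoke Lemma~\ref{kronecker} to obtain the sandwich $\DD^{>l} \subseteq \Vcal \subseteq \DD^{\geq l}$; inspecting its proof moreover extracts $\Ucal_{l+1} = 0$, and since the aisle cohomologies form a descending chain ($\Ucal_{n+1} \subseteq \Ucal_n$ because $\Ucal[1]\subseteq\Ucal$), this forces $\Ucal_n = 0$ and hence $\Vcal_n = \AMod$ for all $n > l$. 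Combined with the formula $\Vcal_l = \Ucal_l^{\perp_0} \cap \Ucal_{l+1}^{\perp_1}$, we get $\Vcal_l = \Ccal_l = \Cogen(W)$, and the proof of the lemma also shows $\Vcal_k = 0$ for all $k < l$. Consequently
$$\Vcal = \{X \in \DD^{\geq l} \mid H^l(X) \in \Cogen(W)\},$$
which is a shift of the Happel-Reiten-Smal\o{} coaisle associated to the torsion pair $(\Add(\mathbf{q}),\Cogen(W))$, placing $(\Ucal,\Vcal)$ in family~(ii).

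To finish I will check that each family really does consist of homotopically smashing t-structures. For (i) this is automatic: Proposition~\ref{construction} produces a definable coaisle, which is homotopically smashing by \cite[Theorem 3.11]{L}. For (ii) I will observe that $\Cogen(W)$ is a definable subcategory of $\AMod$ (every cosilting class is), so the associated Happel-Reiten-Smal\o{} coaisle is definable by Proposition~\ref{hs}, and any of its shifts inherits definability.

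The main delicate point will be the identification $\Vcal_l = \Cogen(W)$ in Case~2. A priori $\Vcal_l$ is only known to satisfy $\Cogen(\Vcal_l) = \Cogen(W)$, which is strictly weaker than equality since $\Vcal_l$ need not be closed under submodules (the condition $\Ucal_{n+1}^{\perp_1}$ is not preserved by subobjects, even over a hereditary ring). Everything therefore hinges on extracting $\Ucal_{l+1} = 0$ from the internal workings of Lemma~\ref{kronecker}; once this vanishing is secured, the remaining steps are direct appeals to the machinery developed in Sections~3--5 of the paper.
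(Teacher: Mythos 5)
Your proposal is correct and follows essentially the same route as the paper's proof: the dichotomy between all $\Ccal_n$ minimal (handled by Theorem~\ref{minimalTTF}) versus some $\Ccal_l=\Cogen(W)$ (handled by Lemma~\ref{kronecker}), using that $\Cogen(W)$ is the unique non-minimal cosilting class. You merely fill in details the paper leaves implicit, namely the extraction of $\Ucal_{l+1}=0$ from the proof of Lemma~\ref{kronecker} to identify $\Vcal_l$ exactly, and the converse check that both families are indeed homotopically smashing.
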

\begin{proof}
	 If the corresponding TTF triple is minimal, then we are in case (i) by Theorem~\ref{minimalTTF}. 
Otherwise 
	  there is an integer $l$ such that $\Ccal_l$ is not minimal. Then $\Ccal_l = \Cogen(W)$, as that is the unique non-minimal cosilting class in $\AMod$. So Lemma~\ref{kronecker} applies and $(\Ucal,\Vcal)$ is the Happel-Reiten-Smal\o~ t-structure induced by the torsion pair $(\Add(\mathbf{q}),\Cogen(W))$ shifted to degree $l$.
\end{proof}

According to Propositions~\ref{constructioncomplex} and~\ref{P:siltconstr}, the cosilting t-structures, and the silting t-structures are determined by the chains of homological ring epimorphisms  $(\lambda_n)$ satisfying the condition  (\ref{zeros}), and (\ref{dualzeros}), respectively.  We are now going to see that over the Kronecker algebra these conditions can both be rephrased by saying that the chain $(\lambda_n)$  has meet $A\to 0$ and join ${\rm id}_A$.  

We recall the shape of the lattice of homological ring epimorphisms from   \cite[Example 5.19]{AMV2}
$$\xymatrix@=0.7cm{&&&& \ {\rm id}_A \ \ar@{-}[lllld]\ar@{-}[llld]\ar@{-}[lld]\ar@{-}[rrrrd]\ar@{-}[rrrd]\ar@{-}[rrd]\ar@<-2ex>@{-}[d]^{\ ...}\ar@<-1.6ex>@{-}[d]\ar@<-1.2ex>@{-}[d]\ar@<1.2ex>@{-}[d]\ar@<1.6ex>@{-}[d]\ar@<2ex>@{-}[d]\\ \lambda_0\ar@{-}[rrrrdddd]&\lambda_1\ar@{-}[rrrdddd]&\lambda_2\ar@{-}[rrdddd] & ... & *+[F]{\{\lambda_x| x\in \mathbb U\}}\ar@{--}@/_1pc/[dd]^{...}\ar@{--}[dd]\ar@{--}@/_2pc/[dd]^{...}\ar@{--}@/_3pc/[dd]^{...}\ar@{--}[dd]\ar@{--}@/^3pc/[dd]_{...}\ar@{--}@/^2pc/[dd]_{...}\ar@{--}@/^1pc/[dd]_{...} & ... & \mu_2\ar@{-}[lldddd] & \mu_1\ar@{-}[llldddd] &\mu_0\ar@{-}[lllldddd] \\ \\ &&&& *+[F]{\{\lambda_{\mathbb U\setminus \{x\}}| x\in \mathbb U\}}\ar@<-2ex>@{-}[d]^{\ ...}\ar@<-1.6ex>@{-}[d]\ar@<-1.2ex>@{-}[d]\ar@<1.2ex>@{-}[d]\ar@<1.6ex>@{-}[d]\ar@<2ex>@{-}[d]\\ &&&& *+[F]{\lambda_{\mathbb U}}\ar@{-}[d]\\ &&&&0}$$
where the interval between ${\rm id}_A$ and $\lambda_{\mathbb U}$ represents the dual poset of subsets of $\mathbb U$. The ring epimorphisms with infinite dimensional target are those in frames, that is, those of the form $\lambda_\Ucal$ with $\emptyset\neq\Ucal\subseteq \mathbb U$.  The remaining ring epimorphisms are universal localizations at indecomposable preprojective or preinjective modules; their targets, viewed as $A$-modules, are preprojective or preinjective, and as rings they are all Morita equivalent to $k$.

\begin{proposition}\label{L:kroneckerduality}
	Let $A$ be a hereditary ring. Consider a chain 		$\cdots  \leq \lambda_n \leq \lambda_{n+1} \leq \cdots$
		 of homological ring epimorphisms $\lambda_n:A\to B_n$, and denote by $\Xcal_n$ and $\Xcal_n'$ the corresponding bireflective subcategories of $\AMod$ and $\ModA$, respectively.
		 Moreover, define as above the subcategories
		 $\Lcal_n = \Ker(B_n \otimes_A^\LL -)$ of $\DAMod$ and $\Kcal_n=\Ker {\mathbf R} \Hom{A}{B_n}{-}$ of $\DModA$. 
\begin{enumerate}
\item $\bigcap_{n \in \Z} \Xcal_n' = 0$ if and only if  $\bigcap_{n \in \Z} {\Xcal}_n = 0$, and this means precisely that the meet $\bigwedge_{n \in \Z}\lambda_n$  equals the trivial ring epimorphism $A\to 0$.
\item $\bigcap_{n \in \Z} \Kcal_n = 0$ implies $\bigcap_{n \in \Z} {\Lcal}_n = 0$, which in turn implies that the join $\bigvee_{n \in \Z}\lambda_n$  equals ${\rm id}_A$.
\item If $A$ is the Kronecker algebra,  then $\bigcap_{n \in \Z} {\Lcal}_n = 0$ if and only if $\bigcap_{n \in \Z} \Kcal_n = 0$, and this means precisely that the join $\bigvee_{n \in \Z}\lambda_n$  equals ${\rm id}_A$. 
\end{enumerate}		
\end{proposition}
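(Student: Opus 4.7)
For part (1), the plan is to use Theorem~\ref{epicl} to identify the lattice of homological ring epimorphisms out of $A$ (equivalently, ring epimorphisms with $\Tor{1}{A}{B}{B}=0$, since $A$ is hereditary) with the lattice of extension-closed bireflective subcategories of $\AMod$ (resp.\ $\ModA$), ordered by inclusion, in which the meet is intersection and the minimum corresponds to $0_A\colon A\to 0$. Consequently $\bigwedge_n\lambda_n=0_A$ is equivalent to either $\bigcap_n\Xcal_n=0$ or $\bigcap_n\Xcal_n'=0$. To reconcile the two, I will observe that a module belongs to $\Xcal_n'$ iff it carries a $B_n$-module structure iff the same holds for its $(-)^+$-dual in $\Xcal_n$; since $(-)^+$ is faithful ($W$ being an injective cogenerator), either intersection vanishes iff the other does.

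For part (2), the first implication follows from the natural isomorphism $\rhom{A}{B_n}{X^+}\cong(B_n\otimes_A^{\LL}X)^+$ of Lemma~\ref{L:dualityformulas}(ii): together with the faithfulness of $(-)^+$, this yields the equivalence $X\in\Lcal_n\Leftrightarrow X^+\in\Kcal_n$, whence $X\in\bigcap_n\Lcal_n$ forces $X^+=0$ and thus $X=0$. For the second implication I argue contrapositively: if $\lambda:=\bigvee_n\lambda_n$ differs from ${\rm id}_A$, then by Theorem~\ref{epicl} and hereditariness $\lambda\colon A\to B$ is a homological ring epimorphism with $B\neq A$, and the order relation $\lambda_n\leq\lambda$ yields factorisations $A\xrightarrow{\lambda}B\to B_n$ and natural isomorphisms $B_n\otimes_A^{\LL}-\cong B_n\otimes_B^{\LL}(B\otimes_A^{\LL}-)$. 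Therefore $\Ker(B\otimes_A^{\LL}-)\subseteq\Lcal_n$ for all $n$, and this kernel contains the nonzero object $\Cone(\lambda)$ (because $B\otimes_A^{\LL}\lambda$ is an isomorphism for any homological ring epimorphism, forcing $B\otimes_A^{\LL}\Cone(\lambda)=0$).

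For part (3), the implications $\bigcap_n\Kcal_n=0\Rightarrow\bigcap_n\Lcal_n=0\Rightarrow\bigvee_n\lambda_n={\rm id}_A$ are already covered by part (2); what remains is $\bigvee_n\lambda_n={\rm id}_A\Rightarrow\bigcap_n\Kcal_n=0$. My plan is to invoke the self-duality of the Kronecker algebra $A\cong A^{\rm op}$ (arising from the anti-involution of the Kronecker quiver swapping the two vertices), which induces a covariant equivalence $\sigma_*\colon\AMod\xrightarrow{\sim}\ModA$ and an involution on the lattice of homological ring epimorphisms out of $A$. Applying $\sigma_*$ sends the chain $(\lambda_n)$ to another chain $(\tilde\lambda_n)$ with the same join ${\rm id}_A$, and (after verifying that the derived version of $\sigma_*$ interchanges $\Kcal_n$ in $\DModA$ with the $\Lcal$-subcategory of the anti-iso'd chain in $\DAMod$) the second implication of part (2) applied to $(\tilde\lambda_n)$ yields $\bigcap_n\Kcal_n=0$.

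The main obstacle will be the identification in part (3) that the derived functor $\sigma_*$ intertwines $\Kcal_n$ for the original chain with the $\Lcal$-type subcategory for the anti-iso'd chain. For a general hereditary ring no such identification is available, because $\rhom{A}{B_n}{-}$ cannot be rewritten as a base-change tensor product unless $B_n$ is compact (Lemma~\ref{L:dualityformulas}(iv) requires compactness, which fails for $B_n=A_\Ucal$ with $\Ucal\subseteq\mathbb U$ infinite). The proof genuinely rests on the left-right symmetry of the Kronecker quiver, and careful bookkeeping of the involution $\sigma$ on the lattice, on the bireflective subcategories, and on the derived functors $\Ker(B_n\otimes_A^{\LL}-)$ versus $\Ker\rhom{A}{B_n}{-}$ will be the most delicate step.
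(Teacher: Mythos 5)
Parts (1) and the first implication of (2) are fine and agree with what the paper does (the paper dismisses (1) as clear and proves the first implication of (2) exactly by the duality $X\in\Lcal_n\Leftrightarrow X^+\in\Kcal_n$). Your contrapositive proof of the second implication of (2), via the cone of the join, is a legitimate alternative to the paper's route through wide subcategories and universal localizations (Theorem~\ref{univlochom}); just note that you must justify that the join of the chain is again a \emph{homological} ring epimorphism, which over a hereditary ring follows from Theorem~\ref{univlochom} (the join of extension-closed bireflective subcategories corresponds to the intersection of the associated wide subcategories, hence to a universal localization).

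Part (3), however, has a genuine gap. The self-duality $A\cong\Aop$ of the Kronecker algebra induces a covariant equivalence $\ModA\simeq\AMod$ that transports the bimodule $B_n$ to a bimodule $\tilde B_n$ and the functor $\rhom{A}{B_n}{-}$ to $\rhom{\Aop}{\tilde B_n}{-}$: it stays a Hom functor. The distinction between $\Kcal_n=\Ker\rhom{A}{B_n}{-}$ and $\Lcal_n=\Ker(B_n\otimes_A^\LL-)$ is not a left/right asymmetry but the asymmetry between the two complements of $\Bcal_n$ in the stable TTF triple $(\Lcal_n,\Bcal_n,\Kcal_n)$, and no (anti-)automorphism of $A$ interchanges them; the only bridge between Hom-kernels and tensor-kernels is Lemma~\ref{L:dualityformulas}(iv), which, as you yourself observe, needs $B_n$ compact and fails for the infinite-dimensional localizations $A_{\Ucal}$. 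Moreover, even if the transport did convert $\bigcap_n\Kcal_n$ into an intersection of $\Lcal$-type subcategories for a transported chain, you would then need the implication ``join $={\rm id}_A$ implies $\bigcap_n\Lcal_n=0$'', i.e.\ the \emph{converse} of the second implication of (2), which is precisely what remains unproved. The paper's argument for (3) is instead direct and exploits the explicit lattice of homological epimorphisms of the Kronecker algebra: by Theorem~\ref{KS}, $\bigcap_n\Kcal_n=(\bigcup_n\Xcal_n')^{\perp_{\Z}}$; if the chain is eventually constant the join is its largest member and $\bigcup_n\Xcal_n'=\ModA$; otherwise the chain is eventually given by universal localizations at a strictly decreasing family of sets of quasi-simples with empty intersection, so $\bigcup_n\Xcal_n'$ contains $G$ and every Pr\"ufer module, hence $\Loc(\bigcup_n\Xcal_n')$ contains the tilting module $T=G\oplus\bigoplus_{S\in\mathbb U}S_\infty$ and $(\bigcup_n\Xcal_n')^{\perp_{\Z}}\subseteq T^{\perp_{\Z}}=0$. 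Some concrete argument of this kind, rather than a formal duality, is unavoidable here.
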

\begin{proof}
(1) is clear.

(2) The first implication follows from Lemma~\ref{L:nondegengeneral} (or by checking directly using the duality $(-)^+$).
Moreover, by Theorem~\ref{KS}, the condition $\bigcap_{n \in \Z}\Lcal_n = 0$ is equivalent to  $\bigcap_{n \in \Z}{}^{\perp_{0,1}}\Xcal_n = {}^{\perp_{0,1}}(\bigcup_{n \in \Z}\Xcal_n) = 0$, which implies that the join $\bigvee_{n \in \Z}\lambda_n$  equals ${\rm id}_A$. To see the latter, recall from Theorem~\ref{univlochom} that  $\lambda_n$ coincides with the universal localization at the wide subcategory $\Mcal_n={}^{\perp_{0,1}}\Xcal_n\cap\lfmod{A}$ of $\lfmod{A}$, and that $\Xcal_n=\Mcal_n^{\perp_{0,1}}$. Now it is easy to check that the join $\bigvee_{n \in \Z}\lambda_n$   is  the universal localization at $\bigcap_{n \in \Z}\Mcal_n$, 
which is contained in $ {}^{\perp_{0,1}}(\bigcup_{n \in \Z}\Xcal_n)$.

(3) We have to verify that $\bigcap_{n \in \Z} \Kcal_n = 0$ whenever the join $\bigvee_{n \in \Z}\lambda_n$  equals ${\rm id}_A$. 
Observe that $\bigcap_{n \in \Z} \Kcal_n = \bigcap_{n \in \Z}\Xcal_n'\,^{\perp_{\mathbb Z}}= ( \bigcup_{n \in \Z}\Xcal_n')\,^{\perp_{\mathbb Z}}$.

Now we have two cases. In the first case, the chain involves only a finite number of different ring epimorphisms. Then the join coincides with the largest member of the chain, hence  $\bigcup_{n \in \Z}\Xcal_n'=\ModA$, and the claim is proven.

In the second case, there is a chain of strictly decreasing subsets
						$\mathbb U \supseteq \mathcal{U}_0 \supset \mathcal{U}_1 \supset \mathcal{U}_2 \supset \cdots$
						of the representative set $\mathbb U$ of all quasi-simples such that 						  $\lambda_n: A \rightarrow B_n$ is the universal localization at the (projective presentations of the simple regular modules in the) set $\mathcal{U}_n$ for each $n \in \omega$. 
Now one can see, e.g.~from the shape of the lattice, that for any simple regular left $A$-module $S$ there is $n \in \Z$ such that $S \in {\Xcal}_n'$, and moreover, that all $\lambda_n$ lie above the universal localization at $\mathbb U$, which means precisely that $G\in\Xcal_n'$. It follows that $\Loc(\bigcup_{n \in \Z} \Xcal_n')$ contains the right $A$-module $T=G \oplus \bigoplus_{S \in \mathbb{U}}S_{\infty}$, which is a right tilting $A$-module and therefore satisfies $T^{\perp_{\mathbb Z}}=0$. This shows the claim.						
\end{proof}

 As a consequence, we get the following classification of the pure-injective cosilting objects, and dually, of silting objects of finite type. 

\begin{theorem}\label{kronecosilt} Let $A$ be the Kronecker algebra over a field $k$. Every {pure-injective} cosilting object  in $\DAMod$
 arises from a chain of homological ring epimorphisms   with meet $\;0_A: A\to 0$  and join  $\;{\rm id}_A: A\to A$, or
 it is equivalent to a shift of the cotilting module $W$.
 
The following is 
 a complete list of all pure-injective cosilting objects,  up to equivalence:
\begin{itemize}
	\item[(i)] shifts of the non-minimal cotilting module $W$;
	
\item[(ii)] for any finitely dimensional homological epimorphism $\lambda: A \rightarrow B$, and for all integers $l \leq m$, the cosilting object
			$$C = B^+[-l] \oplus \Cone(\lambda)^+[-m]$$
				with induced t-structure arising from the following chain  of  bireflective subcategories of $\AMod$:
				$$\Xcal_n = \begin{cases} 0 & n<l \\ \Xcal_B & l \leq n \leq m \\ \AMod & l>m; \end{cases}$$
				
\item[(iii)] for any $l \in \Z$, and any chain of subsets 						$\mathbb{U} \supseteq \mathcal{U}_0 \supseteq \mathcal{U}_1 \supseteq \mathcal{U}_2 \supseteq \cdots$
						such that $\bigcap_{n \in \omega}\mathcal{U}_n = \emptyset$, the cosilting object
						$$C = B_0^+[-l] \oplus \prod_{n \in \omega}(\prod_{S \in \mathcal{U}_n \setminus \mathcal{U}_{n+1}}S_{-\infty}[-n-l]),$$
where $\lambda_n: A \rightarrow B_n$ denotes the universal localization at the set $\mathcal{U}_n$, and the induced t-structure  arises from the following chain  of  bireflective
 subcategories of $\AMod$:
				$$\Xcal_n = \begin{cases} 0 & n<l \\ \Xcal_{B_{n-l}} & l \leq n. \end{cases}$$
\end{itemize}
\end{theorem}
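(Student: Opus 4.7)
The plan is to combine Theorem~\ref{kroneckertstr} with the non-degeneracy criterion of Proposition~\ref{constructioncomplex} and the explicit shape of the lattice of homological ring epimorphisms over the Kronecker algebra recalled from \cite[Example 5.19]{AMV2}. By Theorem~\ref{T:cosiltingTTF}(2) combined with Theorem~\ref{hereditaryutc}, equivalence classes of pure-injective cosilting objects in $\DAMod$ correspond bijectively to non-degenerate compactly generated t-structures. Theorem~\ref{kroneckertstr} then dichotomises these: either the t-structure is a shift of the Happel--Reiten--Smal\o~t-structure induced by $(\Add(\mathbf{q}),\Cogen(W))$ --- in which case it is induced by a shift of the cotilting module $W$ by construction, yielding case~(i) --- or it arises from an increasing chain $(\lambda_n)_{n \in \mathbb{Z}}$ of homological ring epimorphisms via Proposition~\ref{construction}.

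For the second alternative, Proposition~\ref{constructioncomplex} tells us that the t-structure is cosilting precisely when the conditions~(\ref{zeros}) hold, and parts~(1) and~(3) of Proposition~\ref{L:kroneckerduality} translate these into the single requirement that the meet of the chain equals $0_A$ and its join equals ${\rm id}_A$. What remains is to enumerate such chains by inspecting the lattice. The meet and join conditions force $\lambda_n = 0_A$ for $n \ll 0$ and $\lambda_n = {\rm id}_A$ for $n \gg 0$. Now the finite dimensional homological epimorphisms (the $\lambda_n$ and $\mu_n$ attached to indecomposable preprojective and preinjective modules) occupy the top level of the lattice and are pairwise incomparable, being only comparable with $0_A$ and ${\rm id}_A$. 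Hence, if the chain passes through a non-trivial finite dimensional $\lambda$, then $\lambda$ is its unique non-trivial value, appearing on a (possibly empty) interval $l \leq n \leq m$; this produces case~(ii). Otherwise the non-trivial values of the chain lie in the sublattice of universal localizations $\lambda_{\mathcal{U}}$ for $\emptyset \neq \mathcal{U} \subseteq \mathbb{U}$, which is antiisomorphic to the inclusion lattice of subsets of $\mathbb{U}$. A chain here corresponds to a decreasing sequence $\mathcal{U}_0 \supseteq \mathcal{U}_1 \supseteq \cdots$, and the join condition $\bigvee_n \lambda_{\mathcal{U}_n} = \lambda_{\bigcap_n \mathcal{U}_n} = {\rm id}_A$ forces $\bigcap_n \mathcal{U}_n = \emptyset$; this produces case~(iii).

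It remains to compute $C$ explicitly using the formula $C = \prod_{n \in \mathbb{Z}} \Cone(\mu_n)^+[-n]$ from Proposition~\ref{constructioncomplex}. In case~(ii), only two cones are non-zero: $\Cone(\mu_{l-1}) = B[1]$ contributes a summand $B^+[-l]$, while $\Cone(\mu_m) = \Cone(\lambda)$ contributes $\Cone(\lambda)^+[-m]$. In case~(iii), I would apply Lemma~\ref{cones} together with the key identity that the Pr\"ufer module $S_\infty$ is killed by $- \otimes_A^{\mathbf{L}} A_{\mathcal{U}'}$ when $S \in \mathcal{U}'$ and is preserved otherwise; this yields $\Cone(\mu_{n+l}) \cong \bigoplus_{S \in \mathcal{U}_n \setminus \mathcal{U}_{n+1}} S_\infty$, whose $k$-dual is $\prod_{S \in \mathcal{U}_n \setminus \mathcal{U}_{n+1}} S_{-\infty}$ (the dual of a right Pr\"ufer module being the corresponding left adic module). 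Combined with the contribution $B_0^+[-l]$ coming from $\mu_{l-1}$, this produces the stated formula. The main technical hurdle is verifying this tensor identity for Pr\"ufer modules under universal localization and confirming the exhaustiveness of the lattice-theoretic enumeration; both are essentially computational, relying on the classification of homological ring epimorphisms over $A$ from \cite{AMV2} and on the fact that each $\lambda_{\mathcal{U}}$ ``inverts'' precisely the tubes indexed by $\mathcal{U}$.
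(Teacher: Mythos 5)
Your proposal is correct and assembles exactly the argument the paper leaves implicit: the dichotomy of Theorem~\ref{kroneckertstr}, the non-degeneracy criterion of Proposition~\ref{constructioncomplex} translated via Proposition~\ref{L:kroneckerduality} into the meet/join conditions, the enumeration of chains from the lattice of \cite[Example 5.19]{AMV2}, and the computation of $C=\prod_n\Cone(\mu_n)^+[-n]$ using Lemma~\ref{cones} and the fact that $S_\infty\otimes^{\mathbf L}_A A_{\Ucal'}$ vanishes for $S\in\Ucal'$ and is $S_\infty$ otherwise. The only quibble is that in case (ii) the interval $l\le n\le m$ is never empty (the degenerate chain jumping from $0_A$ to ${\rm id}_A$ is the shifted standard t-structure, subsumed elsewhere), but this does not affect the argument.
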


\begin{theorem}\label{kronesilt}
	Let $A$ be the Kronecker algebra over an algebraically closed field $k$. The assignment $T \mapsto T^+$ induces a bijection between 
	\begin{enumerate}
			\item  equivalence classes of silting objects  of finite type in $\DModA$,
	\item  equivalence classes of pure-injective cosilting objects in $\DAMod$. 
	\end{enumerate}
	Every silting object of finite type in $\DModA$ 
arises from a chain of homological ring epimorphisms   with meet $\;0_A: A\to 0$  and join  $\;{\rm id}_A: A\to A$,  or 
 it	is  equivalent to a shift of the Lukas tilting module $L$.
	
The following is 
 a complete list of all silting objects of finite type,  up to shift and equivalence:
 \begin{itemize}
 \item[(i)]  the  non-minimal tilting module $L$;
\item[(ii)] for any  finite-dimensional homological epimorphism $\lambda:A\to B$,  the silting object
$$B\oplus \Cone(\lambda)[m];$$ 
\item[(iii)]  for any chain of subsets 						$\mathbb{U} \supseteq \mathcal{U}_0 \supseteq \mathcal{U}_1 \supseteq \mathcal{U}_2 \supseteq \cdots$
						such that $\bigcap_{n \in \omega}\mathcal{U}_n = \emptyset$, the silting object $$B_0\oplus \bigoplus\limits_{n\in\omega}(\bigoplus\limits_{S\in\mathcal{U}_n \setminus \mathcal{U}_{n+1}}S_{\infty}[n]),$$ where $\lambda_n: A \rightarrow B_n$ denotes the universal localization at  $\mathcal{U}_n$.
\end{itemize}
\end{theorem}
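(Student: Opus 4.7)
The plan is to combine the injection $T \mapsto T^+$ from Theorem~\ref{P:silttocosilt} with the classification of pure-injective cosilting objects in Theorem~\ref{kronecosilt}, exhibiting a silting preimage of finite type for each entry. Since $A$ is hereditary, Theorem~\ref{hereditaryutc} ensures that every pure-injective cosilting object is of cofinite type, so Theorem~\ref{P:silttocosilt} realizes $T \mapsto T^+$ as an injection from (1) to (2). What remains is to establish the surjectivity together with the explicit list.

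For the non-minimal case (shifts of $W = L^+$), the preimages are the corresponding shifts of the Lukas tilting module $L$: viewed as a 2-term complex, $L$ is a bounded silting complex and hence of finite type by Theorem~\ref{T:siltingTTF}(2) together with the remarks after it, and $(L[-k])^+ \cong W[k]$ by Lemma~\ref{L:dualityformulas}. For cases (ii) and (iii) of Theorem~\ref{kronecosilt}, the cosilting object arises by Proposition~\ref{constructioncomplex} from an increasing chain $\cdots \le \lambda_n \le \lambda_{n+1} \le \cdots$ of homological ring epimorphisms satisfying condition (\ref{zeros}). By Proposition~\ref{L:kroneckerduality}(1) and (3), over the Kronecker algebra this condition amounts to $\bigwedge_n \lambda_n = 0_A$ and $\bigvee_n \lambda_n = {\rm id}_A$, and it is moreover equivalent to condition (\ref{dualzeros}). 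Applying Proposition~\ref{P:siltconstr} to the same chain on the right-module side---using that $A$ being hereditary makes all right $A$-modules have projective dimension at most one---yields a silting object $T = \bigoplus_{n\in\mathbb Z}\Cone(\mu_n)[n]$. By Proposition~\ref{L:YcgTTF}, the suspended TTF triple induced by $T$ is carried under the duality $\Psi$ of Theorem~\ref{T:TTFduality} to the cosuspended TTF triple induced by $C$; by the proof of Theorem~\ref{P:silttocosilt}, so is the one induced by $T^+$, and since pure-injective cosilting objects inducing the same t-structure are equivalent, this forces $T^+ \sim C$. This establishes the bijection and shows that every silting object of finite type is either a shift of $L$ or arises from a chain as above.

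It remains to compute $T$ explicitly. Lemma~\ref{cones} gives $\Cone(\mu_n) \cong \Cone(\lambda_n)\otimes_A^\LL B_{n+1}$, which reduces the task to computing how the cones of the chain interact with the successive localization. For case (ii) all but two of the connecting maps $\mu_n$ are either identities or trivial, leaving the cones $B[1]$ (at index $l-1$) and $\Cone(\lambda)$ (at index $m$), which after shift reproduce $B \oplus \Cone(\lambda)[m]$. For case (iii), $\Cone(\lambda_n)$ is a direct sum of Pr\"ufer modules $\bigoplus_{S\in\Ucal_{n-l}}S_\infty$, and one needs the identities $S_\infty \otimes_A^\LL B_{n+1}=0$ when $S\in\Ucal_{n+1-l}$ and $S_\infty\otimes_A^\LL B_{n+1}\cong S_\infty$ otherwise; these follow respectively from the kernel $\Lcal_{n+1}$ being closed under direct limits (so Pr\"ufer modules over killed simples lie there) and from the bireflective subcategory $\Xcal'_{B_{n+1}}$ being closed under direct limits (so Pr\"ufer modules over surviving simples are $B_{n+1}$-modules). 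Summing the contributions and re-indexing produces the formula stated in (iii). The main obstacle is the matching $T^+\sim C$ for the infinite chains in (iii): direct dualization of an infinite direct sum is incompatible with the infinite product structure of $C$, so the argument must rely on the uniqueness of cosilting objects up to equivalence for a given t-structure rather than on any coordinatewise computation.
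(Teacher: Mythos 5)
Your proposal is correct and follows essentially the same route as the paper: combine the injectivity of $T\mapsto T^+$ from Theorem~\ref{P:silttocosilt} with the cosilting classification of Theorem~\ref{kronecosilt}, use Propositions~\ref{L:kroneckerduality}, \ref{P:siltconstr} and \ref{L:YcgTTF} to produce a finite-type silting preimage for each chain, and read off the list via Lemma~\ref{cones}. You in fact supply more detail than the paper (which leaves the explicit computation of the $\Cone(\mu_n)$ to the reader); the only slight inaccuracy is your closing worry about dualizing the infinite direct sum, since $(\bigoplus_n X_n)^+\cong\prod_n X_n^+$ makes $T^+$ literally the object of Proposition~\ref{constructioncomplex}, though your fallback on uniqueness of the cosilting object for a given t-structure is valid and is what the paper uses anyway.
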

\begin{proof}
	By the classification of pure-injective cosilting objects of $\DAMod$, any minimal cosilting object $C$
	is induced 
	by a chain of homological epimorphisms with meet $\;0_A: A\to 0$  and join  $\;{Id}: A\to A$. According to Propositions~\ref{L:YcgTTF} and~\ref{L:kroneckerduality}, this chain also gives rise to a silting object  of finite type in $\DModA$, which is the preimage of $C$ under the injective map $\Psi$ in Theorem~\ref{P:silttocosilt}. We infer  that the assignment $T \mapsto T^+$ induces a bijection between silting objects of finite type in $\DModA$ and pure-injective cosilting objects in $\DAMod$. The classification then follows from the classification of cosilting objects in $\DAMod$ and Proposition~\ref{P:siltconstr}.
\end{proof}

{\bf Acknowledgement.} The authors would like to thank the referee for careful reading of the manuscript and for many valuable suggestions. 



\begin{thebibliography}{AHT}

\bibitem{AIR}{\sc T.~Adachi, O.~Iyama, I.~Reiten}, {$\tau$-tilting theory}, Compos. Math. {\bf 150} (2014), 415--452.
\bibitem{AI} {\sc T. Aihara, \sc O. Iyama}, {Silting mutation in triangulated categories}, J. Lond. Math. Soc. (2) \textbf{85} (2012),  633--668.


\bibitem{AJSa}{\sc L. Alonso, A. Jerem\'ias, M. Saor\'in},
Compactly generated t-structures on the derived category of a Noetherian ring,
J. Algebra \textbf{324} (2010), 313--346.

\bibitem{AJS} {\sc L. Alonso Tarr\'io, A. Jerem\'ias Lop\'ez, M. Souto Salorio}, Localization in categories of complexes and unbounded resolutions, Canad. J. Math. {\bf 52} (2000), 225--247.



\bibitem{AF}{\sc F. W. Anderson, K. R. Fuller}, \textit{Rings and categories of modules}, 2nd ed., Springer, Berlin 1992.


\bibitem{abundance}{\sc  L.~Angeleri H\"ugel}, On the abundance of silting modules, Contemporary Mathematics {\bf 716} (2018), 1--23.
\bibitem{objects}{\sc  L.~Angeleri H\"ugel}, Silting objects, Bull. London Math. Soc. {\bf 51} (2019), 658--690.

\bibitem{angarc} {\sc L.~Angeleri H\"ugel, M.~Archetti}, { Tilting modules and universal localisation}, Forum Math. {\bf 24} (2012) 709-731.

\bibitem{relml}{\sc L.~Angeleri H\"ugel,  D. Herbera}, Mittag-Leffler conditions on modules,
{ Indiana University Mathematics Journal} {\bf 57} (2008),  2459-2518.

\bibitem{AHT1}
{\sc L. Angeleri--H\" ugel, D. Herbera, J. Trlifaj},
Divisible modules and localization, Journal of  Algebra, {\bf 294} (2005) 519-551.


\bibitem{AH}
{\sc L. Angeleri H\" ugel, M. Hrbek}, Silting modules over commutative rings, International Mathematics Research Notices {\bf 2017(13)} (2017),  4131--4151.  

\bibitem{AKL1} \textsc{L.~Angeleri H\"ugel}, \textsc{S.~Koenig}, \textsc{Q.~Liu}, Recollements and tilting objects, J. Pure Appl. Algebra \textbf{215} (2011), no. 4, 420--438.

\bibitem{AMV1}{\sc L.~Angeleri H\"ugel, F. Marks, J. Vit\'oria}, Silting modules, International Mathematics Research Notices {\bf 2016.4} (2016): 1251--1284.

\bibitem{AMV2}{\sc L.~Angeleri H\"ugel, F. Marks, J. Vit\'oria}, Silting modules and ring epimorphisms,  {Adv. Math.} {\bf 303} (2016), 1044--1076.
\bibitem{AMV4}{\sc  L.~Angeleri H\"ugel, F. Marks, J. Vit\'oria}, A characterisation of $\tau$-tilting finite algebras, { Contemporary Mathematics} {\bf 730} Amer. Math. Soc., Providence, RI (2019), 75--89. 

\bibitem{AMSTV}
{\sc L. Angeleri H\"ugel, F.Marks, J.{\v S}{\v{t}}ov{\'{\i}}{\v{c}}ek, R.Takahashi, J.Vit\'oria}, Flat ring epimorphisms and universal localizations of commutative rings. The Quarterly Journal of Mathematics 71.4 (2020): 1489-1520.

\bibitem{ASa}{\sc L. Angeleri H\"ugel, M.Saor\'in}, t-structures and cotilting modules over commutative noetherian rings, Mathematische Zeitschrift {\bf 277} (2014),  847--866.






\bibitem{ARS}{\sc M. Auslander, I. Reiten, S. O. Smal\o}, Representation theory of Artin algebras, Cambridge University Press, 1997.

\bibitem{B}{\sc S. Bazzoni}, Cotilting modules and homological ring epimorphisms, Journal of Algebra {\bf 441} (2015): 552-581.




\bibitem{BH}{\sc S. Bazzoni, M. Hrbek}, Definable coaisles over rings of weak global dimension at most one. Publicacions Matemàtiques. 65.1 (2021). 165-241.

\bibitem{BP}{\sc S. Bazzoni, L. Positselski}, Matlis category equivalences for a ring epimorphism, Journal of Pure and Applied Algebra, {\bf 224.10} (2020).

\bibitem{BS}{\sc S. Bazzoni, J. \v{S}\v{t}ov\'{i}\v{c}ek}, Smashing localizations of rings of weak global dimension at most one, Advances in Mathematics {\bf 305} (2017): 351-401.

\bibitem{Bel} {\sc A. Beligiannis}, {Relative Homological Algebra and Purity in Triangulated Categories}, J. Algebra \textbf{227}, (2000), 268--361.



\bibitem{BD}{\sc G.~Bergman, W.~Dicks}, Universal derivations and universal ring constructions, Pacific J. Math.{\bf 79} (1978), 293-337.

\bibitem{BPop}{\sc S. Breaz, F. Pop}, Cosilting modules, Algebr. Represent. Theory {\bf 20} (2017), 1305--1321.






\bibitem{CH}{\sc L. W. Christensen, H. Holm}, The direct limit closure of perfect complexes, Journal of Pure and Applied Algebra {\bf 219.3} (2015): 449-463.
\bibitem{Cohn}{\sc P.M. Cohn}, Free Ideal Rings and Localization in General Rings, Cambridge University Press (2006).








\bibitem{GL} \textsc{W.~Geigle, H.~Lenzing}, {Perpendicular categories with applications to representations and sheaves}, J. Algebra {\bf 144}, (1991), no.2, 273--343.

\bibitem{GdP} {\sc P.~Gabriel, J.~de la Pe\~na}, {Quotients of representation-finite algebras}, Comm. Algebra {\bf 15} (1987), no. 1-2, 279--307.

\bibitem{GP}{\sc G.~Garkusha, M.~Prest}, {Triangulated Categories and the Ziegler Spectrum}, Algebras and Representation Theory {\bf 8} (2005),  499--523.
\bibitem{Gl}{\sc S. Glaz}, {\it Commutative coherent rings}, Volume 1371 of Lecture Notes in Mathematics. Springer-Verlag, Berlin, 1989.
\bibitem{GT}{\sc R. G\"obel, J. Trlifaj}, \textit{Approximations and Endomorphism Algebras of Modules}, GEM \textbf{41}, W. de Gruyter, Berlin 2006.
\bibitem{Groth}{\sc M. Groth}, Derivators, pointed derivators and stable derivators, Algebraic \& Geometric Topology {\bf 13.1} (2013): 313-374.
\bibitem{HRS}{\sc D. Happel, I. Reiten, S.O. Smal\o}, Tilting in Abelian Categories and Quasitilted Algebras, Mem. Amer. Math. Soc. {\bf 575} (1996). 

 
\bibitem{HCG} {\sc M. Hrbek},  Compactly generated t-structures in the derived category of a commutative ring. Mathematische Zeitschrift 295.1 (2020): 47-72.
\bibitem{HN}{\sc M. Hrbek, T. Nakamura}, Telescope conjecture for homotopically smashing t-structures over commutative noetherian rings. Journal of Pure and Applied Algebra 225.4 (2021): 106571.
	
\bibitem{Ka}{\sc I.\ Kaplansky}, The homological dimension of a quotient
field, Nagoya Math.\ J. {\bf 27}(1966), 139--142.

\bibitem{Kr1} {\sc H. Krause}, Smashing subcategories and the telescope conjecture-an algebraic approach, Invent. Math. \textbf{139} (2000), 99--133.
\bibitem{Kr3}{\sc H. Krause}, Coherent functors in stable homotopy theory, Fundam. Math. {\bf 173(1)} (2002), 33--56.
\bibitem{K2} {\sc H. Krause}, Derived categories, resolutions, and Brown representability. Contemporary Mathematics {\bf 436} (2007): 101.

\bibitem{K} {\sc  H. Krause.}  Thick subcategories of modules over commutative noetherian rings, With an appendix by S. Iyengar. Math. Ann. {\bf 340} (2008), 733-747.


\bibitem{KSt} {\sc H.~Krause, J.~{\v S}{\v{t}}\!ov{\'{\i}}{\v{c}}ek}, The telescope conjecture for hereditary rings via Ext-orthogonal pairs, Adv. Math. {\bf 225} (2010), no. 5, 2341--2364.


\bibitem{L}{\sc R. Laking}, Purity in compactly generated derivators and t-structures with Grothendieck hearts. Mathematische Zeitschrift 295, 1615–1641 (2020). 

\bibitem{LV}{\sc R.~Laking} and {\sc J.~Vit\'oria}, Definability and approximations in triangulated categories. Pacific Journal of Mathematics 306.2 (2020): 557-586.

\bibitem{Lam}{\sc T. Y. Lam}, {\it Lectures on Modules and Rings}, Springer, New York, 1999.

\bibitem{MV}{\sc F.~Marks, J.~Vit\'oria}, {Silting and cosilting classes in derived categories}, J. Algebra {\bf 501} (2018), 526-544.





\bibitem{N} {\sc A. Neeman}, {The chromatic tower for {$D(R)$}},    {With an appendix by Marcel B{\"o}kstedt},{Topology}, {\bf 31} {(1992)},  {519--532}.

\bibitem{Nee}
\textsc{A.~Neeman}, {Some adjoints in homotopy categories}, Ann. Math. \textbf{171} (2010), 2143--2155.




\bibitem{NS}{\sc P. Nicol\'as, M. Saor\'in}, Parametrizing recollement data for triangulated categories, J. Algebra. {\bf 322} (2009) 1220--1250.

\bibitem{P}{\sc M. Prest}, {\it Purity, spectra and localisation}, Cambridge University Press 2009.

\bibitem{PV}{\sc C. Psaroudakis, J. Vit\'{o}ria}, Realisation functors in tilting theory, Mathematische Zeitschrift {\bf 288.3-4} (2018): 965-1028.

\bibitem{RG}{\sc M.\ Raynaud, L.\ Gruson}, Crit\`eres de platitude et de projectivit\'e, Invent. Math. {\bf 13}(1971), 1--89.



\bibitem{RR} {\sc  I.\ Reiten, C.M.\ Ringel}, Infinite dimensional representations of canonical algebras, Canad.\ J.\ Math. \textbf{58} (2006), 180--224.

\bibitem{R}{\sc C.~M.~Ringel}, A construction of endofinite modules,  in: Advances in Algebra and Model Theory (Essen 1994, Dresden 1995), 387-399, Algebra Logic Appl.~{\bf 9}, Gordon Breach, Amsterdam 1997. 

\bibitem{SSV}{\sc M. Saor\'in, J. \v{S}\v{t}ov\'{i}\v{c}ek, S. Virili}, t-Structures on stable derivators and Grothendieck hearts, arXiv preprint arXiv:1708.07540 (2017).
\bibitem{Sch}  {\sc A.~H.~Schofield}, Representations of Rings
over Skew--Fields,
Cambridge Univ.\ Press, Cambridge 1985.

\bibitem{Scho1} {\sc A.~H.~Schofield}, {\it Severe right Ore sets and universal localisation}, preprint, arXiv:0708.0246.

\bibitem{Scho2} {\sc A.~H.~Schofield}, Universal localisations of hereditary rings, preprint, arXiv:0708.0257.


\bibitem{Sp}{\sc N. Spaltenstein}, Resolutions of unbounded complexes. Compositio Mathematica {\bf 65.2} (1988): 121-154.

\bibitem{SVR} {\sc D. Stanley, A.-C. van Roosmalen}. t-structures on hereditary categories. Mathematische Zeitschrift 293.1 (2019): 731-766.

\bibitem{Ste}{\sc B. Stenstr\"om}, \emph{Rings of quotients}. Springer-Verlag (1975).

\bibitem{PS}{\sc J. \v{S}\v{t}ov\'{i}\v{c}ek, D. Posp\'{i}\v{s}il}, On compactly generated torsion pairs and the classification of co-t-structures for commutative noetherian rings, Transactions of the American Mathematical Society {\bf 368.9} (2016): 6325-6361.






\end{thebibliography}
\end{document}